\documentclass{amsart}
\usepackage[foot]{amsaddr}
\usepackage{float, graphicx}
\usepackage[]{epsfig}
\usepackage{amsmath, amsthm, amssymb,}
\usepackage{epsfig}
\usepackage{verbatim}
\usepackage{multicol}
\usepackage{url}
\usepackage{latexsym}
\usepackage{mathrsfs}
\usepackage[colorlinks, bookmarks=true]{hyperref}
\usepackage{graphicx}
\usepackage{enumerate}
\usepackage[normalem]{ulem}
\usepackage{bm}
\usepackage{dsfont}
\usepackage{stmaryrd}
\usepackage{enumitem}
\usepackage{mathtools}

\usepackage{subcaption}
\usepackage{tikz}
\usetikzlibrary{arrows}
\usetikzlibrary{arrows.meta}
\usetikzlibrary{calc}
\definecolor{wwhhii}{rgb}{1.,1.,1.}
\definecolor{rreedd}{rgb}{1.,0.,0.}
\definecolor{uuuuuu}{rgb}{0.26666666666666666,0.26666666666666666,0.26666666666666666}
\definecolor{darkgreen}{HTML}{0d8513}

\usepackage{color}
\usepackage{xcolor}

\makeatletter

\makeatother


\oddsidemargin=0in
\evensidemargin=0in
\textwidth=6.5in
\setlength{\unitlength}{1cm}

\numberwithin{equation}{section}

\newcommand{\deltaw}{\mathfrak f_0}
\newcommand{\wtt}{\varpi_t}
\newcommand{\wwtt}{\widetilde\varpi_t}

\newcommand{\cL}{{\mathcal L}}

\newcommand{\cP}{{\mathcal P}}

\newcommand{\cT}{{\mathcal T}}

\newcommand{ \sfx }{{\mathsf x}}
\newcommand{ \sfy }{{\mathsf y}}

\newcommand{\sfY}{{\mathsf Y}}
\newcommand{ \sft }{{\mathsf t}}
\newcommand{\sfa}{{\mathsf a}}
\newcommand{ \sfb }{{\mathsf b}}
\newcommand{ \sfg }{{\mathsf g}}

\newcommand{\sfd}{{\mathsf d}}
\newcommand{\sfz}{{\mathsf z}}
\newcommand{\sfw}{{\mathsf w}}

\newcommand{\sfq}{{\mathsf q}}
\newcommand{\sfQ}{{\mathsf Q}}

\newcommand{\sfM}{{\mathsf M}}

\newcommand{ \sfh}{{\mathsf h}}
\newcommand{ \sff}{{\mathsf f}}
\newcommand{\sfA}{{\mathsf A}}
\newcommand{\sfB}{{\mathsf B}}
\newcommand{\sfD}{{\mathsf D}}

\newcommand{\sfP}{{\mathsf P}}
\newcommand{\sfH}{{\mathsf H}}

\newcommand{ \sfs }{{\mathsf s}}

\newcommand{\sfR}{{\mathsf R}}
\newcommand{ \sfr}{{\mathsf r}}

\newcommand{\fA}{{\mathfrak A}}

\newcommand{\fc}{{\mathfrak c}}
\newcommand{\fC}{{\mathfrak C}}
\newcommand{\fL}{{\mathfrak L}}

\newcommand{\fd}{{\mathfrak d}}

\newcommand{\fr}{{\mathfrak r}}

\newcommand{\ft}{{\mathfrak t}}

\newcommand{\fs}{{\mathfrak s}}

\newcommand{\fq}{{\mathfrak q}}

\newcommand{\fD}{{\mathfrak D}}
\newcommand{\fP}{{\mathfrak P}}
\newcommand{\fR}{{\mathfrak R}}
\newcommand{\fB}{{\mathfrak B}}
\newcommand{\fit}{\bar t_0}
\newcommand{\fft}{\bar t_1}

\newcommand{\rd}{{\rm d}}

\newcommand{\ri}{\mathrm{i}}

\newcommand{\bC}{{\mathbb C}}

\newcommand{\bB}{\mathbb{B}}

\newcommand{\bH}{\mathbb{H}}

\newcommand{\bR}{{\mathbb R}}

\newcommand{\al}{\alpha}

\newcommand{\Z}{\mathbb{Z}}
\newcommand{\C}{\mathbb{C}}
\newcommand{\R}{\mathbb{R}}
\newcommand{\N}{\mathbb{N}}
\newcommand{\T}{\mathbb{T}}
\newcommand{\HH}{\mathbb{H}}
\renewcommand{\i}{\mathbf{i}}
\newcommand{\im}{\mathrm{Im}\,}

\newcommand{\e}{\varepsilon}
\DeclareMathOperator{\PP}{\mathbb{P}}
\newcommand{\cal}{\mathcal}

\DeclareMathOperator{\dist}{dist}

\DeclareMathOperator{\OO}{\mathcal{O}}
\DeclareMathOperator{\oo}{o}
\DeclareMathOperator{\argmax}{argmax}

\renewcommand{\Re}{\mathop{\mathrm{Re}}}
\renewcommand{\Im}{\mathop{\mathrm{Im}}}
\renewcommand{\sim}{\asymp}

\newcommand{\del}{\partial}

\newcommand{\wt}{\widetilde}
\newcommand{\cofa}{a_1}
\newcommand{\coffa}{a}

\newcommand{\qq}[1]{[\![{#1}]\!]}

\newcommand{\beq}{\begin{equation}}
\newcommand{\eeq}{\end{equation}}

\newcommand{\Adm}{\mathrm{Adm}}
\setcounter{tocdepth}{1}
\pagestyle{plain}

\usepackage{tikz}
\usetikzlibrary{arrows}
\usepackage{cleveref}

\DeclareMathOperator{\north}{no}
\DeclareMathOperator{\so}{so}
\DeclareMathOperator{\ea}{ea}
\DeclareMathOperator{\we}{we}

\newtheorem{thm}{Theorem}[section]
\newtheorem{prop}[thm]{Proposition}
\newtheorem{lem}[thm]{Lemma}

\theoremstyle{remark}
\newtheorem{rem}[thm]{Remark}
\theoremstyle{definition}
\newtheorem{definition}[thm]{Definition}
\newtheorem{assumption}[thm]{Assumption}

\title{Pearcey universality at cusps of polygonal lozenge tiling}

\author{Jiaoyang Huang}
\address{Department of Statistics and Data Science, University of Pennsylvania, Pennsylvania, PA, USA}
\email{huangjy@wharton.upenn.edu}

\author{Fan Yang}
\address{Yau Mathematical Sciences Center, Tsinghua University, and Beijing Institute of Mathematical Sciences and Applications, Beijing, China.}
\email{fyangmath@mail.tsinghua.edu.cn}

\author{Lingfu Zhang}
\address{Department of Statistics, University of California, Berkeley, CA, USA.}
\email{lfzhang@berkeley.edu}

\newcommand{\g}{\mathcal E}
\newcommand{\oox}{\sfx}

\newcommand{\oot}{\sft}
\newcommand{\ooz}{\sfz}
\newcommand{\oow}{\sfw}

\newcommand{\den}{\rho_0}
\newcommand{\fff}{f_0}
\newcommand{\mmm}{m_0}

\newcommand{\lsca}{n}
\newcommand{\ssca}{\xi}

\newcommand{\Em}{E_-}
\newcommand{\Ep}{E_+}
\newcommand{\cusx}{x_c}
\newcommand{\cust}{t_c}

\newcommand{\qut}{\gamma}
\newcommand{\zzz}{z}

\newcommand{\zzc}{z_c}
\newcommand{\sct}{\overline{t}}

\newcommand{\pct}{\tau}
\newcommand{\pcx}{\gamma}
\newcommand{\pcz}{\mathbf{z}}
\newcommand{\pcw}{\mathbf{w}}

\newcommand{\Dfin}{\mathsf{D}}
\newcommand{\Sfin}{\mathsf{G}}
\newcommand{\oSfin}{\overline{\Sfin}}
\newcommand{\oell}{\ell}
\newcommand{\ooell}{\overline{\oell}}
\newcommand{\soell}{\oell^*}

\newcommand{\err}{\epsilon_4}
\newcommand{\ere}{\epsilon_2}
\newcommand{\errd}{\epsilon_3}
\newcommand{\errh}{\epsilon_1}

\newcommand{\z}{z}
\newcommand{\w}{w}
\newcommand{\semci}{{\rm sc}}

\newcommand{\don}{\mathds{1}}

\newcommand{\BE}{B}

\newcommand{\Lat}{\Lambda}

\begin{document}
\maketitle

\begin{abstract}
We study uniformly random lozenge tilings of general simply connected polygons. Under a technical assumption that is presumably generic with respect to polygon shapes, we show that the local statistics around a cusp point of the arctic curve converge to the Pearcey process. This verifies the widely predicted universality of edge statistics in the cusp case. Together with the smooth and tangent cases proved in \cite{aggarwal2021edge,ERT}, these are believed to be the three types of edge statistics that can arise in a generic polygon. Our proof is via a local coupling of the random tiling with non-intersecting Bernoulli random walks (NBRW). To leverage this coupling, we establish an optimal concentration estimate for the tiling height function around the cusp. As another step and also a result of potential independent interest, we show that the local statistics of NBRW around a cusp converge to the Pearcey process when the initial configuration consists of two parts with proper density growth, via careful asymptotic analysis of the determinantal formulas.
\end{abstract}

 \begin{figure}[!htb]
     \centering
     \begin{subfigure}{0.48\textwidth}
         \centering
         \includegraphics[width=4cm]{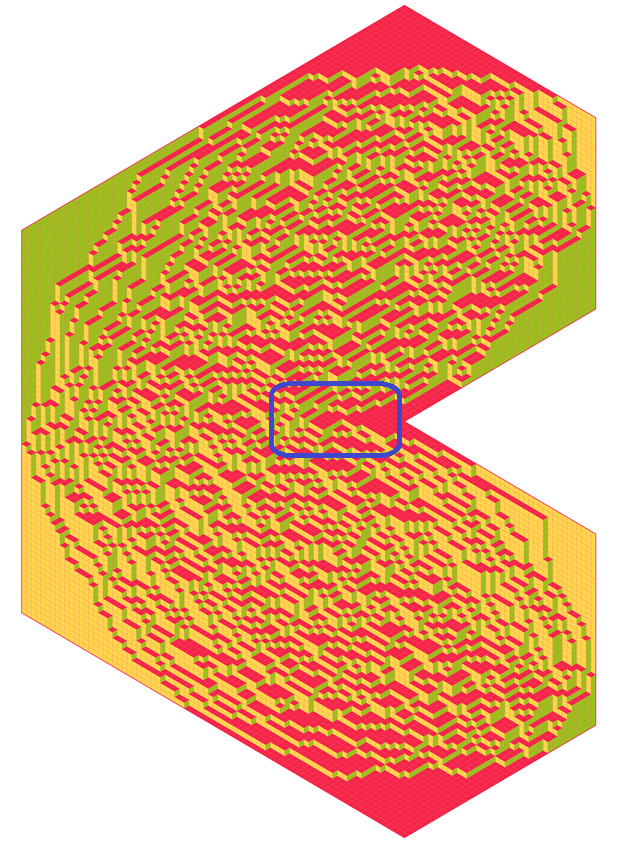}
     \end{subfigure}
     \hfill
     \begin{subfigure}{0.48\textwidth}
         \centering
\includegraphics[width=7cm]{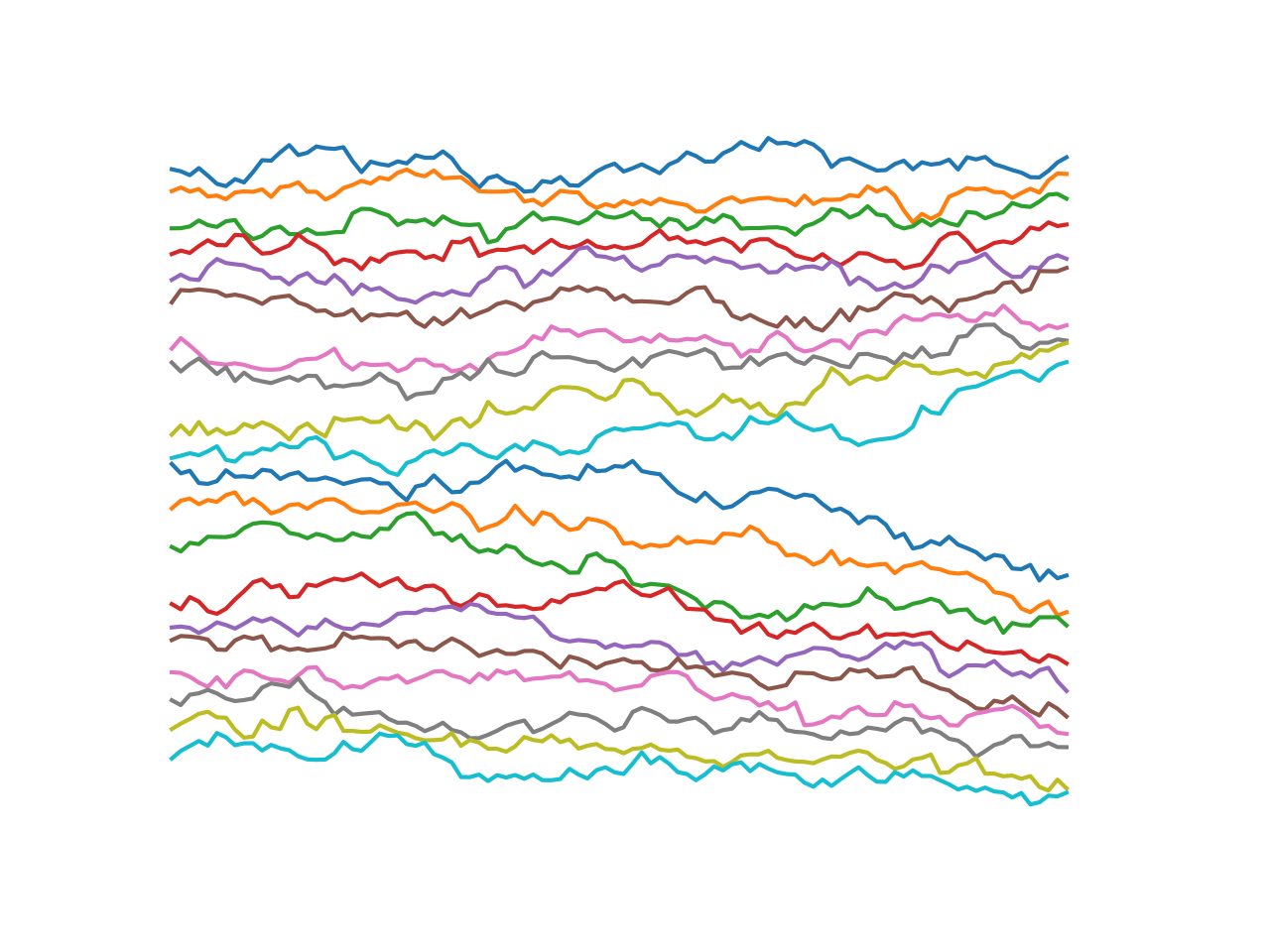}
     \end{subfigure}
        \caption{The left panel a uniformly sampled lozenge tiling, from the website of Leonid Petrov \url{https://lpetrov.cc/2016/08/Tilings-examples/} (using here under CC BY-SA 4.0). There is a cusp point in the blue box, where the paths formed by green and yellow tiles should converge to the Pearcey process, depicted in the right panel.}
        \label{fig:Pea}
\end{figure}

{
  \hypersetup{linkcolor=black}
  \tableofcontents
}

\section{Introduction}
 
The random lozenge tiling model is an exactly solvable two-dimensional statistical mechanical system that has attracted a significant amount of studies over the past few decades. For this model, many physical quantities of interest such as the partition function and correlation functions can be expressed in terms of determinants of an inverse Kasteleyn matrix. For random tilings of large domains, asymptotic analysis of these determinants leads to predictions of various universality phenomena in the large-scale limit; see, for instance, the book \cite{RT} for a comprehensive review. One such fundamental result is the \emph{limit shape} phenomenon claiming that the height function of a uniformly random tiling of a large domain would concentrate (after proper scaling) around a deterministic function. This behavior was first established for domino tilings of essentially arbitrary domains \cite{LSRT,VPT}, where the limit shape is expressed through a variational principle as the maximizer of a certain surface tension functional of the height function. This result was later extended to the case of random lozenge tilings in \cite{LSCE}, where the limit shape was written as the solution to a complex Burgers equation  which, in many cases, can be solved easily through the classical method of characteristics.

An interesting and important feature of the limit shape phenomenon is that the boundary condition induces a phase transition of the local statistics. Depending on the shape of the domain, it admits both frozen regions, where the associated height function is almost flat and deterministic, and liquid regions, where the height function is more rough and random. The curve separating these two regions is then called the arctic boundary. The reader can refer to \cite{RTT,TSP} for some early studies of this phenomenon in the context of random tilings, but we remark that a similar notion was discovered even earlier for Wulff Crystals in the Ising model; see e.g., \cite{ECM,Cerf,DKS_Book}. 

It is then natural to ask whether the local statistics are universal inside the liquid region and on the arctic boundaries, and how the universal limits behave if they exist. It is conjectured in \cite{VPT} that around a point inside the liquid region, the local statistics should be given by the ergodic Gibbs translation-invariant (EGTI) measure with slope matching the gradient of the limiting shape. It is known that the EGTI measure is unique and can be expressed as determinantal point processes with certain explicit \emph{extended discrete sine kernels} \cite{KOS, AST_Shef}. This conjecture was completely proved for random lozenge tilings of essentially arbitrary simply-connected domains in \cite{ULS}, based on and improving many previous proofs under stronger assumptions on the shapes of domains, such as  \cite{borodin2010q,Honeycomb,petrov2014asymptotics,gorin2017bulk,gorin2019universality,LLTS}, to name a few. 

\subsection*{Edge statistics and universal conjectures} 
Compared with the bulk statistics inside the liquid region, the \emph{edge statistics} near the arctic boundary exhibit much richer behaviors due to various possible singularities that the arctic boundary may develop. 
In studying edge statistics, the domain is usually taken to be polygonal (see \Cref{p} below). 
Besides being a reasonably general class of domains, such a restriction of being polygonal seems to be essential in establishing universality of edge statistics.
In fact, unlike the bulk statistics which are only determined by the macroscopic shape of the domain, the edge statistics are also sensitive to microscopic perturbations and can be altered by even a single defect at the boundary of the domain (as discussed in \cite{aggarwal2021edge} after the statement of the main result there). 

A detailed study of the arctic boundaries on general polygonal domains (which may not be simply connected) was conducted in \cite{LSCE,DMCS}, which showed that they are actually algebraic curves determined by the shapes of the polygons.
We note that \cite{LSCE} requires the sides to be cyclicly oriented and the domain to be simply connected, and these assumptions were removed in \cite{DMCS}.
In \cite{DMCS}, a complete classification of the regularity of these arctic curves is proved, with a total of six cases identified (this result holds for more general dimer models with periodic weight structure): (1) a smooth point of the arctic curve; (2) a point where the arctic curve is tangent to a side of the polygon; (3) a generic cusp point; (4) a cuspidal turning point; (5\&6) two types of tacnodes. The first three cases should appear for generic polygonal domains, whereas the last three cases are often referred to as non-generic singularities, in the sense that they are believed to be sensitive to perturbations of the side lengths (although this is not rigorously proved, and even its precise meaning is subtle; see \cite[Remark 2.8]{aggarwal2021edge} and the discussion below \Cref{pa}). 
The \emph{universal edge fluctuation conjecture} (see e.g., Section 9 of \cite{DMCS} or Lecture 19.2 of \cite{RT}) states that the local statistics of uniform lozenge tilings for polygonal domains have a universal scaling limit for each case. 
This conjecture has been verified for the first two cases. For case (1), the Airy line ensemble is conjectured to appear as the scaling limit. This was first proved for some special classes of domains, and recently solved in \cite{aggarwal2021edge} for general simply connected polygonal domains.
For case (2), it is conjectured that the GUE-corners process is the universal scaling limit at such tangency points. This was proved in \cite{ERT} for almost general domains, which improved previous results for some special classes of domains.

The goal of this paper is then to prove the universal edge fluctuation conjecture for case (3) (i.e., the \emph{cusp universality}). We will show that at any generic cusp on the arctic boundary, the local statistics of the uniformly random lozenge tiling converge to the Pearcey process.

The Pearcey process is a determinantal process described by the extended Pearcey kernel (given in \eqref{eq:pearcey}) and should be realized as a family of continuous random processes (see the right panel of \Cref{fig:Pea}).
The name `Pearcey' is from the connection between the kernel and Pearcey integrals.
Its first appearance traces back to \cite{brezin1998level,brezin1998universal} on certain matrices with Gaussian randomness. The limiting eigenvalue distribution around certain cusp points was shown to be a determinantal point process, whose kernel is then termed the `Pearcey kernel', corresponding to a single time slice of the Pearcey process.
Later, the extended Pearcey kernel was obtained at cusps of the arctic boundaries of random skew 3D partitions (which can be viewed as weighted random tilings of an infinite domain) \cite{RSPPP} and cusps of non-intersecting Brownian bridges starting from the origin and conditioned to end at two points \cite{Tracy2006}.
The Pearcey universality at cusps of more general non-intersecting Brownian bridges was established in \cite{adler2010universality,AFVM2010}. 
In random matrix theory, besides \cite{brezin1998level,brezin1998universal}, Pearcey limits have been proved for some other Gaussian matrix models, such as \cite{AM_CPAM,adler2011pearcey,Capitaine2016,HHN_Pearcey}, and for general Wigner-type matrices (with non-Gaussian entries) at cusps of the global density of states \cite{CEKS_Cusp,EKS_Cusp}.

As for random tilings, around any cusp, the local statistics can be encoded by a family of Bernoulli paths (as will be explained in \Cref{ssec:nbp} below).
Since \cite{RSPPP}, Pearcey limits of such paths have been established for special classes of domains; see e.g., \cite{borodin2008asymptotics, borodin2011limits, petrov2014asymptotics, borodin2015random, duits2020two}. 
It is natural to predict that the Pearcey universality at cusps holds for general polygonal domains, as stated in case (3) of the universal edge fluctuation conjecture.
Such a prediction actually traces back to \cite{RSPPP} and has been stated in many works such as \cite{DMCS,TCP,UEFDIPS,TNSDP,RT, EFLS, aggarwal2021edge,AvM_tac}. Our main result in this paper verifies this prediction for simply-connected polygonal domains, under certain technical conditions of the arctic curve.
\begin{thm}
Let $\fP$ be a simply-connected rational polygonal set forbidding certain presumably non-generic behaviors, as specified in \Cref{p} and Assumption \ref{pa} below.
For a uniformly random lozenge tiling of $n\fP$, around any cusp point of the arctic boundary, the corresponding paths (under appropriate scaling) converge to the Pearcey process as $n\to\infty$.
\end{thm}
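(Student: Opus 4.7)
The plan is to encode the local statistics near the cusp by a family of non-intersecting Bernoulli paths, couple these paths with an auxiliary system of non-intersecting Bernoulli random walks (NBRW), and then perform steepest-descent asymptotics on the resulting determinantal kernel. More concretely, near any cusp point $(\cusx,\cust)$ of the arctic boundary, the tiles that contribute to the paths in \Cref{fig:Pea} form non-intersecting Bernoulli trajectories, and the macroscopic limit shape (determined by the complex Burgers equation) fixes how many such paths enter a given mesoscopic window around the cusp and with what density profile on its entrance and exit edges. Pearcey behavior is expected precisely because, on the mesoscopic scale, the boundary data splits into two clusters whose density profiles meet smoothly in the limit, producing the characteristic cubic singularity of the complex slope.

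To carry this out, I would first isolate a mesoscopic box $\cB$ whose dimensions are much larger than the Pearcey scale $\lsca^{1/4}$ (horizontally) but small enough that the limit shape restricted to $\cB$ is well-approximated by its local expansion around the cusp. Using the Gibbs property of uniform lozenge tilings together with an optimal concentration estimate for the height function near the cusp, one then argues that, with overwhelming probability, the configuration on $\partial\cB$ matches---up to errors negligible on the Pearcey scale---the initial and boundary data of an NBRW with an appropriate two-cluster initial condition. Since the NBRW ensemble is determinantal with an explicit Lindstr\"om--Gessel--Viennot (equivalently Karlin--McGregor) kernel, the local statistics inside $\cB$ reduce to the asymptotic analysis of a concrete double-contour integral.

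The Pearcey limit is then extracted from this kernel by a steepest-descent analysis. The integrand $\exp(\lsca S(z))$ has a critical equation $S'(z)=0$ whose three roots coalesce at the cusp---the algebraic signature of Pearcey behavior. After rescaling by $\lsca^{1/4}$ and the corresponding powers for the auxiliary spatial and temporal arguments, the local expansion $S(z)=S(\zzc)+c_4(z-\zzc)^4+\OO((z-\zzc)^5)$, together with the perturbative contributions from the rescaled parameters, should deliver the extended Pearcey kernel \eqref{eq:pearcey} in the limit.

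The main obstacle I anticipate is the concentration estimate. To close the NBRW coupling on the Pearcey scale, the height function has to be controlled to additive precision $\lsca^{o(1)}$ at distance $\lsca^{1/4}$ from the cusp, which is much sharper than what soft concentration inputs (Azuma--Hoeffding or log-Sobolev-based arguments) provide. To obtain this I expect one must adapt the multi-scale dynamical and loop-equation machinery developed for the smooth edge in \cite{aggarwal2021edge} and for the tangent point in \cite{ERT}, combining it with a local analysis of the complex Burgers equation tailored to the cusp singularity; the anisotropic scaling and the merging of three characteristics make both the a priori height-function estimates and the base case of the multi-scale induction substantially more delicate than in the smooth setting. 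A secondary difficulty is that the determinantal asymptotics must be performed uniformly over the mesoscopic parameters encoding the fluctuations of the tiling's boundary data on $\partial\cB$, which requires contour choices compatible with both the triple coalescence of saddles and the discreteness of the Bernoulli steps.
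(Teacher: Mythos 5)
Your proposal follows essentially the same route as the paper: optimal height-function rigidity near the cusp (extending the smooth-edge argument of \cite{aggarwal2021edge}), a mesoscopic box of scale $\gg n^{1/2}$ around the cusp in which the tiling paths are coupled with an NBRW via monotonicity and the Gibbs property, and then steepest-descent analysis of the NBRW double-contour kernel, whose triple critical point and quartic expansion $-\cust^{-4}A(z-\zzc)^4$ yield the extended Pearcey kernel. The only small calibration difference is that the paper needs concentration of order $\oo(n^{1/4})$ on the box boundary (at distance $\gtrsim n^{1/2+\e}$ from the cusp), not $n^{\oo(1)}$ precision at distance $n^{1/4}$, but this does not change the strategy.
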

A more formal and precise statement of this result is stated as Theorem \ref{thm:univ} below.
We remark that, as in all previous works showing Pearcey limits, our convergence to the Pearcey process is in the sense of convergence of point processes at finitely many times, due to the lack of a continuous theory of the Pearcey process. See \Cref{s:dccp} for more discussions.

\subsection*{Proof ideas} 
We now outline our proof of the cusp universality. First, we prove an optimal concentration (or rigidity) estimate for the corresponding Bernoulli paths near the cusp we are considering. For a simply connected polygon of diameter order $n$, near a smooth point of the arctic curve, the extreme path is concentrated within $n^{1/3+\delta}$ of the limit shape for any constant $\delta>0$ as shown in \cite{aggarwal2021edge}. 
We extend the argument there to the vicinity of a cusp and show that the extreme path is within $n^{1/4+\delta}$ of the cusp of the limit shape (the Pearcey fluctuation of the paths near a cusp is expected to be of order $n^{1/4}$). The concentration estimate is better as we get further away from the cusp and becomes $\oo(n^{1/4})$ if the distance from the cusp is at least $n^{1/2+\e}$ for a constant $\e>0$. 
For a smooth point of the arctic curve, such an optimal rigidity estimate almost suffices to deduce the Airy universality, because, as done in \cite{aggarwal2021edge}, one can sandwich the associated Bernoulli paths between two Airy line ensembles with different curvatures
to approximate the paths with $\oo(n^{1/3})$ error, which is negligible under the Airy scaling.
However, such a straightforward comparison cannot be carried out at a cusp, since it is mostly surrounded by the liquid region and is connected to the frozen region only in the tangent direction, and, furthermore, the Pearcey process is not versatile enough.

Instead, we will carve out a small domain around the cusp with height and width of order $\Omega(n^{1/2+\e})$. 
We then rewrite the tiling configuration in this domain using the well-known representation of a family of non-intersecting Bernoulli paths.
We consider the model of \emph{non-intersecting Bernoulli random walks (NBRW)} introduced in \cite{KCR}. It can be viewed as a family of independent simple random walks conditioned on never colliding, or a Markov chain in a discrete Weyl chamber. It also has the local Gibbs resampling property as tiling. 
We can construct an NBRW on the domain such that the limiting particle configuration matches the limit shape of the tiling function well. Then, the monotonicity property of the NBRW together with the concentration estimates of order $\oo(n^{1/4})$ on the boundaries of the domain shows that the NBRW is a good approximation of the tiling Bernoulli paths with a negligible $\oo(n^{1/4})$ error under the Pearcey scaling.

Now, the problem is reduced to showing the Pearcey universality of the corresponding NBRW, which is another challenging step of our proof and can be of independent interest (see \Cref{thm:univNBRWlim} below). 
It is known that the trajectories of NBRW is a determinantal point process, and a contour integral formula for the kernel is given in \cite{gorin2019universality}. 
We do an asymptotic analysis of the formula and show that when the initial configuration is appropriate (i.e., has two separate parts with proper density growth), the kernel near the cusp is close to the extended Pearcey kernel.
We use the \emph{steepest descent method}, which is well-known and can be traced back to Riemann in the 19th century. Its application in the study of determinantal point processes was pioneered by Okounkov (see e.g., \cite{okounkov2002symmetric}). Since then it has become standard and widely used in such tasks of asymptotic analysis in integrable probability (see e.g., \cite[Section 5]{borodin2016lectures}, or \cite[Lectures 15--18]{RT} in the context of tiling).
There are several technical challenges  in applying this method to our setting (see Section \ref{ssec:kapprox} for more details).
First, to show universality, we need to work with general initial conditions which require extra care. Second, the Pearcey process corresponds to that the saddle point (to be analyzed using the steepest descent method) is a `triple critical point' (as seen in \cite{RSPPP}). Besides, the fact that the distance between the cusp point and the boundary of the domain is of order much smaller than $\lsca$ makes it hard to tame the behavior of the analyzed function away from the saddle point. Much technical effort and some innovations (such as a multi-step approximation of the analyzed functions and a discretization of the contours) are presented to overcome these issues.

\medskip

Finally, we mention some possible future directions regarding tiling (or dimer) models that are closely related to this paper. First, the framework developed in this paper for the proof of cusp universality and Pearcey statistics can be applied to models beyond the realm of tilings. For example, it may be used to establish the Pearcey statistics for the Brownian motions on large unitary groups in certain regimes of interest \cite{AL2022}. Second, the scaling limits of random tilings around the three types of non-generic singularities have been proved for some special domains; see e.g., \cite{TCP} for the cusp-Airy process around cuspidal turning points, \cite{AJM_tac,AvM_tac} for the Tacnode process, and \cite{THCAF,TNSDP,AvM_tac} for the discrete Tacnode process. It would be interesting to prove the universality of these processes in tiling models. 
The third direction is to establish local statistics universality for other tiling models. 
For uniformly random domino tilings, we expect the existing methods can be adapted to show universalities at smooth and cusp points, analogous to \cite{aggarwal2021edge} and this paper.
It would also be interesting to consider random tilings with non-uniform measures, such as the various weighted ones \cite{BergDuits_2019,CDKL_CMP,Char_period,Mkrtchyan2021,BCJ_AAP,duits2020two}. 

\subsection*{Organization of the remaining text}
In Section \ref{Walks}, we formally define our model and present the main results regarding the Pearcey universality of uniform lozenge tilings (\Cref{thm:univ}) and NBRW (\Cref{thm:univNBRWlim}). In Section \ref{s:NBwe}, we introduce the monotonicity and Gibbs properties of uniform random tiling that will be used repeatedly. 
We will prove \Cref{thm:univ} in \Cref{sec:univproof} by combining three main ingredients: NBRW universality (from \Cref{thm:univNBRWlim}), optimal rigidity around cusps, and limiting height function estimates.
We will prove \Cref{thm:univNBRWlim} in \Cref{s:nibr}, and complete the remaining two steps in \Cref{s:optimalR} and \Cref{s:det} respectively.

\subsection*{Acknowledgement}
JH is supported by NSF grant DMS-2054835 and DMS-2331096.
FY is supported by Yau Mathematical Sciences Center, Tsinghua University, and Beijing Institute of Mathematical Sciences and Applications.
LZ is supported by the Miller Institute for Basic Research in Science, at the University of California, Berkeley, and NSF grant DMS-2246664. 
The authors would like to thank Amol Aggarwal, Erik Duse, Vadim Gorin, and Nicolai Reshetikhin for helpful comments on an earlier draft of this paper.
 
\section{Setup and main results} 
	
	\label{Walks}

To facilitate the presentation, we introduce some necessary notations that will be used throughout the paper. In this paper, we are interested in the asymptotic regime with $n\to \infty$. When we refer to a constant, it will not depend on the parameter $n$. Unless otherwise noted, we will use $C$ to denote a large positive constant, whose value may change from line to line. Similarly, we will use $\epsilon$, $\delta$, $c$, $\fc$, $\fd$ etc.~to denote small positive constants. 
For an event $\Xi_n$ whose definition depends on $n$, we say that it holds \emph{with overwhelming probability} (w.o.p.), if for any constant $D>0$ there is $\mathbb P(\Xi_n)\ge 1-n^{-D}$ for all large enough $n$.
For any two (possibly complex) sequences $a_n$ and $b_n$ depending on $n$, $a_n = \OO(b_n)$ means that $|a_n| \le C|b_n|$ for some constant $C>0$, whereas $a_n=\oo(b_n)$ or $|a_n|\ll |b_n|$ means that $|a_n| /|b_n| \to 0$ as $n\to \infty$. We say that $a_n \lesssim b_n$ (or $b_n\gtrsim a_n$) if $a_n = \OO(b_n)$, and $a_n \asymp b_n$ (or $a_n =\Omega(b_n)$) if $a_n = \OO(b_n)$ and $b_n = \OO(a_n)$. 

For any $x,y\in\R\cup\{-\infty, \infty\}$, $x\le y$, we denote $\llbracket x, y\rrbracket = [x,y]\cap \Z$, $x\vee y=\max\{x, y\}$, and $x\wedge y=\min\{x, y\}$.
For an event $A$, we let $\don_A$ or $\don[A]$ denote its indicator function.
For any set $S$ we use $|S|$ to denote its cardinality. For any $D\subset \R^2$ we use $\overline{D}$ to denote its closure.
We use $\HH=\{\z\in \C: \Im \z >0\}$ and $\HH^-=\{\z\in\C: \Im \z <0\}$ to denote the upper- and lower-half complex planes, respectively.
We also employ the Pochhammer symbols $(z)_{k}=z(z+1)\ldots(z+k-1)$ and the binomial coefficients $\binom{k}{a}=(-1)^a\frac{(-k)_a}{a!}$, for any $z\in\C$ and $k,a\in\Z_{\ge 0}$.

\subsection{Lozenge tiling}	\label{FunctionWalks}
	We denote by $\mathbb{T}$ the \emph{triangular lattice}, namely, the graph whose vertex set is $\mathbb{Z}^2$ and whose edge set consists of edges connecting $(\sfx, \sft), (\sfx', \sft') \in \mathbb{Z}^2$ if $(\sfx' - \sfx, \sft' - \sft) \in \{ (1, 0), (0, 1), (1, 1)\}$. The axes of $\mathbb{T}$ are the lines $\{ \sfx = 0 \}$, $\{ \sft = 0 \}$, and $\{ \sfx  = \sft \}$, and the faces of $\mathbb{T}$ are triangles with vertices of the form $\big\{ (\sfx, \sft), (\sfx + 1, \sft), (\sfx + 1, \sft + 1) \big\}$ or $\big\{ (\sfx, \sft), (\sfx, \sft + 1), (\sfx + 1, \sft + 1) \big\}$. A \emph{domain} $\mathsf{R} \subseteq \mathbb{R}^2$ is a finite union of triangular faces that is simply-connected. As a slight abuse of this notation, we also denote by $\mathsf{R}$ the set of all vertices incident to these triangular faces or the subgraph of $\mathbb{T}$ induced by these vertices.
 
	\begin{figure}[!ht]
		
		\begin{center}		
			
			\begin{tikzpicture}[
				>=stealth,
				auto,
				style={
					scale = .3
				}
				]		
				
				\draw[-, black] (15, 7) node[above, scale = .7]{$\mathsf{H}$}-- (15, 5) node[right, scale = .7]{$\mathsf{H}$} -- (13, 3) node[below, scale = .7]{$\mathsf{H}$} -- (13, 5) node[left, scale = .7]{$\mathsf{H}$} -- (15, 7); 
				\draw[-, dashed, black] (13, 5) -- (15, 5);
				
				\draw[-, black] (19, 5) node[above, scale = .7]{$\mathsf{H}$} -- (21, 5) node[right, scale = .7]{$\mathsf{H} + 1$} -- (19, 3) node[below, scale = .7]{$\mathsf{H} + 1$} -- (17, 3) node[left, scale = .7]{$\mathsf{H}$}-- (19, 5);
				\draw[-, dashed, black] (19, 3) -- (19, 5);
				
				\draw[-, black] (25, 3) node[below, scale = .7]{$\mathsf{H}$} -- (25, 5) node[above, scale = .7]{$\mathsf{H}$} -- (27, 5) node[above, scale = .7]{$\mathsf{H} + 1$} -- (27, 3) node[below, scale = .7]{$\mathsf{H} + 1$} -- (25, 3);
				\draw[-, dashed, black] (25, 3) -- (27, 5);
				
				\draw[-, black] (-9-2, 1.5) node[below, scale = .7]{$\mathsf{H}$} -- (-7-2, 1.5) node[below, scale = .7]{$\mathsf{H} + 1$} -- (-5-2, 3.5) node[right, scale = .7]{$\mathsf{H} + 1$} -- (-5-2, 5.5) node[above, scale = .7]{$\mathsf{H} + 1$} -- (-7-2, 5.5) node[above, scale = .7]{$\mathsf{H}$} -- (-9-2, 3.5) node[left, scale = .7]{$\mathsf{H}$} -- (-9-2, 1.5);
				\draw[-, black] (-9-2, 3.5) -- (-7-2, 3.5)node[scale = .7]{$\mathsf{H}+1$} -- (-7-2, 1.5);
				\draw[-, black] (-7-2, 3.5) -- (-5-2, 5.5);
				
				\draw[-, black] (0, 0) -- (5, 0);
				\draw[-, black] (0, 0) -- (0, 4);
				\draw[-, black] (5, 0) -- (8, 3);
				\draw[-, black] (0, 4) -- (3, 7);
				\draw[-, black] (8, 3) -- (8, 7); 
				\draw[-, black] (8, 7) -- (3, 7);	
				
				\draw[-, black] (1, 0) -- (1, 3) -- (0, 3) -- (1, 4) -- (1, 5) -- (2, 5) -- (4, 7);
				\draw[-, black] (0, 2) -- (2, 2) -- (2, 0);
				\draw[-, black] (0, 1) -- (3, 1) -- (3, 0) -- (4, 1) -- (6, 1);
				\draw[-, black] (2, 6) -- (5, 6) -- (6, 7) -- (6, 6) -- (8, 6);
				\draw[-, black] (4, 6) -- (5, 7);
				\draw[-, black] (1, 4) -- (2, 4) -- (2, 5); 
				\draw[-, black] (2, 4) -- (3, 5) -- (3, 6);
				\draw[-, black] (1, 3) -- (2, 4);
				\draw[-, black] (2, 2) -- (3, 3) -- (3, 4) -- (2, 4) -- (2, 3) -- (1, 2);
				\draw[-, black] (2, 3) -- (3, 3) -- (3, 2) -- (4, 3) -- (4, 6) -- (5, 6) -- (5, 3) -- (4, 3);
				\draw[-, black] (3, 5) -- (5, 5) -- (6, 6);
				\draw[-, black] (7, 7) -- (7, 4) -- (8, 4) -- (7, 3) -- (7, 2) -- (6, 2) -- (4, 0);
				\draw[-, black] (8, 5) -- (7, 5) -- (6, 4) -- (6, 5) -- (7, 6);
				\draw[-, black] (3, 3) -- (4, 4) -- (6, 4);
				\draw[-, black] (3, 4) -- (4, 5);
				\draw[-, black] (6, 5) -- (5, 5);
				\draw[-, black] (7, 4) -- (6, 3) -- (7, 3);
				\draw[-, black] (2, 1) -- (3, 2) -- (5, 2) -- (6, 3);
				\draw[-, black] (3, 1) -- (5, 3) -- (6, 3) -- (6, 4); 
				\draw[-, black] (4, 1) -- (4, 2); 
				\draw[-, black] (5, 1) -- (5, 2); 
				\draw[-, black] (6, 2) -- (6, 3); 
				
				\draw[] (0, 0) circle [radius = 0] node[below, scale = .5]{$0$};
				\draw[] (1, 0) circle [radius = 0] node[below, scale = .5]{$1$};
				\draw[] (2, 0) circle [radius = 0] node[below, scale = .5]{$2$};
				\draw[] (3, 0) circle [radius = 0] node[below, scale = .5]{$3$};
				\draw[] (4, 0) circle [radius = 0] node[below, scale = .5]{$4$};
				\draw[] (5, 0) circle [radius = 0] node[below, scale = .5]{$5$};
				
				\draw[] (0, 1) circle [radius = 0] node[left, scale = .5]{$0$};
				\draw[] (1, 1) circle [radius = 0] node[above = 3, left = 0, scale = .5]{$1$};
				\draw[] (2, 1) circle [radius = 0] node[above = 3, left = 0, scale = .5]{$2$};
				\draw[] (3, 1) circle [radius = 0] node[left = 1, above = 0, scale = .5]{$3$};
				\draw[] (4, 1) circle [radius = 0] node[right = 1, below = 0, scale = .5]{$3$};
				\draw[] (5, 1) circle [radius = 0] node[right = 1, below = 0, scale = .5]{$4$};
				\draw[] (6, 1) circle [radius = 0] node[below, scale = .5]{$5$};
				
				\draw[] (0, 2) circle [radius = 0] node[left, scale = .5]{$0$};
				\draw[] (1, 2) circle [radius = 0] node[above = 3, left = 0, scale = .5]{$1$};
				\draw[] (2, 2) circle [radius = 0] node[left = 1, above = 0, scale = .5]{$2$};
				\draw[] (3, 2) circle [radius = 0] node[left = 3, above = 0, scale = .5]{$2$};
				\draw[] (4, 2) circle [radius = 0] node[below = 3, right = 0, scale = .5]{$3$};
				\draw[] (5, 2) circle [radius = 0] node[below = 1, right = 0, scale = .5]{$4$};
				\draw[] (6, 2) circle [radius = 0] node[right = 1, below = 0, scale = .5]{$4$};
				\draw[] (7, 2) circle [radius = 0] node[below, scale = .5]{$5$};
				
				\draw[] (0, 3) circle [radius = 0] node[left, scale = .5]{$0$};
				\draw[] (1, 3) circle [radius = 0] node[left = 1, above = 0, scale = .5]{$1$};
				\draw[] (2, 3) circle [radius = 0] node[above = 1, left = 0, scale = .5]{$1$};
				\draw[] (3, 3) circle [radius = 0] node[left = 3, above = 0, scale = .5]{$2$};
				\draw[] (4, 3) circle [radius = 0] node[right = 3, above = 0, scale = .5]{$2$};
				\draw[] (5, 3) circle [radius = 0] node[right = 3, above = 0, scale = .5]{$3$};
				\draw[] (6, 3) circle [radius = 0] node[below = 3, right = 0, scale = .5]{$4$};
				\draw[] (7, 3) circle [radius = 0] node[below = 1, right = 0, scale = .5]{$5$};
				\draw[] (8, 3) circle [radius = 0] node[right, scale = .5]{$5$};
				
				\draw[] (0, 4) circle [radius = 0] node[left = 1, scale = .5]{$0$};
				\draw[] (1, 4) circle [radius = 0] node[above = 1, left = 0, scale = .5]{$0$};
				\draw[] (2, 4) circle [radius = 0] node[left = 3, above = 0, scale = .5]{$1$};
				\draw[] (3, 4) circle [radius = 0] node[above, scale = .5]{$2$};
				\draw[] (4, 4) circle [radius = 0] node[above = 1, left = 0, scale = .5]{$2$};
				\draw[] (5, 4) circle [radius = 0] node[left = 3, above = 0, scale = .5]{$3$};
				\draw[] (6, 4) circle [radius = 0] node[left = 3, above = 0, scale = .5]{$4$};
				\draw[] (7, 4) circle [radius = 0] node[right = 3, above = 0, scale = .5]{$4$};
				\draw[] (8, 4) circle [radius = 0] node[right, scale = .5]{$5$};
				
				\draw[] (1, 5) circle [radius = 0] node[above, scale = .5]{$0$};
				\draw[] (2, 5) circle [radius = 0] node[left = 1, above = 0, scale = .5]{$1$};
				\draw[] (3, 5) circle [radius = 0] node[above = 2, left = 0, scale = .5]{$1$};
				\draw[] (4, 5) circle [radius = 0] node[left = 3, above = 0, scale = .5]{$2$};
				\draw[] (5, 5) circle [radius = 0] node[left = 3, above = 0, scale = .5]{$3$};
				\draw[] (6, 5) circle [radius = 0] node[left = 1, above = 0, scale = .5]{$4$};
				\draw[] (7, 5) circle [radius = 0] node[right = 3, above = 0, scale = .5]{$4$};
				\draw[] (8, 5) circle [radius = 0] node[right, scale = .5]{$5$};			
				
				\draw[] (2, 6) circle [radius = 0] node[above, scale = .5]{$0$};
				\draw[] (3, 6) circle [radius = 0] node[above, scale = .5]{$1$};
				\draw[] (4, 6) circle [radius = 0] node[left = 2, above = 0, scale = .5]{$2$};
				\draw[] (5, 6) circle [radius = 0] node[left = 2, above = 0, scale = .5]{$3$};
				\draw[] (6, 6) circle [radius = 0] node[right = 3, above = 0, scale = .5]{$3$};
				\draw[] (7, 6) circle [radius = 0] node[right = 3, above = 0, scale = .5]{$4$};
				\draw[] (8, 6) circle [radius = 0] node[right, scale = .5]{$5$};
				
				\draw[] (3, 7) circle [radius = 0] node[above, scale = .5]{$0$};
				\draw[] (4, 7) circle [radius = 0] node[above, scale = .5]{$1$};
				\draw[] (5, 7) circle [radius = 0] node[above, scale = .5]{$2$};
				\draw[] (6, 7) circle [radius = 0] node[above, scale = .5]{$3$};
				\draw[] (7, 7) circle [radius = 0] node[above, scale = .5]{$4$};
				\draw[] (8, 7) circle [radius = 0] node[above, scale = .5]{$5$};
				
			\end{tikzpicture}
			
		\end{center}
		
		\caption{\label{tilinghexagon} Depicted to the right are the three types of lozenges. Depicted in the middle is a lozenge tiling of a hexagon. One may view this tiling as a packing of boxes (of the type depicted on the left) into a large corner, which gives rise to a height function (shown in the middle). }
	\end{figure}
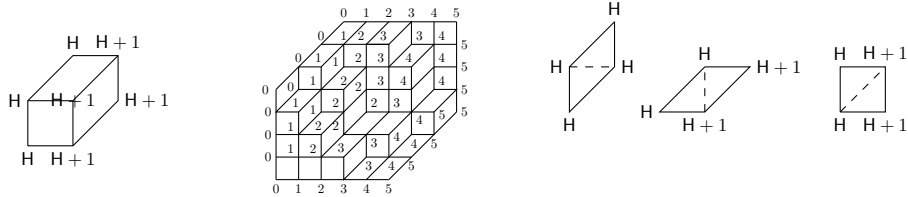 
 When viewing $\mathsf{R}$ as a vertex set, the \emph{boundary} $\partial \mathsf{R} \subseteq \mathsf{R}$ is the set of vertices $\mathsf{v} \in \mathsf{R}$ adjacent to a vertex in $\mathbb{T} \setminus \mathsf{R}$; when viewing $\mathsf{R}$ as a union of triangular faces, $\partial \mathsf{R}$ is the union of its boundary edges.

 A \emph{dimer covering} of a domain $\mathsf{R} \subseteq \mathbb{T}$ is defined to be a perfect matching on the dual graph of $\mathsf{R}$ (which has a vertex for each triangular face of $\mathsf{R}$, and an edge for each pair of adjacent triangular faces). A pair of adjacent triangular faces in any such matching forms a parallelogram, which we will also refer to as a \emph{lozenge} or \emph{tile}. Lozenges can be oriented in one of three ways; see the right side of \Cref{tilinghexagon} for all three orientations. 
 The vertices are in the form of
 \begin{itemize}
     \item $\big\{ (\sfx, \sft), (\sfx, \sft + 1), (\sfx + 1, \sft + 2), (\sfx + 1, \sft + 1) \big\}$, the left lozenge in the right side of \Cref{tilinghexagon}, or
     \item $\big\{ (\sfx, \sft), (\sfx + 1, \sft), (\sfx + 2, \sft + 1), (\sfx + 1, \sft + 1) \big\}$, the middle lozenge in the right side of \Cref{tilinghexagon}, or
     \item $\big\{ (\sfx, \sft), (\sfx + 1, \sft), (\sfx + 1, \sft + 1), (\sfx , \sft + 1) \big\}$, the right lozenge in the right side of \Cref{tilinghexagon}.
 \end{itemize}
 These lozenges are referred to as  \emph{type $1$},  \emph{type $2$}, and  \emph{type $3$} lozenges, respectively.
 A dimer covering of $\mathsf{R}$ can equivalently be interpreted as a tiling of $\mathsf{R}$ by lozenges of types $1$, $2$, and $3$. Therefore, we will also refer to a dimer covering of $\mathsf{R}$ as a \emph{(lozenge) tiling}. We call $\mathsf{R}$ \emph{tileable} if it admits a tiling.
 
 The main object we investigate in this paper is \emph{uniformly random tiling}, where we consider the probability measure on the (finite) space of all tilings of a tileable domain where each tiling has the same probability.

\subsubsection{Height function and its restriction at the boundary}  \label{ssec:hf}
For a chosen vertex $\mathsf{v}$ of $\mathsf{R}$ and an integer $h_0 \in \mathbb{Z}$, one can associate with any tiling of $\mathsf{R}$ a \emph{height function} $\mathsf{H}: \mathsf{R} \rightarrow \R$ as follows. First, set $\mathsf{H} (\mathsf{v}) = h_0$, and then define $\mathsf{H}$ at the remaining vertices of $\mathsf{R}$ in such a way that the height functions along the four vertices of any lozenge in the tiling are of the form depicted on the right side of \Cref{tilinghexagon}. In particular, we require that $\mathsf{H} (\sfx + 1, \sft) = \mathsf{H} (\sfx, \sft)$ if and only if $(\sfx, \sft)$ and $(\sfx + 1, \sft)$ are vertices of the same type $1$ lozenge, and that $\mathsf{H} (\sfx, \sft) - \mathsf{H} (\sfx, \sft + 1) = 1$ if and only if $(\sfx, \sft)$ and $(\sfx, \sft + 1)$ are vertices of the same type $2$ lozenge. Since $\mathsf{R}$ is simply connected, the height function $\mathsf{H}$ on the vertex set $\mathsf{R}$ is uniquely determined by these conditions (up to adding a global constant which is necessarily an integer).
This height function $\mathsf{H}$ can be extended by linearity to the faces of $\mathsf{R}$, so that it may also be viewed as a piecewise linear function on $\mathsf{R} \subseteq \mathbb{R}^2$.

For any height function $\mathsf{H}$, we refer to the restriction $\mathsf{h} = \mathsf{H}|_{\partial \mathsf{R}}$ as the \emph{boundary height function}, which is a piecewise linear function on the boundary edges.
We note that for any tileable domain $\sfR$, the boundary height function, up to a global shift, is independent of the choice of the tiling (thereby uniquely determined by $\sfR$).
Indeed, along any boundary edge with slope $1$ or $\infty$, any boundary height function $\sfh$ must be constant. Along any boundary edge with slope $0$, $\sfh$ must grow linearly with rate $1$, i.e., for any $(\sfx, \sft), (\sfx+1, \sft) \in \partial\sfR\cap\T$, there is $\sfH(\sfx, \sft+1)=\sfH(\sfx, \sft)+1$.
Since $\sfR$ is simply connected, $\partial \sfR$ is a closed curve, and the above rules determine $\sfh$ once its value at one point in $\partial \sfR$ is given.

We refer to the middle of \Cref{tilinghexagon} for an example; as depicted there, we can also view a (lozenge) tiling of $\mathsf{R}$ (which is a hexagon) as a packing of $\mathsf{R}$ by boxes of the type shown on the left side of \Cref{tilinghexagon}. In this case, the value $\mathsf{H} (\mathsf{u})$ of the height function associated with this tiling at some vertex $\mathsf{u} \in \mathsf{R}$ denotes the height of the stack of boxes at $\mathsf{u}$.

A tiling can also be interpreted as a family of non-intersecting Bernoulli paths.
  
 \subsubsection{Non-intersecting Bernoulli paths} \label{ssec:nbp}
 	A  \emph{Bernoulli path} is a function $\sfb:\llbracket \sfr, \sfs\rrbracket\to \Z$ for some $\sfr, \sfs\in \Z$,
 such that $\sfb (\oot + 1) - \sfb (\oot) \in \{ 0, 1 \}$ for each $\oot \in \qq{\sfr, \sfs - 1}$. It denotes the space-time trajectory of a walk, which takes either a `non-jump' ($\sfb (\oot + 1) = \sfb (\oot)$) or a `right-jump' ($\sfb (\oot + 1)= \sfb (\oot)+1$) at each step. We call the interval $\qq{\sfr, \sfs}$ the \emph{time span} of the Bernoulli path $\sfb$. 
As an extension of the notion of Bernoulli paths,
for any $I\subset \llbracket \sfr, \sfs\rrbracket$, we also call $\sfb$ restricted to $I$ a Bernoulli path (whose time span $I$ is possibly a union of several discrete intervals).

Take any $M, N\in \Z$, $M\le N$, and $I_i\subset \Z$ for each $i\in\qq{M,N}$.
A family of (consecutive) Bernoulli paths $\{\sfb_i\}_{i\in\qq{M, N}}$, with each $\sfb_i$ having time span $I_i$, is called \emph{non-intersecting}, if for any $M \le i < j \le N$ and any $\oot\in I_i\cap I_j$, there is always $\sfb_i (\oot) - i\le \sfb_j (\oot)-j$.
Another notation that we will also use to denote such non-intersecting Bernoulli paths is in the form of a function $\sfB$, from $\Z$ to the set
\[
\{\{\sfx_i\}_{i\in \Phi}\in\Z^\Phi: \subset \Z,\; \sfx_i-i\le \sfx_j-j, \forall i<j\in \Phi\},
\]
with $\sfB(\oot)=\{\sfb_i(\oot)\}_{i\in \qq{M,N}, I_i\ni \oot}$ for each $\oot\in\Z$, and $\Phi \subset \Z$ is any index set.

\begin{figure}	[!ht]
		\begin{center}		
			\begin{tikzpicture}[
				>=stealth,
				auto,
				style={
					scale = .4
				}
				]
				
				\draw[-, black]  (1.5, 2) -- (1.5, 3) -- (2.5, 4) ; 
				\draw[-, black] (2.5, 2) -- (3.5, 3) -- (3.5, 4)--(4.5,5); 
				\draw[-, black] (4.5, 2) -- (4.5, 3) -- (5.5, 4)--(5.5,5)--(5.5,6);
				\draw[-, black] (6.5, 2) -- (6.5, 3) -- (7.5, 4)--(7.5,5) ; 
				\draw[-, black]  (7.5, 2) -- (8.5, 3) -- (8.5, 4);
				\draw[-, black]  (9.5, 2) -- (9.5, 3) -- (10.5, 4) --(10.5,5);
				
				\filldraw[fill=black] (1.5, 2) circle [radius = .1] node[below , scale = .6]{$\mathsf{q}_{1} (0)$};	
				\filldraw[fill=black] (2.5, 2) circle [radius = .1] node[ below, scale = .6]{$\mathsf{q}_{2} (0)$};	
				\filldraw[fill=black] (4.5, 2) circle [radius = .1] node[below, scale = .6]{$\mathsf{q}_3 (0)$};	
				\filldraw[fill=black] (6.5, 2) circle [radius = .1] node[below, scale = .6]{$\mathsf{q}_4 (0)$};	
				\filldraw[fill=black] (7.5, 2) circle [radius = .1] node[below, scale = .6]{$\mathsf{q}_5 (0)$};	
				\filldraw[fill=black] (9.5, 2) circle [radius = .1] node[below, scale = .6]{$\mathsf{q}_6 (0)$};

				\filldraw[fill=black] (1.5, 2) circle [radius = .1];
				\filldraw[fill=black] (2.5, 2) circle [radius = .1];
				\filldraw[fill=black] (4.5, 2) circle [radius = .1];
				\filldraw[fill=black] (6.5, 2) circle [radius = .1];
				\filldraw[fill=black] (7.5, 2) circle [radius = .1];
				\filldraw[fill=black] (9.5, 2) circle [radius = .1];
				
				\filldraw[fill=black] (1.5, 3) circle [radius = .1];
				\filldraw[fill=black] (3.5, 3) circle [radius = .1];
				\filldraw[fill=black] (4.5, 3) circle [radius = .1];
				\filldraw[fill=black] (6.5, 3) circle [radius = .1];
				\filldraw[fill=black] (8.5, 3) circle [radius = .1];
				\filldraw[fill=black] (9.5, 3) circle [radius = .1];
				
				\filldraw[fill=black] (2.5, 4) circle [radius = .1]node[above , scale = .6]{$\mathsf{q}_{1} (2)$};
                    \filldraw[fill=black] (3.5, 4)circle [radius = .1];
				\filldraw[fill=black] (4.5, 5) circle [radius = .1]node[above , scale = .6]{$\mathsf{q}_{2} (3)$};
    \filldraw[fill=black] (5.5, 5) circle [radius = .1];
    \filldraw[fill=black] (5.5, 6) circle [radius = .1]node[above , scale = .6]{$\mathsf{q}_{3} (4)$};
				\filldraw[fill=black] (5.5, 4) circle [radius = .1];
				\filldraw[fill=black] (7.5, 4) circle [radius = .1];
                    \filldraw[fill=black] (7.5, 5) circle [radius = .1]node[above , scale = .6]{$\mathsf{q}_{4} (3)$};
				\filldraw[fill=black] (8.5, 4) circle [radius = .1]node[above , scale = .6]{$\mathsf{q}_{5} (2)$};
				\filldraw[fill=black] (10.5, 4) circle [radius = .1];
                    \filldraw[fill=black] (10.5, 5) circle [radius = .1]node[above , scale = .6]{$\mathsf{q}_{6} (3)$};

				\draw[-, black] (15, 2) -- (15, 3) -- (16, 4) -- (17, 4) -- (18, 4) -- (19, 5) -- (19, 4) -- (20, 4)  -- (21, 5) -- (21, 4) -- (23, 4)  -- (24, 5)--(24,4) -- (25, 4) -- (24, 3) -- (24, 2)  -- (23, 2) --(23,3)--(22,2)-- (20, 2) --(20,3)-- (19, 2) -- (18, 2) -- (18, 3) -- (17, 2) -- (15, 2);

				\draw[-, black, dashed] (15.5, 2)node[below, scale = .6]{$\mathsf{q}_{1}$}  -- (15.5, 3) -- (16.5, 4) ; 
				\draw[-, black, dashed] (16.5, 2) node[below , scale = .6]{$\mathsf{q}_{2}$} -- (17.5, 3) -- (17.5, 4)--(18.5,5); 
				\draw[-, black, dashed] (18.5, 2) node[ below , scale = .6]{$\mathsf{q}_3$} -- (18.5, 3) -- (19.5, 4) --(19.5,5)--(19.5,6);
				\draw[-, black, dashed] (20.5, 2) node[below, scale = .6]{$\mathsf{q}_4$}-- (20.5, 3) -- (21.5, 4) -- (21.5,5); 
				\draw[-, black, dashed]  (21.5, 2)node[below, scale = .6]{$\mathsf{q}_5$} -- (22.5, 3) -- (22.5, 4);
				\draw[-, black, dashed]  (23.5, 2)node[below, scale = .6]{$\mathsf{q}_6$} -- (23.5, 3) -- (24.5, 4)-- (24.5,5);
				\draw[-, black] (18, 3) -- (18, 4) ;
				\draw[-, black] (16, 2) -- (17, 3) -- (17, 4) --(18,5)--(19,5); 
				\draw[-, black] (18, 2) -- (18, 3) -- (19, 4);
				\draw[-, black]  (20, 2) -- (20, 3) -- (21, 4) ; 
                    \draw[-, black] (20,4)--(20,5)--(20,6)--(19,6)--(19,5)--(20,5);
				\draw[-, black] (21, 2) -- (22, 3) -- (22, 4) --(22,5)--(21,5) ;
				\draw[-, black ] (23, 2) -- (23, 3) -- (24, 4);

				\draw[-, black] (15, 3) -- (16, 3); 
				\draw[-, black] (16, 4) -- (17, 4); 
				\draw[-, black] (16, 3) -- (17, 4);
				
				\draw[-, black] (17, 2) -- (18, 3) -- (18, 2);
				
				\draw[-, black] (20, 4) -- (19, 4);
				\draw[-, black] (20, 4) -- (19, 3) -- (19, 2);
				\draw[-, black] (18, 2) -- (19, 2);
				\draw[-, black] (18, 3) -- (19, 3);
				\draw[-, black] (19, 2) -- (20, 3) -- (20, 4);
				\draw[-, black] (21, 2) -- (21, 3) -- (22, 4) ;
				\draw[-, black]  (22, 2) -- (23, 3) -- (23, 4) ;

				\draw[-, black] (20, 2) -- (22, 2);
				\draw[-, black] (20, 3) -- (21, 3);
				\draw[-, black] (21, 4) -- (23, 4);
				\draw[-, black] (22, 3) -- (23, 3);

				\draw[-, black] (17, 3) -- (18, 3);
				\draw[-, black] (24, 3) -- (25, 4)--(25,5)--(24,5);
				\draw[-, black] (23, 2) -- (24, 2); 
				\draw[-, black] (23, 3) -- (24, 3); 
				\draw[-, black] (24, 4) -- (25, 4);
				\draw[-, black] (16, 2) -- (16, 3);		
			\end{tikzpicture}	
		\end{center}
		\caption{\label{walksfigure} Depicted to the left is an ensemble consisting of six non-intersecting Bernoulli paths. Depicted to the right is an associated lozenge tiling. }		
	\end{figure}
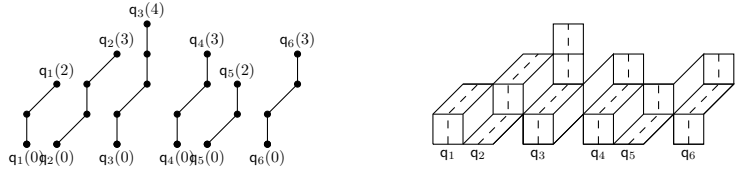 	
	For any domain $\mathsf{R}$ and any tiling $\mathscr{M}$ of $\mathsf{R}$, we may interpret $\mathscr{M}$ as a family of non-intersecting Bernoulli paths by (roughly speaking) first omitting all type $1$ lozenges from $\mathscr{M}$, and then viewing any type $2$ or type $3$ tile as a right-jump or non-jump of a Bernoulli path, respectively; see \Cref{walksfigure} for a depiction. 
 More formally, the non-intersecting Bernoulli paths are defined by taking any height function $\mathsf{H}: \mathsf{R} \rightarrow \mathbb{Z}$ associated with the tiling $\mathscr{M}$, and letting $\sfb_i(\oot)$ be the number satisfying
 \begin{equation}  \label{eq:defbH}
 \mathsf{H}(\sfb_i(\oot), \oot)=i,\quad  \mathsf{H}(\sfb_i(\oot)+1, \oot)=i+1,    
 \end{equation}
if such a number exists (note that the number is also unique since $\mathsf{H}(\cdot, \oot)$ is non-decreasing).
We remark that the non-intersecting Bernoulli paths are uniquely determined by the tiling $\mathscr{M}$, modulus a global shift of the indices of individual paths.

	\subsection{Limit shapes}

	\label{HeightLimit} 
	
	To analyze the limits of height functions of random tilings, it will be useful to introduce continuum analogs of several notions considered in \Cref{FunctionWalks}. 
 We set 
	\begin{flalign} \label{t}
		\mathcal{T} = \big\{ (s, t) \in (0,1) \times (-1,0): s+t>0 \big\} \subset \mathbb{R}^2, 
	\end{flalign} 

	\noindent and its closure $\overline{\mathcal{T}} = \big\{ (s, t) \in [0,1] \times [-1,0]: s+t\ge 0 \big\}$. We interpret $\overline{\mathcal{T}}$ as the set of possible gradients, also called \emph{slopes}, for a continuum height function; $\mathcal{T}$ is then the set of `non-frozen' or `liquid' slopes, whose associated tilings contain tiles of all types. For any simply-connected open set $\mathfrak{R} \subset \mathbb{R}^2$, we say that a function $H : \mathfrak{R} \rightarrow \mathbb{R}$ is \emph{admissible} if $H$ is $1$-Lipschitz and $\nabla H(v) \in \overline{\mathcal{T}}$ for almost all $v \in \mathfrak{R}$. 
 For any function $h: \partial \mathfrak{R} \rightarrow \mathbb{R}$, we define $\Adm (\mathfrak{R}; h)$ to be the set of admissible functions $H: \mathfrak{R} \rightarrow \mathbb{R}$ with $H |_{\partial \mathfrak{R}} = h$; and we say that $h: \partial \mathfrak{R}$ \emph{admits an admissible extension to $\mathfrak{R}$} if $\Adm (\mathfrak{R}; h)$ is not empty.
	
	We say a sequence of domains $\mathsf{R}_1, \mathsf{R}_2, \ldots \subset \mathbb{T}$ \emph{converges} to a simply-connected set $\mathfrak{R} \subset \mathbb{R}^2$ if $n^{-1} \mathsf{R}_n \subseteq \mathfrak{R}$ for each $n \ge 1$ and  $\lim_{n \rightarrow \infty} \dist (n^{-1} \del\mathsf{R}_n, \del\mathfrak{R}) = 0$. We further say a sequence $\mathsf{h}_1, \mathsf{h}_2, \ldots $ of boundary height functions on $\mathsf{R}_1, \mathsf{R}_2, \ldots $ \emph{converges} to a boundary height function $h : \partial \mathfrak{R} \rightarrow \mathbb{R}$ if $\lim_{n \rightarrow \infty} n^{-1} \mathsf{h}_n (n v_n) = h (v)$ for any sequence of points $v_n\to v$ with $v_n\in n^{-1} \del\mathsf{R}_n$ and $v\in \mathfrak \del R$.  
	
	To state results on the limiting height function of random tilings, for any $x \in \mathbb{R}_{\ge 0}$ and $(s, t) \in \overline{\mathcal{T}}$ we denote the \emph{Lobachevsky function} $L: \mathbb{R}_{\ge 0} \rightarrow \mathbb{R}$ and the \emph{surface tension} $\sigma : \overline{\mathcal{T}} \rightarrow \mathbb{R}$ by 
	\begin{flalign}
		\label{sigmal} 
		L(x) = - \displaystyle\int_0^x \log |2 \sin z| \mathrm{d} z; \qquad \sigma (s, t) = \displaystyle\frac{1}{\pi} \Big( L(\pi (1-s)) + L (- \pi t) + L \big( \pi ( s + t) \big) \Big).
	\end{flalign}
	\noindent For any admissible $H:\mathfrak{R}\to \R$, we further define the \emph{entropy functional}
	\begin{flalign}
		\label{efunctionh} 
		\mathcal{E} (H) = \displaystyle\int_{\mathfrak{R}} \sigma \big( \nabla H (v) \big) \mathrm{d}v.
	\end{flalign}
	
	The following variational principle of \cite{VPT}  states that the height function associated with a uniformly random tiling of a sequence of domains converging to $\mathfrak{R}$ converges to the maximizer of $\mathcal{E}$ with high probability.

	\begin{lem}[{\cite[Theorem 1.1]{VPT}}]
		
		\label{hzh} 
		
		Let $\mathsf{R}_1, \mathsf{R}_2, \ldots \subset \mathbb{T}$ denote a sequence of tileable domains, with associated boundary height functions $\mathsf{h}_1, \mathsf{h}_2, \ldots $, respectively. Assume that they converge to a simply-connected set $\mathfrak{R} \subset \mathbb{R}^2$ with piecewise smooth boundary, and a boundary height function $h : \partial \mathfrak{R} \rightarrow \mathbb{R}$, respectively. Denoting the height function associated with a uniformly random tiling of $\mathsf{R}_n$ with boundary height function $\mathsf h_n$ by $\mathsf{H}_n$, we have for any constant $\e>0$,
		\begin{flalign*}
			\displaystyle\lim_{n \rightarrow \infty} \mathbb{P} \bigg( \displaystyle\max_{\mathsf{v} \in \mathsf{R}_n} \big| n^{-1} \mathsf{H}_n (\mathsf{v}) - H^* (n^{-1} \mathsf{v}) \big| > \varepsilon \bigg) = 0,
		\end{flalign*}  
	
		\noindent where $H^*$ is the unique maximzer of $\mathcal{E}$ on $\mathfrak{R}$ with boundary data $h$,
		\begin{flalign}
			\label{hmaximum}
			H^* = \displaystyle\argmax_{H \in \Adm (\mathfrak{R}; h)} \mathcal{E} (H).
		\end{flalign}
	\end{lem}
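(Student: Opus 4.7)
The plan is to follow the classical strategy of Cohn--Kenyon--Propp, which proceeds via a large deviation principle (LDP) for the normalized height function and uses the strict concavity of $\sigma$ on $\cT$ to conclude uniqueness of the maximizer and hence concentration. The Lipschitz property of any tiling height function (inherited from the three allowed slope vectors of the tiles) forces $n^{-1}\sfH_n$ to be $1$-Lipschitz, so by Arzel\`a--Ascoli the sequence $\{n^{-1}\sfH_n(n \cdot)\}$ is precompact in the sup-norm topology on $\overline{\mathfrak R}$. Moreover, the boundary convergence $n^{-1}\sfh_n(n\cdot)\to h$ pins down the boundary values of every subsequential limit, so each such limit lies in $\Adm(\mathfrak R;h)$. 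It therefore suffices to prove that, w.o.p., $n^{-1}\sfH_n(n\cdot)$ stays close to $H^*$, the unique maximizer of $\cE$.

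The key analytic input is the \emph{local surface tension estimate}: the number of lozenge tilings of an $\e n \times \e n$ parallelogram with prescribed boundary height function of slope $(s,t)\in \cT$ grows, to leading exponential order, as $\exp(n^2 \e^2 \sigma(s,t))$. One derives this by exact computation on a torus (or via the Kasteleyn determinant for small rectangles) together with ergodicity of the Gibbs measure of slope $(s,t)$. Strict concavity of $\sigma$ on $\overline{\cT}$ follows from an explicit computation (its Hessian is strictly negative definite on $\cT$); this is what guarantees that the maximizer $H^*$ of $\cE$ on $\Adm(\mathfrak R;h)$ is unique.

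With the local count in hand, I would tile $\mathfrak R$ by a fine grid of small parallelograms of mesh $\e$ and, for any Lipschitz admissible target $H$, approximate it by a piecewise affine function $H_\e$ that is affine on each grid cell. Summing the local surface tension exponents over cells, the probability that $n^{-1}\sfH_n$ lies in a sup-norm neighborhood of $H$ is
\[
 \exp\!\Bigl( n^2 \bigl(\cE(H)-\cE(H^*)\bigr) + o(n^2) \Bigr),
\]
as $\e\to 0$ after $n\to \infty$. This yields both the upper and lower LDP bounds with rate function $\cE(H^*)-\cE(H)$. Combined with uniqueness of the maximizer $H^*$, the conclusion follows: any open neighborhood of the non-maximizer set has exponentially small probability, which is stronger than the stated $\lim_{n\to\infty}\mathbb P(\cdots>\e)=0$.

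The main obstacle is the matching of the exponential rates (i.e.\ the sharp local surface tension asymptotics) and the patching argument, because the local counts are only valid for boundary data that are themselves close to affine of a given slope, while the realized boundary of a grid cell is a \emph{random} (and a priori rough) height function determined by the global sample. One handles this by a two-scale argument: first regularize the height function at an intermediate scale $\e' \ll \e$ to show that, with high probability, the realized boundary on each cell is close to its affine approximation, then apply the local count conditionally on this boundary. This regularization, together with the continuity of $\sigma$ up to $\partial \cT$ (where one must be careful that tilings with frozen slopes contribute negligibly to $\cE$), is the technical heart of the proof in \cite{VPT}.
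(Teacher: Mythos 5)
This lemma is not proved in the paper at all: it is quoted verbatim from the literature (Cohn--Kenyon--Propp, cited here as \cite{VPT}, with uniqueness of the maximizer attributed to Proposition 4.5 of \cite{MCFARS}), so the only meaningful comparison is with the proof in that reference. Your outline is essentially the argument of \cite{VPT}: Lipschitz precompactness plus boundary convergence to reduce to a concentration statement in $\Adm(\mathfrak R;h)$, local surface-tension asymptotics on mesoscopic blocks, a piecewise-affine approximation and patching argument to get upper and lower large-deviation bounds with rate $\cE(H^*)-\cE(H)$, and uniqueness of the maximizer to conclude. One caveat you should fix: $\sigma$ is \emph{not} strictly concave on all of $\overline{\cT}$ — its Hessian is strictly negative definite only on the open set $\cT$, and $\sigma$ vanishes (hence is affine) along the frozen boundary of $\overline{\cT}$ — so uniqueness of $H^*$ does not follow from strict concavity alone; one needs the additional argument handling regions where $\nabla H^*$ hits $\partial\cT$ (this is exactly why the paper points to \cite[Proposition 4.5]{MCFARS}). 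With that repair, and with the two-scale regularization you flag carried out as in \cite{VPT}, your sketch matches the standard proof; it contains no new route relative to the cited source.
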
 

	\noindent The fact that there is a unique maximizer described as in \eqref{hmaximum} follows from Proposition 4.5 of \cite{MCFARS}. 
 The region where $\nabla H^*\in \cT$ is called the \emph{liquid region} $\mathfrak{L} = \mathfrak{L} (\mathfrak{\fR}) \subset \mathfrak{R}$,
 	\begin{flalign}
		\label{al} 
		\mathfrak{L} = \big\{ v \in \mathfrak{R}: \nabla H^* (v) \in \mathcal{T} \big\},
	\end{flalign}
 where we expect to see all three types of lozenges. 

\subsection{Complex slope}\label{s:cslope}
	
 An important quantity that characterizes the limiting height function $H^*$ as in \eqref{hmaximum} is the \emph{complex slope} $f^*: \mathfrak{L} \rightarrow \mathbb{H}^-$. For  any $(x,t) \in \mathfrak{L}$, $f^*(x,t) \in \mathbb{H}^-$ is the unique complex number satisfying 
\begin{flalign}
		\label{fh}
		\arg^* f^*(x,t) = - \pi \partial_x H^* (x,t), \qquad \arg^* \big( f^*(x,t) + 1 \big) = \pi \partial_t H^* (x,t);
	\end{flalign}
	see \Cref{slope1} for a depiction. Hereafter, for any $z \in \R\cup\mathbb{H}^-\setminus \{0\}$, we set $\arg^* z = \theta \in [-\pi, 0]$ to be the unique number in $[-\pi, 0]$ satisfying $e^{-\mathrm{i} \theta} z \in \mathbb{R}_{> 0}$. Note that we interpret $1 - \partial_x H^* (x,t)$ and $-\partial_t H^* (x,t)$ as the approximate proportions of types $1$ tiles and type $2$ tiles around $(nx,nt) \in \mathsf{R}_n$, respectively (which follows from the definition of the limiting height function in \Cref{ssec:hf}).   Below we also denote $f^*_t (x) = f^* (x, t)$ for any $(x, t) \in \mathfrak{L}$.
 
	\begin{figure}[!ht]
	\begin{center}
	 \includegraphics[scale=0.14,trim={0cm 8cm 0 10cm},clip]{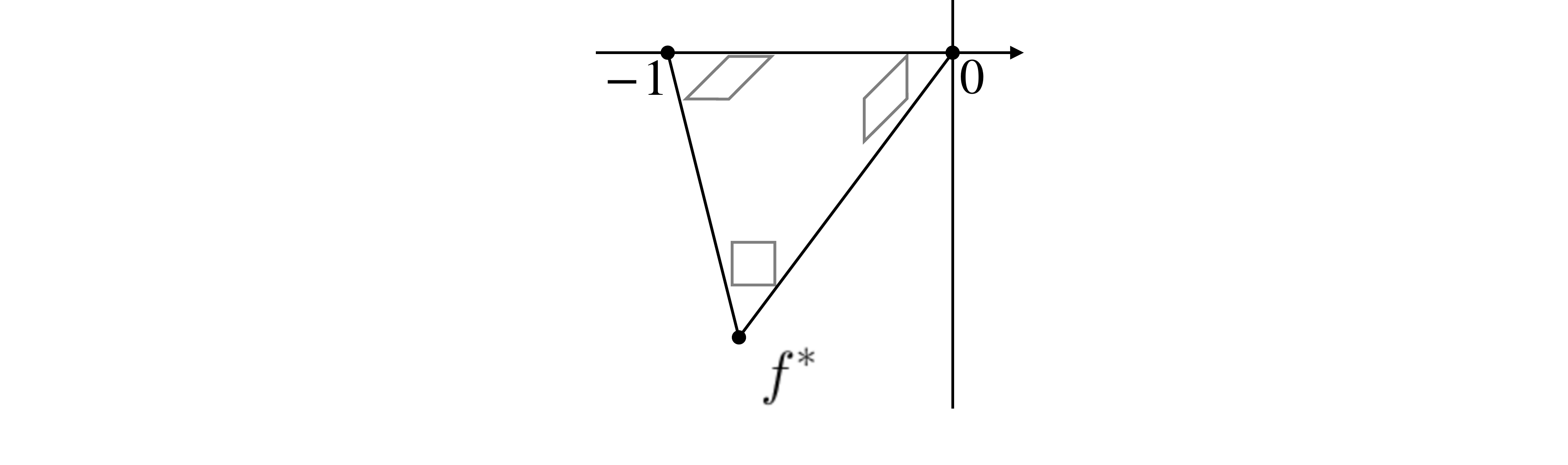}
	 \caption{Shown above the complex slope $f^* = f^* (x,t)$.}
	 \label{slope1}
	 \end{center}
	 \end{figure}	
		  The following result from \cite{LSCE} indicates that the complex slope $f^*$ satisfies the complex Burgers equation in the liquid region.
	  \begin{prop}[{\cite[Theorem 1]{LSCE}}]
	 	
	 	\label{fequation}
	 	
	 	For any $(x, t) \in \mathfrak{L}$, we have that
	 	\begin{flalign}
	 		\label{ftx}
	 		\partial_t f^*_t (x) + \partial_x f^*_t (x) \displaystyle\frac{f^*_t (x)}{f^*_t (x) + 1} = 0.
	 	\end{flalign} 
	 	 	
	 	\end{prop}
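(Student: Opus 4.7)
The plan is to derive \eqref{ftx} as the Euler--Lagrange equation for the variational problem \eqref{hmaximum}, combined with the equality of mixed partials of $H^*$, once the partial derivatives of $\sigma$ are identified with $\log|f^*+1|$ and $\log|f^*|$ via a classical law-of-sines computation.

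First, differentiating \eqref{sigmal} with $L'(x)=-\log|2\sin x|$ gives, for $(s,t)\in\cT$,
$$\partial_1\sigma(s,t)=\log\left|\frac{\sin(\pi s)}{\sin(\pi(s+t))}\right|,\qquad \partial_2\sigma(s,t)=\log\left|\frac{\sin(-\pi t)}{\sin(\pi(s+t))}\right|.$$
I then apply the law of sines to the Euclidean triangle with vertices $\{0,-1,f^*(x,t)\}\subset\bC$. Since $f^*(x,t)\in\bH^-$, the conditions \eqref{fh} force the interior angles of this triangle at $0$, $-1$, and $f^*$ to equal $\pi(1-\partial_x H^*)$, $-\pi\partial_t H^*$, and $\pi(\partial_x H^*+\partial_t H^*)$, respectively, while the opposite sides have lengths $|f^*+1|$, $|f^*|$, and $1$. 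The law of sines therefore yields
$$|f^*+1|=\frac{\sin(\pi\partial_x H^*)}{\sin(\pi(\partial_x H^*+\partial_t H^*))},\qquad |f^*|=\frac{\sin(-\pi\partial_t H^*)}{\sin(\pi(\partial_x H^*+\partial_t H^*))},$$
and comparing with the previous display gives the clean identification $\partial_1\sigma(\nabla H^*)=\log|f^*+1|$ and $\partial_2\sigma(\nabla H^*)=\log|f^*|$ on $\mathfrak L$.

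Since $\sigma$ is smooth and strictly convex on $\cT$, standard elliptic regularity (already invoked in \cite{LSCE}) gives that $H^*$ is smooth in the interior of $\mathfrak L$ and satisfies the pointwise Euler--Lagrange PDE $\partial_x\,\partial_1\sigma(\nabla H^*)+\partial_t\,\partial_2\sigma(\nabla H^*)=0$. Substituting the identification above and using $\partial\log|z|=\Re\,\partial\log z$ rewrites this as
$$\Re\left(\frac{\partial_x f^*_t(x)}{f^*_t(x)+1}+\frac{\partial_t f^*_t(x)}{f^*_t(x)}\right)=0.$$
For the corresponding imaginary part, I differentiate the first identity of \eqref{fh} in $t$ and the second in $x$ and invoke $\partial_t\partial_x H^*=\partial_x\partial_t H^*$ to obtain $\partial_t\arg^* f^*+\partial_x\arg^*(f^*+1)=0$, which is exactly the vanishing of the imaginary part of the same complex bracket (noting that $f^*$ and $f^*+1$ both lie in $\bH^-$, so the relevant branch of argument is smooth). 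Combining the two, the full complex bracket vanishes, and multiplying through by $f^*_t(x)\neq 0$ rearranges to \eqref{ftx}.

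The bulk of the argument is elementary trigonometry and the triangle identity in Step 2; the only nontrivial input is the regularity of $H^*$ needed to write the Euler--Lagrange equation pointwise (rather than distributionally) inside $\mathfrak L$, and this is classical given the strict convexity and smoothness of $\sigma$ on $\cT$.
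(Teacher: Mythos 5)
The paper itself does not prove this proposition: it is imported verbatim from \cite[Theorem 1]{LSCE}, so there is no internal proof to compare against; your argument is essentially the original Kenyon--Okounkov derivation (Euler--Lagrange equation in divergence form plus the triangle/law-of-sines identification $\partial_1\sigma(\nabla H^*)=\log|f^*+1|$, $\partial_2\sigma(\nabla H^*)=\log|f^*|$, with the imaginary part supplied by equality of mixed partials of $H^*$ through \eqref{fh}), and the algebra checks out: the real and imaginary parts combine to $\partial_t f^*/f^*+\partial_x f^*/(f^*+1)=0$, which rearranges to \eqref{ftx}. Two caveats. First, $\sigma$ in \eqref{sigmal} is strictly \emph{concave} on $\cT$ (the functional \eqref{efunctionh} is maximized), not convex; this is cosmetic, since ellipticity and the form of the Euler--Lagrange equation are unaffected. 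Second, the regularity input is heavier than ``standard elliptic regularity'': because of the gradient constraint $\nabla H\in\overline{\cT}$ and the degeneration of $\sigma$ at $\partial\cT$, one must first know that $H^*$ is $C^1$ on $\fL$ with $\nabla H^*$ locally confined to a compact subset of $\cT$ (this is the content of De Silva--Savin's regularity theorem and of the arguments in \cite{LSCE}); only then does interior elliptic theory give smoothness and justify writing the Euler--Lagrange equation pointwise rather than distributionally. With that input stated explicitly, your proof is correct and coincides with the approach of the cited source.
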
 

    \subsection{Polygonal domains}
This paper concerns tilings of polygonal domains, which we describe now.
 \begin{definition} 
 	\label{p} 
 	An open set $\mathfrak{P} \subset \mathbb{R}^2$ is \emph{polygonal} if its boundary $\partial \mathfrak{P}$ consists of a finite union of line segments, each of which is parallel to an axis of $\mathbb{T}$. 
  For the rest of this paper, whenever we take a polygonal set, it is always assumed to be simply-connected.
  The set is \emph{rational polygonal} if, in addition, every endpoint of each segment in $\partial\fP$ is a rational point.
  We note that being rational is equivalent to that
  there exists some $n_0\in\N$ with $n_0\mathfrak{P}$ being a tileable domain. 
 \end{definition}
From this definition, for any $n\in n_0\N$,  $\mathsf{P}\equiv \mathsf{P}_n=n \mathfrak{P}$ is a tileable domain, and is therefore associated with a (unique up to a global shift) boundary height function $\mathsf{h} = \mathsf{h}_n$. 
  We set $h: \partial \mathfrak{P} \rightarrow \mathbb{R}$ by $h (v) = n^{-1} \mathsf{h} (nv)$ for each $v \in \partial \mathfrak{P}\cap n^{-1}\mathbb{T}$, and linearly interpolating between points on $n^{-1}\mathbb{T}$.
  It is straightforward to check that this function $h$ is determined by $\mathfrak{P}$ (i.e., independent of $n$), up to a global shift.

  Let $H^*$ be the limiting height function of uniformly random lozenge tiling of $\fP$, as defined in \eqref{hmaximum}. We recall $\cT$ from \eqref{t} and the liquid region $\mathfrak{L} = \mathfrak{L} (\mathfrak{\fP}) \subset \mathfrak{P}$ from \eqref{al}. We denote the \emph{arctic boundary} $\mathfrak{A} = \mathfrak{A} (\mathfrak{P}) \subset \overline{\mathfrak{P}}$ by
	\begin{flalign}
		\label{al2} 
		\mathfrak{L} = \big\{ v \in \mathfrak{P}: \nabla H^* (v) \in \mathcal{T} \big\}, \quad \text{and} \quad \mathfrak{A} = \del \fL.
	\end{flalign}
The liquid region and arctic boundary are determined by the set $\mathfrak{P}$, and have the following properties.

\begin{lem}[{\cite{LSCE,DMCS}}]
	
	\label{pla}
Assume that $\mathfrak{P}$ is a rational polygonal set, then the followings hold.
	\begin{enumerate}	
		\item 
  For the maximizer $H^* = \argmax_{H \in \Adm(\mathfrak{R}; h)} \mathcal{E} (H)$, which is determined by $\mathfrak{P}$ up to a global shift,
  $\nabla H^*$ is piecewise constant on $\mathfrak{P} \setminus \mathfrak{L} (\mathfrak{P})$, taking values in $\big\{ (0, 0), (1, 0), (1, -1) \big\}$. 
		\item The arctic boundary $\mathfrak{A} (\mathfrak{P})$ is an algebraic curve, and its singularities are all either ordinary cusps or tacnodes.
  
	\end{enumerate}
	\end{lem}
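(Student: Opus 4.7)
Both parts of this lemma are structural consequences of the complex-analytic description of limit shapes for polygonal domains developed in \cite{LSCE,DMCS}, and my plan is to invoke those results rather than reprove them from scratch. I expect the main obstacle to be part (2), whose classification of singularities requires a rational parametrization of the arctic curve together with a delicate local analysis at its critical points.

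For part (1), the starting point is that on each linear segment of $\partial\mathfrak{P}$ the boundary height function $h$ is constant (on segments with slope $1$ or $\infty$) or grows at rate one (on segments with slope $0$), as described in \Cref{ssec:hf}. Combined with the admissibility condition $\nabla H^* \in \overline{\mathcal{T}}$, this forces $\nabla H^*$ to lie at a vertex of $\overline{\mathcal{T}}$, namely in $\{(0,0), (1,0), (1,-1)\}$, in a neighborhood of that segment. The variational principle \Cref{hzh}, together with strict concavity of $\sigma$ on $\mathcal{T}$, implies that $H^*$ is real-analytic on $\mathfrak{L}$ and piecewise affine with gradient in $\partial\overline{\mathcal{T}}$ on $\mathfrak{P}\setminus \mathfrak{L}$. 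That only the three vertex values can appear — and not points in the relative interior of an edge of $\overline{\mathcal{T}}$, which would correspond to semi-frozen regions where only two lozenge types are present — is the content of the boundary-propagation argument in \cite{LSCE}: a hypothetical semi-frozen region would force the complex slope $f^*$ to approach the real axis or the point $-1$ along a two-dimensional set, which is incompatible with \eqref{ftx} together with the rational-polygonal boundary condition.

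For part (2), by \cite[Theorem 1]{LSCE} the complex slope $f^*:\mathfrak{L}\to\mathbb{H}^-$ admits a univalent inverse $\mathcal{Q}:\mathbb{H}^-\to\mathfrak{L}$ which, when $\mathfrak{P}$ is rational polygonal, extends to a rational map whose coefficients are determined by the side lengths of $\partial\mathfrak{P}$. The arctic boundary $\mathfrak{A}$ is then the image of $\mathbb{R}$ under this extension, which immediately exhibits it as an algebraic curve. Singularities of $\mathfrak{A}$ arise at points of $\mathbb{R}$ where $\mathcal{Q}'$ vanishes or where two distinct points of $\mathbb{R}$ are mapped to the same point of $\mathbb{R}^2$. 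The local analysis in \cite{DMCS} shows that for simply-connected $\mathfrak{P}$ the only singularity types that can occur are ordinary cusps (corresponding to a zero of $\mathcal{Q}'$ of order exactly two) and tacnodes (corresponding to transversely tangential self-crossings of $\mathcal{Q}|_\mathbb{R}$). This classification, which must in particular rule out higher-order degenerate critical points using the univalence of $\mathcal{Q}$ on $\mathbb{H}^-$ and the simple-connectedness of $\mathfrak{P}$, is where I expect the main technical difficulty to lie.
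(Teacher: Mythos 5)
Your proposal is correct and takes essentially the same route as the paper: this lemma is quoted from the literature, and the paper's own ``proof'' is simply the attribution of part (1) to \cite[Theorem 1.9]{DMCS} and part (2) to \cite[Theorems 1.2 and 1.10]{DMCS} (see also \cite[Theorem 2, Proposition 5]{LSCE}), exactly the results you invoke. Your accompanying sketches of the frozen-gradient classification and the rational parametrization of $\mathfrak{A}$ are consistent with those references, so nothing further is needed.
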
 
These results are proved in \cite{LSCE,DMCS} and quoted in this form as \cite[Lemma 2.3]{aggarwal2021edge}. The first statement is by \cite[Theorem 1.9]{DMCS}, and the second  statement is by \cite[Theorem 1.2, Theorem 1.10]{DMCS} (see also \cite[Theorem 2, Proposition 5]{LSCE}).

For polygonal set, it was proved in \cite[Theorem 1.2, Theorem 1.5]{DMCS} that the complex slope $(x,t)\mapsto f_t^*(x)$ extends to the arctic boundary. More precisely, the complex slope extends to a continuous function from $\overline{\cL(\fP)}$ to the one point compactification $\bC\cup\{\infty\}$. For any $(x, t) \in \fA$, $f^*_t(x)\in \bR\cup \{\infty\}$ and the slope of the arctic boundary at $(x,t)$ is given by
\begin{equation}\label{e:slope}
\frac{f^*_t(x)+1}{f^*_t(x)}.
\end{equation} 

	 For a nonsingular point in $\mathfrak{A}$, we call it a \emph{tangency location} of $\mathfrak{A}$, if the tangent line to $\mathfrak{A}$ has slope in $\{ 0, 1, \infty \}$. 
We need to impose the following assumptions of a rational polygonal set $\mathfrak{P}$, on its arctic boundary.
	\begin{assumption} 
		\label{pa} 
		For a rational polygonal set $\mathfrak{P} \subset \mathbb{R}^2$, assume the following four properties hold. 
		\begin{enumerate} 
			\item The arctic boundary $\mathfrak{A} = \mathfrak{A} (\mathfrak{P})$ has no tacnode singularities. 
			\item No cusp singularity of $\mathfrak{A}$ is also a tangency location of $\mathfrak{A}$. 
			\item  There exists an axis $\ell$ of $\mathbb{T}$ such that any line connecting two distinct cusp singularities of $\mathfrak{A}$ is not parallel to $\ell$. 
			\item  Any intersection point between $\mathfrak{A}$ and $\partial \mathfrak{P}$ must be a tangency location of $\mathfrak{A}$. Moreover, $\nabla H^*(x,t)$ is continuous at any point on $\mathfrak{A}$ that is not a tangency location.
		\end{enumerate}	
	\end{assumption} 
As discussed in \cite[Remark 2.8]{aggarwal2021edge}, these assumptions are believed to hold for a generic rational polygonal set with a given number of sides, as violating each assumption is equivalent to that the side lengths satisfy a certain algebraic equation; but here we do not provide a rigorous proof of this.
 
	\subsection{Pearcey process}  \label{s:pp}
As another preparation for our main results, we formally define the Pearcey process $\cP$ as a time-dependent random collection of infinitely many particles on $\R$, with the multi-time gap probability given by the Fredholm determinant
\[
\PP[\cP(t_i)\cap E_i = \emptyset, \forall 1\le i \le m]
= \det(I-\chi K^{\textnormal{Pearcey}})_{L^2(\{t_1,\ldots,t_m\}\times \R)}
\]
for any $t_1<\cdots <t_m$ and finite unions of intervals $E_1, \ldots, E_m$. Here, $\chi$ is the projection operator, acting as $\chi f(t_i,x)=\don[x\in E_i]f(t_i,x)$ for $f:\{t_1,\ldots, t_m\}\times \R\to\R$,
and $K^{\textnormal{Pearcey}}$ is the integral operator, acting as
\[
K^{\textnormal{Pearcey}}f(t_i,x)=\sum_{j=1}^m \int K^{\textnormal{Pearcey}}(t_i,x;t_j,y) f(t_j,y) dy,
\]
with the extended Pearcey kernel
\begin{multline}  \label{eq:pearcey}
K^{\textnormal{Pearcey}}(s,x;t,y)\\
= -\frac{\mathds{1}[s<t]}{\sqrt{2\pi(t-s)}}\exp\Big(-\frac{(x-y)^2}{2(t-s)}\Big)
+\frac{1}{(2\pi\i)^2} \iint \frac{\rd\pcz \rd\pcw}{\pcz-\pcw}\exp\Big(\frac{-\pcz^4+\pcw^4}{4}+\frac{t\pcz^2-s\pcw^2}{2}-y\pcz + x\pcw\Big),
\end{multline}
for any $s,x,t,y\in \R$; see e.g., \cite{adler2010universality}.
The $\pcz$ contour is taken to be the straight vertical line $\Re(\pcz)=0$ traversed upwards (from $-\infty\i$ to $\infty\i$), and the $\pcw$ contour contains the straight lines from $\infty e^{\pi\i/4}$ and $-\infty e^{\pi\i/4}$ to $0$, and from $0$ to $\infty e^{-\pi\i/4}$ and $-\infty e^{-\pi\i/4}$.

 \subsection{Main results}

To state our result on the Pearcey process in tiling, we need to define the scaling parameters.
\begin{definition}	
		\label{sr} 
		For a rational polygonal set $\fP$, fix a cusp point $(x_c, t_c) \in \mathfrak{A} = \mathfrak{A} (\mathfrak{P})$ that is not a tangency location. We say that $(x_c,t_c)$ is \emph{upward oriented}, if the slope of the tangent line through $(x_c,t_c)$ is in $(1,\infty)$, and there exist $(\mathfrak{r}, \mathfrak{q}) = \big( \mathfrak{r} (x_c, t_c; \mathfrak{A}), \mathfrak{q} (x_c, t_c; \mathfrak{A}) \big) \in \mathbb{R}^2$  so that 
		\begin{flalign}
			\label{xyql}
			x - x_c = \frac{ (t - t_c)}{\mathfrak{r}} \pm \frac{2\mathfrak{q}}{3\sqrt 3} (t_c - t)^{3/2} + \mathcal{O} \big( (t_c - t)^2 \big),
		\end{flalign} 
 for all $(x, t) \in \mathfrak{A}$ in a sufficiently small neighborhood of $(x_c, t_c)$. 
We note that these can always be achieved by rotating $\fP$.
We call $(\fr, \fq)$ the \emph{curvature parameters} $(\mathfrak{r}, \mathfrak{q})$ associated with $(x_c, t_c)$.	
Note that $\fr=\frac{f^*_{t_c}(x_c)+1}{f^*_{t_c}(x_c)}$, according to \eqref{e:slope}.
\end{definition}

Our main cusp universality result is as follows.
\begin{thm}  \label{thm:univ}
	Take a rational polygonal set $\mathfrak{P} \subset \mathbb{R}^2$ satisfying \Cref{pa}, and let $H^*$ be a limiting height function of it. Fix some point $(x_c, t_c)$ that is a cusp location of $\mathfrak{A} (\mathfrak{P})$. Assume (without loss of generality) that this cusp is upward oriented as stated in \Cref{sr}. Denote the associated curvature parameters by $(\mathfrak{r}, \mathfrak{q})$, with $\fr\in (1,\infty)$ and $\fq>0$. 

 Take $n\in\N$ such that $\mathsf{P}=n\mathfrak{P}$ is a tileable domain.
	Let $\mathscr{M}$ denote a uniformly random tiling of $\mathsf{P}$. It is associated with a (random) family of non-intersecting Bernoulli paths (as defined in Section \ref{ssec:nbp}), which we denote as a function $\sfM$ from $\Z$ to the set of finite subsets of $\Z$ (by ignoring the indices of paths). Then as $n\to\infty$, the process
		\begin{flalign} 
			\label{xit1}
	t\mapsto \frac{\sfM(\lfloor\lsca\cust - \sqrt{\fr-1}\lsca^{1/2}t/(\fr \fq)\rfloor)-\lsca\cusx+\sqrt{\fr-1}\lsca^{1/2}t/(\fr^2 \fq)}{(\fr-1)^{3/4}\lsca^{1/4}/\sqrt{\fq\fr^3}} 
		\end{flalign}
converges to the Pearcey process $\mathcal{P}$, in the sense of convergence as point processes, in any set of the form $\{t_1,\ldots, t_m\}\times E$ with $t_1<\cdots <t_m$ and $E$ being a compact interval.
\end{thm}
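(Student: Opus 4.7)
The plan is to reduce the problem from tilings of the full polygon $\fP$ to a more tractable model of non-intersecting Bernoulli random walks (NBRW) on a mesoscopic window around the cusp, and then to establish Pearcey convergence for that reduced model by steepest descent. Concretely, I would first isolate a rectangular window $\mathsf{W}\subset \mathsf{P}$ centered at $(n\cusx, n\cust)$ of height and width $n^{1/2+\e}$ for some small $\e>0$ chosen after $\delta$. The Gibbs resampling property of the uniform tiling measure implies that, conditional on the Bernoulli paths on $\partial \mathsf{W}$, the tiling inside $\mathsf{W}$ is itself a uniformly random non-intersecting path ensemble with that boundary data. This conditional law is close, via monotone coupling, to an NBRW on $\mathsf{W}$ whose initial configuration at the bottom of $\mathsf{W}$ matches the bulk density predicted by the complex slope $f^*$; that density profile has two separated components near the cusp (this is precisely the two-component situation of \Cref{thm:univNBRWlim}), pinched together at $(\cusx,\cust)$ by the cusp of $\fA$.

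The coupling error is controlled by the second main ingredient: an optimal rigidity estimate for the Bernoulli paths around the cusp. Extending the argument of \cite{aggarwal2021edge} for smooth edges, I would show with overwhelming probability that the extreme path lies within $n^{1/4+\delta}$ of the cusp location, and that the bulk of the paths at horizontal distance $s$ from the cusp, for $n^{1/2+\e}\le s\ll n$, deviate from the limit shape by at most $\oo(n^{1/4})$. The rigidity near a smooth edge is a tight input here, and is then upgraded near the cusp using the local cusp expansion \eqref{xyql} and the harmonicity/regularity of $f^*$ near $(x_c,t_c)$. Combined with \Cref{hzh} and the exact identification of the limiting density near a cusp in terms of $(\fr,\fq)$, the boundary configuration of the extracted window $\partial \mathsf W$ is within $\oo(n^{1/4})$ of the chosen NBRW initial data. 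By monotonicity of NBRW and tilings under boundary comparison, any positive/negative shift of size $\oo(n^{1/4})$ sandwiches the true tiling between two NBRW realizations, yielding negligible error under the Pearcey scaling of \eqref{xit1}.

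Once the coupling is in place, it suffices to invoke \Cref{thm:univNBRWlim} (to be proved separately) for the NBRW with the prescribed two-component initial configuration; this step provides the Pearcey kernel with parameters matching $(\fr,\fq)$. The derivation of Pearcey convergence for NBRW itself is done by steepest descent on the explicit double-contour kernel of \cite{gorin2019universality}. The saddle-point equation, driven by the density profile of the initial configuration, has a triple critical point $\zzc$ exactly at a cusp, as expected from \cite{RSPPP}. A Taylor expansion of the phase function around $\zzc$ to quartic order, combined with the Pearcey scaling \eqref{xit1}, produces the quartic exponent $-\z^4/4+t\z^2/2 - y \z$ in \eqref{eq:pearcey}, and the $(\pcz-\pcw)^{-1}$ prefactor appears from the coincident pole; the time-ordered diffusion term in \eqref{eq:pearcey} arises in the standard way when $s<t$.

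The hard part is the steepest descent analysis: one must choose global contours through $\zzc$ along which the phase function is monotonically decreasing, even though (i) the initial density has two separated supports, forcing the phase to be defined with care across branch cuts; (ii) the cusp sits at distance $\oo(n)$ from $\partial \mathsf{W}$, so there is no uniform control of the phase away from the saddle coming from compactness; and (iii) the triple degeneracy of the critical point makes the Gaussian/cubic saddle arguments insufficient and forces a quartic analysis. To handle these I would use a multi-step approximation: a local Pearcey approximation in a window of size $n^{-1/4+\delta'}\ll 1$ around $\zzc$, a global approximation by a piecewise-constant (discretized) density on the rest of the contour obtained by a Riemann-sum/contour-discretization argument, and a separate, softer tail bound coming from the explicit Bernoulli weights. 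Summing the contributions and checking convergence of the kernel on compact sets in $x,y$ and fixed times $t_1<\dots<t_m$ yields convergence of finite-dimensional gap probabilities to those of $\mathcal{P}$, and hence convergence as point processes in any $\{t_1,\ldots,t_m\}\times E$ with $E$ compact, completing the proof.
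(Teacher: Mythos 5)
Your proposal follows essentially the same route as the paper: carve out a mesoscopic window of size $\Omega(n^{1/2+\e})$ around the cusp, prove optimal rigidity ($n^{1/4+\delta}$ at the cusp, $\oo(n^{1/4})$ at the window boundary), sandwich the tiling paths against an NBRW with matched initial data via the monotone coupling, and then establish Pearcey convergence for the NBRW by steepest descent on the double-contour kernel of \cite{gorin2019universality} around a triple critical point, handling the outer contours by a multi-step approximation and discretization. This is the paper's argument in both structure and key ingredients, so no substantive divergence to report.
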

\begin{rem}\label{rem:rescale-to-Pearcey}
Here, we have used the Pearcey process whose boundary is like $x=2(t/3)^{3/2}$ (see e.g.~\cite{adler2011pearcey}). In our setting, the arctic boundary around the cusp $(x_c,t_c)$ is parametrized by \eqref{xyql}. Hence, we need to rescale it to $x=2(t/3)^{3/2}$. Each path in the Pearcey process locally behaves like a Brownian motion. Locally around the cusp, the non-intersecting Bernoulli paths have drift $1/\fr$, so each step has variance $(1/\fr)(1-1/\fr)$. 
To make them behave like Brownian motions without drift, we need to do the following Brownian scaling 
\begin{align}\label{e:Rescale}
    \widehat \sfM=a\frac{(\sfM-nx_c)-(\sft-nt_c)/\fr}{\sqrt{(1/\fr)(1-1/\fr)}},\quad
    \widehat \sft=a^2 (nt_c-\sft),\quad a=\sqrt{\frac{\fq \fr}{\sqrt{\fr-1}}},
\end{align}
where $a$ is determined by $\widehat\sfM/n=2(\widehat\sft/3n)^{3/2}$. 
To get the Pearcey process, we further rescale the space by $n^{-1/4}$ and time by $n^{-1/2}$ so that the gaps between two paths are of order one:
\begin{align}
    \widetilde \sfM=n^{-1/4}\widehat \sfM, \quad \widetilde \sft=n^{-1/2}\widehat \sft,
\end{align}
which leads to \eqref{xit1}.
\end{rem}

\subsubsection{Universality of non-intersecting Bernoulli random walks (NBRW)}  \label{ssec:unbrw}

As already indicated, in proving Theorem \ref{thm:univ}, a key step is to understand the universality of the Pearcey process in the related model of NBRW, which we now define formally. 

\medskip
\noindent\textbf{NBRW as a Markov chain.} 
The NBRW $\sfA:\qq{0,\infty}\to \Z^{\qq{-M,N}}$ that we will consider can be defined as a Markov chain on time $\qq{0,\infty}$, with state space being the Weyl chamber
\[
\left\{\{\sfx_i\}_{i\in\qq{-M,N}}\in\Z^{\llbracket -M, N\rrbracket}: \sfx_{-M}<\cdots<\sfx_N \right\}
\]
for some $M,N\in \N$. The transition probability is given as follows. Take $\beta\in (0,1)$, which is the \emph{drift parameter}. For any $\sft\in\qq{0, \infty}$, let $\PP\left[\sfA(\sft+1)=\{\sfy_i\}_{i\in\qq{-M,N}} \mid \sfA(\sft)=\{\sfx_i\}_{i\in\qq{-M,N}}\right]$ equal
\[
(1-\beta)^{M+N+1} \prod_{-M\le i \le N} \Big(\frac{\beta}{1-\beta}\Big)^{\sfy_i-\sfx_i} \prod_{-M\le i<j\le N}\frac{(\sfy_i-\sfy_j)}{(\sfx_i-\sfx_j)},
\]
when each $\sfy_i-\sfx_i\in\{0,1\}$; and $0$ otherwise.
Alternatively, $\sfA$ can be defined as a collection of $M+N+1$ independent Bernoulli$(\beta)$ random walks on $\qq{0,\infty}$, conditioned on never intersect. It can also be viewed as a discrete analog of the \emph{Dyson Brownian motion} with parameter $2$.

With the relation between tilings and non-intersecting Bernoulli paths given in Section \ref{ssec:nbp}, we can view NBRW on $\llbracket 0,\infty\rrbracket$ as a random tiling of the upper-half plane, where the boundary height function on the horizontal axis is in correspondence with the initial configuration $\sfA(0)$.

\medskip

We next describe a universal convergence of NBRW to the Pearcey process. Roughly speaking, it says that if the initial configuration of NBRW contains two separated groups of particles, with the gap between them and their density growth being of `proper' orders, then the Pearcey process appears when these two groups of particles merge together.

We start with the setup. Fix any $\phi \in (0, 1/2)$. Let $\errh>0$ be a small enough constant (depending on $\phi$), and then $\err>0$ be a small enough constant (depending on $\phi$ and $\errh$).
To state the asymptotic result, we consider a sequence of NBRWs:
for each integer $\lsca>0$, we consider NBRW $\sfA$ on $\qq{0,\infty}$ with drift parameter $\beta\in (\phi, 1-\phi)$ and (possibly random) initial condition $\sfA(0)=\{\sfd_i\}_{i\in \llbracket -M, N\rrbracket}$ for some $M, N \asymp \lsca$.
We assume that $\{\sfd_i\}_{i\in \llbracket -M, N\rrbracket}$ (with scaling $n^{-1}$) can be \emph{approximated by the quantiles of a density function $\den:\R\to[0,1]$ up to order $\lsca^{-1+\err}$}, and $\den$ satisfies certain \emph{cusp growth at scale $\lsca$ and up to distance $\sct$, with $\lsca^{-1/2+\errh}<\sct\lesssim 1$}, in the sense to be specified in Assumption \ref{ass:dtsncusp} below.
Let $\cusx$, $\cust$, $A$, $B$ be real numbers determined by $\den$ and $\beta$, via Lemma \ref{lem:xtzdef} and \eqref{eq:defA} below (in particular, we have $\cust\asymp \sct$).
We remark that all of $\beta$, $M$, $N$, $\{\sfd_i\}_{i\in \llbracket -M, N\rrbracket}$, $\den$, $\sct$, $\cusx$, $\cust$, $A$, $B$ can depend on $\lsca$.

\begin{thm}  \label{thm:univNBRWlim}
As $\lsca\to \infty$, the process
\[
\cP^{\textnormal{Bernoulli}}:t\mapsto \frac{\sfA(\lfloor\lsca\cust - 2A^{1/2}B(1-B)\lsca^{1/2}t\rfloor)-\lsca\cusx}{\sqrt{2}A^{1/4}B(1-B)\lsca^{1/4}} +\sqrt{2}A^{1/4}B\lsca^{1/4}t ,
\]
converges to the Pearcey process, in the sense of convergence as point processes, in any set of the form $\{t_1,\ldots, t_m\}\times E$ with $t_1<\cdots <t_m$ and $E$ being a compact interval.
\end{thm}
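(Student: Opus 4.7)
The plan is to analyze the exact double contour integral formula for the correlation kernel of the NBRW via the steepest descent method, and then transfer kernel asymptotics into convergence of the point process. The NBRW is a determinantal process, and the formula of \cite{gorin2019universality} writes the correlation kernel schematically as
\[
K_\lsca(s,x;t,y) \;=\; \iint e^{\lsca\big(G_\lsca(\zzz;t,y)-G_\lsca(\www;s,x)\big)}\,\frac{\rd\zzz\,\rd\www}{\zzz-\www} \;-\; (\text{Gaussian diagonal term when }s<t),
\]
where the action $G_\lsca$ combines the empirical measure $\lsca^{-1}\sum_i \delta_{\sfd_i/\lsca}$ of the initial particles, the drift factor $(t/\lsca)\log(1-\beta+\beta\zzz)$, and the observation term $-(y/\lsca)\log\zzz$. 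Under the quantile approximation hypothesis, $G_\lsca\to G_\infty$ with $G_\infty$ replacing the empirical sum by $\int\log(\zzz-u)\,\den(u)\,\rd u$. The definitions of $\cusx,\cust$ from Lemma \ref{lem:xtzdef} and of $A,B$ from \eqref{eq:defA} should precisely encode that a real point $\zzc$ is a \emph{triple critical point} of $G_\infty(\,\cdot\,;\cust,\cusx)$, i.e.~$\partial_\zzz G_\infty=\partial_\zzz^2 G_\infty=\partial_\zzz^3 G_\infty=0$ at $\zzc$ with $\partial_\zzz^4 G_\infty(\zzc)$ nondegenerate and proportional to $A$. This triple-critical-point structure, as already identified in \cite{RSPPP}, is the universal source of Pearcey statistics.

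\textbf{Local analysis and extraction of the Pearcey kernel.} I would deform the $\zzz$- and $\www$-contours so that each passes through $\zzc$ along steepest-descent rays dictated by the quartic leading term. Because the local phase is $\propto(\zzz-\zzc)^4$, the natural choice is the standard Pearcey contour shape: one contour passes through $\zzc$ vertically, the other consists of four rays at angles $\pm\pi/4,\pm 3\pi/4$. After the local change of variable $\zzz-\zzc = c\,\lsca^{-1/4}\pcw$, $\www-\zzc = c\,\lsca^{-1/4}\pcz$ for an appropriate normalizing constant $c$, together with the time and position rescaling prescribed in the statement of the theorem (which selects first-order corrections in $s,t,x,y$ to the action), a fourth-order Taylor expansion of $G_\infty$ around $\zzc$ should produce exactly the Pearcey integrand $-\pcz^4/4+\pcw^4/4+(t\pcz^2-s\pcw^2)/2-y\pcz+x\pcw$, with the $\lsca^{-1/4}$ Jacobian from the $(\zzz-\www)$ factor providing the correct spatial scaling of the kernel. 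The added linear drift $\sqrt{2}A^{1/4}B\lsca^{1/4}t$ in the theorem statement corresponds to aligning the reference frame with the mean motion of the particles at $(\cusx,\cust)$, i.e.~to the imaginary-axis shift that centers the saddle on the Pearcey phase.

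\textbf{Main obstacle.} The principal difficulty, as anticipated in Section \ref{ssec:kapprox}, is to show that the contribution to the double integral from the portions of the contours outside a microscopic window of radius $\lsca^{-1/4+\delta}$ around $\zzc$ is exponentially small. Two intertwined issues arise. First, the density $\den$ is only assumed to exhibit cusp growth within the window of width $\sct$, and $\sct$ can be as small as $\lsca^{-1/2+\errh}$; outside that window $G_\infty$ is no longer a reliable proxy for $G_\lsca$, yet the deformed contour must still close up globally. I would address this with a \emph{multi-step approximation} in which the action $G_\lsca$ is replaced by progressively cruder proxies on a hierarchy of zones (microscopic near $\zzc$, mesoscopic up to $\sct$, and global out to the boundary of the support of $\den$), verifying on each zone that the real part of the phase is monotone in the direction the contour traverses. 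Second, because $G_\lsca$ contains a discrete sum with poles spaced $\lsca^{-1}$ apart along $\R$, a smooth contour cannot be used uniformly. I would instead \emph{discretize} the contour into a polyline with vertical segments of spacing comparable to the particle density, bypass the poles along those segments, and control Riemann-sum errors between the discrete sum and $\int \log(\zzz-u)\,\den(u)\,\rd u$ segment by segment. These two devices are what force the hierarchy $\err\ll\errh\ll 1$ in the hypothesis and are what make the argument go beyond previous cusp analyses in special domains.

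\textbf{From kernel convergence to the point process.} Once $K_\lsca\to K^{\textnormal{Pearcey}}$ is established pointwise with uniformity and integrable upper bounds on compacts in the rescaled coordinates, standard results on determinantal point processes yield convergence of all finite-dimensional correlation functions, and hence convergence of the rescaled process to $\cP$ as point processes on any set $\{t_1,\ldots,t_m\}\times E$ with $E$ a compact interval. This delivers the conclusion of Theorem \ref{thm:univNBRWlim}.
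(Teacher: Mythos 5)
Your proposal follows essentially the same route as the paper: the kernel formula of \cite{gorin2019universality} is analyzed by steepest descent around a triple critical point at $\zzc$ (whose quartic coefficient is $-\cust^{-4}A$), with a multi-step approximation of the action (continuous version, then discretized, then the actual $\Dfin_1,\Dfin_2$) and a lattice discretization of the steepest-descent curves to control the outer contribution, followed by the standard passage from kernel convergence to point-process convergence. The only steps you gloss over are minor relative to the strategy: the diagonal term in the discrete kernel is a binomial coefficient that must be shown to converge to the Gaussian via Stirling, and in the paper the contour discretization serves mainly to bound curve lengths and to manage residues from possible intersections of the $\z$- and $\w$-contours rather than to bypass the $\lsca^{-1}$-spaced poles.
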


We note that this is a `normal and smaller distance' result, in the sense that while the Pearcey process has temporal and  spatial scalings of order $n^{1/2}\times n^{1/4}$, the time when it appears is of order $\lsca\cust\asymp\lsca\sct$, which is $\gtrsim n^{1/2+\errh}$ and $\lesssim n$.
We cannot expect a Pearcey process of order $n^{1/2}\times n^{1/4}$ at any time much beyond this window: on one hand, the time to the boundary must be much larger than the temporal scaling $\lsca^{1/2}$; on the other hand, at any time much larger than $\lsca$, the spatial fluctuation of the paths should be much larger than $\lsca^{1/4}$ around a cusp.
Therefore, Theorem \ref{thm:univNBRWlim} covers almost the whole possible time window where a Pearcey process of order $n^{1/2}\times n^{1/4}$ could appear.

Theorem \ref{thm:univNBRWlim} is an immediate consequence of Proposition \ref{prop:Kconvconj} below, which gives a stronger pre-limit estimate of the NBRW determinantal kernel at the cusp.

\subsubsection{On the continuous theory of the Pearcey process and convergence}
\label{s:dccp}
Intuitively, for the non-intersecting Bernoulli paths from tiling or NBRW around a cusp, they should converge to a family of continuous processes, under e.g.~the topology of uniform convergence in any compact interval.
This limiting family should be a continuous path version of the Pearcey process $\cP$, which has been expected to exist (see e.g.~\cite{Tracy2006}, at the end of the introduction), and should have Brownian Gibbs property, as that of the Airy line ensemble given in \cite{PLE} (see e.g.~\cite[Problem 2.34]{AIM}).
Such an object could be called the `Pearcey line ensemble' (PLE), following the naming convention of the Airy and Bessel line ensembles, constructed in \cite{PLE} and \cite{wu2021bessel}. 
However, as far as we know, such a construction has not yet been accomplished in the literature, despite that the Pearcey limit has been established for various probabilistic models, such as random matrices, non-intersecting Brownian motions, and tilings, as stated in the introduction.
Compared to the Airy and Bessel cases, one additional difficulty is that paths in the PLE are indexed by $\Z$ rather than $\N$.
This causes a labeling issue: in Airy or Bessel, the point process distribution at a fixed time gives the distribution of the continuous paths at this time, since the $i$-th highest point must be in the $i$-th path. However, for Pearcey, given the point process at one time, additional information is needed to determine which points correspond to the paths that would $\to \infty$ or $-\infty$ as $t\to\infty$.

In terms of the convergence to the Pearcey process, all the proven results are (more or less equivalently) in the sense of convergence as point processes at finitely many times, as our Theorems \ref{thm:univ}, \ref{thm:univNBRWlim}; and this is what one can hope for without having the PLE defined.
We expect that once the PLE is built, there should be a general theorem upgrading all such point process convergence to uniform in compact convergence, as long as the prelimiting model has some local Gibbs properties (such as \Cref{lem:Gibbstiling} below for tiling).
For the Airy line ensemble such a theorem exists; see \cite[Theorem 4.2]{DNV}.

\section{Monotonicity and Gibbs properties}\label{s:NBwe}

In the study of uniformly random tiling and related models of random non-intersecting paths, 
an important and widely used \emph{monotonicity} property roughly says that: for two random configurations, if they are `close to each other' at the boundary of a region, they should also be `close to each other' inside the region.
It has various versions in the literature (see e.g.\ \cite[Lemma 18]{LSRT}, \cite[Lemmas 2.6 and 2.7]{PLE}, \cite[Lemmas 2.6 and 2.7]{corwin2016kpz}, and \cite[Lemma 5.6]{dimitrov2021characterization}). Here, we record some that will be used later.

The first one is for random non-intersecting Bernoulli paths. To proceed, we need some more notations. Take a family of non-intersecting Bernoulli paths $\sfB=\{\sfb_i\}_{i\in\qq{1,m}}$, consisting of $m$ paths, with each $\sfb_i$ having the same time span $\qq{0,\sfr}$.
Given functions $\mathsf{f}, \mathsf{g}: \qq{0, \sfr} \rightarrow \mathbb{R}$, we say that $\sfB$ has $\mathsf{f}$ and $\mathsf{g}$ as \emph{boundary conditions} if $\mathsf{f} (\sft) \le \sfb_i (\sft) \le \mathsf{g} (\sft)$ for each $\sft \in \qq{0, \sfr}$ and $i\in\qq{1,m}$. We refer to $\mathsf{f}$ and $\mathsf{g}$ as the \emph{left boundary} and the \emph{right boundary}, respectively, and allow $\mathsf{f}$ and $\mathsf{g}$ to be $-\infty$ or $\infty$. We say that $\sfB$ has \emph{entrance condition} $\mathsf{d} = (\mathsf{d}_1, \mathsf{d}_{2}, \ldots , \mathsf{d}_m)$ and \emph{exit condition} $\mathsf{e} = (\mathsf{e}_1, \mathsf{e}_{2}, \ldots , \mathsf{e}_m)$ if $\sfB (0) = \mathsf{d}$ and $\sfB (\sfr) = \mathsf{e}$. There is a finite number of non-intersecting Bernoulli paths with given entrance, exit, and (possibly infinite) boundary conditions.

In what follows, for any functions $\mathsf{f}, \mathsf{f}': \qq{0,\sfr} \rightarrow \mathbb{R}$ we write $\mathsf{f} \le \mathsf{f}'$ if $\mathsf{f} (\sft) \le \mathsf{f}' (\sft)$ for each $\sft \in \qq{0,\sfr}$ and denote $|\mathsf{f}-\mathsf{f}'|=\max_{\sft\in \llbracket 0, \sfr\rrbracket}|\mathsf{f}(\sft)-\mathsf{f}'(\sft)|$. Similarly, for any $m$-tuples $\mathsf{d} = (\mathsf{d}_1, \mathsf{d}_{2}, \ldots , \mathsf{d}_m) \in \mathbb{R}^m$ and $\mathsf{d}' = (\mathsf{d}_1', \mathsf{d}_{2}', \ldots , \mathsf{d}_m') \in \mathbb{R}^m$, we write $\mathsf{d} \le \mathsf{d}'$ if $\mathsf{d}_i \le \mathsf{d}_i'$ for each $i \in \qq{1,m}$ and denote $|\mathsf{d}-\mathsf{d}'|=\max_{i\in \llbracket 1, m\rrbracket}|\mathsf{d}_i - \mathsf{d}_i'|$.

	\begin{lem}
		
		\label{comparewalks}

		Fix integers $\sfr,m\ge 1$, functions $\mathsf{f}, \mathsf{f}', \mathsf{g}, \mathsf{g}' : \qq{0,\sfr} \rightarrow \mathbb{R}$, and $m$-tuples $\mathsf{d}, \mathsf{d}', \mathsf{e}, \mathsf{e}'$ with coordinates indexed by $\qq{1,m}$. Let $\sfQ=\{\sfq_i\}_{i\in\qq{1,m}}$ denote uniformly random non-intersecting Bernoulli paths with boundary, entrance, and exit conditions given by $\mathsf{f}$, $\mathsf{g}$, $\mathsf{d}$, $\mathsf{e}$; define $\sfQ'=\{\sfq'_i\}_{i\in\qq{1,m}}$ similarly, but using $\mathsf{f}'$, $\mathsf{g}'$, $\mathsf{d}'$, $\mathsf{e}'$ instead. If $|\mathsf{f} - \mathsf{f}'|\le K$, $|\mathsf{g} - \mathsf{g}'|\le K$, $|\mathsf{d} - \mathsf{d}'|\le K$, and $|\mathsf{e} - \mathsf{e}'|\le K$, for some $K>0$, then there exists a coupling between $\sfQ$ and $\sfQ'$ such that $|\mathsf{q}_i  - \mathsf{q}'_i|\le K$ almost surely for each $i \in \qq{1,m}$.
	\end{lem}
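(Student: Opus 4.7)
The plan is the classical Glauber-dynamics coupling argument. I would construct a Markov chain on non-intersecting Bernoulli path ensembles (with fixed boundary/entrance/exit data) whose unique invariant measure is the uniform one, then run three coupled copies of it with shared randomness to enforce the two-sided pointwise bound $|\mathsf{q}_i - \mathsf{q}'_i|\le K$.

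Concretely, on the finite state space of admissible ensembles with data $(\mathsf{f},\mathsf{g},\mathsf{d},\mathsf{e})$ I would use the standard ``corner flip'' chain: at each step, pick a uniformly random interior space-time site $(i,\sft)$ with $i\in\qq{1,m}$ and $\sft\in\qq{1,\sfr-1}$, and an independent sign $\varepsilon\in\{+,-\}$. If $\varepsilon=+$ and the two steps of path $\mathsf{q}_i$ around time $\sft$ form a non-jump followed by a right-jump, propose to swap them (which raises $\mathsf{q}_i(\sft)$ by $1$); if $\varepsilon=-$ and they form a right-jump followed by a non-jump, propose the opposite swap. Accept the proposal iff the result is still non-intersecting and remains between $\mathsf{f}$ and $\mathsf{g}$ (the entrance/exit data are preserved automatically since only interior times are touched). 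This chain is reversible for the uniform measure (each elementary flip is its own inverse, with equal proposal probability in both directions) and irreducible, a standard fact equivalent to the connectivity of lozenge tilings of a fixed domain under single-rhombus rotations.

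To establish the lemma, run three such chains simultaneously with shared randomness---same $(i,\sft)$ and same $\varepsilon$ at every step---driving configurations $\sfQ^-, \sfQ', \sfQ^+$ with boundary data $(\mathsf{f}-K,\mathsf{g}-K,\mathsf{d}-K,\mathsf{e}-K)$, $(\mathsf{f}',\mathsf{g}',\mathsf{d}',\mathsf{e}')$, $(\mathsf{f}+K,\mathsf{g}+K,\mathsf{d}+K,\mathsf{e}+K)$ respectively. The hypotheses $|\mathsf{f}-\mathsf{f}'|\le K$ and so on force these three sets of data to be nested: $(\mathsf{f}-K,\mathsf{g}-K,\mathsf{d}-K,\mathsf{e}-K)\le (\mathsf{f}',\mathsf{g}',\mathsf{d}',\mathsf{e}')\le (\mathsf{f}+K,\mathsf{g}+K,\mathsf{d}+K,\mathsf{e}+K)$. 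Start from admissible initial configurations satisfying $\sfQ^-\le \sfQ'\le \sfQ^+$ and $\sfQ^+ = \sfQ^- + 2K$, which one can construct by taking any admissible $\sfQ^-$, setting $\sfQ^+ := \sfQ^-+2K$ (admissible because its boundary data are just the translate), and choosing any admissible $\sfQ'$ sandwiched between them. A routine case check shows that the common-randomness coupling preserves both invariants $\sfQ^-\le \sfQ'\le \sfQ^+$ and $\sfQ^+ = \sfQ^- + 2K$ at every step. Letting $k\to\infty$ and setting $\sfQ := \sfQ^- + K = \sfQ^+ - K$, which has the uniform law with data $(\mathsf{f},\mathsf{g},\mathsf{d},\mathsf{e})$ by translation invariance of the dynamics, produces a joint coupling of $\sfQ$ and $\sfQ'$ with $\sfQ - K\le \sfQ'\le \sfQ + K$, i.e.~$|\mathsf{q}_i - \mathsf{q}'_i|\le K$ almost surely.

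The main technical task is the case analysis underlying the preserved monotonicity. When a proposed flip is valid in one chain but not in another, the obstruction is always either a neighboring path or one of the boundary constraints $\mathsf{f},\mathsf{g},\mathsf{f}',\mathsf{g}',\mathsf{f}\pm K, \mathsf{g}\pm K$; in each such case one verifies that the one-sided move cannot break the sandwich $\sfQ^-\le \sfQ'\le \sfQ^+$. This is standard in the dimer/tiling literature and is equivalent to the well-known monotonicity of the uniform Gibbs measure on lozenge tilings with respect to boundary height data, cf.~\cite[Lemma 18]{LSRT}; the only ingredient beyond the basic one-sided monotonicity is the simultaneous tracking of the upper and lower shifted chains, which is automatic from translation invariance of the dynamics under common randomness.
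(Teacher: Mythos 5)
Your proposal is correct in substance and lives in the same family as the paper's argument (a Glauber-dynamics coupling with shared randomness, plus convergence of the finite irreducible chain to the uniform measure), but the implementation is genuinely different. The paper couples the two ensembles of interest \emph{directly}: it runs the dynamics for $\sfQ$ and $\sfQ'$ with common clocks, starts both from their lowest admissible configurations (which differ by at most $K$), and proves the quantitative key fact that $\max_i|\sfq_i-\sfq_i'|$ is \emph{non-increasing} under the coupled dynamics — a two-sided discrepancy bound established by one case analysis. You instead reduce to one-sided order preservation: you sandwich the primed chain between two auxiliary chains whose data are the unprimed data shifted by $\pm K$, observe that these two auxiliary chains stay rigidly equal up to the translation $2K$ under common randomness, and then invoke the standard monotonicity of the shared-randomness coupling for ordered boundary data. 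Both routes need essentially the same local case check at a flip; yours trades the ``max difference non-increasing'' computation for translation invariance plus the usual stochastic monotonicity (the analogue of \cite[Lemma 18]{LSRT}), which is arguably more modular.

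Two points in your write-up need a small patch. First, the translation trick presupposes that the shifted entrance and exit data $\mathsf{d}\pm K$, $\mathsf{e}\pm K$ are again integer vectors, i.e.\ effectively that $K\in\Z$; otherwise the auxiliary ensembles are empty and the chains $\sfQ^{\pm}$ are not defined. (The paper's direct coupling does not introduce shifted ensembles, so it avoids this; its own initial-condition claim is the place where integrality is implicitly used.) You should either assume $K$ integral, or phrase the shifts so that the entrance/exit data remain admissible. Second, your construction of the initial triple — ``take any admissible $\sfQ^-$, set $\sfQ^+:=\sfQ^-+2K$, and choose any admissible $\sfQ'$ sandwiched between them'' — is not automatic for an arbitrary choice of $\sfQ^-$: there need not exist a primed configuration between a given $\sfQ^-$ and $\sfQ^-+2K$. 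The fix is the same device the paper uses: start all chains from their \emph{lowest} admissible configurations; since the lowest configuration is monotone in the boundary/entrance/exit data and the three data sets are nested, the lowest configurations are automatically sandwiched. Finally, your limit step should be stated as extracting a (subsequential) limit of the joint laws on the finite state space, so that the almost-sure inequalities pass to a coupling of the two uniform measures; this is the same step the paper performs when it invokes convergence to the invariant measures.
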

This lemma is in the spirit of \cite[Lemmas 2.6 and 2.7]{PLE} and can be proved using the same idea of constructing the coupling using the Glauber dynamics of the paths. 
We give a sketch here for completeness. 
\begin{proof}[Proof of Lemma \ref{comparewalks}]
We introduce a continuous-time Markovian dynamic on the non-intersecting Bernoulli paths (which is the Glauber dynamics). We write the non-intersecting Bernoulli paths at time $\tau$ as $\sfY_\tau=(\{\sfy_{i, \tau}\}_{i\in\qq{1,m}})_\tau$ and $\sfY'_\tau=(\{\sfy'_{i, \tau}\}_{i\in\qq{1,m}})_\tau$, with the time $0$ configurations $\sfY_0$ and $\sfY'_0$ being the lowest possible non-intersecting Bernoulli paths with boundary, entrance, and exit conditions being $\mathsf{f}$, $\mathsf{g}$, $\mathsf{d}$, $\mathsf{e}$, and $\mathsf{f}'$, $\mathsf{g}'$, $\mathsf{d}'$, $\mathsf{e}'$, respectively.
 It is clear that such lowest configurations exist, are unique, and satisfy $|\sfy_{i,0}'-\sfy_{i,0}|\le K$ for all $i\in\qq{1,m}$. 
For simplicity of notations, denote $\sfy_{0,\tau}=\sff$, $\sfy_{m+1,\tau}=\sfg$, $\sfy'_{0,\tau}=\sff'$, $\sfy'_{m+1,\tau}=\sfg'$, for any $\tau\ge 0$.
The dynamics are as follows: for each $\sft\in \qq{1,\sfr-1}$, $i\in \qq{1,m}$ and $e\in \{1,-1\}$, there is an independent exponential clock which rings at rate $1$.
If the clock labeled $(\sft, i, e)$ rings at time $\tau$, one attempts to set $\sfy_{i,\tau}(\sft)=\sfy_{i,\tau-}(\sft)+e$ (where $\sfy_{i,\tau-}(\sft)$ is the limit of $\sfy_{i,\tau'}(\sft)$ as $\tau'\to \tau$ from the left). This setting is only successful if $\sfy_{i,\tau}$ remains a Bernoulli path, and the condition of non-intersection with $\sfy_{i-1,\tau}$ and $\sfy_{i+1,\tau}$ is not broken.
One also attempts to set $\sfy'_{i,\tau}(\sft)=\sfy'_{i,\tau-}(\sft)+e$, and
the same conditions apply.

The first key fact is that the maximum difference $\max_{i\in \qq{0,m+1}}|\sfy_{i,\tau}-\sfy'_{i,\tau}|$ is non-increasing in $\tau$. As a consequence, for all $\tau \ge 0$, $|\sfy_{i,\tau}-\sfy'_{i,\tau}|\le K$
for each $i\in \qq{1,m}$. The second key fact is that the distributions of these non-intersecting Bernoulli paths converge to the invariant measures for this Markovian dynamics, which are given by the non-intersecting Bernoulli paths randomly sampled under the uniform measure on the set of paths with prescribed entrance, exit, and boundary conditions. This fact is true since these dynamics have finite state spaces which are irreducible with the obvious invariant measures. Then, Lemma \ref{comparewalks} follows immediately from these two facts.

For the rest of this proof, we prove the first key fact above, i.e., the maximum difference is non-increasing in time. 
Suppose that a clock labeled $(\sft^*, i^*, e)$ rings at some time $\tau>0$.
We denote by $\{\sfy_{i,\tau-}\}_{i\in\qq{0,m+1}}$, $\{\sfy'_{i,\tau-}\}_{i\in\qq{0,m+1}}$ the paths before the ringing, and $\{\sfy_{i,\tau}\}_{i\in\qq{0,m+1}}$, $\{\sfy_{i,\tau}\}_{i\in\qq{0,m+1}}$ the paths after the ringing.
If $(\sft^*, i^*)$ is not an $\argmax$ of $|\sfy_{i,\tau-}(\sft)-\sfy'_{i,\tau-}(\sft)|$ for $\sft\in\qq{0,\sfr}$ and $i\in\qq{0,m+1}$, then the maximum difference is obviously non-increasing at the instant $\tau$. Hence, below we assume that $|\sfy_{i,\tau-}(\sft)-\sfy'_{i,\tau-}(\sft)|$ achieves maximum at $(\sft^*, i^*)$.

Without loss of generality, we assume that $\sfy_{i^*,\tau-}(\sft^*)-\sfy'_{i^*,\tau-}(\sft^*)\ge 0$ and $e=1$.
It suffices to prove that the following scenario is impossible: $\sfy_{i^*,\tau}(\sft^*) = \sfy_{i^*,\tau-}(\sft^*)+1$ 
    and
    $\sfy'_{i^*,\tau}(\sft^*)
    = \sfy'_{i^*,\tau-}(\sft^*)$. 
    Assume the contrary, there are two cases: 
\begin{itemize}[leftmargin=*]
    \item[(i)] $\sfy'_{i^*,\tau-}(\sft^*+1)=\sfy_{i^*,\tau-}'(\sft^*)$ or $\sfy'_{i^*,\tau-}(\sft^*-1)=\sfy'_{i^*}(\sft^*)-1$. Then, we have $\sfy_{i^*,\tau-}(\sft^*+1)-\sfy'_{i^*,\tau-}(\sft^*+1)>\sfy_{i^*,\tau-}(\sft^*)-\sfy_{i^*,\tau-}'(\sft^*)$ or $\sfy_{i^*,\tau-}(\sft^*-1)-\sfy'_{i^*,\tau-}(\sft^*-1)>\sfy_{i^*,\tau-}(\sft^*)-\sfy_{i^*,\tau-}'(\sft^*)$, because we must have $\sfy_{i^*,\tau-}(\sft^*+1)=\sfy_{i^*,\tau-}(\sft^*)+1$ and $\sfy_{i^*,\tau-}(\sft^*)=\sfy_{i^*,\tau-}(\sft^*-1)$ in order for the update $\sfy_{i^*,\tau}(\sft^*) = \sfy_{i^*,\tau-}(\sft^*)+1$ to be permissible.
    This contradicts the assumption that $(\sft^*, i^*)$ is an $\argmax$ of the difference.
    \item[(ii)] $\sfy'_{i^*,\tau-}(\sft^*+1)=\sfy'_{i^*,\tau-}(\sft^*)+1$ and  $\sfy'_{i^*,\tau-}(\sft^*-1)=\sfy'_{i^*,\tau-}(\sft^*)$. In this case, since we have assumed that $\sfy'_{i^*,\tau}(\sft^*)=\sfy'_{i^*,\tau-}(\sft^*)$, i.e., the attempt to set $\sfy'_{i^*,\tau}(\sft^*)=\sfy'_{i^*,\tau-}(\sft^*)+e$ fails, we must have $\sfy'_{i^*+1,\tau-}(\sft^*)=\sfy'_{i^*,\tau-}(\sft^*)+1$. Moreover, since we have assumed that $\sfy_{i^*,\tau}(\sft^*) = \sfy_{i^*,\tau-}(\sft^*)+1$, we must have \[\sfy_{i^*+1,\tau-}(\sft^*)= \sfy_{i^*+1,\tau}(\sft^*) \ge \sfy_{i^*,\tau}(\sft^*)+1 = \sfy_{i^*,\tau-}(\sft^*)+2.\]
    This leads to $\sfy_{i^*+1,\tau-}(\sft^*)-\sfy'_{i^*+1,\tau-}(\sft^*)>\sfy_{i^*,\tau-}(\sft^*)-\sfy'_{i^*,\tau-}(\sft^*)$, which again contradicts the assumption that $(\sft^*, i^*)$ is an $\argmax$ of the difference.
\end{itemize}
Putting these cases together yields the first key fact, thereby the conclusion follows.
\end{proof}
We will also use the following version of monotonicity, in terms of the height function of tiling. For this purpose, we define uniformly random tilings on general subsets of $\R^2$, but with given boundary functions, in the sense of a uniformly chosen height function.

\begin{definition}  \label{defn:uhf}
Take any compact set $\sfR\subset\R^2$ with piecewise smooth boundary, and a function $\sfh:\partial\sfR\to\R$. If there exists a tileable domain $\sfR_+$ containing $\sfR$, and a tiling of $\sfR_+$ whose height function on $\partial \sfR$ equals $\sfh$, we call $\sfh$ a \emph{plausible boundary height function of $\sfR$}. 
In this case, there must be finitely many such height functions of $\sfR_+$, and for a uniformly chosen one, we call its restriction to $\sfR$ the \emph{uniformly random height function of $\sfR$ with boundary $\sfh$}. By the Gibbs property in \Cref{lem:Gibbstiling} below, it is straightforward to check that this uniformly chosen height function is independent of the choice of $\sfR_+$.
\end{definition}
 \begin{lem}[\protect{\cite[Lemma 18]{LSRT}}]  \label{lem:hfmon}
 Consider a compact set $\sfR_1\subset\R^2$ with piecewise smooth boundary, and its translation $\sfR_2=\sfR_1+v_0$ for some $v_0\in\R^2$.
 Take plausible boundary height functions $\sfh_1:\partial\sfR_1\to\R$ and $\sfh_2:\partial\sfR_2\to\R$.
 Let $\sfH_1$ and $\sfH_2$ be uniformly random height functions of $\sfR_1$ and $\sfR_2$ with boundaries $\sfh_1$ and $\sfh_2$, respectively.
 If $\sfh_1\le \sfh_2(\cdot+v_0)$, then there exists a coupling between $\sfH_1$ and $\sfH_2$, such that $\sfH_1\le \sfH_2(\cdot+v_0)$ almost surely.
 \end{lem}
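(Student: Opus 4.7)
The plan is to follow the Glauber-dynamics coupling scheme used in the proof of \Cref{comparewalks}, but on the space of height functions rather than on non-intersecting paths. First, since $\sfR_2$ is a lattice-preserving translate of $\sfR_1$, I would pull everything back and reduce to the setting $\sfR_1 = \sfR_2 = \sfR$ with plausible boundary data $\sfh_1 \le \sfh_2$ on $\partial \sfR$, aiming for $\sfH_1 \le \sfH_2$ pointwise on $\sfR$. Using \Cref{defn:uhf}, I would further choose a single tileable domain $\sfR_+ \supset \sfR$ together with a fixed plausible boundary $\sfh_+$ on $\partial \sfR_+$ consistent with both $\sfh_1$ and $\sfh_2$, so that each $\sfH_i$ may be realized as the restriction to $\sfR$ of a uniformly random valid height function on $\sfR_+$ with outer boundary $\sfh_+$ and inner boundary $\sfh_i$ on $\partial\sfR$. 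The independence of the $\sfR$-marginal from the choice of $\sfR_+$, noted after \Cref{defn:uhf}, justifies this.

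Second, I set up a continuous-time Glauber dynamics on the finite set of valid height functions on $\sfR_+$ with the fixed outer boundary $\sfh_+$. At each interior vertex $v$ there are two independent rate-$1$ Poisson clocks: one attempting the elementary flip $\sfH(v)\mapsto\sfH(v)+1$, the other $\sfH(v)\mapsto\sfH(v)-1$. Each attempt is accepted only when the resulting function remains a valid height function, equivalently only when the edge differences incident to $v$ stay in their admissible sets $\{0,1\}$ or $\{-1,0\}$ prescribed in \Cref{ssec:hf}. This Markov chain is irreducible (elementary flips connect any two valid height functions with the same boundary) and its unique invariant measure is the uniform distribution, so its law at time $\tau$ converges to the uniform law as $\tau\to\infty$. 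Run two copies $\sfH_{1,\tau}$ and $\sfH_{2,\tau}$ with boundary values $\sfh_1$ and $\sfh_2$ on $\partial\sfR$ respectively, each initialized at the pointwise-minimal valid configuration in its state space, so that $\sfH_{1,0}\le\sfH_{2,0}$ by the elementary fact that the pointwise-minimal valid extension is monotone non-decreasing in its boundary data.

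Third, drive the two chains by common clocks and verify by induction over ringing events that the ordering $\sfH_{1,\tau}\le\sfH_{2,\tau}$ is preserved. The only delicate case arises when a clock rings at a vertex $v$ with $\sfH_{1,\tau-}(v)=\sfH_{2,\tau-}(v)$: for an up-move I must show that acceptance by the $\sfH_1$ chain forces acceptance by the $\sfH_2$ chain, with the dual assertion for down-moves. This reduces to the observation that, with the neighbor heights fixed, the admissible interval $[\sfH_{\min}(v),\sfH_{\max}(v)]$ at $v$ is a monotone non-decreasing function of the neighbor heights, since every active constraint is of the form $\sfH(v)\in[\sfH(u)+a_{uv},\sfH(u)+b_{uv}]$ for edge-determined constants $a_{uv},b_{uv}\in\{-1,0,1\}$. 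Hence $\sfH_{1,\tau-}(u)\le\sfH_{2,\tau-}(u)$ at every neighbor $u$ of $v$ gives $\sfH_{1,\max}(v)\le\sfH_{2,\max}(v)$ and $\sfH_{1,\min}(v)\le\sfH_{2,\min}(v)$, which combined with $\sfH_{1,\tau-}(v)=\sfH_{2,\tau-}(v)$ propagates acceptance in both directions. Sending $\tau\to\infty$ then yields the desired coupling.

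The main obstacle is the monotone-in-neighbors property of the local admissibility interval; once that short edge-difference calculation is carried out for the triangular lattice, the rest of the argument is a direct analogue of the Glauber coupling in the proof of \Cref{comparewalks} and the conclusion follows in the same manner.
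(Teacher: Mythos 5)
Your core argument is correct and is essentially the standard one: the paper itself gives no proof of this lemma (it is quoted from \cite[Lemma 18]{LSRT} with the remark that the domino-tiling argument carries over verbatim), and the monotone Glauber coupling you describe is exactly that argument, run on height functions instead of on the Bernoulli paths of \Cref{comparewalks}. Your treatment of the key step is right: since both height functions are integer-valued and moves are $\pm 1$, the ordering can only be threatened when the two chains agree at the ringing vertex, and there the monotone dependence of the admissible interval $[\max_u(\sfH(u)+c_{uv}),\min_u(\sfH(u)+c_{uv}+1)]$ on the neighbour values forces acceptance to propagate in the correct direction for up-moves and down-moves. The initialization at the pointwise-minimal extensions is also fine, via the directed-distance formula for the minimal extension, which is monotone in the boundary data.

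Two points deserve repair, though neither touches the heart of the coupling. First, your reduction to a single tileable domain $\sfR_+$ with one outer boundary height function $\sfh_+$ compatible with \emph{both} inner boundaries $\sfh_1$ and $\sfh_2$ is not justified and is in fact delicate: compatibility of $(\sfh_+,\sfh_i)$ is a two-sided Lipschitz-type condition with respect to the directed lattice distance, and lowering the inner data from $\sfh_2$ to $\sfh_1$ relaxes one side but tightens the other, so a common $\sfh_+$ need not exist for an arbitrary choice. This step is also unnecessary: as in the proof of \Cref{comparewalks}, you can run the two chains on their own (different) state spaces — height functions with boundary data $\sfh_1$, respectively $\sfh_2$ — driven by common clocks, and the ordering argument only uses the ordering of the two configurations, not equality of their outer boundary data; the identification of the stationary law with the uniformly random height function of \Cref{defn:uhf} then comes from \Cref{lem:Gibbstiling}. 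Second, the statement allows an arbitrary translation $v_0\in\R^2$, whereas your pull-back step silently assumes $v_0$ is a lattice vector (as it is, up to rounding, in the paper's application); for a non-lattice shift the two surfaces live on shifted copies of $\mathbb T$ and a common-lattice Glauber coupling does not literally apply, so this case should either be excluded by how the lemma is used or handled by an extra approximation argument. With these adjustments your proof goes through and coincides with the cited one in spirit.
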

We note that \cite[Lemma 18]{LSRT} is proved in the setting of random domino tiling, but the arguments carry over to lozenge tiling verbatim.

Finally, we record the Gibbs property for uniformly random tilings here, for the convenience of later reference. It is directly implied by the definition of uniformly random tilings.
\begin{lem}  \label{lem:Gibbstiling}
Take compact sets $\sfR, \sfR'\subset \R^2$ with piecewise smooth boundaries,
such that $\sfR\subset\sfR'$. Take plausible boundary height functions $\sfh:\partial \sfR\to\R$ and $\sfh':\partial\sfR'\to\R$, and let $\sfH$ and $\sfH'$ be uniformly random height functions of $\sfR$ and $\sfR'$ with boundaries $\sfh$ and $\sfh'$, respectively.
Consider the event where the restriction of $\sfH'$ on $\partial\sfR$ equals $\sfh$.
Suppose that this event happens with positive probability. Then, conditioning on this event, the restriction of $\sfH'$ on $\sfR$ has the same distribution as $\sfH$.
\end{lem}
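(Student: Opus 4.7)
The plan is to reduce both $\sfH$ and $\sfH'$ to restrictions of a uniformly random height function on a single large tileable domain, and then establish a gluing bijection that makes the product structure of the uniform measure manifest. Since $\sfh'$ is plausible and the event $\{\sfH'|_{\partial\sfR}=\sfh\}$ has positive probability, I can choose a tileable domain $\sfR_+$ containing $\sfR'$ together with at least one height function $\sfH_+^\circ$ on $\sfR_+$ whose restrictions to $\partial\sfR'$ and $\partial\sfR$ equal $\sfh'$ and $\sfh$, respectively. Fix such an $\sfR_+$ and set $\sfh_+ := \sfH_+^\circ|_{\partial\sfR_+}$; every height function of $\sfR_+$ then agrees with $\sfh_+$ on $\partial\sfR_+$ up to a global shift, which I would fix once and for all. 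Via \Cref{defn:uhf}, the distribution of $\sfH'$ coincides with the restriction to $\sfR'$ of a uniformly chosen height function on $\sfR_+$ conditioned to equal $\sfh'$ on $\partial\sfR'$, and analogously for $\sfH$ with $\sfh$ on $\partial\sfR$.

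The heart of the argument is the following gluing bijection. Let $T_{\sfh',\sfh}$ denote the set of height functions on $\sfR_+$ with boundary values $\sfh_+$, $\sfh'$, $\sfh$ on $\partial\sfR_+$, $\partial\sfR'$, $\partial\sfR$, respectively. I claim that $T_{\sfh',\sfh}$ is naturally in bijection with the Cartesian product $A\times B\times C$, where $A$ is the set of height functions on $\sfR$ with boundary $\sfh$; $B$ is the set of height functions on $\overline{\sfR'\setminus\sfR}$ with boundaries $\sfh$ and $\sfh'$; and $C$ is the set of height functions on $\overline{\sfR_+\setminus\sfR'}$ with boundaries $\sfh'$ and $\sfh_+$. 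In one direction the map is simultaneous restriction; in the other, one glues three compatible pieces into a single function on $\sfR_+$. What needs verification is that the glued function is a valid tiling height function, i.e., that the local rules recalled in \Cref{ssec:hf} continue to hold across the seams $\partial\sfR$ and $\partial\sfR'$. This is purely local: on edges entirely inside one of the three pieces the rules hold by hypothesis, and at any edge straddling a seam the two adjacent pieces share the same boundary values there, so local validity reduces to the combinatorial fact that a tiling of a small neighborhood across the seam with these boundary values exists, which is witnessed by the reference tiling $\sfH_+^\circ$.

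Given the bijection, the uniform measure on $T_{\sfh',\sfh}$ factors as a product of uniform measures on $A$, $B$, and $C$; in particular its marginal on the first factor is uniform on $A$. Hence the conditional distribution of $\sfH'|_{\sfR}$ given $\sfH'|_{\partial\sfR}=\sfh$ is uniform on height functions of $\sfR$ with boundary $\sfh$. Applying the analogous two-factor decomposition to the set $T_\sfh$ of height functions on $\sfR_+$ with outer boundary $\sfh_+$ and inner boundary $\sfh$ (written as $A\times D$, where $D$ is the set of height functions on $\overline{\sfR_+\setminus\sfR}$ with the induced boundary values) shows that $\sfH$ is also uniform on $A$, completing the proof. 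The step I expect to be the main obstacle is the gluing verification in the second paragraph: one must check carefully that pieces agreeing along the seams $\partial\sfR$ and $\partial\sfR'$ assemble into a function satisfying the height-function rules at every lattice edge, which rests on the purely local nature of those rules together with the existence of the reference tiling $\sfH_+^\circ$ providing the required compatibility of seam data.
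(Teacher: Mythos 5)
Your overall strategy---pass to one ambient tileable domain, then factor the set of admissible configurations into inside/annulus/outside pieces so that the uniform measure becomes a product and its first marginal is uniform---is exactly the standard way to make precise what the paper simply asserts (it offers no proof, calling the lemma ``directly implied by the definition''). In the case where $\partial \sfR$ and $\partial\sfR'$ are unions of lattice edges your argument is essentially complete: validity of a height function is checked edge by edge within each piece, so any triple agreeing on the seams glues automatically (no reference tiling is needed for this), and the product structure gives the claim. Your closing two-factor decomposition also quietly handles the ambient-domain ambiguity in \Cref{defn:uhf}, which the paper itself justifies by this very lemma; to avoid circularity you should run that part for an arbitrary ambient domain for $\sfH$ rather than taking it equal to your fixed $\sfR_+$.

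The genuine gap is in the gluing verification, precisely at the step you flagged. The lemma allows $\sfR,\sfR'$ to be arbitrary compact sets with piecewise smooth boundary, so the seams may cut through the interiors of triangular faces (this actually occurs in the paper's application, where $\mathsf{D}=n\mathfrak{D}$ need not be a union of faces). On a face $F$ crossed by $\partial\sfR$, the inside piece determines an admissible gradient of the height function on $F\cap\sfR$ and the outside piece one on $F\setminus\sfR$; the glued function is a height function only if it is linear on all of $F$, i.e.\ only if these gradients coincide. Agreement of both pieces with $\sfh$ on $F\cap\partial\sfR$ does not force this: the admissible gradients are $(0,0)$, $(1,0)$, $(1,-1)$, and two of them restrict identically to any chord parallel to a lattice direction (for instance $(1,0)$ and $(1,-1)$ agree along a horizontal chord, $(0,0)$ and $(1,0)$ along a vertical one), so a pair of pieces can match on the seam and still fail to glue. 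Hence the restriction map into $A\times B\times C$ is injective but in general not onto, the fibre count over $a\in A$ need not be constant, and ``uniform on $A$'' does not follow; the existence of the reference tiling $\sfH_+^{\circ}$ only shows the seam data is consistent with \emph{some} configuration, not that an \emph{arbitrary} compatible pair assembles. To close the gap you must either restrict to seams that run along lattice edges (where the edge-local characterization makes gluing immediate), or condition on richer seam data---e.g.\ the restriction of the height function to a one-face-thick neighborhood of $\partial\sfR$, equivalently the tiles of all faces meeting $\partial\sfR$---and phrase the conclusion as equality of the two conditional laws rather than uniformity over $A$.
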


\section{Tiling cusp universality: proof of \Cref{thm:univ}}
\label{sec:univproof}

In this section, we present the main steps for the proof of \Cref{thm:univ} as several lemmas and deduce  \Cref{thm:univ} from them.
The proofs of these lemmas will be given in subsequent sections.

\subsection*{Basic Setup}
Take any rational polygonal set $\fP$ satisfying \Cref{pa}, and recall that its liquid region and arctic curve are denoted by $\fL$ and $\fA$, respectively. Take a cusp point $(x_c, t_c) \in \fA$.
Let $n$ be any large enough integer such that $n\fP$ is a tileable domain.
As in \Cref{thm:univ}, by rotating $\fP$ if necessary, we assume that $(x_c, t_c)$ is upward oriented in the sense of \Cref{sr}, with curvature parameters $\fr,\fq$.
In this section, all the constants (including those implicitly used in $\lesssim,\gtrsim,\asymp,\OO$) can depend on $\fP$.

As indicated in the introduction, we will compare paths from tiling and NBRW in a region around $(x_c, t_c)$.
More precisely, we denote $\Delta t=n^{-\omega}$ for some constant $\omega\in(0,1/2)$.
Then we take $t_0<t_c<t_1$, such that $t_0, t_1\in n^{-1}\Z$, $t_c-t_0, t_1-t_c\asymp \Delta t$.
Take a small constant $\fc>0$.
We are mainly interested in the region $[x_c-\fc, x_c+\fc]\times [t_0, t_1]$, where $\fA$ contains two analytic pieces $\{(E_{-}(t),t): t_0\le t\le t_c\}$ and $\{(E_{+}(t),t): t_0\le t\le t_c\}$, with $E_-(t)<x_c+(t-t_c)/\fr<E_+(t)$ and $x_c+(t-t_c)/\fr-E_-(t), E_+(t)-x_c-(t-t_c)/\fr\asymp (t_c-t)^{3/2}$ for each $t\in [t_0, t_c]$.
Moreover, as pointed out in \Cref{sr}, we have $\fr=(f^*_{t_c}(x_c)+1)/f^*_{t_c}(x_c)$.
Then, we have $f^*_{t_c}(x_c)\in (0, \infty)$, implying that $\nabla H^*(x_c, t_c)=(0,0)$ by \eqref{fh}.
Therefore, we can assume that $H^*(x,t)=0$ for all $(x,t)$ in the frozen region with $t_0 \le t \le t_c$ and $E_-(t)\le x \le E_+(t)$.

\subsection{Tiling path estimates}
We next present estimates of the paths associated with tiling.
Let $\sfH:n\fP\to\R$ be the height function of the uniformly random tiling, satisfying $\sfH(nv)=nH^*(v)$ for each $v\in\partial \fP$.
We then consider a (random) family of non-intersecting Bernoulli paths as in \Cref{ssec:nbp}: for each $i, \sft\in\Z$, we define $\sfq_i(\sft)$ to be the number satisfying
$\sfH(\sfq_i(\sft), \sft)=i$ and $\sfH(\sfq_i(\sft)+1, \sft)=i+1$,
if such a number exists. 

The typical locations of the paths are deterministic numbers given by the quantiles of $H^*$, as follows.
Let $\fc>0$ be a small enough constant depending on $\fP$ and $(x_c, t_c)$.
Take $M, N\in\N$ such that 
\[\qq{-M, N}=\{i\in \Z: H^*(x_c-\fc,t_0) \le H^*(x_c,t_c)+i/n < H^*(x_c+\fc,t_0)\}.
\]
For each $t \in [t_0, t_1]$ and $i\in\qq{-M, N}$, we let 
\begin{equation}\label{eq:defgammai}
    \gamma_i(t) = \sup\{x: (x,t)\in \fP, H^*(x,t)-H^*(x_c,t_c)=i/n\}.
\end{equation}
In particular, notice that $\gamma_0(t)=E_+(t)$ when $t\le t_c$.
We have the following estimates on $\gamma_i$.
\begin{lem}  \label{lem:gammaies}
For any $i\in\N$, if $1\le i\lesssim \Delta t^2 \lsca$, we have
\begin{align}\label{e:gammailoc1}
\qut_i(t_0) - \Ep(t_0) \asymp \Delta t^{1/6} (i/\lsca)^{2/3}, \quad \Em(t_0) - \qut_{-i}(t_0) \asymp \Delta t^{1/6} (i/\lsca)^{2/3}.
\end{align}
If $i\ge C\Delta t^2 \lsca$ for a large enough constant $C>0$, we have that  for any $t\in [t_0, t_1]$, 
\begin{align}\label{e:gammailoc2}
\qut_i(t) - (x_c+(t-t_c)/\fr) \asymp (i/\lsca)^{3/4}, \ \ \ i\le N; \quad
(x_c+(t-t_c)/\fr) - \qut_{-i}(t) \asymp (i/\lsca)^{3/4},\ \ \ i\le M.
\end{align}
\end{lem}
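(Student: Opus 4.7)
The plan is to derive these estimates from a local analysis of the limiting complex slope $f^*$ near the cusp, leveraging the complex Burgers equation \eqref{ftx} together with the cusp structure from \Cref{pla}. The key input, obtainable by the method of characteristics (or the classification in \cite{DMCS}), is a local cubic expansion: with $\tau = t_c - t$ and $\xi = x - x_c - (t-t_c)/\fr$, and $f_c = f^*_{t_c}(x_c)\in(0,\infty)$,
\[
(f^*-f_c)^3 \;=\; A\tau\,(f^*-f_c) + B\xi + (\text{higher-order corrections}),
\]
for some $A,B>0$ determined by the cusp geometry. Matching the discriminant locus of this cubic to the description \eqref{xyql} of the arctic boundary fixes $A^{3/2}/B$ in terms of $\fq$, so that the two real double-root loci for $\tau>0$ reproduce $E_\pm(t)$. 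The density is then $\rho(x,t)=\partial_x H^*(x,t) = -\arg^* f^*(x,t)/\pi$, computed by selecting the unique root in $\HH^-$.

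For the far-field regime $i \ge C\Delta t^2\lsca$, we have $(i/\lsca)^{3/4}\gg \Delta t^{3/2}$, so the target $\gamma_i$ sits at $|\xi|\gg |\tau|^{3/2}$. There the cubic is dominated by $(f^*-f_c)^3\approx B\xi$; picking the $\HH^-$ branch yields $\Im f^* \asymp -|\xi|^{1/3}$ and hence $\rho(x,t) \asymp |\xi|^{1/3}/f_c$. Integrating from the frozen-region reference and using $H^*(x_c,t_c)=0$, we obtain $H^*(\gamma,t) - H^*(x_c,t_c) \asymp |\gamma - x_c - (t-t_c)/\fr|^{4/3}$; solving for $\gamma$ yields \eqref{e:gammailoc2}, uniformly in $t\in[t_0,t_1]$ (the analysis is structurally identical on both sides of $t_c$, since for $\tau<0$ the single real root and the complex conjugate pair have the same cube-root dominant behavior at $|\xi|\gg|\tau|^{3/2}$).

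For the near-field regime $1\le i\lesssim \Delta t^2\lsca$ at $t=t_0$, we have $s:=\gamma_i(t_0)-E_+(t_0) \ll \tau^{3/2}\asymp \Delta t^{3/2}$, placing us near the double-root edge $f^* = f_c - \sqrt{A\tau/3}$. Perturbing $f^* - f_c = -\sqrt{A\tau/3} + \epsilon$ and using $\xi = E_+(t_0) - x_c - (t_0-t_c)/\fr + s$ reduces the cubic to $\epsilon^2(\epsilon-3\sqrt{A\tau/3}) = Bs$; for small $\epsilon$ the $\HH^-$ branch satisfies $\epsilon \asymp -\mathrm{i}\sqrt{s}/\tau^{1/4}$. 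Hence $\rho(x,t_0) \asymp \sqrt{s}/\tau^{1/4}$. Because $H^*(E_+(t_0),t_0)=H^*(x_c,t_c)$ (both lie in the same frozen component with height $0$), integrating in $x$ from $E_+(t_0)$ gives $H^*(\gamma_i(t_0),t_0) - H^*(x_c,t_c) \asymp s^{3/2}/\tau^{1/4}$; inverting yields $s\asymp (i/\lsca)^{2/3}\tau^{1/6}\asymp (i/\lsca)^{2/3}\Delta t^{1/6}$, which is \eqref{e:gammailoc1}. The corresponding estimate for $\gamma_{-i}$ near $E_-(t_0)$ follows by the same perturbation around the other double root $f_c + \sqrt{A\tau/3}$.

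The main obstacle will be rigorously justifying the cubic normal form with error terms controlled uniformly over the two regimes: one must promote the formal expansion to a genuine analytic statement valid up to $|\tau|, |\xi| \lesssim 1$, and then carefully track the perturbative expansion of the $\HH^-$ branch all the way from $|\xi|\ll|\tau|^{3/2}$ (edge scaling) through $|\xi|\gg|\tau|^{3/2}$ (bulk cube-root scaling). This can be handled by analytic implicit function arguments applied to the local normal form of an ordinary cusp, together with a matching argument identifying the leading coefficients with $(\fq,\fr)$ via \eqref{xyql}; once these ingredients are in place, both bounds follow from direct integration of the density.
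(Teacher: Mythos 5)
Your overall strategy is the same as the paper's: a cubic local normal form for the complex slope near the cusp (the paper's Lemma \ref{lem:eq_around_cusp2}, obtained from the function $Q$ in \Cref{xhh2} and the Burgers equation rather than directly from \eqref{ftx}), from which one reads off the square-root edge density near $E_\pm(t_0)$ and the cube-root density $\rho_t^*\asymp|x-c(t)|^{1/3}$ in the far field (\Cref{p:rhot}), and then integrates and inverts. The near-field bound \eqref{e:gammailoc1} and the far-field bound for $t\le t_c$ are handled correctly this way.

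However, there is a genuine gap in your treatment of \eqref{e:gammailoc2} for $t\in(t_c,t_1]$. The quantiles are anchored by the absolute normalization $H^*(\gamma_i(t),t)=i/n$ with $H^*(x_c,t_c)=0$, so what you need is $\int_{\gamma_0(t)}^{\gamma_i(t)}\rho_t^*=i/n$ \emph{together with} control on where the zero-level quantile $\gamma_0(t)$ sits. For $t\le t_c$ this is free, since $\gamma_0(t)=E_+(t)$ and $E_+(t)-c(t)\asymp(t_c-t)^{3/2}\ll(i/\lsca)^{3/4}$. For $t>t_c$ there is no frozen region at time $t$, and your step ``integrating from the frozen-region reference \ldots\ gives $H^*(\gamma,t)-H^*(x_c,t_c)\asymp|\gamma-c(t)|^{4/3}$'' assumes implicitly that the $H^*=0$ level at time $t$ stays within $\oo\big((i/\lsca)^{3/4}\big)$ of the line $c(t)$; this does not follow from the density estimate $\rho_t^*\asymp(t-t_c)^{1/2}\vee|x-c(t)|^{1/3}$ alone, nor from a naive time-integration of $\partial_t H^*$ (the error terms there are of size $\Delta t^{4/3}$, which can dominate $(i/\lsca)$ when $(i/\lsca)^{3/4}\ll\Delta t$). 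The paper closes exactly this hole with \Cref{lem:gamma0}, showing $|\gamma_0(t)-c(t)|\lesssim(t-t_c)^{3/2}$; its proof is not a formality: it uses the quantile evolution ODE $\gamma_0'(t)=\arg^*[f_t(\gamma_0(t))+1]/\arg^*[f_t(\gamma_0(t))]$ together with the cubic formula and a continuity/bootstrap argument giving $|w_t(\gamma_0(t))-\fr^{-1}|\lesssim(t-t_c)^{1/2}$. You would need to add an argument of this type (or an equivalent control of the level set) before the inversion step for $t>t_c$ is legitimate.
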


We next give the estimate on the fluctuations of the tiling paths around these quantiles. 

\begin{lem}   \label{lem:sfqes}
For an arbitrarily small constant $\fd>0$,
with overwhelming probability, we have
\begin{align}\label{e:concentrate1}
&\sfq_{0}(nt_0)/n-E_+(t_0),\ \sfq_{-1}(nt_0)/n-E_-(t_0) \lesssim n^{-2/3+\fd}\Delta t^{1/6},\\
\label{e:concentrate2}
&|\{i \in \llbracket -M, N\rrbracket: \sfq_i(nt_0)<x\lsca\}| - |\{i \in \llbracket -M, N\rrbracket: \qut_i(t_0)<x\}|  \lesssim n^{\fd},
\end{align}
 uniformly for all $x\in \bR$.
Take a constant $ \delta \in (0, \omega/2)$, and let $L=\lceil n^{1+\delta}\Delta t^2\rceil$.
When $\fd$ is small enough (depending on $\omega$ and $\delta$), the following estimates hold with overwhelming probability:
\begin{align}\label{e:concentrate3}
 \sfq_{L}(nt)/n-\gamma_L(t), \ \sfq_{-L}(nt)/n-\gamma_{-L}(t)  \lesssim n^{-3/4-\fd},\quad &\forall t \in [t_0, t_1]\cap n^{-1}\Z,\\
\label{e:concentrate4}
 \sfq_{i}(nt_1)/n-\gamma_i(t_1) \lesssim n^{-3/4-\fd},\quad &\forall i\in\qq{-L, L}.
\end{align}
\end{lem}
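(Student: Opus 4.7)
The plan is to adapt the optimal Airy-type rigidity argument of \cite{aggarwal2021edge}, proved at smooth points of the arctic curve, to a neighborhood of the cusp $(x_c,t_c)$. The guiding observation is that although the arctic curve is singular at the cusp, at time $t_0$ with $t_c-t_0 \asymp \Delta t = n^{-\omega}$ and $\omega<1/2$, the two local branches $E_\pm(t_0)$ are still smooth and separated by a macroscopic distance $\asymp \Delta t^{3/2}$. Thus locally we remain in the smooth regime, but with an effective curvature that degenerates as $\Delta t\to 0$, which is precisely the source of the factor $\Delta t^{1/6}$ in \eqref{e:concentrate1}. Throughout, I will work with \emph{index-level} rigidity (on the cumulative number of paths below a given height) and then translate it to spatial rigidity via the quantile spacing from \Cref{lem:gammaies}.

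For \eqref{e:concentrate1} and \eqref{e:concentrate2}, I will first apply a local version of the arguments of \cite{aggarwal2021edge} in two windows, one around each smooth branch $E_\pm(t_0)$, to obtain index-level rigidity with error $n^\fd$, which is exactly \eqref{e:concentrate2}. The conversion to the spatial bound \eqref{e:concentrate1} is then immediate from $\gamma_i(t_0) - E_+(t_0) \asymp \Delta t^{1/6}(i/n)^{2/3}$ in \Cref{lem:gammaies}: the top $n^\fd$ quantiles above $E_+(t_0)$ span a spatial window of size $\Delta t^{1/6}(n^\fd/n)^{2/3}$, and after absorbing the factor of $2/3$ into $\fd$ this yields $n^{-2/3+\fd}\Delta t^{1/6}$.

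For \eqref{e:concentrate3} and \eqref{e:concentrate4}, the indices of interest satisfy $|i|=L\asymp n^{1+\delta}\Delta t^2\gg n^{1/2}$, and by \Cref{lem:gammaies} the corresponding paths are deep in the bulk at distance $(L/n)^{3/4}\gg n^{-3\omega/2}$ from $x_c+(t-t_c)/\fr$. The target rate $n^{-3/4-\fd}$ is the natural bulk rigidity scale, much finer than the edge scale. I plan to obtain it via a second application of the \cite{aggarwal2021edge}-style bulk arguments on a carved-out sub-domain around the cusp, using the edge rigidity from the first step to control the boundary data through the monotonicity lemmas \Cref{comparewalks} and \Cref{lem:hfmon}. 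The required uniformity in $t\in[t_0,t_1]\cap n^{-1}\Z$ and $i\in\qq{-L,L}$ will follow from a union bound over a polynomial-size net combined with Lipschitz continuity of both $\sfq_i$ and $\gamma_i$.

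The main obstacle, relative to \cite{aggarwal2021edge}, is the uniformity of the edge rigidity constants in the distance-to-cusp parameter $\Delta t$. The auxiliary line ensembles used in that work to sandwich the tiling paths have rigidity constants depending on the local arctic curvature, which degenerates as we approach the cusp. I expect the main effort to go into constructing comparison ensembles, at each $t\in[t_0,t_1]$, whose rigidity scales correctly with $\Delta t$, so that after propagating through the monotonicity lemmas \Cref{comparewalks} and \Cref{lem:hfmon} on an appropriately chosen sub-domain (whose boundary data is controlled through \Cref{hzh} and the explicit form of $\nabla H^*$ near the cusp), the cumulative error stays below the target $n^{-2/3+\fd}\Delta t^{1/6}$. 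Getting this quantitative propagation tight, and uniform in $t$ throughout the window $[t_0,t_1]$, is the technically delicate step.
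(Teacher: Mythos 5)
Your overall skeleton (index-level rigidity at scale $n^{\fd}$ plus quantile-spacing conversions, with a cusp-adapted concentration estimate as the hard input) is in the right spirit, but two concrete steps do not work as stated. First, \eqref{e:concentrate1} is \emph{not} an immediate consequence of \eqref{e:concentrate2} and \Cref{lem:gammaies}: that conversion only bounds how far $\sfq_0(nt_0)/n$ can sit \emph{above} $E_+(t_0)$ (namely by $\gamma_{\lfloor Sn^{\fd}\rfloor}(t_0)-E_+(t_0)\lesssim n^{-2/3+\fd}\Delta t^{1/6}$). It says nothing about how far $\sfq_0$ can drop \emph{below} $E_+(t_0)$ into the frozen gap, where $H^*$ is constant and index-level rigidity carries no spatial information; the gap has width $E_+(t_0)-E_-(t_0)\asymp\Delta t^{3/2}=n^{-3\omega/2}$, which (since $\omega<1/2$) is much larger than the target $n^{-2/3+\fd}\Delta t^{1/6}$. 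The lower bound is exactly where the paper needs its new optimal cusp rigidity (\Cref{t:cusprigidity}, proved in \Cref{s:optimalR} by carving out a trapezoid, invoking the random-boundary estimate \Cref{p:rigidityBB} imported from \cite{huang2021edge}, and comparing via \Cref{lem:hfmon}): the statement that $\sfH=nH^*$ exactly on $[E_-(t)+(t_c-t)^{1/6}n^{\fd-2/3},\,E_+(t)-(t_c-t)^{1/6}n^{\fd-2/3}]$ forces $\sfq_0(nt_0)/n\ge E_+(t_0)-n^{-2/3+\fd}\Delta t^{1/6}$. Your closing paragraph gestures at constructing $\Delta t$-dependent comparison ensembles, which is the right direction, but the explicit deduction you give of \eqref{e:concentrate1} from \eqref{e:concentrate2} is incomplete, and this missing frozen-region input is the main content of the lemma.

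Second, for \eqref{e:concentrate3}--\eqref{e:concentrate4} your proposed ``second bulk application with boundary data fed through \Cref{comparewalks}/\Cref{lem:hfmon}'' is both unnecessary and would not reach the target rate: the monotone couplings propagate the \emph{sup-norm} boundary error, and the best available accuracy near the cusp edge at time $t_0$ is $\asymp n^{1/3+\fd-\omega/6}$ lattice units, which exceeds the target $n^{1/4-\fd}$ whenever $\omega<1/2$; moreover, the paths $\sfq_{\pm L}$ you are trying to control are precisely the natural lateral boundary of any carved sub-domain, so the scheme is circular. The paper instead gets \eqref{e:concentrate3}--\eqref{e:concentrate4} in one step from the global $n^{\fd}$ index rigidity (\Cref{t:rigidity}, quoted from \cite{huang2021edge}) together with the density lower bound near index $\pm L$: by \Cref{lem:gammaies} and Item 1 of \Cref{p:rhot}, $\rho^*_t(\gamma_{\pm L}(t))\asymp (L/n)^{1/4}$, so an index error of $n^{\fd}$ translates into a spatial error $\lesssim n^{\fd}/(n^{3/4}L^{1/4})\le n^{-3/4-\fd}$ once $\fd$ is small relative to $\delta$ (using $L=\lceil n^{1+\delta}\Delta t^2\rceil\gg n^{\delta}$). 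No sandwiching or new rigidity argument is needed for the bulk indices; the only place a new concentration estimate enters is the frozen-gap lower bound discussed above.
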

The proofs of the above two lemmas rely on careful analysis of the limiting height function $H^*$ through the \emph{complex slope}. 
The proof \Cref{lem:sfqes} also uses the optimal rigidity estimate of the height function $\sfH$ around cusps, to be presented in \Cref{s:optimalR}. The detailed proofs will be presented in \Cref{s:det}.

\subsection{Construction and estimates of NBRW}  \label{ssec:cnbrw}
To prove that the random paths associated with tiling around $(x_c,t_c)$ converge to the Pearcey process, our strategy is to compare them with a certain NBRW starting from time $nt_0$.

We consider the NBRW $\widetilde\sfQ=\{\widetilde\sfq_i\}_{i=-M}^N:\qq{nt_0, \infty}\to \Z^{\qq{-M,N}}$, with initial data $\widetilde\sfQ(nt_0)=\{\widetilde\sfq_i(nt_0)\}_{i=-M}^N=\{\sfq_i(nt_0)\}_{i=-M}^N$.
Next, we explain the procedure to choose the drift parameter $\beta$. 
For any time $t\in[t_0, t_1]$,
 we denote the density $\rho^*_t(x)=\del_x H^*(x,t)$, which is defined almost everywhere and is in $[0,1]$ since $H^*$ is admissible.
We can interpret $\rho^*_t(x)$ as approximately the density of paths (or equivalently, type 2 and type 3 lozenges) around $(nx,nt)$.
We denote
\begin{align}\label{eq:deftildem}
\widetilde\rho_{t_0}=\rho_{t_0}^*\don([\gamma_{-M}(t_0), \gamma_N(t_0)]),\quad \widetilde m_{t_0}(z)=\int \frac{\widetilde\rho_{t_0}(x)\rd x}{z-x},
\end{align}
which are the restriction of $\rho_{t_0}^*$ on $[\gamma_{-M}(t_0), \gamma_N(t_0)]$ and its Stieltjes transform.
Denote\begin{align}\label{e:defzc}
    z_c=x_c-(t_c-t_0)/\fr,
\end{align}
which is the intersection of the tangent line at the cusp with the line $t=t_0$.
We take $\beta$ to satisfy
\begin{align}\label{e:chosebeta}
\frac{\beta}{1-\beta}=\frac{f_{t_c}^*(x_c)}{e^{\widetilde m_{t_0}(z_c)}}.  
\end{align}
It turns out that, by choosing such a $\beta$, the limit shape of NBRW will have a cusp at $(\widetilde x_c,\widetilde t_c)$ that is close to $(x_c,t_c)$.
Here, $\widetilde x_c$ and $\widetilde t_c$ are determined by $\widetilde\rho_{t_0}$ and $\beta$, through \Cref{lem:xtzdef} below.
More precisely, let $\widetilde f_{t_0}(z)=e^{\widetilde m_{t_0}(z)}\beta/(1-\beta)$, we then take $\widetilde x_c\in\R$, $\widetilde t_c>t_0$, and $\widetilde z_c\in (E_-(t_0), E_+(t_0))$ as the solutions to the following system of equations:
\begin{equation} \label{eq:tildedef}
\widetilde x_c =\widetilde z_c+\widetilde t_c \frac{\widetilde f_{t_0}(\widetilde  z_c)}{\widetilde f_{t_0}(\widetilde z_c)+1},\quad 
 1+\widetilde  t_c \frac{\widetilde f_{t_0}'(\widetilde z_c)}{(\widetilde f_{t_0}(\widetilde  z_c)+1)^2}=0,\quad 
\widetilde f_{t_0}''(\widetilde z_c)-\frac{2\widetilde f_{t_0}'(\widetilde z_c)^2}{\widetilde f_{t_0}(\widetilde z_c)+1}=0.
\end{equation}
By \Cref{lem:xtzdef}, these numbers exist and are unique, and there is $\widetilde t_c-t_0\asymp \Delta t$.
\begin{lem}  \label{lem:cuspclos}
We have $\widetilde x_c - x_c, \widetilde t_c - t_c, \widetilde z_c - z_c \lesssim \Delta t^2$, and $\frac{\widetilde f_{t_0}(\widetilde  z_c)}{\widetilde f_{t_0}(\widetilde z_c)+1} - \fr^{-1} \lesssim \Delta t$. 
\end{lem}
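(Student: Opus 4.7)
The plan is to use $(x_c,t_c,z_c)$ as a near-solution for the NBRW cusp system \eqref{eq:tildedef}, then apply a quantitative implicit function theorem. Two ingredients drive the argument: an exact anchoring identity $\widetilde f_{t_0}(z_c)=f^*_{t_0}(z_c)$, and derivative comparisons showing that $(f^*_{t_0})^{(k)}(z_c)-\widetilde f_{t_0}^{(k)}(z_c)$ is smaller than the ambient scale $\asymp \Delta t^{-k}$ by a factor of $\Delta t$ for $k=1,2$.

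\textbf{Anchoring at $z_c$.} The complex Burgers equation (\Cref{fequation}) shows that $f^*$ is constant along characteristics of slope $\rd x/\rd t=f^*/(f^*+1)$; at the cusp the slope is $1/\fr$ by \Cref{sr}, so the characteristic through $(x_c,t_c)$ reaches time $t_0$ at $z_c=x_c-(t_c-t_0)/\fr$, giving $f^*_{t_0}(z_c)=f^*_{t_c}(x_c)$. Combined with the formula $\widetilde f_{t_0}(z)=e^{\widetilde m_{t_0}(z)}\beta/(1-\beta)$ and the choice of $\beta$ in \eqref{e:chosebeta}, this yields the identity $\widetilde f_{t_0}(z_c)=f^*_{t_0}(z_c)$. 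As a consequence, $(x_c,t_c,z_c)$ satisfies the analogue of \eqref{eq:tildedef} with $f^*_{t_0}$ in place of $\widetilde f_{t_0}$; this is the standard characterisation of an arctic-curve cusp as a triple critical point of the inverse characteristic map $z\mapsto z+(t-t_0)f^*_{t_0}(z)/(f^*_{t_0}(z)+1)$ (see \cite{LSCE,DMCS}). Reading off the second and third cusp equations gives the scalings $(f^*_{t_0})'(z_c)\asymp \Delta t^{-1}$ and $(f^*_{t_0})''(z_c)\asymp \Delta t^{-2}$.

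\textbf{Derivative comparison.} On the cut-off interval $(\gamma_{-M}(t_0),\gamma_N(t_0))$ the two densities $\widetilde\rho_{t_0}$ and $\rho^*_{t_0}$ coincide by construction, so the boundary arguments of $f^*_{t_0}$ and $\widetilde f_{t_0}$ match there. Consequently the ratio $r(z):=f^*_{t_0}(z)/\widetilde f_{t_0}(z)$ (with $\widetilde f_{t_0}$ never vanishing since it is an exponential) extends by Schwarz reflection to a holomorphic function on an $\asymp 1$-neighborhood of this interval, on which $|r|$ is uniformly $\OO(1)$. Cauchy's estimates then yield $|r^{(k)}(z_c)|\lesssim 1$ for $k=0,1,2$. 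Since $r(z_c)=1$ by the previous step, expanding $f^*_{t_0}=\widetilde f_{t_0}\cdot r$ at $z_c$ gives
\[
(f^*_{t_0})'(z_c)-\widetilde f_{t_0}'(z_c)=\widetilde f_{t_0}(z_c)\,r'(z_c)=\OO(1),\qquad (f^*_{t_0})''(z_c)-\widetilde f_{t_0}''(z_c)=\OO(\Delta t^{-1}).
\]
The principal subtlety, and the main obstacle of the proof, is ensuring that $r$ is holomorphic on a disk of size $\asymp 1$ around $z_c$ rather than only on the frozen band $(E_-(t_0),E_+(t_0))$ of width $\OO(\Delta t^{3/2})$; in the latter case the Cauchy bounds would degrade by powers of $\Delta t^{-1/2}$ and destroy the final accuracy. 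It is precisely the choice of $\widetilde\rho_{t_0}$ in \eqref{eq:deftildem} as the restriction of $\rho^*_{t_0}$ to the full macroscopic interval $[\gamma_{-M}(t_0),\gamma_N(t_0)]$ that causes the branch cuts of $f^*_{t_0}$ and $\widetilde f_{t_0}$ to cancel throughout this macroscopic interval and delivers the uniform $\asymp 1$ radius.

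\textbf{Perturbation and conclusion.} Evaluating the three cusp residuals $R_1,R_2,R_3$ from \eqref{eq:tildedef} with $\widetilde f_{t_0}$ at $(x_c,t_c,z_c)$: the anchoring gives $R_1=0$, while combining the derivative comparison with the tiling cusp identities yields $|R_2|=\OO(\Delta t)$ and $|R_3|=\OO(\Delta t^{-1})$. The Jacobian of $(R_1,R_2,R_3)$ in $(x,t,z)$ at this triple is triangular in a suitable row/column ordering, with nonzero entries of orders $1$, $\Delta t^{-1}$, and $\Delta t^{-4}$; the last scale is $\partial_z R_3=(\widetilde f_{t_0}+1)^2\cdot(\rd^3/\rd z^3)[\widetilde f_{t_0}/(\widetilde f_{t_0}+1)]|_{z_c}\asymp \Delta t^{-4}$, forced by the arctic-curve expansion \eqref{xyql}. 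A quantitative implicit function theorem then gives $|\widetilde x_c-x_c|,|\widetilde t_c-t_c|\lesssim \Delta t^2$ and $|\widetilde z_c-z_c|\lesssim \Delta t^3$, proving the first assertion. For the last assertion, the Taylor expansion
\[
\widetilde f_{t_0}(\widetilde z_c)=\widetilde f_{t_0}(z_c)+\widetilde f_{t_0}'(z_c)(\widetilde z_c-z_c)+\OO\big(\widetilde f_{t_0}''(z_c)(\widetilde z_c-z_c)^2\big)=f^*_{t_0}(z_c)+\OO(\Delta t^2),
\]
followed by the map $s\mapsto s/(s+1)$, yields $\widetilde f_{t_0}(\widetilde z_c)/(\widetilde f_{t_0}(\widetilde z_c)+1)-\fr^{-1}=\OO(\Delta t^2)$, which is stronger than the stated $\OO(\Delta t)$ bound.
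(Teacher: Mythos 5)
Your proposal reaches the lemma (indeed with slightly stronger bounds) but by a genuinely different route than the paper. The paper does not perturb the defining system \eqref{eq:tildedef} directly: it first proves the anchoring $\wt f_{t_0}(z_c)=f_{t_0}(z_c)$ (its \eqref{eq:choosebeta2}, identical to your first step) and the discrepancy bound $\wt w_0-w_0=\OO(|z-z_c|)$ with $\OO(1)$ derivatives via the factorization $\wt f_0=f_0(1+\deltaw)$, and then tracks the NBRW limit shape geometrically: the edges $\wt E_\pm(t)$ of $\wt\rho_t$ are characterized through the characteristic flow and the cubic map $F_0$ coming from the Taylor expansion of $Q$ at the triple root, the cusp time $\wt t_c$ is pinned down by the double-root (edge-merging) condition \eqref{eqF2}, and $\wt z_c,\wt x_c,\wt w_0(\wt z_c)$ are then read off. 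Your version instead treats \eqref{eq:tildedef} as a perturbed triple-critical-point system anchored at $(x_c,t_c,z_c)$ (using \Cref{lem:xtzdeft}-type identities for $f^*_{t_0}$) and applies a quantitative implicit function theorem; your residual and Jacobian scalings ($R_1=0$, $R_2=\OO(\Delta t)$, $R_3=\OO(\Delta t^{-1})$; diagonal entries $1,\Delta t^{-1},\Delta t^{-4}$) are consistent with the paper's derivative computations, and your extra precision $|\wt z_c-z_c|\lesssim\Delta t^3$, hence $\wt w_0(\wt z_c)-\fr^{-1}=\OO(\Delta t^2)$, does appear to be correct (the paper only records $\Delta t^2$ and $\Delta t$, which is all the lemma needs). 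The IFT route is arguably more transparent about where each power of $\Delta t$ comes from; the paper's route avoids any uniqueness/localization discussion by working with the geometric cusp of $\wt\rho_t$ directly.

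Two points in your write-up need repair. First, the "main obstacle" you identify is misdiagnosed: the Schwarz-reflection argument for holomorphy of $r=f^*_{t_0}/\wt f_{t_0}$ on an order-one neighborhood of the entire interval $(\gamma_{-M}(t_0),\gamma_N(t_0))$ presupposes that $f^*_{t_0}$ is defined and holomorphic in $\HH$ near that whole interval, which the paper never provides — the complex extension of the slope exists only on the local domain $\mathscr U_{t_0}$ of \Cref{xhh2}. What saves you is that you only need $r$ holomorphic and bounded on a disk of constant (not macroscopic) radius around $z_c$, and that follows at once from the decomposition \eqref{e:gszmut} together with the fact that the removed mass $\rho^*_{t_0}-\wt\rho_{t_0}$ is supported at distance $\asymp 1$ from $z_c$; this is exactly the paper's \eqref{eq:g-wtg}--\eqref{eq:wtf_f}, and you should route your Cauchy estimates through it. Second, your IFT step only produces a solution of \eqref{eq:tildedef} within $\OO(\Delta t^2)$ (resp.\ $\OO(\Delta t^3)$) of $(x_c,t_c,z_c)$; to conclude you must identify it with the $(\wt x_c,\wt t_c,\wt z_c)$ of \Cref{lem:xtzdef}, which a priori is only located at scale $\Delta t^{3/2}$ inside the gap. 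Either invoke the uniqueness of the solution of \eqref{eq:tildedef} asserted alongside \Cref{lem:xtzdef}, or show directly that every zero of $\wt w_0''=w_0''+\Delta w_0''$ in the gap lies within $\OO(\Delta t^3)$ of $z_c$ (this can be done by comparing $|w_0''|\asymp |u|/|F_0'(u)|^3$ with the bound $\Delta w_0''=\OO(|w_0'|^2|z-z_c|+|w_0'|+1)$); as written, this identification is skipped.
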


We next state a fluctuation estimate of $\widetilde{\sfQ}$ that is necessary for the proof.
\begin{lem}  \label{lem:tsfQes}
For $\omega\in (0,1/2)$ chosen in the basic setup above, assume that $\omega>3/8$ and take a constant $0<\delta < (\omega/2)\wedge(2\omega-3/4)$. Let $L=\lceil n^{1+\delta}\Delta t^2\rceil$.
Fix a constant $\fd>0$ that is small enough (depending on $\omega$ and $\delta$), then the following estimates hold with overwhelming probability:
\begin{align*}
    \widetilde\sfq_{L}(nt)/n-\gamma_L(t), \ \widetilde\sfq_{-L}(nt)/n-\gamma_{-L}(t)  \lesssim n^{-3/4-\fd},\quad &\forall t \in [t_0, t_1]\cap n^{-1}\Z,\\
 \widetilde\sfq_{i}(nt_1)/n-\gamma_i(t_1) \lesssim n^{-3/4-\fd},\quad &\forall i\in\qq{-L, L}.
\end{align*}

\end{lem}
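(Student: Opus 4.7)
The plan is to prove \Cref{lem:tsfQes} as the NBRW analog of \Cref{lem:sfqes}, by showing that the NBRW $\widetilde\sfQ$ has essentially the same limit shape as the tiling paths in a neighborhood of the cusp, and then invoking optimal rigidity for NBRW at the cusp scale.

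The first step is to establish the limit shape of $\widetilde\sfQ$, which I will denote by $\widetilde H$, and its associated quantiles $\widetilde\gamma_i(t)$ defined analogously to \eqref{eq:defgammai}. This limit shape is determined by the initial density profile $\widetilde\rho_{t_0}$ and the drift $\beta$; its complex slope satisfies the complex Burgers equation \eqref{ftx} with initial data $\widetilde f_{t_0}(z)=e^{\widetilde m_{t_0}(z)}\beta/(1-\beta)$, cf.\ \cite{gorin2019universality}. From this representation one reads off $\widetilde H$ via its density, and hence $\widetilde\gamma_i(t)$.

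The second step is to compare $\widetilde H$ with $H^*$ in the region $[x_c-\fc, x_c+\fc]\times[t_0,t_1]$. By \eqref{e:concentrate1}--\eqref{e:concentrate2} the initial density $\widetilde\rho_{t_0}$ approximates $\rho^*_{t_0}$ up to error $n^{-1+\fd}$, with the extremal paths concentrating near $E_\pm(t_0)$. The drift $\beta$ is chosen in \eqref{e:chosebeta} precisely so that $\widetilde f_{t_0}(z_c)=f^*_{t_c}(x_c)$ matches the tiling cusp slope. Together with \Cref{lem:cuspclos}, which guarantees $\widetilde x_c-x_c,\widetilde t_c-t_c,\widetilde z_c-z_c=\OO(\Delta t^2)$, this forces $|\widetilde\gamma_i(t)-\gamma_i(t)|$ to be much smaller than $n^{-3/4-\fd}$ on the range of indices and times of interest, once the hypotheses $\omega>3/8$ and $\delta<2\omega-3/4$ are used (these ensure $\Delta t^2=n^{-2\omega}\ll n^{-3/4-\fd}$, so that the cusp-matching error is negligible against the target fluctuation scale).

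The third step is the probabilistic core: show that $\widetilde\sfq_i(nt)/n$ concentrates around its quantile $\widetilde\gamma_i(t)$ with error $n^{-3/4-\fd}$, with overwhelming probability, at the extreme indices $\pm L$ for all $t\in[t_0,t_1]\cap n^{-1}\Z$ and at all indices $i\in\qq{-L,L}$ at the final time $t_1$. This is exactly the NBRW analog of the tiling estimates \eqref{e:concentrate3}--\eqref{e:concentrate4} in \Cref{lem:sfqes}. NBRW has the same determinantal structure, Gibbs resampling, and monotonicity (\Cref{comparewalks}) as tiling, and the analytic structure of its limit shape at the cusp $(\widetilde x_c,\widetilde t_c)$ is of the same type (arctic boundary with $3/2$ power-law, triple critical point); thus the optimal rigidity argument of \Cref{s:optimalR} applies with straightforward modifications. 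Combining with step~2 yields the claimed bounds.

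The main obstacle I expect is step~3: rigorously porting the optimal rigidity proof from the tiling setting to NBRW. The argument for tiling in \Cref{s:optimalR} relies on several ingredients — the variational principle \Cref{hzh}, the complex Burgers equation \Cref{fequation}, local Gibbs resampling, and a careful comparison of extremal paths to the arctic boundary — and each must be checked to extend to NBRW. Once these structural analogs are in place, combining the small limit-shape discrepancy from step~2 with the $n^{-3/4-\fd}$ rigidity for NBRW around its own cusp gives the conclusion. A secondary technical point is ensuring that the NBRW built on $\qq{nt_0,\infty}$ (rather than on a compact polygonal domain) allows the same rigidity analysis; this should be handled by confining attention to the finite window $[x_c-\fc,x_c+\fc]\times[t_0,t_1]$ and using monotonicity to reduce to a domain-restricted problem.
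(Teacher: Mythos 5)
Your overall architecture (rigidity of $\widetilde\sfQ$ around its own quantiles, plus a deterministic comparison of those quantiles with $\gamma_i$) matches the paper's decomposition into \Cref{p:localwalk0} and \Cref{l:locwalk_comp}, but there is a genuine gap in your step~2: you assert that the choice of $\beta$ in \eqref{e:chosebeta} together with \Cref{lem:cuspclos} ``forces'' $|\widetilde\gamma_i(t)-\gamma_i(t)|\ll n^{-3/4-\fd}$. \Cref{lem:cuspclos} only compares the two cusp points $(\widetilde x_c,\widetilde t_c)$ and $(x_c,t_c)$; it says nothing about quantiles at the indices $i=\pm L$ (which lie at distance $\asymp n^{3\delta/4}\Delta t^{3/2}$ from the cusp tangent line) at \emph{all} intermediate times, nor about all $i\in\qq{-L,L}$ at time $t_1$. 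Closing this is the main technical content of the paper's proof: one must track the characteristic flows of the two complex Burgers equations, show $\widetilde w_0(z)-w_0(z)=\OO(|z|)$ from the matching condition, derive the quantile evolution equation \eqref{evol_quant}, run a bootstrap/net argument plus Gr\"onwall to get $|\widetilde\gamma_{\pm L}(t)-\gamma_{\pm L}(t)|\le n^{\delta}\Delta t^{5/2}$ along the whole window, and separately control the density difference via a cubic-equation stability analysis to get $|\widetilde\gamma_{i}(t_1)-\gamma_i(t_1)|\le n^{\delta}\Delta t^{2}$ for all $i\in\qq{-L,L}$ (this is \Cref{l:locwalk_comp} and \Cref{sec:compare}). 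Note also that the exponents matter: the admissible error is $n^{\delta}\Delta t^{5/2}$ and $n^{\delta}\Delta t^{2}$, and only the latter is governed by the constraint $\delta<2\omega-3/4$ you cite; none of this follows from the cusp-location estimate alone.

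Your step~3 is also not quite how the argument goes, though it is less of an obstruction. The paper does not re-derive the Section~\ref{s:optimalR} rigidity for NBRW; instead, \Cref{p:localwalk0} is obtained by directly applying the existing result for non-intersecting Bernoulli walks with prescribed (random) boundary data, i.e.\ \cite[Proposition 4.4]{huang2021edge} (quoted as \Cref{p:rigidityBB}), whose hypotheses on the initial configuration at time $t_0$ are verified by \Cref{lem:sfqes}. Porting the whole of \Cref{s:optimalR} ``with straightforward modifications'' is not straightforward: that argument leans on the polygonal variational principle and a comparison inside a fixed macroscopic polygon, whereas here the NBRW limit shape is $n$-dependent and mesoscopic (cusp at distance $\asymp n\Delta t$ from the initial time), which is exactly the regime the quoted trapezoid result is designed for. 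So the probabilistic input you need already exists; the missing piece is the deterministic quantile comparison described above.
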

The proofs of the above two lemmas are deferred to \Cref{s:det}.

\subsection{Pearcey limit and the comparison between tiling and NBRW}

Given the construction of the NBRW $\widetilde{\sfQ}$, using the estimates of the fluctuations of $\widetilde{\sfQ}$ (\Cref{lem:tsfQes}) and the paths from tiling (\Cref{lem:sfqes}), we can now prove \Cref{thm:univ} through comparison, by using the following convergence of $\widetilde{\sfQ}$ to the Pearcey process.
\begin{lem}   \label{lem:nbrwcco}
Fix the constant $\omega \in (3/8,1/2)$. Recall the curvature parameters $\fr,\fq$ in \Cref{sr}.
For each $\sft\in\qq{nt_0,\infty}$, we regard $\widetilde{\sfQ}(\sft)$ as a finite subset of $\Z$. Then, as $n\to\infty$, the process
\[
	t\mapsto \frac{\widetilde\sfQ(\lfloor\lsca\cust - \sqrt{\fr-1}\lsca^{1/2}t/(\fr \fq)\rfloor)-\lsca\cusx+\sqrt{\fr-1}\lsca^{1/2}t/(\fr^2 \fq)}{(\fr-1)^{3/4}\lsca^{1/4}/\sqrt{\fq\fr^3}}  , 
		\]
converges to the Pearcey process in the same sense as in \Cref{thm:univ}.
\end{lem}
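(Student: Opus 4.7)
The plan is to deduce \Cref{lem:nbrwcco} by invoking \Cref{thm:univNBRWlim} for the NBRW $\widetilde\sfQ$ constructed in \Cref{ssec:cnbrw}, and then translating the intrinsic NBRW scaling parameters $(\widetilde x_c, \widetilde t_c, A, B)$ into the tiling parameters $(x_c, t_c, \fr, \fq)$ appearing in \Cref{lem:nbrwcco}.

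\textbf{Verifying the hypothesis.} I would take the reference density to be $\widetilde\rho_{t_0}=\rho_{t_0}^*\mathds{1}_{[\gamma_{-M}(t_0),\gamma_N(t_0)]}$ from \eqref{eq:deftildem}, whose support consists of two intervals separated by the frozen band $[E_-(t_0), E_+(t_0)]$. The local analysis of $H^*$ around the cusp combined with \Cref{lem:gammaies} gives $\widetilde\rho_{t_0}(x)\asymp |x-E_\pm(t_0)|^{1/2}/\Delta t^{1/4}$ at the inner edges, with separation $E_+(t_0)-E_-(t_0)\asymp \Delta t^{3/2}$; this matches the cusp-growth structure at scale $\lsca$ up to inner distance $\sct\asymp \Delta t>\lsca^{-1/2+\errh}$ required by \Cref{thm:univNBRWlim} (this is where $\omega\in(0,1/2)$ enters). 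The quantile approximation $|\sfq_i(nt_0)-n\gamma_i(t_0)|\le n^\err$ for bulk indices $i$ follows w.o.p.~from the counting rigidity \eqref{e:concentrate2} in \Cref{lem:sfqes}, while the two innermost particles $\sfq_0(nt_0), \sfq_{-1}(nt_0)$ are handled separately using \eqref{e:concentrate1}, whose fluctuation scale is commensurate with the spacing between consecutive edge quantiles of $\widetilde\rho_{t_0}$.

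\textbf{Matching scales.} The choice of drift \eqref{e:chosebeta} is made precisely so that the NBRW limit shape develops a cusp at $(\widetilde x_c, \widetilde t_c)$ close to $(x_c, t_c)$. \Cref{lem:cuspclos} yields $\widetilde x_c - x_c, \widetilde t_c - t_c = O(\Delta t^2)$ together with $B := \widetilde f_{t_0}(\widetilde z_c)/(\widetilde f_{t_0}(\widetilde z_c)+1) = 1/\fr + O(\Delta t)$. A Taylor expansion of the system \eqref{eq:tildedef} at the NBRW cusp, compared with \eqref{xyql} applied to the NBRW limit shape around the same point, then identifies the NBRW curvature as $A = \fr^{2}/(4(\fr-1)\fq^{2}) + O(\Delta t)$. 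Substituting these identifications into the prefactors of \Cref{thm:univNBRWlim} produces $2A^{1/2}B(1-B)=\sqrt{\fr-1}/(\fr\fq)+O(\Delta t)$, $\sqrt{2}A^{1/4}B(1-B)=(\fr-1)^{3/4}/\sqrt{\fq\fr^{3}}+O(\Delta t)$, and $\sqrt{2}A^{1/4}B=1/(\sqrt{\fr\fq}(\fr-1)^{1/4})+O(\Delta t)$, matching, up to vanishing corrections, exactly the numerical factors appearing in the target rescaling. Finally, since $\omega>3/8$, the cusp-location discrepancies $|\widetilde x_c - x_c|, |\widetilde t_c - t_c|=O(n^{-2\omega})$ translate to absolute spatial and temporal errors of order $n^{1-2\omega}$, which are $o(n^{1/4})$ and $o(n^{1/2})$ respectively, both negligible under Pearcey scaling.

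\textbf{Main obstacle.} The most delicate step is the quantitative identification $A = \fr^{2}/(4(\fr-1)\fq^{2})+O(\Delta t)$. It requires a three-term Taylor expansion of \eqref{eq:tildedef} at the tiling cusp using the analyticity of the complex slope $f^*$ near $(x_c,t_c)$ together with the complex Burgers equation \eqref{ftx}, the definition of the curvature parameter $\fq$ from \Cref{sr}, and careful bookkeeping of the $O(\Delta t)$ correction terms arising because $\widetilde f_{t_0}$ is defined only from real-line data and differs from the analytic continuation of $f^*$. The verification of the initial-condition approximation in the first step is conceptually routine, though the treatment of the two innermost particles must match their edge fluctuations with the precise cusp profile of $\widetilde\rho_{t_0}$ permitted by \Cref{thm:univNBRWlim}.
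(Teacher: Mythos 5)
Your proposal follows essentially the same route as the paper: verify Assumption \ref{ass:dtsncusp} for $\widetilde\rho_{t_0}$ and for the initial data $\{\sfq_i(nt_0)\}$ via Lemmas \ref{lem:gammaies} and \ref{lem:sfqes}, apply Theorem \ref{thm:univNBRWlim}, and convert the scaling using Lemma \ref{lem:cuspclos} together with the identifications $\widetilde B\to\fr^{-1}$ and $\widetilde A\to\fr^2(\fr-1)^{-1}\fq^{-2}/4$ (which the paper isolates as Lemma \ref{lem:relapears} and proves in Section \ref{s:det} by precisely the Taylor-expansion/complex-Burgers computation you sketch as the "main obstacle"), with $\omega>3/8$ making the $\OO(\Delta t^2)$ cusp-location discrepancies negligible under Pearcey scaling. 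The only step you omit is the verification that the drift $\beta$ chosen in \eqref{e:chosebeta} stays bounded away from $0$ and $1$ uniformly in $n$ (a hypothesis of Theorem \ref{thm:univNBRWlim}), which the paper checks by showing $\widetilde m_{t_0}(z_c)\lesssim 1$ using $E_+(t_0)-z_c,\ z_c-E_-(t_0)\asymp \Delta t^{3/2}$ and Lemma \ref{lem:gammaies}.
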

This lemma is deduced from \Cref{thm:univNBRWlim}, the Pearcey universality for NBRW.
There, the scaling of the Pearcey process is described by the following two parameters:
\[
\widetilde A = (\widetilde t_c-t_0)^4 \int \frac{\widetilde\rho_{t_0}(x)}{4(\widetilde z_c-x)^4} \rd x - \frac{(\widetilde t_c-t_0)^4}{12(\widetilde t_c-t_0+\widetilde z_c-\widetilde x_c)^3} - \frac{(\widetilde t_c-t_0)^4}{12(\widetilde x_c-\widetilde z_c)^3},\quad \widetilde B =\frac{\widetilde x_c-\widetilde z_c}{\widetilde t_c-t_0},
\]
as defined in \eqref{eq:defA} below.
We need the following relation between $\widetilde A, \widetilde B$ and $\fr, \fq$. We defer its proof to \Cref{s:det}.
\begin{lem}  \label{lem:relapears}
As $n\to\infty$, we have $\widetilde B \to \fr^{-1}$ and $\widetilde A \to \fr^2(\fr-1)^{-1}\fq^{-2}/4$.
\end{lem}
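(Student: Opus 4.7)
The first claim is immediate: from the first equation in \eqref{eq:tildedef} we read off $\widetilde B = (\widetilde x_c - \widetilde z_c)/(\widetilde t_c - t_0) = \widetilde f_{t_0}(\widetilde z_c)/(\widetilde f_{t_0}(\widetilde z_c)+1)$, which tends to $\fr^{-1}$ by \Cref{lem:cuspclos}.

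\textbf{Paragraph 2.} The main task is to prove the exact algebraic identity $\widetilde A = 1/(4uv\widetilde\fq^2)$, where $u = 1/(\widetilde f_{t_0}(\widetilde z_c)+1)$, $v = 1 - u = \widetilde B$, and $\widetilde\fq$ denotes the cusp-curvature parameter of the NBRW arctic curve near $(\widetilde x_c, \widetilde t_c)$, in the sense of \Cref{sr}. Set $\tau = \widetilde t_c - t_0$ and introduce the substitution $u(z) := 1/(\widetilde f_{t_0}(z)+1)$, so that $g(z) := z + \tau\widetilde f_{t_0}(z)/(\widetilde f_{t_0}(z)+1) = z + \tau(1 - u(z))$, and the cusp equations \eqref{eq:tildedef} become $g'(\widetilde z_c) = g''(\widetilde z_c) = 0$. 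Using $\widetilde f_{t_0}'/\widetilde f_{t_0} = \widetilde m_{t_0}'$ (valid since $\widetilde f_{t_0} = e^{\widetilde m_{t_0}}\beta/(1-\beta)$), these translate to $\widetilde m_{t_0}'(\widetilde z_c) = -1/(\tau uv)$ and $\widetilde m_{t_0}''(\widetilde z_c) = (v-u)\widetilde m_{t_0}'(\widetilde z_c)^2$. Starting from $u' = -uv\widetilde m_{t_0}'$ and differentiating twice with these constraints inserted, a routine but delicate computation (using $(v-u)^2 = 1 - 4uv$) yields
\[
u'''(\widetilde z_c) = -\frac{2}{\tau^3 u^2 v^2} + \frac{6}{\tau^3 uv} - uv\,\widetilde m_{t_0}'''(\widetilde z_c).
\]

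\textbf{Paragraph 3.} Solve this for $\widetilde m_{t_0}'''(\widetilde z_c)$, and note that $\int \widetilde\rho_{t_0}(x)/(4(\widetilde z_c - x)^4)\,\rd x = -\widetilde m_{t_0}'''(\widetilde z_c)/24$ by differentiating the Stieltjes transform. Since $\widetilde x_c - \widetilde z_c = \tau v$ and $\tau + \widetilde z_c - \widetilde x_c = \tau u$, the formula for $\widetilde A$ reduces (using $g''' = -\tau u'''$) to
\[
\widetilde A = -\frac{\tau^3 g'''(\widetilde z_c)}{24uv} + \frac{\tau}{12 u^3 v^3}\bigl[1 - 3uv - u^3 - v^3\bigr].
\]
Because $u + v = 1$, the Newton identity gives $u^3 + v^3 = (u+v)^3 - 3uv(u+v) = 1 - 3uv$, so the bracket vanishes identically, leaving $\widetilde A = -\tau^3 g'''(\widetilde z_c)/(24uv)$. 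To identify the cusp curvature, Taylor-expand $g_t(z) = z + (t - t_0)\widetilde f_{t_0}(z)/(\widetilde f_{t_0}(z)+1)$ in $(z, t)$ around $(\widetilde z_c, \widetilde t_c)$ and solve $g_t'(z) = 0$ for $\widetilde t_c - t > 0$ small; this exhibits the NBRW arctic curve in the local cusp form $x - \widetilde x_c = v(t - \widetilde t_c) \pm (2\widetilde\fq/(3\sqrt 3))(\widetilde t_c - t)^{3/2} + O((\widetilde t_c - t)^2)$ with $\widetilde\fq^2 = -6/(\tau^3 g'''(\widetilde z_c))$. Substituting gives the exact identity $\widetilde A = 1/(4uv\widetilde\fq^2)$.

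\textbf{Paragraph 4.} To pass to the limit, by \Cref{lem:cuspclos} we have $v \to \fr^{-1}$ and $u \to (\fr-1)/\fr$, hence $uv \to (\fr-1)/\fr^2$; it remains to show $\widetilde\fq \to \fq$. By construction, $\widetilde\rho_{t_0}$ coincides with $\rho^*_{t_0}$ on a full neighborhood of $z_c$ inside $[\gamma_{-M}(t_0), \gamma_N(t_0)]$, while \eqref{e:chosebeta} calibrates $\beta$ so that $\widetilde f_{t_0}(z_c) = f^*_{t_0}(z_c)$ (matching the analytic continuation of the tiling complex slope through the frozen gap). Consequently $\widetilde f_{t_0} - f^*_{t_0}$ extends analytically across $z_c$ with bounded derivatives of all orders uniformly in $n$. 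Since the tiling cusp expansion \eqref{xyql} together with the same computation as above applied to $g^*(z) := z + (t_c - t_0) f^*_{t_0}(z)/(f^*_{t_0}(z)+1)$ yields $\fq^2 = -6/((t_c - t_0)^3 g^*{}'''(z_c))$, a perturbation argument parallel to (but one order sharper than) \Cref{lem:cuspclos} gives $\tau^3 g'''(\widetilde z_c) \to (t_c - t_0)^3 g^*{}'''(z_c)$, whence $\widetilde\fq \to \fq$ and $\widetilde A \to \fr^2/(4(\fr-1)\fq^2)$. The main obstacle is precisely this final perturbation estimate: despite $\widetilde z_c$ sitting only $O(\tau^{3/2})$ away from the edge of the support of $\widetilde\rho_{t_0}$ (so that $\widetilde m_{t_0}'''(\widetilde z_c) \asymp \tau^{-4}$ diverges sharply), one must track the leading $\tau^{-4}$ scaling of $g'''(\widetilde z_c)$ and its difference with $g^*{}'''(z_c)$ down to relative error $o(1)$, which requires a careful comparison of the two complex-Burgers evolutions along the characteristic reaching the cusp.
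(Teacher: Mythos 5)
Your Paragraph 1 coincides with the paper's first step (via \Cref{lem:cuspclos}), and your Paragraphs 2--3 do take a genuinely different route: you package $\widetilde A$ into an internal NBRW identity involving $uv$ and a cusp-curvature parameter $\widetilde\fq$ of the NBRW arctic boundary. The paper never introduces $\widetilde\fq$: it shows the last two terms in the definition of $\widetilde A$ are $\OO(\Delta t)$, replaces $\widetilde z_c$ by $z_c$ and $\widetilde\rho_{t_0}$ by $\rho^*_{t_0}$ (both errors negligible under the $\Delta t^4$ scaling, using bounds of the type \eqref{eq:mmmfo}), and then evaluates $-t_c^4\,m_{t_0}^{*}{}'''(z_c)/24$ in the limit through the explicit derivatives of the tiling complex slope at $z_c$ (\Cref{lem:xtzdeft}, which comes from the algebraic relation $Q(f_0(z))=z(f_0(z)+1)$) combined with the identity $3\fq^{-2}=a=\tfrac{\fr^5}{2(\fr-1)^5}Q'''(f^*_{t_c}(x_c))$ in \eqref{eq:aQ3}. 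Your algebra with the cusp equations \eqref{eq:tildedef} (e.g.\ $\widetilde m_{t_0}'(\widetilde z_c)=-1/(\tau uv)$, $\widetilde m_{t_0}''(\widetilde z_c)=(v-u)\widetilde m_{t_0}'(\widetilde z_c)^2$) is correct, though the claimed identity is at best asymptotic rather than exact, since $\widetilde\fq^2=-6/(\tau^3 g'''(\widetilde z_c))$ identifies the cusp coefficient only to leading order. In any case Paragraphs 2--3 only reformulate the problem.

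The genuine gap is Paragraph 4, which is exactly where the work lies, and the one concrete claim you offer there is false. You assert that $\widetilde f_{t_0}-f^*_{t_0}$ extends analytically across $z_c$ with derivatives of all orders bounded uniformly in $n$. It does not: since $t_c-t_0\asymp\Delta t\to0$, the derivatives of $f_{t_0}$ at $z_c$ blow up ($f_{t_0}'(z_c)\asymp\Delta t^{-1}$, $f_{t_0}'''(z_c)\asymp\Delta t^{-4}$ by \Cref{lem:xtzdeft}), and the calibration \eqref{e:chosebeta} only yields the multiplicative comparison the paper proves, $\widetilde f_{t_0}=f_{t_0}(1+\deltaw)$ with $\deltaw(z_c)=0$ and $\deltaw$ having bounded derivatives (\eqref{eq:wtf_f}, \eqref{eq:Deltaw0}); this gains exactly one order, e.g.\ $(\widetilde f_{t_0}-f_{t_0})'''(z_c)=\OO(\Delta t^{-2})$, not $\OO(1)$. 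More importantly, the decisive limit $\widetilde\fq\to\fq$ --- equivalently $\tau^3 g'''(\widetilde z_c)\to(t_c-t_0)^3 (g^{*})'''(z_c)$, a comparison of order-one quantities built from third derivatives of size $\Delta t^{-4}$, evaluated at points differing by $\OO(\Delta t^2)$ --- is never proved; you explicitly defer it as ``the main obstacle.'' Closing it requires (i) the multiplicative comparison above together with fourth-derivative bounds like \eqref{eq:mmmfo} to absorb the shift from $z_c$ to $\widetilde z_c$, and (ii) an exact evaluation of $(g^{*})'''(z_c)$, i.e.\ of $w_{t_0}'''(z_c)$, in terms of $\fq$, which is precisely where the algebraic input $Q(f_0(z))=z(f_0(z)+1)$ and \eqref{eq:aQ3} enter. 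In other words, the missing step of your proposal contains essentially all of the paper's proof; as written you establish $\widetilde B\to\fr^{-1}$ and an algebraic repackaging of $\widetilde A$, but not the asserted limit of $\widetilde A$.
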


\begin{proof}[Proof of \Cref{lem:nbrwcco}]
Take $\fd>0$ to be arbitrarily small depending on $\omega$.
By \Cref{lem:gammaies} and \Cref{lem:sfqes}, 
the quantiles of $\widetilde\rho_{t_0}$ (which are precisely $\gamma_i(t_0)$) satisfy the growth specified in \Cref{ass:dtsncusp}, with $\sct=\Delta t=n^{-\omega}$; and with overwhelming probability, the initial data $\widetilde\sfQ(nt_0)=\{\widetilde\sfq_i(nt_0)\}_{i=-M}^N=\{\sfq_i(nt_0)\}_{i=-M}^N$ is approximated by $\widetilde\rho_{t_0}$ in the sense stated in \Cref{ass:dtsncusp}, with $\err=\fd$.
In addition, we know that $\beta$ is bounded away from $0$ and $1$, uniformly in $n$. Indeed, by \eqref{e:chosebeta}, it suffices to show that $\widetilde m_{t_0}(z_c)\lesssim 1$.
From the definition of $z_c$, we obtain $E_+(t_0)-z_c, z_c-E_-(t_0)\asymp \Delta t^{3/2}$. Then, by decomposing the integral $\int  {\widetilde\rho_{t_0}(x)}/({z_c-x})\rd x$ according to the quantiles $\gamma_i(t_0)$, we can readily deduce $\widetilde m_{t_0}(z_c)\lesssim 1$ with the help of \Cref{lem:gammaies}.

With the above preparations, we can now apply \Cref{thm:univNBRWlim} to conclude the convergence of
\[
t\mapsto \frac{\widetilde\sfQ(\lfloor\lsca\widetilde t_c - 2\widetilde A^{1/2}\widetilde B(1-\widetilde B)\lsca^{1/2}t\rfloor)-\lsca\widetilde x_c}{\sqrt{2}\widetilde A^{1/4}\widetilde B(1-\widetilde B)\lsca^{1/4}} +\sqrt{2}\widetilde A^{1/4}\widetilde B\lsca^{1/4}t
\]
to the Pearcey process. Then, by \Cref{lem:cuspclos} and \Cref{lem:relapears}, and noticing that $\Delta t=\oo(n^{-3/8})$ (so $n\widetilde x_c-nx_c, n\widetilde t_c-nt_c=\oo(n^{1/4})$), the conclusion follows.
\end{proof}

We now finish the comparison arguments.
\begin{proof}[Proof of \Cref{thm:univ}]
We take constants $\omega \in (3/8,1/2)$ and $0<\delta < (\omega/2)\wedge(2\omega-3/4)$, and let $\Delta t=n^{-\omega}$ and $L=\lceil n^{1+\delta}\Delta t^2\rceil$.
For the random paths $\{\sfq_i(\sft)\}_{i\in\qq{-L,L}, \sft\in\qq{nt_0, nt_1}}$ associated with tiling around $(nx_c, nt_c)$ and the NBRW paths $\{\widetilde\sfq_i(\sft)\}_{i\in\qq{-L,L}, \sft\in\qq{nt_0, n_1}}$, by \Cref{lem:sfqes}, \Cref{lem:tsfQes}, and the monotonicity property in  \Cref{comparewalks}, we can couple them so that with overwhelming probability,
\begin{align}\label{e:couple}
\max\{|\sfq_i(\sft)-\widetilde\sfq_i(\sft)|: i\in\qq{-L,L}, \sft\in\qq{nt_0, nt_1}\}= \oo(n^{1/4}).
\end{align}
By \Cref{lem:gammaies} and \Cref{lem:sfqes}, 
with overwhelming probability, we have 
\[
\max\{\sfq_{-L}(\sft)-(\sft-nt_c)/\fr: \sft\in\qq{nt_0, nt_1}\} < -n^{1/4+\fd},
\]
\[
\min\{\sfq_{L}(\sft)-(\sft-nt_c)/\fr: \sft\in\qq{nt_0, nt_1}\} > n^{1/4+\fd},
\]
for a small enough $\fd>0$ depending on $\delta$.
These ensure that the paths $\{\sfq_i\}_{i\in\qq{-L,L}}$ and $\{\widetilde\sfq_i\}_{i\in\qq{-L,L}}$ contain all paths around $(nx_c, nt_c)$ in $\sfQ$ and $\widetilde\sfQ$, respectively.
Then, the conclusion follows from \Cref{lem:nbrwcco}.
\end{proof}

\section{Cusp universality for non-intersecting Bernoulli random walks} \label{s:nibr}
In this section, we study non-intersecting Bernoulli random walks 
\[\sfA=\{\sfa_{i}\}_{i\in \llbracket -M, N\rrbracket}:\qq{0,\infty}\to \Z^{\qq{-M,N}},\] 
with drift parameter $\beta\in (0,1)$ and initial configuration $\sfA(0)=\{\sfd_i\}_{i\in\qq{-M,N}} \in\Z^{\llbracket -M, N\rrbracket}$ for $M, N \asymp \lsca$, as defined in Section \ref{ssec:unbrw}. 
We will assume that $\{\sfd_i\}_{i\in\qq{-M,N}}$ contains two separated parts $\{\sfd_i\}_{i\in\qq{-M,-1}}$ and $\{\sfd_i\}_{i\in\qq{0,N}}$, so that cusp forms when the two parts meet, and we prove that the Pearcey process appears around the cusp location.
Our proof uses the fact that both NBRW and the Pearcey process are determinantal point processes, and we bound the difference between their kernels (see Proposition \ref{prop:Kconvconj} below).

We now set up the parameters we will use.
First, we take $\phi\in(0,1/2)$, and we assume that the drift parameter $\beta\in(\phi, 1-\phi)$.
Then, we take small positive constants $\errh$, $\ere$, $\errd$, $\err$, in the following way:
\begin{enumerate}
    \item[(1)] $\errh>0$ is any number small enough depending on $\phi$;
    \item[(2)] $\ere>0$ is any number small enough depending on $\phi, \errh$;
    \item[(3)] $\errd>0$ is any number small enough depending on $\phi, \errh, \ere$;
    \item[(4)] $\err>0$ is any number small enough depending on $\phi, \errh, \ere, \errd$.
\end{enumerate}
The precise requirements for the choice of these parameters will be clear in the proofs below.
All other constants ($c,C>0$ and those implicitly used in $\lesssim$, $\gtrsim$, $\asymp$, $\OO$) can depend on these parameters.

We next describe the assumptions on the initial configuration $\{\sfd_i\}_{i\in\qq{-M,N}}$.
We approximate it with a density function $\den:\R\to [0,1]$, when rescaled by $\lsca$.
Take $\sct>0$ satisfying that $\lsca^{-1/2+\errh}<\sct\lesssim 1$.
The density function $\den$ can depend on $\lsca$, and needs to satisfy certain assumptions to form a cusp at the distance of order $\lsca\sct$.

\begin{assumption}  \label{ass:dtsncusp}
We assume that $\den=0$ on $[\Em, \Ep]$, for some $\Em<\Ep$ with $\Ep-\Em\asymp\sct^{3/2}$.
Let $\{\qut_i\}_{i\in\llbracket-M,N\rrbracket}$ be the scale $\lsca^{-1}$ quantiles starting from $\Em$ and $\Ep$. Namely, we let $\qut_0=\Ep$; for $i\in\llbracket 1, N\rrbracket$ or $i\in\llbracket 1, M\rrbracket$, let $\qut_i$ or $\qut_{-i}$ satisfy that,
\[
\int_{\Ep}^{\qut_i} \den(x) \rd x = i/\lsca, \quad \text{or}\quad
\int_{\qut_{-i}}^{\Em} \den(x) \rd x = i/\lsca.
\]
We assume that $\den=0$ on $(-\infty, \qut_{-M})\cup (\qut_N, \infty)$.
In addition, we assume that for $i\in\N$, when $i\lesssim \sct^2 \lsca$,
\[
\qut_i - \Ep \asymp \sct^{1/6} (i/\lsca)^{2/3}, \quad \Em - \qut_{-i} \asymp \sct^{1/6} (i/\lsca)^{2/3};
\]
and when $i\gtrsim \sct^2 \lsca$, $i\le N$ or $M$,
\[
\qut_i - \Ep \asymp (i/\lsca)^{3/4},\quad
\Em - \qut_{-i} \asymp (i/\lsca)^{3/4}.
\]
For $\{\sfd_i\}_{i\in\qq{-M,N}}$, we assume that it is approximated by $\den$, in the following sense:
\begin{itemize}
    \item $\sfd_0-\Ep \lsca\lesssim \lsca^{1/3+\err} \sct^{1/6}$ and $\sfd_{-1}-\Em \lsca\lesssim \lsca^{1/3+\err} \sct^{1/6}$; 
    \item for any $x\in\R$, we have
\begin{equation}  \label{eq:heightclose}
 |\{i \in \llbracket -M, N\rrbracket: \sfd_i<x\lsca\}| - |\{i \in \llbracket -M, N\rrbracket: \qut_i<x\}|  \lesssim \lsca^{\err}.  
\end{equation}
\end{itemize}

\end{assumption}

\subsection{Cusp location}
We now determine the cusp location of NBRW under Assumption \ref{ass:dtsncusp}, in a way indicated by Lemma \ref{lem:xtzdeft}.
For this purpose, we consider the Stieltjes transform
$\mmm(\zzz)=\int \frac{\den(x)}{\zzz-x}\rd x$.
Below are some basic properties of $\mmm$ for $z\in (\Em, \Ep)$, which are straightforward to check.
\begin{enumerate}
    \item $\mmm'(\zzz)=-\int \frac{\den(x)}{(z-x)^2} \rd x<0$ for any $\zzz \in (\Em, \Ep)$, and
    \begin{equation}  \label{eq:mmmfir}
        |\mmm'(\zzz)| = \int \frac{\den(x)}{(z-x)^2} \rd x \asymp \frac{1}{\lsca}\sum_{i=-M}^N \frac{1}{(\zzz-\qut_i)^2} \asymp \sct^{-1/4}((\zzz-\Em)\wedge (\Ep-\zzz))^{-1/2},
    \end{equation}
    when $\zzz \in (\Em+\lsca^{-2/3}\sct^{1/6}, \Ep-\lsca^{-2/3}\sct^{1/6})$.
    \item When $\zzz \in (\Em+\lsca^{-2/3}\sct^{1/6}, \Ep-\lsca^{-2/3}\sct^{1/6})$, we have
        \begin{equation}  \label{eq:mmmzer}
        |\mmm(\zzz)|\le \int \frac{\den(x)}{|z-x|}\rd x \asymp \frac{1}{\lsca}\sum_{i=-M}^N \frac{1}{|\zzz-\qut_i|} \lesssim 1.
    \end{equation}
    \item With $\mmm''(\zzz)=\int \frac{2\den(x)}{(\zzz-x)^3}\rd x$, we can deduce that $\mmm''(\Em+\lsca^{-2/3}\sct^{1/6})>0$, $\mmm''(\Ep-\lsca^{-2/3}\sct^{1/6}) < 0$, and
    \begin{equation}  \label{eq:mmmsec}
    \mmm''(\zzz)\asymp \sct^{-1/4}((\zzz-\Em)\wedge (\Ep-\zzz))^{-3/2},
    \end{equation}
    for any $\zzz \in (\Em, \Ep)$ with $(\zzz-\Em)\wedge (\Ep-\zzz) < c\sct^{3/2}$ for a small enough constant $c>0$.
    \item $\mmm'''(\zzz)=-\int \frac{6\den(x)}{(\z-x)^4} \rd x<0$ for any $\zzz \in (\Em, \Ep)$, so $\mmm''$ is decreasing in $(\Em, \Ep)$.
    Moreover, 
    \begin{equation}  \label{eq:mmmthi}
    \mmm'''(\zzz)=-\int \frac{6\den(x)}{(\z-x)^4} \rd x\asymp \frac{1}{\lsca}\sum_{i=-M}^N \frac{1}{(\zzz-\qut_i)^4} \asymp \sct^{-4},
    \end{equation}
whenever $(\zzz-\Em)\wedge (\Ep-\zzz) \asymp \sct^{3/2}$.
\item For any $\zzz\in \C$ with $\Re \zzz\in (E_-,E_+)$ and $(\Re \zzz -\Em)\wedge (\Ep-\Re \zzz ) \asymp \sct^{3/2}$, we have
\begin{equation}  \label{eq:mmmfo}
    \mmm''''(\zzz) = \int \frac{24\den(x)}{(\zzz-x)^5}\rd x \lesssim \sct^{-11/2}.
\end{equation}
\end{enumerate}
We further denote (see \Cref{prop:slopeNBRW} below)
$\fff(\zzz)=e^{\mmm(\zzz)}\frac{\beta}{1-\beta}$.
We then find the cusp location, using formulas inspired by the complex slope (see \Cref{lem:xtzdeft} below).
\begin{lem}  \label{lem:xtzdef}
There exist $\cusx \in \R$, $\cust>0$, and $\zzc \in (\Em, \Ep)$, such that 
\begin{equation}  \label{eq:defcusx}
\cusx=\zzc+\cust \frac{\fff(\zzc)}{\fff(\zzc)+1},
\end{equation}
\begin{equation}  \label{eq:defcust}
 1+\cust \frac{\fff'(\zzc)}{(\fff(\zzc)+1)^2}=0,
\end{equation}
\begin{equation}  \label{eq:defzzzc}
\fff''(\zzc)-\frac{2\fff'(\zzc)^2}{\fff(\zzc)+1}=0.
\end{equation}
In addition, we have 
\begin{equation}\label{eq:zbtcloser}
\cust\asymp\sct, \quad (\Ep-\zzc)\wedge (\zzc-\Em) \asymp \sct^{3/2}, \quad
-\cust < \zzc-\cusx < 0,\quad
\cusx-\zzc,\, \cust+\zzc-\cusx\asymp \cust.
\end{equation}
\end{lem}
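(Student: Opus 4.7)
\emph{Plan.} The idea is to recognize the three equations as characterizing a triple critical point of a single-variable map, reduce the problem to finding a critical point of an explicit function on $(\Em,\Ep)$, and then read off all the size estimates.

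First, I would reformulate the system. Set $\psi(\zzz):=\fff(\zzz)/(\fff(\zzz)+1)$; this is smooth and strictly decreasing from $(\Em,\Ep)$ onto a subinterval of $(0,1)$ (since $\fff'=\fff\mmm'<0$). Introduce the auxiliary map $\Phi(\zzz,t):=\zzz+t\psi(\zzz)$. A direct computation gives
\[
\partial_{\zzz}\Phi=1+t\,\frac{\fff'(\zzz)}{(\fff(\zzz)+1)^{2}},\qquad \partial_{\zzz}^{2}\Phi=t\,\frac{\fff''(\zzz)(\fff(\zzz)+1)-2\fff'(\zzz)^{2}}{(\fff(\zzz)+1)^{3}}.
\]
For $\cust\ne 0$, equations \eqref{eq:defcusx}--\eqref{eq:defzzzc} are equivalent to $\cusx=\Phi(\zzc,\cust)$, $\partial_{\zzz}\Phi(\zzc,\cust)=0$, and $\partial_{\zzz}^{2}\Phi(\zzc,\cust)=0$; the last two reduce respectively to $\psi''(\zzc)=0$ and $\cust=-1/\psi'(\zzc)$. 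So I only have to locate $\zzc\in(\Em,\Ep)$ with $\psi''(\zzc)=0$.

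Second, to produce such a $\zzc$ and pin down its location, I would study $L(\zzz):=-\psi'(\zzz)=\fff(\zzz)\,|\mmm'(\zzz)|/(\fff(\zzz)+1)^{2}$, so that $\psi''(\zzc)=0\Leftrightarrow L'(\zzc)=0$. From \eqref{eq:mmmzer} and $\beta\in(\phi,1-\phi)$ one gets $\fff(\zzz)\asymp 1$ on the whole interval, so $L(\zzz)\asymp|\mmm'(\zzz)|$. By \eqref{eq:mmmfir}, $L(\zzz)\asymp\sct^{-1/4}d(\zzz)^{-1/2}$ where $d(\zzz):=(\zzz-\Em)\wedge(\Ep-\zzz)$, whenever $d(\zzz)\ge\lsca^{-2/3}\sct^{1/6}$; and by direct use of the quantile sum representation of $\mmm'$ together with the growth of $\qut_i-\Ep$ in \Cref{ass:dtsncusp}, $L(\zzz)\to+\infty$ as $\zzz\to\Em^{+}$ or $\Ep^{-}$. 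Hence $L$ attains a minimum at some $\zzc\in(\Em,\Ep)$, giving $L'(\zzc)=0$. Evaluating $L$ at the midpoint of $(\Em,\Ep)$ yields $L\lesssim\sct^{-1}$, so $L(\zzc)\lesssim\sct^{-1}$; combined with the lower bound $L(\zzz)\gtrsim\sct^{-1/4}d(\zzz)^{-1/2}$, this forces $d(\zzc)\gtrsim\sct^{3/2}$, while the trivial $d(\zzc)\le(\Ep-\Em)/2\asymp\sct^{3/2}$ yields $d(\zzc)\asymp\sct^{3/2}$, and in turn $L(\zzc)\asymp\sct^{-1}$.

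Third, I set $\cust:=1/L(\zzc)\asymp\sct$ (in particular $\cust>0$) and $\cusx:=\zzc+\cust\psi(\zzc)$. Since $\fff(\zzc)\asymp 1$, $\psi(\zzc)\in(0,1)$ is bounded away from both endpoints, so $\cusx-\zzc=\cust\psi(\zzc)\asymp\cust$, $\cust+\zzc-\cusx=\cust(1-\psi(\zzc))\asymp\cust$, and $\zzc-\cusx\in(-\cust,0)$; this verifies all the bounds in \eqref{eq:zbtcloser}. For uniqueness (implicitly needed later) one rewrites $\psi''(\zzc)=0$ as
\[
\Lambda(\zzc):=\psi(\zzc)-\tfrac12-\frac{\mmm''(\zzc)}{2\mmm'(\zzc)^{2}}=0,
\]
and checks, via \eqref{eq:mmmthi}, that the dominant contribution to $\Lambda'$ in the region $d(\zzz)\asymp\sct^{3/2}$ is $-\mmm'''(\zzz)/(2\mmm'(\zzz)^{2})\asymp\sct^{-2}>0$, yielding strict monotonicity of $\Lambda$ in the window where the zero must lie; and \eqref{eq:mmmsec} rules out additional zeros close to the edges by showing $\Lambda\to-\infty$ near $\Em$ and $\Lambda\to+\infty$ near $\Ep$.

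The main obstacle is controlling $\mmm'$, $\mmm''$, and $\mmm'''$ consistently across the crossover region $d(\zzz)\asymp\sct^{3/2}$, where $\mmm''$ changes sign and the upper bound \eqref{eq:mmmsec} is not sharp; all the inputs needed are in \eqref{eq:mmmfir}--\eqref{eq:mmmthi} together with the quantile growth of \Cref{ass:dtsncusp}, but the bookkeeping for the dominant and subdominant terms must be tracked carefully.
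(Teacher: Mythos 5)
Your proposal is correct in substance, and it follows the same overall skeleton as the paper (solve \eqref{eq:defzzzc} for $\zzc$ first, then read off $\cust$ from \eqref{eq:defcust} and $\cusx$ from \eqref{eq:defcusx}, with the size bounds coming from $|\mmm'(\zzc)|\asymp\sct^{-1}$ and $\mmm(\zzc)\lesssim 1$), but the mechanism you use to produce $\zzc$ is genuinely different. The paper rewrites \eqref{eq:defzzzc} as $g(\zzc)=0$ for an explicit function $g$ built from $\mmm,\mmm',\mmm''$, checks via \eqref{eq:mmmfir}--\eqref{eq:mmmsec} that $g>0$ at $\Em+\lsca^{-2/3}\sct^{1/6}$ and $g<0$ at $\Ep-\lsca^{-2/3}\sct^{1/6}$ (the sign of $\mmm''$ dominates near the two edges), and applies the intermediate value theorem; the localization $(\zzc-\Em)\wedge(\Ep-\zzc)\asymp\sct^{3/2}$ then comes from $\mmm''(\zzc)\lesssim(\mmm'(\zzc))^2$ together with \eqref{eq:mmmsec}. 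You instead observe that \eqref{eq:defzzzc} is $\psi''(\zzc)=0$ for $\psi=\fff/(\fff+1)$ and obtain $\zzc$ variationally, as an interior minimizer of $L=-\psi'$, localizing it by comparing $L(\zzc)$ with the value of $L$ at the midpoint and using only \eqref{eq:mmmfir} and \eqref{eq:mmmzer}; this is a clean alternative (it avoids \eqref{eq:mmmsec} for the lower bound on the distance to the edges, at the price of only working because $\zzc$ is a global minimizer), and your $\Phi(\zzz,t)=\zzz+t\psi(\zzz)$ reformulation makes transparent why the three equations encode a degenerate critical point. The extra uniqueness sketch via $\Lambda$ is not required by the statement and would need the "window" claim fleshed out, but it does not affect the lemma.

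One small inaccuracy to fix: $L$ does \emph{not} tend to $+\infty$ at $\Em^+$ or $\Ep^-$, since the nearest quantile sits at distance $\asymp\sct^{1/6}\lsca^{-2/3}$ from the edge, so $|\mmm'|$ stays finite up to the endpoints. What is true, and what your argument actually needs, is that near the edges (say within $\lsca^{-2/3}\sct^{1/6}$) one has $L\gtrsim\lsca^{1/3}\sct^{-1/3}$, which is $\gg\sct^{-1}\gtrsim L(\text{midpoint})$ because $\sct>\lsca^{-1/2+\errh}$; this forces the minimum of $L$ over $[\Em,\Ep]$ to be attained in the interior (indeed in the region where \eqref{eq:mmmfir} and \eqref{eq:mmmzer} apply), after which your argument goes through unchanged.
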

The cusp should be present around the location $(\lsca\cusx, \lsca\cust)$. 
The strategy to prove this lemma is to first determine $\zzc$ using \eqref{eq:defzzzc}, then $\cust$ using \eqref{eq:defcust}, and finally $\cusx$ using \eqref{eq:defcusx}.
\begin{proof}[Proof of Lemma \ref{lem:xtzdef}]
Note that \eqref{eq:defzzzc} is equivalent to $g(\zzc)=0$, where for $\zzz\in(\Em, \Ep)$, $g(\zzz)$ is defined as
\[
g(\zzz):=[(\mmm'(\zzz))^2 + \mmm''(\zzz)]\beta (\beta+(1-\beta)e^{-\mmm(\zzz)})-2(\mmm'(\zzz))^2\beta^2.
\]
Using the above basic properties \eqref{eq:mmmfir}, \eqref{eq:mmmzer}, and \eqref{eq:mmmsec}, we get that $g(\Em+\lsca^{-2/3}\sct^{1/6})>0$ and $g(\Ep-\lsca^{-2/3}\sct^{1/6})<0$.
Thus, we can find $\zzc\in(\Em+\lsca^{-2/3}\sct^{1/6}, \Ep-\lsca^{-2/3}\sct^{1/6})$ such that $g(\zzc)=0$.
For such $\zzc$, \eqref{eq:mmmfir} and $g(\zzc)=0$ imply that 
\[
\mmm''(\zzc) \lesssim (\mmm'(\zzc))^2 \lesssim \sct^{-1/2}((\zzc-\Em)\wedge (\Ep-\zzc))^{-1}.
\]
Then by \eqref{eq:mmmsec}, we must have that $(\zzc-\Em)\wedge (\Ep-\zzc) \asymp\sct^{3/2}$.
So by \eqref{eq:mmmfir}, we have $\mmm'(\zzc)\asymp\sct^{-1}$, and by \eqref{eq:mmmzer}, we have $\mmm(\zzc)\lesssim 1$.
The number $\cust$ is then determined by \eqref{eq:defcust}, which yields that
\[
\cust = -\frac{(e^{\mmm(\zzc)}\beta+(1-\beta))^2}{\mmm'(\zzc)\beta(1-\beta)e^{\mmm(\zzc)}} \asymp \sct.
\]
Finally, $\cusx$ is solved from \eqref{eq:defcusx}.
In particular, \eqref{eq:defcusx} and the fact that $\mmm(\zzc)\lesssim 1$ imply that $-\cust < \zzc-\cusx < 0$ and $\cusx-\zzc,\, \cust+\zzc-\cusx\asymp \cust$.
\end{proof}

\subsection{Kernel approximation}  \label{ssec:kapprox}
As discussed before, for the NBRW $\sfA$, the set $\{(\sfa_i(\sft),\sft)\}_{i\in\llbracket- M,N\rrbracket, \sft\in\Z_{\ge 0}}$ is a determinantal point process on $\Z^2$.
The kernel is given in \cite[Theorem 2.1]{gorin2019universality} as
    \begin{multline}  \label{eq:KBernoulli}
        K^{\textnormal{Bernoulli}}(\oot_1,\oox_1;\oot_2,\oox_2)
        =
        \don_{\oox_1\ge \oox_2}\don_{\oot_1>\oot_2}
        (-1)^{\oox_1-\oox_2+1}
        \binom{\oot_1-\oot_2}{\oox_1-\oox_2}
        \\+
        \frac{\oot_1!}{(\oot_2-1)!}\frac1{(2\pi\i)^{2}}
        \int\limits_{\oox_2-\oot_2+\frac12-\i\infty}
        ^{\oox_2-\oot_2+\frac12+\i\infty}\rd \ooz
        \oint\limits_{\textnormal{all $\oow$ poles}}\rd \oow\,
        \frac{(\ooz-\oox_2+1)_{\oot_2-1}}{(\oow-\oox_1)_{\oot_1+1}}
        \frac{1}{\oow-\ooz}
        \frac{\sin(\pi \oow)}{\sin(\pi \ooz)}
        \left(\frac{1-\beta}{\beta}\right)^{\oow-\ooz}
        \prod_{i=-M}^{N}\frac{\ooz-\sfd_i}{\oow-\sfd_i},
    \end{multline}
for any $\oox_1, \oox_2\in \Z$ and $\oot_1, \oot_2\in \Z_+$. 
Here, we recall the Pochhammer symbols and the binomial coefficients defined at the beginning of \Cref{Walks}. 
The integration contours for $\ooz$ and $\oow$ are as follows: the $\ooz$ contour is the straight vertical line $\Re(\ooz)=\oox_2-\oot_2+\frac12$ traversed upwards, and the $\oow$ contour is a positively (counter-clockwise) oriented circle or a union of two circles encircling all the $\oow$ poles $\{\oox_1-\oot_1,\oox_1-\oot_1+1,\ldots,\oox_1-1,\oox_1\}\cap \{\sfd_i\}_{i\in\qq{-M,N}}$ of the integrand, except the pole at $\oow=\ooz$.

With proper scaling, this kernel $K^{\textnormal{Bernoulli}}$ should be approximated by the Pearcey kernel $K^{\textnormal{Bernoulli}}$, given by \eqref{eq:pearcey}. This is the main task of this section.
For this purpose, we denote
\begin{equation}  \label{eq:defA}
\begin{split}
A &=\cust^4 \int \frac{\den(x)}{4(\zzc-x)^4} \rd x - \frac{\cust^4}{12(\cust+\zzc-\cusx)^3} - \frac{\cust^4}{12(\cusx-\zzc)^3},\quad \BE =(\cusx-\zzc)\cust^{-1}.
\end{split}
\end{equation}
By \eqref{eq:zbtcloser}, we have $B>0$ and $B\asymp 1$.
We also have $A>0$ and $A\asymp 1$, which will be proved later as Lemma \ref{lem:Sderiv}.
Then, \Cref{thm:univNBRWlim} is an immediate consequence of the next proposition.
\begin{prop}  \label{prop:Kconvconj}
Suppose Assumption \ref{ass:dtsncusp} holds. Take $\cusx \in \R$ and $\cust>0$ from Lemma \ref{lem:xtzdef}, and define $A, B$ as in \eqref{eq:defA}.
Take any $\pct_1, \pct_2, \pcx_1, \pcx_2\in\R$, $\oox_1, \oox_2\in\Z$, and $\oot_1, \oot_2\in \Z_+$ such that $|\pct_1|, |\pct_2|, |\pcx_1|, |\pcx_2| \lesssim 1$, and $\oot_1=\lsca\cust +\lsca^{1/2}\pct_1+\OO(1)$, $\oot_2=\lsca\cust +\lsca^{1/2}\pct_2+\OO(1)$, $\oox_1=\lsca\cusx+B \lsca^{1/2}\pct_1+\lsca^{1/4}\pcx_1+\OO(1)$, $\oox_2=\lsca\cusx+B \lsca^{1/2}\pct_2+\lsca^{1/4}\pcx_2+\OO(1)$. 
Then, we have
\begin{multline*}
\Big|(-1)^{\oox_1-\oox_2}\BE^{\oox_1-\oox_2}(1-\BE)^{\oot_1-\oot_2+\oox_2-\oox_1}
K^{\textnormal{Bernoulli}}(\oot_1,\oox_1;\oot_2,\oox_2)
    - \frac{1}{\sqrt{2}\lsca^{1/4}A^{1/4}\BE(1-\BE)} \\ \times K^{\textnormal{Pearcey}}\Big(\frac{-\pct_1}{2A^{1/2}\BE(1-\BE)},\frac{\pcx_1}{\sqrt{2}A^{1/4}\BE(1-\BE)};\frac{-\pct_2}{2A^{1/2}\BE(1-\BE)},\frac{\pcx_2}{\sqrt{2}A^{1/4}\BE(1-\BE)}\Big)\Big|\lesssim \lsca^{-1/4-\errd},
\end{multline*}
if $\pct_1= \pct_2$ and $\oot_1=\oot_2$, or $|\pct_1-\pct_2|>\lsca^{-\errd}$.
\end{prop}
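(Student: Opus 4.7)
The approach is a steepest-descent analysis of the double contour integral in \eqref{eq:KBernoulli} combined with a local central limit argument for the binomial term, matched termwise to \eqref{eq:pearcey}. I would first write the Pochhammer ratios as $\Gamma$-quotients, apply Stirling, and simultaneously replace $\lsca^{-1}\sum_i\log(\ooz-\sfd_i)$ by $\int\log(\ooz-x)\,\den(x)\,\rd x$ via summation by parts using \eqref{eq:heightclose}, with error $\OO(\lsca^{\err-1})$ on the contours. The integrand is recast as $e^{\lsca(\cS(\ooz/\lsca)-\cS(\oow/\lsca))}$ times a slowly varying prefactor, where, after absorbing a linear-in-$z$ gauge into the conjugating factor $(-1)^{\oox_1-\oox_2}\BE^{\oox_1-\oox_2}(1-\BE)^{\oot_1-\oot_2+\oox_2-\oox_1}$,
\[
\cS(z)=(z+\cust-\cusx)\log(z+\cust-\cusx)-(z-\cusx)\log(z-\cusx)+\int\log(z-x)\,\den(x)\,\rd x\quad(\text{schematically}).
\]
A direct calculation shows that $\cS'(\zzc)=\cS''(\zzc)=\cS'''(\zzc)=0$ is equivalent to the system \eqref{eq:defcusx}--\eqref{eq:defzzzc}, so $\zzc$ is a quartic saddle of $\cS$. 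Using $\tfrac{d^4}{dz^4}\int\log(z-x)\,\den(x)\,\rd x\big|_{\zzc}=-6\int\den(x)(\zzc-x)^{-4}\,\rd x$ together with the Pochhammer fourth-derivative contribution $2/(\cust+\zzc-\cusx)^3-2/(\zzc-\cusx)^3$, one obtains $|\cS^{(4)}(\zzc)|\asymp A/\cust^4$ with $A$ as in \eqref{eq:defA}; in particular $A>0$ and $A\asymp 1$ follow from Assumption \ref{ass:dtsncusp} and \eqref{eq:zbtcloser}.

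\textbf{Contour deformation and local asymptotics.}
I would next deform both contours to pass through $\zzc$: the $\ooz$-contour enters along the imaginary direction (steepest descent for a quartic saddle), and the $\oow$-contour along the rays of angle $\pm\pi/4,\pm 3\pi/4$, reproducing the Pearcey $\pcw$-contour. Inside a microscopic window around $\zzc$ I substitute $\ooz=\zzc+\zeta/(\lsca A)^{1/4}$, $\oow=\zzc+\eta/(\lsca A)^{1/4}$. The quartic term in $\lsca(\cS(\ooz/\lsca)-\cS(\oow/\lsca))$ becomes $\pm(\zeta^4-\eta^4)/4$; the quadratic and linear corrections produced by the mismatches $\oot-\lsca\cust\asymp\lsca^{1/2}\pct$ and $\oox-\lsca\cusx\asymp \BE\lsca^{1/2}\pct+\lsca^{1/4}\pcx$ reproduce exactly the $t\pcz^2/2,-y\pcz$ (respectively $s\pcw^2/2,x\pcw$) factors of \eqref{eq:pearcey} once $\pct,\pcx$ are rescaled as in the proposition. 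The factor $\sin(\pi\oow)/\sin(\pi\ooz)$ tends to $1$ on the chosen contours (using $\Re\ooz\in\tfrac12+\Z$ and that $|\Im\oow|$ grows along the contour), $1/(\oow-\ooz)\to 1/(\eta-\zeta)$, and the conjugation cancels both the large exponential prefactor at $\zzc$ and the gauge $((1-\beta)/\beta)^{\oow-\ooz}$, yielding the Pearcey double integral.

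\textbf{Binomial contribution.}
When $\oot_1>\oot_2$ (equivalently $\pct_1>\pct_2$), the binomial term in \eqref{eq:KBernoulli}, after conjugation, becomes (up to sign) the binomial mass function with mean $\BE(\oot_1-\oot_2)$ and variance $\BE(1-\BE)(\oot_1-\oot_2)$ evaluated at $\oox_1-\oox_2$. Since $\oot_1-\oot_2\gtrsim\lsca^{1/2-\errd}$ and $\BE(\oot_1-\oot_2)-(\oox_1-\oox_2)=\OO(\lsca^{1/4})$, the standard local central limit theorem for binomials converts this into the Gaussian first term of $K^{\textnormal{Pearcey}}$ in the stated rescaling, with error bounded by $\OO(\lsca^{-1/4-\errd'})$ for a suitable $\errd'>0$.

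\textbf{Main obstacle: uniform tail estimates.}
The hardest step, flagged by the authors themselves, is bounding the tail contributions of the deformed contours uniformly in $\sct$, which can be as small as $\lsca^{-1/2+\errh}$. Because $\zzc$ sits at distance only $\asymp\sct^{3/2}$ from the edges $\Em,\Ep$ of $\supp\den$, $\cS$ varies rapidly near $\zzc$, and naive Taylor bounds fail outside a microscopic window. To overcome this I would decompose the contours into dyadic shells in $|\ooz-\zzc|,|\oow-\zzc|$, and in the shells closest to the edges replace $\int\log(z-x)\,\den(x)\,\rd x$ by its discrete analog $\lsca^{-1}\sum_i\log(z-\qut_i)$, controlling errors via the quantile growth rates of Assumption \ref{ass:dtsncusp}. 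A careful discretization of the $\oow$-contour avoiding the lattice of poles $\{\sfd_i\}$ while preserving the descent property is also required. Together these ingredients should deliver the claimed $\lsca^{-1/4-\errd}$ bound.
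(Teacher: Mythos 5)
Your overall route is the same as the paper's (steepest descent around the quartic saddle $\zzc$ for the double integral, plus a local CLT for the binomial term), and your inner-window analysis and binomial-to-Gaussian step are in line with what is actually done. However, there are genuine gaps at precisely the points that carry the technical weight. First, the factor $\sin(\pi\oow)/\sin(\pi\ooz)$ does not ``tend to $1$ on the chosen contours'': the $\ooz$-contour must be moved to pass through the real point $\zzc$, where $1/\sin(\pi\lsca\ooz)$ has poles at every lattice point, and the only reason a contour through $\zzc$ is usable is that the sine term must be combined with the Pochhammer factors and the product over $\{\sfd_i\}$ into a single phase function (the paper's $\Dfin_1,\Dfin_2$), whose pole set is a symmetric-difference-type set that omits the gap $(\sfd_{-1}/\lsca,\sfd_0/\lsca)$; treating the sine ratio separately, as you propose, breaks down exactly where the saddle sits. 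Second, and more seriously, your treatment of the outer (tail) contribution is only a statement of intent. The paper's resolution is to deform onto the level curves $\Im\Dfin_j=0$ (so that $\Re\Dfin_j$ is monotone along them), to prove structural facts about these curves (three branches leaving the circle of radius $\lsca^{-1/4+\ere}\cust$ near angles $\pi/4,\pi/2,3\pi/4$, landing on explicit real intervals $P_{j,\pm}$ or escaping to $\infty$, and staying $\lsca^{-3}$ away from the rest of $\R$), to discretize them on lattices of spacing $\lsca^{-100}$ to control lengths, and, crucially, to handle the residues of the $1/(\oow-\ooz)$ pole created when the deformed $\oow$- and $\ooz$-contours intersect, which is done by pushing those residue integrals onto the curve where $\Im(\Dfin_1-\Dfin_2)=0$. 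Your dyadic-shell plan addresses none of this: a generic contour (e.g.\ a vertical line through $\zzc$) passes within distance $\asymp\sct^{3/2}$ of the singular points $\sfd_{-1}/\lsca,\sfd_0/\lsca$ — which, when $\sct\asymp\lsca^{-1/2+\errh}$, is barely outside the inner window — and there is no reason $\Re\Dfin_2$ decays along it; and the contour-intersection/residue issue is absent from your proposal altogether.

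Two smaller points: the positivity and order-one size of $A$ (needed even to state the scaling) is not an immediate consequence of Assumption \ref{ass:dtsncusp}; when $\sct\asymp 1$ the two competing contributions to $\Sfin''''(\zzc)$ are of the same order, and the paper needs a separate convexity-type inequality argument (its Lemmas on $U_\pm,V_\pm$ and $G_\pm$) to conclude $A>0$, $A\asymp 1$. Likewise, the cancellation of the large prefactor $\BE^{\oox_1-\oox_2}(1-\BE)^{\cdots}\,\oot_1!/(\oot_2-1)!\,\lsca^{\oot_2-\oot_1-1}e^{\lsca(\Dfin_2(\zzc)-\Dfin_1(\zzc))}$ down to $\bigl(\cust\BE(1-\BE)\bigr)^{-1}$ with error $\OO(\lsca^{-1/2}\cust^{-1})$ is a delicate Stirling computation that you assert but do not carry out; it is needed at the stated accuracy $\lsca^{-1/4-\errd}$ because $\cust$ can be as small as $\lsca^{-1/2+\errh}$.
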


The rest of this section is devoted to proving this proposition. 
Without loss of generality, below we assume that $\cusx=0$, by shifting $\den$ and $\{\sfd_i\}_{i\in\qq{-M,N}}$.
Then by \eqref{eq:zbtcloser} and Lemma \ref{lem:Sderiv} below, 
we have $-\cust < \Em < \zzc < \Ep < 0$ and $\Ep, \Em+\cust\asymp \cust$, and $\Ep-\zzc, \zzc-\Em \asymp \sct^{3/2}$.
Therefore, we have (see Figure \ref{fig:veca})
\begin{equation}  \label{eq:vecal}
(\oox_1-\oot_1+1)/\lsca,\, (\oox_2-\oot_2+1)/\lsca<\sfd_{-1}/\lsca < \zzc < \sfd_0/\lsca < (\oox_1-1)/\lsca, (\oox_2-1)/\lsca.    
\end{equation}
\begin{figure}[hbt!]
    \centering
\begin{tikzpicture}[line cap=round,line join=round,>=triangle 45,x=0.3cm,y=0.25cm]
\clip(-2.5,-1.5) rectangle (51,1.55);

\draw [->] (-2,0) -- (50,0);

\begin{small}
\draw (29.7-1,0) node[anchor=north]{$\zzc$};

\draw (25.5-2,0) node[anchor=north]{$\Em$};
\draw (32.7,0) node[anchor=north]{$\Ep$};
\draw [blue] (33.3,0) node[anchor=south]{$\sfd_0/\lsca$};
\draw [blue] (26.2-2,0) node[anchor=south]{$\sfd_{-1}/\lsca$};

\draw (6.2-2,0) node[anchor=north]{$-\cust$};
\draw (45,0) node[anchor=north]{$0$};

\draw [red] (44.2,0) node[anchor=south]{$\oox_1/\lsca$};

\draw [red] (7.2-2,0) node[anchor=south]{$(\oox_1-\oot_1)/\lsca$};
\end{small}

\draw [blue] [fill=blue] (26.2-2,0) circle (1.2pt);
\draw [blue] [fill=blue] (33.3,0) circle (1.2pt);

\draw [blue] [fill=blue] (23.2-2,0) circle (1.2pt);
\draw [blue] [fill=blue] (20.7-2,0) circle (1.2pt);
\draw [blue] [fill=blue] (19.5-2,0) circle (1.2pt);
\draw [blue] [fill=blue] (18.3-2,0) circle (1.2pt);
\draw [blue] [fill=blue] (17.1-2,0) circle (1.2pt);
\draw [blue] [fill=blue] (16-2,0) circle (1.2pt);

\draw [red] [fill=red] (7.2-2,0) circle (1.2pt);

\draw [red] [fill=red] (44.2,0) circle (1.2pt);

\draw [blue] [fill=blue] (15-2,0) circle (1.2pt);
\draw [blue] [fill=blue] (14-2,0) circle (1.2pt);
\draw [blue] [fill=blue] (13.1-2,0) circle (1.2pt);
\draw [blue] [fill=blue] (12.2-2,0) circle (1.2pt);
\draw [blue] [fill=blue] (11.4-2,0) circle (1.2pt);
\draw [blue] [fill=blue] (10.6-2,0) circle (1.2pt);
\draw [blue] [fill=blue] (9.8-2,0) circle (1.2pt);
\draw [blue] [fill=blue] (9-2,0) circle (1.2pt);
\draw [blue] [fill=blue] (8.2-2,0) circle (1.2pt);
\draw [blue] [fill=blue] (7.7-2,0) circle (1.2pt);
\draw [blue] [fill=blue] (6.8-2,0) circle (1.2pt);
\draw [blue] [fill=blue] (5.8-2,0) circle (1.2pt);
\draw [blue] [fill=blue] (5.1-2,0) circle (1.2pt);
\draw [blue] [fill=blue] (4.5-2,0) circle (1.2pt);
\draw [blue] [fill=blue] (3.8-2,0) circle (1.2pt);
\draw [blue] [fill=blue] (3.2-2,0) circle (1.2pt);
\draw [blue] [fill=blue] (2.5-2,0) circle (1.2pt);
\draw [blue] [fill=blue] (2.-2,0) circle (1.2pt);
\draw [blue] [fill=blue] (1.5-2,0) circle (1.2pt);
\draw [blue] [fill=blue] (0.9-2,0) circle (1.2pt);
\draw [blue] [fill=blue] (0.2-2,0) circle (1.2pt);

\draw [blue] [fill=blue] (35.5,0) circle (1.2pt);
\draw [blue] [fill=blue] (37.5,0) circle (1.2pt);
\draw [blue] [fill=blue] (39.2,0) circle (1.2pt);
\draw [blue] [fill=blue] (40.4,0) circle (1.2pt);
\draw [blue] [fill=blue] (41.7,0) circle (1.2pt);
\draw [blue] [fill=blue] (42.8,0) circle (1.2pt);
\draw [blue] [fill=blue] (43.8,0) circle (1.2pt);
\draw [blue] [fill=blue] (44.7,0) circle (1.2pt);
\draw [blue] [fill=blue] (45.5,0) circle (1.2pt);
\draw [blue] [fill=blue] (46.3,0) circle (1.2pt);
\draw [blue] [fill=blue] (47.1,0) circle (1.2pt);
\draw [blue] [fill=blue] (48.,0) circle (1.2pt);
\draw [blue] [fill=blue] (48.7,0) circle (1.2pt);

\draw [fill=uuuuuu] (29.7-1,0) circle (1.2pt);

\draw [fill=uuuuuu] (25.5-2,0) circle (1.2pt);
\draw [fill=uuuuuu] (32.7,0) circle (1.2pt);

\draw [fill=uuuuuu] (6.2-2,0) circle (1.2pt);
\draw [fill=uuuuuu] (45,0) circle (1.2pt);

\end{tikzpicture}
\caption{An illustration of the initial configuration $\{\sfd_i\}_{i\in\qq{-M,N}}$ (blue), and the locations of $\Em$, $\Ep$, $\zzc$, $-\cust$, $0$, and $(\oox_1-\oot_1)/\lsca$, $\oox_1/\lsca$.}
\label{fig:veca}
\end{figure}

Our main task is now to analyze the contour integral in \eqref{eq:KBernoulli}.
By separating the terms containing $\ooz$ or $\oow$, we need to study the integral of $\mathsf f(\ooz,\oow)=\frac{1}{\oow-\ooz}\exp(\lsca\Dfin_2(\ooz/\lsca)-\lsca\Dfin_1(\oow/\lsca))$, with the same contours.
Here, $\Dfin_1$ and $\Dfin_2$ are two \emph{key functions} to be defined in Section \ref{ssec:kfunc}.
These functions have three critical points near $\zzc$. From that, we will show that $\Dfin_1(\z)-\Dfin_1(\zzc)$ and $\Dfin_2(\z)-\Dfin_2(\zzc)$ are approximately $-\cust^{-4}A(\z-\zzc)^4$ around $\zzc$.
In light of this, we then use the steepest descent method as follows: we deform the contours of $\ooz$ and $\oow$, so that $\ooz/\lsca$ passes through $\zzc$ vertically, and $\oow/\lsca$ passes through $\zzc$ in the $e^{\pi\i/4}$, $e^{3\pi\i/4}$, $e^{5\pi\i/4}$, $e^{7\pi\i/4}$ directions, and these contours roughly follow the steepest descent curves of $\Re(\Dfin_2)$ and $\Re(\Dfin_1)$ away from $\zzc$.
Then, for the integral of $\mathsf f(\ooz,\oow)$, the main contribution comes from the part of the contours where $|\ooz/\lsca-\zzc|$ and $|\oow/\lsca-\zzc|$ are of order $\OO( \cust\lsca^{-1/4})$. We will later call this part the `inner part', and the remaining part the `outer part'.  
We will do a careful asymptotic analysis of the inner part to obtain the Pearcey kernel \eqref{eq:pearcey}, and show that (under appropriate scaling) the outer part decays to zero as $\lsca\to\infty$.

As already mentioned in the introduction, the steepest descent method has been extensively used to do asymptotic analysis for determinantal point processes. In particular, it has been used in \cite{RSPPP} around a triple critical point to obtain the extended Pearcey kernel for weighted random tilings of special domains; in \cite{gorin2019universality}, it was used to prove convergence to the extended discrete Sine kernel in the bulk of NBRW.
Our task here is more intricate than these previous works, due to the following reasons. (1) Compared to \cite{RSPPP}, we work with general initial configurations rather than special ones. (2) Compared to \cite{gorin2019universality} where the key saddle points are a pair of complex conjugate critical points away from the real line, here we need to handle three critical points near $\zzc\in\R$, which can lead to more complicated behaviors for $\Dfin_1$ and $\Dfin_2$. (3) The fact that we seek for a `small distance' result (i.e., having a cusp at time $\lsca\cust$, which is allowed to be much smaller than $\lsca$) adds to the technical difficulty.
Therefore, delicate computations are needed to achieve the desired approximation of $\Dfin_1$ and $\Dfin_2$ in the inner part of the contours.
For the outer part of the contours, which are taken to be the steepest descent curves of $\Re \Dfin_1 $ and $\Re \Dfin_2 $ outside a ball of radius $\cust\lsca^{-1/4}$ around $\zzc$, it is hard to precisely describe them, as controlling the behavior of $\Dfin_1$, $\Dfin_2$ is challenging in this region. For example, at distance $\asymp\sct^{3/2}$ from $\zzc$, there already exist singular points (i.e., $\sfd_{-1}/\lsca$ and $\sfd_{0}/\lsca$).

In the next two subsections, we will introduce and analyze the \emph{key functions} $\Dfin_1$, $\Dfin_2$, and study their steepest descent curves.
Using them, we finish the proof of Proposition \ref{prop:Kconvconj} in Sections \ref{ssec:convKP} and \ref{ssec:small}.

\subsection{Key functions} \label{ssec:kfunc}
We now define two functions $\Dfin_2$ and $\Dfin_1$, through the following expressions:
\begin{align*}
\Dfin_2(\z) &= \frac{1}{\lsca}\sum_{i=-M}^N\log\Big(\z-\frac{\sfd_i}{\lsca}\Big)
    +\frac{1}{\lsca}\sum_{i=-\oox_2+1}^{-\oox_2+\oot_2-1}\log\Big(\z+\frac{i}{\lsca}\Big)
    -\frac{1}{\lsca}\log(\sin(\pi\lsca\z))+\z \log(\beta/(1-\beta)),\\
\Dfin_1(\z) &= \frac{1}{\lsca}\sum_{i=-M}^N\log\Big(\z-\frac{\sfd_i}{\lsca}\Big)
    +\frac{1}{\lsca}\sum_{i=-\oox_1}^{-\oox_1+\oot_1}\log\Big(\z+\frac{i}{\lsca}\Big)
    -\frac{1}{\lsca}\log(\sin(\pi\lsca\z))+\z \log(\beta/(1-\beta)).
\end{align*}
We note that these expressions define $\Dfin_2$ and $\Dfin_1$ as holomorphic functions in the upper-half complex plane $\HH$, up to adding a pure imaginary constant.
They are also analytically extended to $\R\setminus E(\Dfin_2)$ and $\R \setminus E(\Dfin_1)$, respectively, where 
\begin{align*}
E(\Dfin_2)&=\lsca^{-1}\left[(\{\sfd_i\}_{i=-M}^N \cap [\oox_2-\oot_2+1, \oox_2-1] ) \cup ( \qq{-\infty,\oox_2-\oot_2}\cup\qq{\oox_2,\infty}  \setminus \{\sfd_i\}_{i=-M}^N)\right],\\
E(\Dfin_1)&=\lsca^{-1}\left[(\{\sfd_i\}_{i=-M}^N \cap [\oox_1-\oot_1, \oox_1] ) \cup ( \qq{-\infty,\oox_1-\oot_1-1}\cup\qq{\oox_1+1,\infty}  \setminus \{\sfd_i\}_{i=-M}^N)\right]
\end{align*}
are the sets of poles of $\Dfin'_2$ and $\Dfin'_1$, respectively.
We note that $\Im \Dfin_2 $ (resp.~$\Im \Dfin_1 $) is constant in each interval of $\R\setminus E(\Dfin_2)$ (resp.~$\R \setminus E(\Dfin_1)$);
 in particular, $\Im \Dfin_2 $ and $\Im \Dfin_1 $ are both constant in $(\sfd_{-1}/\lsca, \sfd_0/\lsca)$.
 We then choose the pure imaginary constants for $\Dfin_2$ and $\Dfin_1$ such that for $z\in \R$,
\begin{align*}
\Dfin_2(\z) &= \frac{1}{\lsca}\sum_{i=-M}^N\log \left|\z-\frac{\sfd_i}{\lsca}\right| 
    +\frac{1}{\lsca}\sum_{i=-\oox_2+1}^{-\oox_2+\oot_2-1}\log \left|\z+\frac{i}{\lsca}\right| 
    -\frac{1}{\lsca}\log(\sin(\pi\lsca\z))+\z \log(\beta/(1-\beta)),\\
\Dfin_1(\z) &= \frac{1}{\lsca}\sum_{i=-M}^N\log\left|\z-\frac{\sfd_i}{\lsca}\right|
    +\frac{1}{\lsca}\sum_{i=-\oox_1}^{-\oox_1+\oot_1}\log\left|\z+\frac{i}{\lsca}\right|
    -\frac{1}{\lsca}\log(\sin(\pi\lsca\z))+\z \log(\beta/(1-\beta)).
\end{align*}
In particular, under this choice, we have $\Im \Dfin_2 =\Im \Dfin_1 =0$ in $(\sfd_{-1}/\lsca, \sfd_0/\lsca)$. 
Finally, we analytically extend $\Dfin_2$ and $\Dfin_1$ to the lower-half plane $\HH^-$ from $\HH\cup(\sfd_{-1}/\lsca, \sfd_0/\lsca)$.

Now, by a change of variables of $\z=\ooz/\lsca$ and $\w=\oow/\lsca$, we can rewrite \eqref{eq:KBernoulli} as
    \begin{multline}  \label{eq:kberD}
        K^{\textnormal{Bernoulli}}(\oot_1,\oox_1;\oot_2,\oox_2)
        =
        \mathbf{1}_{\oox_1\ge \oox_2}\mathbf{1}_{\oot_1>\oot_2}
        (-1)^{\oox_1-\oox_2+1}
        \binom{\oot_1-\oot_2}{\oox_1-\oox_2} \\ +
        (-1)^{\oox_1-\oox_2+1}\frac{\oot_1!}{(\oot_2-1)!}\frac{\lsca^{\oot_2-\oot_1-1}}{(2\pi\i)^{2}}
        \int\limits_{(\oox_2-\oot_2+\frac12)/\lsca-\i\infty}
        ^{(\oox_2-\oot_2+\frac12)/\lsca+\i\infty}
        \rd\z
        \oint\limits_{\textnormal{all $\w$ poles}}
       \frac{\rd\w }{\w-\z}\exp(\lsca\Dfin_2(\z)-\lsca\Dfin_1(\w)),
    \end{multline}
where the $\z$ contour is the straight vertical line $\Re  \z =(\oox_2-\oot_2+\frac12)/\lsca$ traversed upwards, and the $\w$ contour encircles all the $\w$ poles, except the pole at $\w=\z$.

For the rest of this subsection, we derive some estimates of $\Dfin_1$ and $\Dfin_2$ near $\zzc$, by approximating them with some easier-to-analyze functions in several steps.

\subsubsection{The function $\Sfin$} 
It would be more convenient to work with a `continuous version' of the functions $\Dfin_1$ and $\Dfin_2$, defined as \begin{equation}   \label{eq:defSf}
\begin{split}
\Sfin(\zzz) =& \int_{-\infty}^{E_-} \log(\zzz-x)\den(x) \rd x + \int_{E_+}^{\infty} \log(x-\zzz)\den(x) \rd x \\
&+ (\zzz+\cust)\log(\zzz+\cust) - \zzz\log(-\zzz) + \zzz \log(\beta/(1-\beta)).  
\end{split}
\end{equation}
We first define this function $\Sfin$ for $\zzz \in (\Em,\Ep)$, and then analytically extend it to $\C \setminus ( (-\infty, \Em] \cup [\Ep, \infty) )$.
A key advantage of $\Sfin$ is that, while each of $\Dfin_1$ and $\Dfin_2$ should have three critical points very close to $\zzc$,
 $\Sfin$ has a triple critical point precisely at $\zzc$, due to our choice of $(\cusx,\cust)$ in Lemma \ref{lem:xtzdef}.

\begin{lem}   \label{lem:Sderivs}
We have $\Sfin'(\zzc)=\Sfin''(\zzc)=\Sfin'''(\zzc)=0$, and $\Sfin''''(\zzc)=-24\cust^{-4}A$. 
\end{lem}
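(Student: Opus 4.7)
The strategy is direct computation: differentiate $\Sfin$ four times, evaluate at $\zzc$, and apply the three defining equations \eqref{eq:defcusx}--\eqref{eq:defzzzc} of the cusp triple $(\cusx,\cust,\zzc)$. Since the excerpt reduces to the case $\cusx=0$, I will work under that normalization throughout. Recalling $\mmm(\zzz)=\int \den(x)/(\zzz-x)\,\rd x$ and $\fff(\zzz) = e^{\mmm(\zzz)}\beta/(1-\beta)$, differentiating \eqref{eq:defSf} gives
\[
\Sfin'(\zzz) = \mmm(\zzz) + \log(\zzz+\cust) - \log(-\zzz) + \log(\beta/(1-\beta)) = \log\!\left(\frac{\fff(\zzz)(\zzz+\cust)}{-\zzz}\right).
\]
Equation \eqref{eq:defcusx} (with $\cusx=0$) rewrites as $\fff(\zzc)/(\fff(\zzc)+1) = -\zzc/\cust$, equivalently $\fff(\zzc)(\zzc+\cust) = -\zzc$, so $\Sfin'(\zzc)=\log 1=0$.

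Next, $\Sfin''(\zzz)=\mmm'(\zzz)+1/(\zzz+\cust)-1/\zzz$. Since $\mmm'=\fff'/\fff$, equation \eqref{eq:defcust} gives $\mmm'(\zzc) = -(\fff(\zzc)+1)^2/(\cust\fff(\zzc))$. Using $\fff(\zzc)=-\zzc/(\zzc+\cust)$ and $\fff(\zzc)+1=\cust/(\zzc+\cust)$ (both from \eqref{eq:defcusx}), this simplifies to $\mmm'(\zzc) = \cust/(\zzc(\zzc+\cust))$. On the other hand $1/(\zzc+\cust)-1/\zzc = -\cust/(\zzc(\zzc+\cust))$, so the two terms cancel and $\Sfin''(\zzc)=0$. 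For the third derivative, $\Sfin'''(\zzz)=\mmm''(\zzz)-1/(\zzz+\cust)^2+1/\zzz^2$. Writing $\mmm''=\fff''/\fff-(\fff'/\fff)^2$ and applying \eqref{eq:defzzzc} in the form $\fff''(\zzc)=2\fff'(\zzc)^2/(\fff(\zzc)+1)$, one obtains
\[
\mmm''(\zzc) = \frac{\fff'(\zzc)^2(\fff(\zzc)-1)}{\fff(\zzc)^2(\fff(\zzc)+1)} = -\frac{\cust(2\zzc+\cust)}{\zzc^2(\zzc+\cust)^2},
\]
after substituting $\fff(\zzc),\fff(\zzc)+1,\fff'(\zzc)=-(\fff(\zzc)+1)^2/\cust$ using the explicit expressions above. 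This exactly cancels $1/\zzc^2 - 1/(\zzz+\cust)^2 = \cust(2\zzc+\cust)/(\zzc^2(\zzc+\cust)^2)$, giving $\Sfin'''(\zzc)=0$.

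Finally, $\Sfin''''(\zzz)=\mmm'''(\zzz)+2/(\zzz+\cust)^3-2/\zzz^3$, and $\mmm'''(\zzc)=-6\int\den(x)/(\zzc-x)^4\,\rd x$. Using $(-\zzc)^3=-\zzc^3$ and the definition \eqref{eq:defA} of $A$ (with $\cusx=0$), we rewrite
\[
\Sfin''''(\zzc) = -\frac{24}{\cust^4}\left[\cust^4\int\frac{\den(x)}{4(\zzc-x)^4}\,\rd x - \frac{\cust^4}{12(\zzc+\cust)^3} - \frac{\cust^4}{12(-\zzc)^3}\right] = -\frac{24 A}{\cust^4}.
\]

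\textbf{Expected obstacle.} There is no deep obstacle; this is a bookkeeping exercise. The only step where one must be careful is the cancellation in $\Sfin'''(\zzc)$, where the identity $\fff''=2(\fff')^2/(\fff+1)$ must be combined with \eqref{eq:defcusx} and \eqref{eq:defcust} in the right order to extract the compact form $-\cust(2\zzc+\cust)/(\zzc^2(\zzc+\cust)^2)$ for $\mmm''(\zzc)$. Choosing $\fff(\zzc),\fff(\zzc)+1$ as the primary variables (rather than $\mmm$ and its derivatives) makes all three cancellations transparent.
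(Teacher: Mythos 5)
Your computation is correct and follows the same route as the paper's own proof: the paper simply declares that the vanishing of the first three derivatives "follows straightforwardly" from \eqref{eq:defcusx}--\eqref{eq:defzzzc} and then identifies $\Sfin''''(\zzc)$ with $-24\cust^{-4}A$ via $\Sfin''''(\zzz)=\mmm'''(\zzz)+2/(\cust+\zzz)^3-2/\zzz^3$ and \eqref{eq:defA}, exactly as you do. Your version just writes out the bookkeeping (under the $\cusx=0$ normalization the paper adopts for this section), and the algebra checks out.
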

\begin{proof}
The fact that  $\Sfin'(\zzc)=\Sfin''(\zzc)=\Sfin'''(\zzc)=0$ is follows straightforwardly from \eqref{eq:defcusx}, \eqref{eq:defcust}, and \eqref{eq:defzzzc}.
Also, we have
\[
\Sfin''''(\zzz)=\mmm'''(\zzz) + \frac{2}{(\cust+\zzz)^3} - \frac{2}{\zzz^3}=- \int \frac{6\den(x)}{(\zzz-x)^4} \rd x + \frac{2}{(\cust+\zzz)^3} - \frac{2}{\zzz^3},
\]
so $\Sfin''''(\zzc)=-24\cust^{-4}A$ by \eqref{eq:defA}. 
\end{proof}

Lemma \ref{lem:Sderivs} indicates that, when $|\z-\zzc|$ is small, we can approximate $\Sfin(\z)$ by $\Sfin(\zzc)-\cust^{-4}A(\z-\zzc)^4$. 
\begin{lem}  \label{lem:appxSzsm}
There exists a constant $c>0$, such that for any $\z \in \C$ with $|\z-\zzc|\le c\cust^{3/2}$, we have
\begin{align*}
\Sfin'(\z)+4\cust^{-4}A(\z-\zzc)^3 &\lesssim \cust^{-11/2}|\z-\zzc|^4,\\
\Sfin(\z)-\Sfin(\zzc)+\cust^{-4}A(\z-\zzc)^4 &\lesssim  \cust^{-11/2}|\z-\zzc|^5.
\end{align*}
\end{lem}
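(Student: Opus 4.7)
The approach is a straightforward Taylor expansion of $\Sfin$ around $\zzc$, leveraging Lemma \ref{lem:Sderivs} which pins down the first four derivatives of $\Sfin$ at $\zzc$. Since $\Sfin'(\zzc) = \Sfin''(\zzc) = \Sfin'''(\zzc) = 0$ and $\Sfin''''(\zzc)(\z - \zzc)^3/6 = -4\cust^{-4}A(\z-\zzc)^3$, applying Taylor's formula with integral remainder to $\Sfin'$ gives
\[
\Sfin'(\z) + 4\cust^{-4}A(\z-\zzc)^3 = \frac{1}{6}\int_{\zzc}^{\z}\Sfin^{(5)}(\w)(\z-\w)^3\, d\w,
\]
whose modulus is at most $|\z-\zzc|^4 \cdot \sup_{|\w-\zzc|\le |\z-\zzc|}|\Sfin^{(5)}(\w)|/24$. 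The second inequality then follows by a further integration (or equivalently, a Taylor expansion of $\Sfin$ itself to one higher order). Hence everything reduces to establishing the pointwise bound $|\Sfin^{(5)}(\w)| \lesssim \cust^{-11/2}$ uniformly for $|\w - \zzc| \le c\cust^{3/2}$ with $c$ a sufficiently small constant.

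For this, the plan is to differentiate \eqref{eq:defSf} explicitly, obtaining
\[
\Sfin^{(5)}(\w) = \int \frac{24\,\den(x)}{(\w - x)^5}\, dx \;-\; \frac{6}{(\w+\cust)^4} \;+\; \frac{6}{\w^4}.
\]
The last two terms are harmless: recalling that we have reduced to $\cusx = 0$, which together with \eqref{eq:zbtcloser} gives $-\cust < \zzc < 0$ and $|\zzc|, |\zzc + \cust| \asymp \cust$; taking $c$ small ensures $|\w|, |\w + \cust| \asymp \cust$ as well, so these two terms are $\OO(\cust^{-4}) = \OO(\sct^{-4})$, which is absorbed into $\cust^{-11/2}$ since $\sct \lesssim 1$. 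The first term is exactly $\mmm''''(\w)$, and the key observation is that for $|\w - \zzc| \le c\cust^{3/2}$ with $c$ small, $\Re \w$ lies in $(\Em, \Ep)$ with $(\Re\w - \Em) \wedge (\Ep - \Re\w) \asymp \sct^{3/2}$ (using $\Ep - \zzc, \zzc - \Em \asymp \sct^{3/2}$ from \eqref{eq:zbtcloser}). This is precisely the regime covered by the previously established estimate \eqref{eq:mmmfo}, which yields $|\mmm''''(\w)| \lesssim \sct^{-11/2}$.

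Combining these bounds gives $|\Sfin^{(5)}(\w)| \lesssim \cust^{-11/2}$ on the disk $\{|\w-\zzc| \le c\cust^{3/2}\}$ (using $\cust \asymp \sct$), and plugging into the integral remainder yields both inequalities. The only mildly subtle point is the verification that $\Re \w$ stays bounded away from the singular set $(-\infty, \Em] \cup [\Ep, \infty)$ at distance $\asymp \sct^{3/2}$ throughout the relevant disk, which is precisely why the radius $c\cust^{3/2}$ has to be taken small relative to the intrinsic scale $\Ep - \zzc \asymp \sct^{3/2}$; no analogous difficulty appears elsewhere, so this is essentially the main technical point rather than a serious obstacle.
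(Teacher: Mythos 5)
Your proposal is correct and follows essentially the same route as the paper: both reduce the claim to the uniform bound $|\Sfin^{(5)}(\w)|\lesssim \cust^{-11/2}$ on the disk $|\w-\zzc|\le c\cust^{3/2}$, obtained from $\Sfin'''''=\mmm''''-6(\w+\cust)^{-4}+6\w^{-4}$ together with \eqref{eq:mmmfo} and \eqref{eq:zbtcloser}, and then conclude by Taylor expansion around $\zzc$ using Lemma \ref{lem:Sderivs}. The only cosmetic difference is that you use the integral-remainder form directly for $\Sfin'$, while the paper first bounds $\Sfin''''(\z)-\Sfin''''(\zzc)$ and then expands; these are equivalent.
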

\begin{proof}
It suffices to bound the fifth derivative of $\Sfin$. 
For any $\z\in\C \setminus ( (-\infty, \Em] \cup [\Ep, \infty) )$, we have
\[
\Sfin'''''(\zzz)=\mmm''''(\zzz) - \frac{6}{(\cust+\zzz)^4} + \frac{6}{\zzz^4}.
\]
Take any $\zzz \in \C$ with $|\z-\zzc|\le c\cust^{3/2}$. For a sufficiently small $c$, we have $(\Re(\zzz)-\Em) \wedge (\Ep-\Re(\zzz))\asymp \cust^{3/2}$ by \eqref{eq:zbtcloser}. Then, with \eqref{eq:mmmfo} and \eqref{eq:zbtcloser}, we get that $\mmm''''(\zzz)\lesssim \cust^{-11/2}$ and  $- \frac{6}{(\cust+\zzz)^4} + \frac{6}{\zzz^4} \lesssim \cust^{-4}$. These two estimates imply that $\Sfin'''''(\zzz)\lesssim \cust^{-11/2}$, which give the bound $\Sfin''''(\z)- \Sfin''''(\zzc) \lesssim \cust^{-11/2}|\z-\zzc|$ for any $\zzz \in \C$ with $|\z-\zzc|\le c\cust^{3/2}$.
Then, the conclusion follows from the Taylor expansion of $\Sfin(z)$ and Lemma \ref{lem:Sderivs}.
\end{proof}
We next use Lemma \ref{lem:Sderivs} to deduce some results that were mentioned earlier.
\begin{lem}   \label{lem:Sderiv}
We have $A>0$ and $A\asymp 1$. In addition, we have $\Ep<0$, $\Em>-\cust$, and $\Ep, \Em+\cust\gtrsim \sct$.
\end{lem}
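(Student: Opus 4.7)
The plan is to verify the two conclusions separately, with the claim $A>0,\ A\asymp 1$ being the substantive content.

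For the bounds on $\Ep$ and $\Em$: combining the normalization $\cusx=0$ adopted just before the lemma with \eqref{eq:zbtcloser} from \Cref{lem:xtzdef}, we have $-\zzc\asymp\cust\asymp\sct$ and $\cust+\zzc\asymp\cust\asymp\sct$, along with $\Ep-\zzc,\ \zzc-\Em\asymp\sct^{3/2}$ (this last asymptotic is also stated in \eqref{eq:zbtcloser}). Writing $-\Ep=-\zzc-(\Ep-\zzc)$ and $\Em+\cust=(\cust+\zzc)-(\zzc-\Em)$, and noting that the $\sct^{3/2}$-sized corrections are swamped by the $\sct$-sized leading terms once the implicit constant in $\sct\lesssim 1$ is taken small enough, we conclude $\Ep<0$, $\Em>-\cust$, and $|\Ep|,\ \Em+\cust\gtrsim\sct$, as required.

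For the main claim $A>0$ and $A\asymp 1$: the central identity is $\Sfin''''(\zzc)=-24\cust^{-4}A$, which is part of \Cref{lem:Sderivs}. Since $\cust\asymp\sct$, it suffices to show $\Sfin''''(\zzc)\asymp -\sct^{-4}$. Using the formula
\[
\Sfin''''(\zzc)=\mmm'''(\zzc)+\frac{2}{(\zzc+\cust)^3}-\frac{2}{\zzc^3},
\]
the ``drift'' term equals $2/(\zzc+\cust)^3+2/(-\zzc)^3$, which is positive and of order $\sct^{-3}$ by the bounds $-\zzc,\ \cust+\zzc\asymp\sct$ just established. The ``singular'' term $\mmm'''(\zzc)=-6\int\den(x)/(\zzc-x)^4\,\rd x$ is negative, and by \eqref{eq:mmmthi}, which applies because $(\zzc-\Em)\wedge(\Ep-\zzc)\asymp\sct^{3/2}$, satisfies $|\mmm'''(\zzc)|\asymp\sct^{-4}$. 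Provided $\sct$ is small enough (so that $\sct^{-4}$ dominates $\sct^{-3}$ quantitatively), we obtain $\Sfin''''(\zzc)\asymp-\sct^{-4}$, whence $A=-\cust^4\Sfin''''(\zzc)/24\asymp\cust^4\cdot\sct^{-4}\asymp 1$ and $A>0$.

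The only real obstacle is this final dominance: the drift and singular contributions are both genuinely present and of definite but opposite signs, and one must verify that the singular part (of size $\sct^{-4}$) beats the drift (of size $\sct^{-3}$). This requires the implicit constant in the upper bound $\sct\lesssim 1$ to be taken sufficiently small, which is consistent with the hierarchy of constants fixed at the beginning of \Cref{s:nibr}; no further structural input is needed.
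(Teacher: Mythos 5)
There is a genuine gap: your entire argument hinges on $\sct^{3/2}$ being negligible compared to $\sct$ (for the edge bounds) and on $\sct^{-4}$ quantitatively beating $\sct^{-3}$ (for the sign of $\Sfin''''(\zzc)$), and you resolve this by ``taking the implicit constant in $\sct\lesssim 1$ sufficiently small.'' That move is not available. The range $\lsca^{-1/2+\errh}<\sct\lesssim 1$ is part of the given setup (it is what lets \Cref{thm:univNBRWlim} cover times all the way up to order $\lsca$), and the implied constants in \eqref{eq:zbtcloser} and \eqref{eq:mmmthi} are fixed by Assumption \ref{ass:dtsncusp}, not at your disposal; so your proof only establishes the lemma for $\sct$ below some small constant threshold, which is a strictly weaker statement than the one being claimed and used. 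In the regime $\sct\asymp\cust\asymp 1$ your comparisons collapse: $-\zzc\gtrsim\sct$ together with $\Ep-\zzc\lesssim\sct^{3/2}$ no longer forces $\Ep<0$ (the two constants can be arbitrary relative to each other), and $|\mmm'''(\zzc)|\asymp\sct^{-4}$ versus drift $\asymp\sct^{-3}$ no longer determines the sign of $\Sfin''''(\zzc)$.

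This is exactly where the paper's proof does its real work, and it is the part your proposal omits. The paper splits on whether the explicit inequality \eqref{eq:m3rd} holds; if it does, one is in the easy case you describe, but if it fails one necessarily has $\sct\asymp 1$, and then the positivity and order of $-\Sfin''''(\zzc)$ are extracted not from crude size comparisons but from the structural identities \eqref{eq:UVrel} (which encode $\Sfin''(\zzc)=\Sfin'''(\zzc)=0$) combined with the sharp elementary inequalities $2V-U^2\ge 2U r^{-1}$ and $W\ge U^3/12+V^2/U$ of \Cref{lem:Sderivsh}, plus the constraint $\den\in[0,1]$. Likewise, $\Ep<0$ and $\Em>-\cust$ are proved by contradiction: if, say, $\Ep\ge 0$, then $\den\le 1$ forces $U_+<Q$ and $V_+<Q^2/2$, which through \eqref{eq:UVrel} yields $U_->P$ and $2V_-<P^2$, contradicting \Cref{lem:Sderivsh}. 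Your proposal never invokes \Cref{lem:Sderivsh} at all, which is a clear sign the hard case has been skipped rather than handled; to repair the proof you would need to supply these (or equivalent) arguments for the $\sct\asymp 1$ regime.
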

\begin{proof}
By \eqref{eq:mmmthi} and \eqref{eq:zbtcloser}, we have $\mmm'''(\zzc)<0$ and $\mmm'''(\zzc)\asymp \sct^{-4}$. If the following estimate holds,
\begin{equation}\label{eq:m3rd}
    -\mmm'''(\zzc)>\frac{3}{2}\left[\frac{2}{(\cust+\zzz_c)^3} - \frac{2}{\zzz_c^3}\right],
\end{equation}
then $\Sfin''''(\zzc)<0$ and $\Sfin''''(\zzc)\asymp \sct^{-4}$, which gives that $A>0$ and $A\asymp 1$.

Otherwise, if \eqref{eq:m3rd} fails, then we must have $\sct\asymp \cust\asymp 1$.
Define 
$$U_-=\int_{-\infty}^{\zzc} \frac{\den(x)}{(\zzc-x)^2} \rd x,\quad V_-=\int_{-\infty}^{\zzc} \frac{\den(x)}{(\zzc-x)^3} \rd x,\quad U_+=\int_{\zzc}^\infty \frac{\den(x)}{(x-\zzc)^2} \rd x,\quad V_+=\int_{\zzc}^\infty \frac{\den(x)}{(x-\zzc)^3} \rd x.$$
For simplicity of notations, we denote $P=(\cust+\zzc)^{-1}$ and $Q=-\zzc^{-1}$, which are positive by \eqref{eq:zbtcloser}.
Then, equations $\Sfin''(\zzc)=0$ and $\Sfin'''(\zzc)=0$ can be written as
\begin{equation}  \label{eq:UVrel}
U_-+U_+ = P+Q, \quad 2V_--2V_+ = P^2-Q^2.    
\end{equation}
Also, denote $G_-=2V_--U_-^2$ and $G_+=2V_+-U_+^2$ (which are positive by \Cref{lem:Sderivsh} below).
Then, 
\begin{equation}  \label{eq:Grel}
G_--G_+=P^2-Q^2-U_-^2+U_+^2.
\end{equation}
By Lemma \ref{lem:Sderivsh} below, using $V_-^2\ge U_-^4/4+U_-^2G_-/2$ and $V_+^2\ge U_+^4/4+U_+^2G_+/2$, we get that
\begin{equation}  \label{eq:Glowb}
\begin{split}
 -\Sfin''''(\zzc) & \ge 
 (U_-^3+U_+^3)/2 + 6(V_-^2/U_-+V_+^2/U_+) - 2P^3-2Q^3\\
& \ge 2(U_-^3+U_+^3) + 3(U_-G_- + U_+G_+) - 2P^3-2Q^3.
\end{split}    
\end{equation}
Without loss of generality, we assume that $P\le Q$. If $U_- \le P$ or $U_-\ge Q$, we have $-\Sfin''''(\zzc) \ge 3U_-G_- + 3U_+G_+$.
Otherwise, we have $U_-, U_+ \in (P, Q)$.
Then, we can write the last line of \eqref{eq:Glowb} as
 \[
\begin{split}
&2(U_-^3+U_+^3-P^3-Q^3) + 3 U_+(G_+-G_-) + 3 (U_-+ U_+)G_-\\
=&2(U_--P)(U_-^2+U_-P+P^2-U_+^2-U_+Q-Q^2 +3 U_+(U_-+U_+)) + 3(U_-+ U_+)G_-
\ge 3(U_-+ U_+)G_-,
\end{split}    
 \]
where we used \eqref{eq:Grel} and the first identity in \eqref{eq:UVrel} in the first step, and the second step simply uses $P\ge 0$ and $Q\le U_-+U_+$.
From Assumption \ref{ass:dtsncusp} and \eqref{eq:zbtcloser}, it is straightforward to check that similarly to \eqref{eq:mmmfir}, we have $U_-, U_+\asymp \sct^{-1}$.
Using Lemma \ref{lem:Sderivsh} below and the fact that $\den$ is supported in an order $1$ interval, we obtain that $G_-\gtrsim U_-\gtrsim \sct^{-1}$ and $G_+\gtrsim U_+\gtrsim \sct^{-1}$. 
Thus, we have $-\Sfin''''(\zzc)>0$ and $-\Sfin''''(\zzc)\gtrsim \sct^{-2}$. 
Since we have assumed that  $\sct\asymp \cust\asymp 1$, we still get that $A>0$ and $A\asymp 1$.

We now deduce that $\Ep<0$ with $\Ep\gtrsim \sct$. Otherwise, 
using that $\den(x)\in [0,1]$ and the growth of $\den$ in Assumption \ref{ass:dtsncusp},
we can deduce that $U_+<\int_0^\infty \frac{1}{(x-\zzc)^2} \rd x=Q$ and $V_+<\int_0^\infty \frac{1}{(x-\zzc)^3} \rd x=Q^2/2$.
Therefore, by \eqref{eq:UVrel}, we have $U_->P$ while $2V_-<P^2$, contradicting Lemma \ref{lem:Sderivsh} below. Similarly, we can deduce that $\Em>-\cust$ with $\Em+\cust\gtrsim \sct$. 
\end{proof}
The following elementary lemma is used in the proof of Lemma \ref{lem:Sderiv}.
\begin{lem}   \label{lem:Sderivsh}
Take any $r>0$ and function $\eta:(0, r)\to [0,1]$, denote $U=\int_0^r \frac{\eta(x)\rd x}{x^2}$, $V=\int_0^r \frac{\eta(x)\rd x}{x^3}$, and $W=\int_0^r \frac{\eta(x)\rd x}{x^4}$.
Then, we have $2V-U^2\ge 2Ur^{-1}$ and $W\ge U^3/12+V^2/U$.
\end{lem}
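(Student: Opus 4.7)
The plan is to establish the two inequalities separately; each exploits the cap $\eta \le 1$, but in different ways.

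For the first inequality $2V - U^2 \ge 2U/r$, I would symmetrize into a double integral. Using the pointwise identity $\frac{r-x}{rx^3} = \int_x^r \frac{dy}{x^2 y^2}$ together with Fubini,
\[
2V - \frac{2U}{r} \;=\; 2\int_0^r \eta(x)\,\frac{r-x}{rx^3}\,dx \;=\; 2\int_0^r\!\int_x^r \frac{\eta(x)}{x^2 y^2}\,dy\,dx \;=\; \int_0^r\!\int_0^r \frac{\eta(\min(x,y))}{x^2 y^2}\,dx\,dy,
\]
where the last equality is symmetrization in $(x,y)$. Since $U^2 = \int_0^r\!\int_0^r \eta(x)\eta(y)/(x^2 y^2)\,dx\,dy$, the claim reduces to the pointwise inequality $\eta(\min(x,y)) \ge \eta(x)\eta(y)$, which is immediate: for $x \le y$, $\eta(\min(x,y)) = \eta(x) \ge \eta(x)\eta(y)$ because $\eta(y) \in [0,1]$, and analogously for $y \le x$.

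For the second inequality $W \ge U^3/12 + V^2/U$, I would switch to a probabilistic formulation. Assume $U > 0$ (otherwise $\eta \equiv 0$ a.e.\ and the statement is trivial), and let $Z$ be a random variable on $(0,r)$ with density $\eta(x)/(Ux^2)$. Then $\bE[Z^{-1}] = V/U$ and $\bE[Z^{-2}] = W/U$, so the desired inequality rearranges to $\Var(Z^{-1}) \ge U^2/12$. Letting $Y = Z^{-1}$, the change of variables $y = 1/x$ shows that $Y$ has density $\eta(1/y)/U$ supported on $(1/r, \infty)$, and this density is bounded above by $1/U$ because $\eta \le 1$.

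It then suffices to invoke the following density-capped variance bound: \emph{any real random variable whose density is bounded by some constant $c > 0$ has variance at least $1/(12 c^2)$}, with equality at the uniform distribution on an interval of length $1/c$. Applied with $c = 1/U$, this yields the wanted $\Var(Y) \ge U^2/12$. The general fact itself is a one-line tail computation: the density cap forces $\bP(|Y - \bE Y| \le t) \le 2tc$ for every $t > 0$, so
\[
\Var(Y) \;=\; 2\int_0^\infty t\,\bP(|Y - \bE Y| > t)\,dt \;\ge\; 2\int_0^{1/(2c)} t(1 - 2tc)\,dt \;=\; \frac{1}{12 c^2}.
\]
No step should pose a real obstacle; the whole argument is elementary once the right symmetrization is used for the first inequality and the right probabilistic change of variables is used for the second. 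The equality cases match the indicator $\eta = \don_{[a,r]}$ that already appeared in the proof of Lemma~\ref{lem:Sderiv}.
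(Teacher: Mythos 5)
Your proof is correct, but it takes a genuinely different route from the paper. The paper argues variationally: since $U,V,W$ are linear functionals of $\eta$ with the constraint $0\le\eta\le1$, it asserts that $V$ is minimized (for fixed $U$) and $W$ is minimized (for fixed $U,V$) when $\eta$ is the indicator of an interval, and then verifies both inequalities by explicit computation with $\eta=\don_{(a,r)}$ and $\eta=\don_{(a,b)}$ — a bathtub/bang--bang argument whose extremality step (especially with two constraints) is left as ``straightforward.'' You instead prove the first inequality by the Fubini identity $2V-2U/r=\iint \eta(\min(x,y))x^{-2}y^{-2}\,dx\,dy$ together with the pointwise bound $\eta(\min(x,y))\ge\eta(x)\eta(y)$ (which is exactly where $\eta\le1$ enters), and the second by passing to the random variable $Y=Z^{-1}$ whose density $\eta(1/y)/U$ is capped by $1/U$, then invoking the standard fact that a density bounded by $c$ forces variance at least $1/(12c^2)$; your rearrangement $W\ge U^3/12+V^2/U \iff \Var(Y)\ge U^2/12$ and the layer-cake computation are both right. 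What your route buys is a fully self-contained elementary argument with no appeal to the existence/structure of constrained minimizers, and it makes the role of the cap $\eta\le1$ completely transparent; what the paper's route buys is brevity and an immediate identification of the equality cases (indicators of intervals), which your argument recovers as well, though note those indicators appear in the paper's proof of this lemma itself rather than in the proof of Lemma~\ref{lem:Sderiv}. Both arguments share the same harmless implicit finiteness assumptions on $U,V,W$ (and your $U>0$ reduction handles the degenerate case).
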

\begin{proof}
Note that $\eta\mapsto \int_0^r \frac{\eta(x)}{x^i}\rd x$, for $i\in\{2,3,4\}$, are linear functionals. It would then be straightforward to deduce that, given $U<\infty$, $V$ is minimized when $\eta$ is the indicator function of an interval $(a, r)$, for some $0<a<r$.
In this case, we have $U=a^{-1}-r^{-1}$ and $V=a^{-2}/2-r^{-2}/2$, so $2V-U^2=2(ar)^{-1}-2r^{-2}=2Ur^{-1}$.

Similarly, given $U,V<\infty$, $W$ is minimized when $\eta$ is the characteristic function of an interval $(a, b)$, for some $0<a<b\le \infty$. Then, we have $U=a^{-1}-b^{-1}$, $V=(a^{-2}-b^{-2})/2$, $W=(a^{-3}-b^{-3})/3$.
These imply that $a^{-1}=U/2+V/U$ and $b^{-1}=-U/2+V/U$, so $W=U^3/12+V^2/U$.
\end{proof}

\subsubsection{Discrete approximation}
We next use the above-obtained information on $\Sfin$ to extract properties of $\Dfin_2$ and $\Dfin_1$.
As a first step, we discretize $\Sfin$.
For the rest of this section, we denote $\oot_c=\lfloor\cust \lsca \rfloor$.
Define
\begin{equation}  \label{eq:defosfin}
 \oSfin(\z)=
    \frac{1}{\lsca}\sum_{i=-M}^N\log\Big(\z-\frac{\sfd_i}{\lsca}\Big)
    +\frac{1}{\lsca}\sum_{i=0}^{\oot_c-1}\log\Big(\z+\frac{i}{\lsca}\Big)
    -\frac{1}{\lsca}\log(\sin(\pi\lsca\z))+\z\log(\beta/(1-\beta)). 
\end{equation}
As in the case of $\Dfin_2$ and $\Dfin_1$, this expression defines a holomorphic function on $\HH$, up to adding a pure imaginary constant, and it can be analytically extended to $\R\setminus E(\oSfin)$ from $\HH$, where
\[
E(\oSfin)=\lsca^{-1}\left[(\{\sfd_i\}_{i=-M}^N \cap [-\oot_c+1, 0] ) \cup ( (-\infty,-\oot_c]\cup[1,\infty)   \setminus \{\sfd_i\}_{i=-M}^N)\right].
\]
We then choose the pure imaginary constant to ensure $\Im \oSfin=0$ in the interval $(\sfd_{-1}/\lsca, \sfd_0/\lsca)$. Finally, we analytically extend $\oSfin$ to $\HH^-$ from $\HH\cup (\sfd_{-1}/\lsca, \sfd_0/\lsca)$.
For the next two lemmas, we show that $\oSfin$ is a good approximation of $\Sfin$, by bounding the difference between their derivatives.
\begin{lem}  \label{lem:dercomp}
For any $\z\in \HH$, we have
\[
\Sfin'(\z)-\oSfin'(\z) \lesssim \frac{\lsca^{-1+\err}}{\inf_{x \in (-\infty, \Em\vee (\sfd_{-1}/\lsca)]\cup [\Ep\wedge (\sfd_0/\lsca), \infty)} |x-\z| }.
\]
\end{lem}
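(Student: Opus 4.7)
\medskip
\textbf{Proof plan.} The plan is to decompose $\Sfin'(\z) - \oSfin'(\z) = I_A(\z) + I_B(\z)$ into a Stieltjes-transform difference $I_A$ and a digamma-log residual $I_B$. Using the telescoping identity $\frac{1}{\lsca}\sum_{i=0}^{\oot_c-1}\frac{1}{\z+i/\lsca} = \psi(\lsca\z+\oot_c) - \psi(\lsca\z)$ together with the digamma reflection formula $\pi\cot(\pi w) = \psi(1-w) - \psi(w)$, one may rewrite
\begin{align*}
\oSfin'(\z) = \frac{1}{\lsca}\sum_{i=-M}^N \frac{1}{\z-\sfd_i/\lsca} + \psi(\lsca\z+\oot_c) - \psi(1-\lsca\z) + \log\tfrac{\beta}{1-\beta}.
\end{align*}
Contrasting with $\Sfin'(\z) = \int \den(x)(\z-x)^{-1}\rd x + \log(\z+\cust) - \log(-\z) + \log\tfrac{\beta}{1-\beta}$, this identifies $I_A(\z) := \int \den(x)(\z-x)^{-1}\rd x - \lsca^{-1}\sum_i (\z-\sfd_i/\lsca)^{-1}$ and $I_B(\z) := [\log(\z+\cust)-\psi(\lsca\z+\oot_c)] - [\log(-\z)-\psi(1-\lsca\z)]$. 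The target for each is $\lesssim \lsca^{-1+\err}/d$, where $d$ denotes the infimum in the lemma.

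For $I_A$, I would apply Stieltjes integration by parts to get $I_A(\z) = -\int_\R (F_\mu-F_{\hat\mu})(x)(\z-x)^{-2}\rd x$, where $F_\mu(x) := \int_{-\infty}^x\den$ and $F_{\hat\mu}(x) := \lsca^{-1}|\{i:\sfd_i/\lsca\le x\}|$. On the middle interval $[\Em\vee\sfd_{-1}/\lsca,\Ep\wedge\sfd_0/\lsca]$ both $\mu$ and $\hat\mu$ assign no mass, so $F_\mu-F_{\hat\mu}$ vanishes there; on the far tails it is a constant of size $\OO(\lsca^{-1})$, integration of which against $(\z-x)^{-2}$ produces boundary terms $c\cdot[(\z-x)^{-1}]_{\text{endpoint}}$ with all relevant endpoints lying in the set defining $d$, hence bounded by $\lsca^{-1}/d$. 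On the remaining ``support'' region (a subset of that set), \eqref{eq:heightclose} from \Cref{ass:dtsncusp} gives $|F_\mu-F_{\hat\mu}|\lesssim \lsca^{-1+\err}$; and an arctangent computation of $\int_{\text{support}}\rd x/|\z-x|^2$, exploiting that the support is a union of two half-rays flanking $\zzc$ (so one avoids a ``short one-sided'' configuration), yields $\lesssim 1/d$. Combining gives $|I_A|\lesssim \lsca^{-1+\err}/d$.

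For $I_B$, I would apply the asymptotic expansion $\psi(w) = \log w - \frac{1}{2w} + \OO(|w|^{-2})$, valid for $|w|$ large with $\arg w$ bounded away from $\pm\pi$. By \Cref{lem:xtzdef} and \Cref{lem:Sderiv}, one has $-\cust<\zzc<0$ with $|\zzc|,|\zzc+\cust|\asymp\cust\asymp\sct\ge\lsca^{-1/2+\errh}$, so for $\z$ in a bounded neighborhood of $\zzc$ both $|\lsca\z+\oot_c|$ and $|1-\lsca\z|$ are of order $\lsca\cust\gg 1$ with arguments uniformly inside $(-\pi,\pi)$. The expansion produces $\psi(\lsca\z+\oot_c) = \log\lsca + \log(\z+\oot_c/\lsca) + \OO(1/(\lsca\cust))$ and $\psi(1-\lsca\z) = \log\lsca + \log(-\z) + \OO(1/(\lsca\cust))$, so the two $\log\lsca$ contributions cancel between the two brackets of $I_B$. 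The residual $\log((\z+\cust)/(\z+\oot_c/\lsca))$ is $\OO(1/(\lsca|\z+\cust|))$ since $|\cust-\oot_c/\lsca|\le 1/\lsca$, and the digamma tail corrections are of the same order. Since $0$ and $-\cust$ both belong to the set defining $d$, $\min(|\z|,|\z+\cust|)\ge d$, yielding $|I_B|\lesssim 1/(\lsca d) \le \lsca^{-1+\err}/d$.

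The main obstacle will be the careful tracking of branch cuts: each of $\log(\z+\cust)$, $\log(-\z)$, $\psi(\lsca\z+\oot_c)$, and $\psi(1-\lsca\z)$ involves a specific branch choice, and these must be matched to the normalization of $\Sfin$ and $\oSfin$ specified below \eqref{eq:defosfin} (namely $\Im\oSfin=\Im\Sfin=0$ on $(\sfd_{-1}/\lsca,\sfd_0/\lsca)$), so as to verify the $\log\lsca$ cancellation exactly and without spurious $\i\pi$-jumps. A secondary concern is confirming that $\arg(1-\lsca\z)$ stays uniformly inside $(-\pi,\pi)$ across the relevant range of $\z\in\HH$; this follows from $\Re(\lsca\z)$ being near $\lsca\zzc<0$.
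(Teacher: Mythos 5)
Your handling of the Stieltjes part $I_A$ is correct and is essentially an integrated-by-parts version of the paper's argument: the paper pairs each quantile interval $[\qut_i,\qut_{i+1}]$ with the particle $\sfd_i$ and controls the overlap count by \eqref{eq:heightclose}, while you compare the two distribution functions directly; both hinge on the exact vanishing of the discrepancy on the middle gap together with the $\lsca^{-1+\err}$ bound from \eqref{eq:heightclose} and $\int_{\mathrm{rays}}|\z-x|^{-2}\rd x\lesssim 1/d$, and your $\OO(\lsca^{-1})$ constant on the right tail is exactly the paper's leftover term $|1/(\lsca\z-\sfd_N)|$. That half of your argument is valid for every $\z\in\HH$.

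The gap is in $I_B$. The lemma is asserted for all $\z\in\HH$, but the expansion $\psi(w)=\log w-\tfrac{1}{2w}+\OO(|w|^{-2})$ requires $|w|$ large with $\arg w$ bounded away from $\pm\pi$, and you only check this "for $\z$ in a bounded neighborhood of $\zzc$" — a restriction not in the statement, and one that does not even remove the problem: points $\z\in\HH$ with $\Re\z\le -\oot_c/\lsca$ and $\Im\z\lesssim\lsca^{-1}$ exist within distance $\OO(\cust)$ of $\zzc$, and there $w=\lsca\z+\oot_c$ lies next to the non-positive real axis, arbitrarily close to the poles of $\psi$, where $\psi(w)-\log w$ is not $\OO(1/|w|)$ (it acquires an $\OO(1)$ reflection/cotangent contribution, or blows up like $1/\dist(w,\Z_{\le 0})$ near a pole); the symmetric problem occurs for $1-\lsca\z$ when $\Re\z\ge \lsca^{-1}$. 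Your remark that the arguments stay inside $(-\pi,\pi)$ because $\Re(\lsca\z)$ is near $\lsca\zzc<0$ only addresses $1-\lsca\z$. The estimate does remain true in these regimes — the offending real points ($0$, $-\cust$, and the $\psi$-poles) all lie in the rays defining $d$, so the right-hand side $\lsca^{-1+\err}/d$ is at least $\lsca^{\err}$ there and dominates the pole singularities $\asymp 1/(\lsca|\z-x_*|)$ — but this needs its own argument, which your write-up does not supply. You could repair it by a regime split using the reflection formula $\psi(w)=\psi(1-w)-\pi\cot(\pi w)$ and checking the extra terms are $\lesssim 1/(\lsca d)$, or, as the paper does, avoid digamma asymptotics altogether: expand $\pi\cot(\pi\lsca\z)$ by partial fractions (see \eqref{eq:cotapp}) and bound each difference between $1/(\lsca\z+i)$ and the matching unit-interval integral by $\inf_x|\lsca\z+x|^{-2}$, which sums to $\lesssim 1/(\lsca\,\dist)$ uniformly on $\HH$. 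In the region where the lemma is actually used later ($|\z-\zzc|\le c\cust^{3/2}$, as in \Cref{lem:ospoc} and \Cref{lem:dDzc}) both $\lsca\z+\oot_c$ and $1-\lsca\z$ have real part $\asymp\lsca\cust$, so your expansion is fine there; the gap concerns the full range $\z\in\HH$ claimed by the statement.
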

\begin{proof}
By \eqref{eq:defSf} and \eqref{eq:defosfin}, we can write $\Sfin'(\z)-\oSfin'(\z)$ for $\z\in\HH$ as
\begin{equation}  \label{eq:SoSddiff}
\int \frac{\den(x)}{\zzz-x} \rd x + \log(\zzz+\cust) - \log(-\zzz) - \sum_{i=-M}^N \frac{1}{\lsca\z - \sfd_i} - \sum_{i=0}^{\oot_c-1}\frac{1}{\lsca\z+i} + \pi\cot(\pi\lsca\z),    
\end{equation}
which is defined first for $(\sfd_{-1}/\lsca, \sfd_0/\lsca)\cap(\Em, \Ep)$, and then analytically extended to $\HH$.
We note that
\begin{equation}   \label{eq:cotapp}
\pi\cot(\pi\lsca\z) = \lim_{m\to\infty} \sum_{i\in\llbracket -m, m\rrbracket} \frac{1}{\lsca\z+i},    
\end{equation}
for any $\z\in\HH$, so we have
\begin{align*}
&\Big| \log(\zzz+\cust) - \log(-\zzz) - \sum_{i=0}^{\oot_c-1}\frac{1}{\lsca\z+i} + \pi\cot(\pi\lsca\z)  \Big| \\
=&
\lim_{m\to\infty} \Big| -\int_{-m}^0 \frac{1}{\lsca\z+x} \rd x - \int_{\lsca\cust}^{m} \frac{1}{\lsca\z+x} \rd x + \sum_{i=-m}^{-1}\frac{1}{\lsca\z+i} + \sum_{i=\oot_c}^{m}\frac{1}{\lsca\z+i}  \Big| \\
\le &\sum_{i=-\infty}^{-1} \Big| \int_i^{i+1}\frac{1}{\lsca\z+x}\rd x - \frac{1}{\lsca\z+i}\Big|
+ \sum_{i=\oot_c}^{\infty} \Big| \int_{i+\lsca\cust-\oot_c}^{i+\lsca\cust-\oot_c+1}\frac{1}{\lsca\z+x}\rd x - \frac{1}{\lsca\z+i}\Big|
\\
\le &\sum_{i=-\infty}^{-1} \frac{1}{\inf_{x\in [i,i+1]}|\lsca\z+x|^2}
+ \sum_{i=\oot_c}^{\infty} \frac{1}{\inf_{x\in [i,i+2]}|\lsca\z+x|^2}  \lesssim  \frac{1}{\lsca \inf_{x \in (-\infty,\oot_c/\lsca] \cup [0,\infty) } |x-\z| }.
\end{align*}
We next bound the remaining terms in \eqref{eq:SoSddiff}.
Recall the quantiles $\qut_i$, $i\in\qq{-M,N}$, defined in Assumption \ref{ass:dtsncusp}.
We have
\begin{equation}  \label{eq:rhodiffz}
\Big| \int \frac{\den(x)}{\zzz-x} \rd x - \sum_{i=-M}^N \frac{1}{\lsca\z - \sfd_i}   \Big|
\le \Big| \sum_{i=-M}^{N-1} \int_{\qut_i}^{\qut_{i+1}} \den(x)\Big(\frac{1}{\zzz-x} - \frac{1}{\z - \sfd_i/\lsca}\Big) \rd x  \Big| + \Big|\frac{1}{\lsca\z-\sfd_N}\Big| .    
\end{equation}
For each $i\in\llbracket -M, N-1\rrbracket$ and $x\in[\qut_i, \qut_{i+1}]$, we have
\[
\Big|\frac{1}{\zzz-x} - \frac{1}{\z - \sfd_i/\lsca} \Big| \le \int_{x\wedge (\sfd_i/\lsca)}^{x\vee (\sfd_i/\lsca)} \frac{\rd y}{|\z-y|^2}.
\]
Thus, \eqref{eq:rhodiffz} can be bounded by
\begin{equation}  \label{eq:rhodiffzb}
\sum_{i=-M}^{N-1} \int_{\qut_i\wedge (\sfd_i/\lsca)}^{\qut_{i+1}' \vee (\sfd_i/\lsca)} \frac{\rd y}{\lsca |\z-y|^2}+ \Big|\frac{1}{\lsca\z-\sfd_N}\Big|,
\end{equation}
where we denote $\qut_0'=\Em$ (in contrast to $\gamma_0=E_+$) and $\qut_i'=\qut_i$ for $i\in\llbracket-M, N\rrbracket\setminus\{0\}$.

For each $i\in\qq{-M, N}$ and $y\in [\qut_i\wedge (\sfd_i/\lsca), \qut_{i+1}' \vee (\sfd_i/\lsca)]$, we have either $\qut_i\le y \le \sfd_i/\lsca$ or $\sfd_i/\lsca \le y \le \qut_{i+1}'$.
We note that by \eqref{eq:heightclose}, for any $y\in (-\infty, \Em \vee (\sfd_{-1}/\lsca)] \cup [\Ep\wedge (\sfd_0/\lsca), \infty)$ we have
\[
|\{i\in \llbracket -M, N-1\rrbracket: \qut_i\le y \le \sfd_i/\lsca\}|+
|\{i\in \llbracket -M, N-1\rrbracket: \sfd_i/\lsca \le y \le \qut_{i+1}'\}| \lesssim   \lsca^{\err};
\]
and for $y\in (\Em \vee (\sfd_{-1}/\lsca), \Ep\wedge (\sfd_0/\lsca))$ we have
\[
|\{i\in \llbracket -M, N-1\rrbracket: \qut_i\le y \le \sfd_i/\lsca\}|+
|\{i\in \llbracket -M, N-1\rrbracket: \sfd_i/\lsca \le y \le \qut_{i+1}'\}|= 0.
\]
Thus, we can further bound \eqref{eq:rhodiffzb} by 
\[
\int_{(-\infty, \Em\vee (\sfd_{-1}/\lsca)]\cup [\Ep\wedge (\sfd_0/\lsca), \infty)} \frac{\lsca^{-1+\err}\rd y}{|\z-y|^2}+ \Big|\frac{1}{\lsca\z-\sfd_N}\Big|,
\]
so the conclusion follows.
\end{proof}
Combining Lemmas \ref{lem:appxSzsm} and \ref{lem:dercomp}, we can deduce the following result.
\begin{lem}  \label{lem:ospoc}
For any $\z\in\C$ with $|\z-\zzc|<c\cust^{3/2}$, we have that
\[
\oSfin(\z)-\oSfin(\zzc)+\cust^{-4}A(\z-\zzc)^4 \lesssim  \cust^{-11/2}|\z-\zzc|^5 + \lsca^{-1+\err} \cust^{-3/2}|\z-\zzc|.
\]
\end{lem}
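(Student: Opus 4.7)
The plan is to prove this by a straightforward decomposition
\[
\oSfin(\z) - \oSfin(\zzc) + \cust^{-4}A(\z-\zzc)^4 = \bigl[\Sfin(\z) - \Sfin(\zzc) + \cust^{-4}A(\z-\zzc)^4\bigr] + \bigl[(\oSfin - \Sfin)(\z) - (\oSfin - \Sfin)(\zzc)\bigr],
\]
controlling the first bracket by Lemma \ref{lem:appxSzsm} (which yields exactly the $\cust^{-11/2}|\z-\zzc|^5$ term), and the second bracket by integrating the derivative bound in Lemma \ref{lem:dercomp} along the straight segment $\gamma(s)=\zzc+s(\z-\zzc)$, $s\in[0,1]$.

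To justify the integration, I first check that $\gamma$ lies in the common domain of analyticity of $\Sfin$ and $\oSfin$. The branch cuts of $\Sfin$ are $(-\infty,\Em]\cup[\Ep,\infty)$, and after the continuation through $(\sfd_{-1}/\lsca,\sfd_0/\lsca)$ the cuts of $\oSfin$ lie in $(-\infty,\Em\vee(\sfd_{-1}/\lsca)]\cup[\Ep\wedge(\sfd_0/\lsca),\infty)$. By Lemma \ref{lem:xtzdef} and \eqref{eq:zbtcloser}, $\Ep-\zzc,\,\zzc-\Em\asymp\sct^{3/2}\asymp\cust^{3/2}$; by Assumption \ref{ass:dtsncusp}, $|\sfd_0/\lsca-\Ep|,|\sfd_{-1}/\lsca-\Em|\lesssim\lsca^{-2/3+\err}\sct^{1/6}$, which is $\ll\cust^{3/2}$ thanks to the hierarchy $\sct>\lsca^{-1/2+\errh}$ with $\errh\gg\err$. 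Hence for $c$ a sufficiently small absolute constant, every point on $\gamma$ stays at distance $\gtrsim \cust^{3/2}$ from that cut set.

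Lemma \ref{lem:dercomp} is stated for $\z\in\HH$, but since both $\oSfin'-\Sfin'$ extends analytically across $(\sfd_{-1}/\lsca,\sfd_0/\lsca)$ (which contains $\zzc$), the same pointwise bound holds on $\gamma$ by continuity (formally, one can perturb $\gamma$ into $\HH$ and take a limit). Applying that lemma, for every $y\in\gamma$ we get $|\Sfin'(y)-\oSfin'(y)|\lesssim\lsca^{-1+\err}/\cust^{3/2}$. Writing
\[
(\oSfin-\Sfin)(\z)-(\oSfin-\Sfin)(\zzc)=(\z-\zzc)\int_0^1(\oSfin'-\Sfin')(\gamma(s))\,ds
\]
then yields the bound $\lsca^{-1+\err}\cust^{-3/2}|\z-\zzc|$, and adding the two contributions gives the lemma. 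The only substantive (though routine) point is the geometric check that $\gamma$ stays well inside the joint analytic domain; everything else is mechanical.
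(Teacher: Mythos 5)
Your argument is correct and is essentially the paper's own proof: apply Lemma \ref{lem:appxSzsm} for the $\cust^{-11/2}|\z-\zzc|^5$ term, and integrate the bound of Lemma \ref{lem:dercomp} (with the denominator bounded below by $\asymp\cust^{3/2}$ via \eqref{eq:zbtcloser} and Assumption \ref{ass:dtsncusp}) along the segment from $\zzc$ to $\z$ to get the $\lsca^{-1+\err}\cust^{-3/2}|\z-\zzc|$ term. Your explicit checks that the segment avoids the branch cuts and that the derivative bound, stated for $\z\in\HH$, extends across $(\sfd_{-1}/\lsca,\sfd_0/\lsca)$ to the lower half-plane are details the paper leaves implicit, and they are handled correctly.
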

\begin{proof}
By Lemma \ref{lem:dercomp}, for any $\z\in\C$ with $|\z-\zzc|<c\cust^{3/2}$, we have $|\oSfin'(\z)-\Sfin'(\z)| \lesssim \lsca^{-1+\err} \cust^{-3/2}$. Then, by integrating over $\z$, we get $|\oSfin(\z)-\Sfin(\z)-\oSfin(\zzc)+\Sfin(\zzc)| \lesssim \lsca^{-1+\err} \cust^{-3/2}|\z-\zzc|$. Together with Lemma \ref{lem:appxSzsm}, this concludes the proof.
\end{proof}

\subsubsection{Estimates of $\Dfin_1$ and $\Dfin_2$}
We next show that $\oSfin$ is close to $\Dfin_1$ and $\Dfin_2$.
\begin{lem}  \label{lem:dDS}
For any $\z\in\C$ with $|\z-\zzc|\le c\cust$, we have
\[
\oSfin'(\z)-\Dfin'_1(\z),\ \oSfin'(\z)-\Dfin'_2(\z) \lesssim \lsca^{-3/4}\cust^{-1} + \lsca^{-1/2}\cust^{-2}|\z-\zzc|.
\]
\end{lem}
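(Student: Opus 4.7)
The first step reduces the problem to a purely combinatorial/analytic estimate. Since $\oSfin$, $\Dfin_1$, $\Dfin_2$ share all terms except the middle sum $\lsca^{-1}\sum\log(\z+i/\lsca)$, differentiating and subtracting gives
\[
\Dfin_1'(\z)-\oSfin'(\z) = \sum_{i\in S_1}\frac{\sigma_i}{\lsca\z+i},
\]
with $S_1\subset\Z$ the symmetric difference of the index sets $\llbracket -\oox_1,-\oox_1+\oot_1\rrbracket$ and $\llbracket 0,\oot_c-1\rrbracket$ and $\sigma_i=\pm1$. Since $\oox_1=B\lsca^{1/2}\pct_1+\lsca^{1/4}\pcx_1+O(1)$ and $\oot_1-\oot_c=\lsca^{1/2}\pct_1+O(1)$, $S_1$ splits into two ``boundary'' intervals of $O(\lsca^{1/2})$ consecutive indices near $i=0$ and $i=\oot_c\approx\lsca\cust$. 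Using $\cusx=0$ (WLOG), together with \eqref{eq:zbtcloser} and Lemma \ref{lem:Sderiv} to confirm $B,1-B\asymp 1$, one sees $|\lsca\z+i|\asymp\lsca\cust$ uniformly for $i\in S_1$ and uniformly in $\Im\z$ for all $\z$ in the disk $|\z-\zzc|\le c\cust$. This uniform lower bound is the key property making the ensuing Euler--Maclaurin analysis valid everywhere in the disk (including on the real line), despite the individual full sums having real singularities in the middle of their ranges.

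The plan is then to apply Euler--Maclaurin to each boundary sum, $\sum_{i=a}^b f(i)=\log((\lsca\z+b)/(\lsca\z+a))+(f(a)+f(b))/2+O(\int_a^b|f''(x)|\,dx)$. With $|b-a|\lesssim\lsca^{1/2}$ and $|\lsca\z+x|\asymp\lsca\cust$, the trapezoidal and remainder errors respectively contribute $O(\lsca^{-1}\cust^{-1})$ and $O(\lsca^{-5/2}\cust^{-3})$, both dominated by $\lsca^{-3/4}\cust^{-1}$ using $\cust\ge\lsca^{-1/2+\errh}$. After recombining, the main surviving term is
\[
\log(1+u/R_2) - \log(1+v/R_1),
\]
where $R_1=\lsca\z$, $R_2=\lsca\z+\oot_c-1$ (both of magnitude $\asymp\lsca\cust$), $u=(1-B)\lsca^{1/2}\pct_1-\lsca^{1/4}\pcx_1+O(1)$, and $v=-B\lsca^{1/2}\pct_1-\lsca^{1/4}\pcx_1+O(1)$. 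Since $|u/R_2|,|v/R_1|\lesssim\lsca^{-1/2}\cust^{-1}\ll 1$, I Taylor expand both logarithms and analyze each order separately.

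The heart of the argument is that $B=-\zzc/\cust$ (from Lemma \ref{lem:xtzdef} with $\cusx=0$) is tuned precisely to force cancellation at $\z=\zzc$ of every pure $\pct_1^k$ coefficient. Indeed, the $k$-th order $\pct_1^k$ coefficient is (up to a harmless scalar) $h_k(\z):=(1-B)^k/(\z+\cust)^k-(-B)^k/\z^k$, and a direct check gives $h_k(\zzc)=\cust^{-k}-\cust^{-k}=0$ while $h_k'(\zzc)=-k/(B(1-B)\cust^{k+1})$, so $|h_k(\z)|\lesssim|\z-\zzc|/\cust^{k+1}$ and the resulting $\pct_1^k$ contribution $O(\lsca^{-k/2}\cust^{-k-1}|\z-\zzc|)$ fits into $\lsca^{-1/2}\cust^{-2}|\z-\zzc|$ under $\cust\ge\lsca^{-1/2}$. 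The first-order $\pcx_1$ coefficient is $g_2(\z)=1/\z-1/(\z+\cust)=\cust/(\z(\z+\cust))$ of magnitude $1/\cust$, giving $O(\lsca^{-3/4}\cust^{-1})$. Cross terms $\pct_1^j\pcx_1^{k-j}$ with $j<k$ and contributions of the $O(1)$ parts of $u,v$ do not cancel at $\zzc$, but their raw magnitudes (worst case $\lsca^{(j-3k)/4}\cust^{-k}$) are absorbed directly into $\lsca^{-3/4}\cust^{-1}$ by the same scale separation. The argument for $\Dfin_2'(\z)-\oSfin'(\z)$ is entirely parallel, with only $O(1)$ index shifts in $u$ and $v$. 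The main obstacle is the systematic bookkeeping required to check that, after extracting the $\pct_1^k$ cancellation at $\zzc$, every residual (subleading $\pcx_1$, mixed terms, $O(1)$ corrections, boundary pieces, and Euler--Maclaurin remainders) combines into one of the two summands of $\lsca^{-3/4}\cust^{-1}+\lsca^{-1/2}\cust^{-2}|\z-\zzc|$; this uses the scale separation $\cust\ge\lsca^{-1/2+\errh}$ and $B\in(0,1)$ bounded away from the endpoints repeatedly.
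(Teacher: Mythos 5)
Your proposal is correct and follows essentially the same route as the paper: both reduce the difference to the two boundary blocks of $\OO(\lsca^{1/2})$ indices in the middle sums, replace each block by a logarithm with negligible sum-versus-integral error, and extract the crucial gain from the identity $\zzc=-\BE\cust$ (equivalently $\oox_2-\BE(\oot_2-\oot_c)\lesssim\lsca^{1/4}$), which kills the leading term at $\z=\zzc$ and leaves a Lipschitz remainder of size $\lsca^{-1/2}\cust^{-2}|\z-\zzc|$. The only difference is cosmetic: the paper merges the two logarithms into a single $\log(1+X)$ so that one first-order analysis of $X$ suffices, whereas you expand the two logarithms separately and must (and correctly do) invoke the cancellation $h_k(\zzc)=0$ at every order $k$, since e.g.\ the uncancelled pure $\pct_1^2$ term $\lsca^{-1}\cust^{-2}$ would not be admissible when $\cust$ is near $\lsca^{-1/2+\errh}$.
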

\begin{proof}
For $\z\in\C$ with $|\z-\zzc|\le c\cust$, we can write that
\[
\oSfin'(\z)-\Dfin'_2(\z) = \sum_{i=0}^{\oot_c-1}\frac{1}{\lsca\z+i} - \sum_{i=-\oox_2+1}^{-\oox_2+\oot_2-1}\frac{1}{\lsca\z+i}.
\]
Since $\oox_2, \oot_2-\oot_c \lesssim \lsca^{1/2}$ and $|\z-\zzc|\le c\cust$ (so $\z, \z+\cust\asymp \cust$ according to \eqref{eq:zbtcloser}), we estimate it as
\begin{align*}
&\log\left(\frac{\lsca\z-\oox_2+1}{\lsca\z}\right) - \log\left(\frac{\lsca\z-\oox_2+\oot_2}{\lsca\z+\oot_c}\right)
+ \OO\Big(\int_{0\wedge(-\oox_2+1)}^{0\vee(-\oox_2+1)} \frac{\rd y}{|\lsca \z + y|^2}\Big)
+ \OO\Big(\int_{\oot_c\wedge(-\oox_2+\oot_2)}^{\oot_c\vee(-\oox_2+\oot_2)} \frac{\rd y}{|\lsca \z + y|^2}\Big)\\
=& 
\log\left(\frac{(\lsca\z-\oox_2+1)(\lsca\z+\oot_c)}{\lsca\z(\lsca\z-\oox_2+\oot_2)}\right) + \OO(\lsca^{-3/2}\cust^{-2}).
\end{align*}
We note that
\[
\log\left(\frac{(\lsca\z-\oox_2+1)(\lsca\z+\oot_c)}{\lsca\z(\lsca\z-\oox_2+\oot_2)}\right) = \log\left(1+\frac{\lsca\z(\oot_c-\oot_2+1) + (-\oox_2+1)\oot_c}{\lsca\z(\lsca\z-\oox_2+\oot_2)}\right).
\]
Again, using that $\oox_2, \oot_2-\oot_c \lesssim \lsca^{1/2}$ and $\z, \z+\cust\asymp \cust$, we get that
\[
\frac{\lsca\zzc(\oot_c-\oot_2+1) + (-\oox_2+1)\oot_c}{\lsca^2\cust^2} + \OO(\lsca^{-1/2}\cust^{-2}|\z-\zzc|) = \OO(\lsca^{-3/4}\cust^{-1} + \lsca^{-1/2}\cust^{-2}|\z-\zzc|),
\]
where we used that $\oox_2+\frac{\zzc}{\cust}(\oot_2-\oot_c) = \oox_2-B(\oot_2-\oot_c) \lesssim \lsca^{1/4}.$
Thus, we have
\[
\oSfin'(\z)-\Dfin'_2(\z) \lesssim \lsca^{-3/2}\cust^{-2} + \lsca^{-3/4}\cust^{-1} + \lsca^{-1/2}\cust^{-2}|\z-\zzc|.
\]
Since $\lsca^{-1/2+\errh}<\sct\asymp\cust$, we have $\lsca^{-3/2}\cust^{-2}\lesssim  \lsca^{-3/4}\cust^{-1}$, which gives the desired bound for $\oSfin'(\z)-\Dfin'_2(\z)$.
The bound for $\oSfin'(\z)-\Dfin'_1(\z)$ is proved similarly.
\end{proof}
Putting together Lemmas \ref{lem:appxSzsm}, \ref{lem:dercomp}, and \ref{lem:dDS}, we get the following estimate for $\Dfin'_1$ and $\Dfin'_2$ near $\zzc$.
\begin{lem}  \label{lem:dDzc}
For any $\z\in\C$ with $|\z-\zzc|\le c\cust^{3/2}$, we have
\[
\Dfin'_1(\z)+4\cust^{-4}A(\z-\zzc)^3,\ \Dfin'_2(\z)+4\cust^{-4}A(\z-\zzc)^3  \lesssim \lsca^{-3/4}\cust^{-1} + \lsca^{-1/2}\cust^{-2}|\z-\zzc| +  \cust^{-11/2}|\z-\zzc|^4.
\]
\end{lem}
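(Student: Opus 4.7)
The plan is to decompose the quantity of interest into three pieces that can each be estimated by one of the preceding three lemmas, namely by writing, for $i\in\{1,2\}$,
\[
\Dfin'_i(\z) + 4\cust^{-4}A(\z-\zzc)^3 = \bigl[\Dfin'_i(\z) - \oSfin'(\z)\bigr] + \bigl[\oSfin'(\z) - \Sfin'(\z)\bigr] + \bigl[\Sfin'(\z) + 4\cust^{-4}A(\z-\zzc)^3\bigr].
\]
Since $\cust\lesssim 1$ (so $c\cust^{3/2}\le c'\cust$), for $|\z-\zzc|\le c\cust^{3/2}$ \Cref{lem:dDS} controls the first bracket by $\lsca^{-3/4}\cust^{-1}+\lsca^{-1/2}\cust^{-2}|\z-\zzc|$, and \Cref{lem:appxSzsm} controls the third bracket by $\cust^{-11/2}|\z-\zzc|^4$. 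These are exactly two of the three terms appearing in the claimed bound, so the whole task reduces to showing that the middle (discretization) bracket is subordinate to these.

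For the middle bracket, I will apply \Cref{lem:dercomp}, whose right-hand side reads $\lsca^{-1+\err}/\inf_{x\in(-\infty,\Em\vee(\sfd_{-1}/\lsca)]\cup[\Ep\wedge(\sfd_0/\lsca),\infty)}|x-\z|$. The key geometric fact is that the infimum is $\gtrsim \cust^{3/2}$ throughout the disk $|\z-\zzc|\le c\cust^{3/2}$ for small enough $c$: by \eqref{eq:zbtcloser} one has $\Ep-\zzc,\zzc-\Em\asymp\cust^{3/2}\asymp\sct^{3/2}$, while \Cref{ass:dtsncusp} yields $|\sfd_0/\lsca-\Ep|,|\sfd_{-1}/\lsca-\Em|\lesssim \lsca^{-2/3+\err}\sct^{1/6}$, which is $\ro(\sct^{3/2})$ provided $\err\ll\errh$ (using $\sct\gtrsim\lsca^{-1/2+\errh}$, which gives $\sct^{4/3}\gtrsim\lsca^{-2/3+4\errh/3}$). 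Consequently, $|\oSfin'(\z)-\Sfin'(\z)|\lesssim \lsca^{-1+\err}\cust^{-3/2}$. Comparing with $\lsca^{-3/4}\cust^{-1}$ in the target bound, the ratio is $\lsca^{-1/4+\err}\cust^{-1/2}\lesssim \lsca^{-\errh/2+\err}\lesssim 1$ (again using $\cust\gtrsim\lsca^{-1/2+\errh}$ and $\err\ll\errh$), so this contribution is absorbed. Adding the three estimates by the triangle inequality yields the lemma.

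The only subtle point—what I expect to be the main obstacle, although it is more bookkeeping than conceptual—is confirming that the nearest singularities of $\Dfin_1,\Dfin_2,\oSfin$ in the real direction (namely $\sfd_0/\lsca$ and $\sfd_{-1}/\lsca$, which lie near $\Ep$ and $\Em$) do not come closer to $\zzc$ than the gap $\sct^{3/2}$, and that the parameter hierarchy $\err\ll\errh$ prescribed at the outset of the section is strong enough to ensure this and to absorb the discretization error into the dominant term $\lsca^{-3/4}\cust^{-1}$. All other ingredients are direct applications of the preceding lemmas.
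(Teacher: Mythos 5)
Your proposal is correct and follows essentially the same route as the paper: the paper obtains the lemma by "putting together" \Cref{lem:appxSzsm}, \Cref{lem:dercomp}, and \Cref{lem:dDS} via exactly your three-term decomposition, and absorbs the discretization error using $\lsca^{-1+\err}\cust^{-3/2}\lesssim \lsca^{-3/4}\cust^{-1}$ (from $\lsca^{-1/2+\errh}<\sct\asymp\cust$ and $\err$ small relative to $\errh$), just as you do. Your explicit check that the singularities $\sfd_{-1}/\lsca$, $\sfd_0/\lsca$ stay at distance $\gtrsim\cust^{3/2}$ from the disk is the same (implicit) step the paper uses when applying \Cref{lem:dercomp} on that disk.
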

Note that here we use that $\lsca^{-1+\err}\cust^{-3/2}\lesssim \lsca^{-3/4}\cust^{-1}$ (since $\lsca^{-1/2+\errh}<\sct\asymp\cust$, and $\err$ is chosen small enough depending on $\errh$).

\subsubsection{Away from $\zzc$} 
So far, we have obtained estimates on $\Dfin_1'$ and $\Dfin_2'$ near $\zzc$, using the approximations $\oSfin$ and $\Sfin$.
We will also need some estimates on $\Dfin_1'$ and $\Dfin_2'$ away from $\zzc$, which are stated as follows.
\begin{lem}  \label{lem:dD}
For any $\z\in \HH$, suppose that $x_*$ is the element in $E(\Dfin_2)$ with the smallest $|\z-x_*|$. Then, we have that
\[
\Dfin'_2(\z) - \frac{s}{\lsca(\z-x_*)} \lesssim \log \lsca ,
\]
for some $s\in\{1, -1\}$, depending on the residue of $\Dfin'_2$ at $x_*$.
A similar estimate holds for $\Dfin_1'$.
\end{lem}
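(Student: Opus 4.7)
The plan is to directly analyze $R(\z) := \Dfin_2'(\z) - s/(\lsca(\z-x_*))$ using the explicit formula
\[
\Dfin_2'(\z) = \sum_{i=-M}^N \frac{1}{\lsca\z - \sfd_i} + \sum_{k=\oox_2-\oot_2+1}^{\oox_2-1}\frac{1}{\lsca\z - k} - \pi\cot(\pi\lsca\z) + \log(\beta/(1-\beta)),
\]
obtained by differentiating the series definition of $\Dfin_2$. Poles of $\Dfin_2'$ occur only at rational points $k/\lsca$ with $k\in \Z$, and a direct bookkeeping of residues from the three pole-bearing terms confirms that the residue is $+1/\lsca$ on $\{\sfd_i\}\cap [\oox_2-\oot_2+1,\oox_2-1]$, $-1/\lsca$ on $\Z\setminus(\{\sfd_i\}\cup [\oox_2-\oot_2+1,\oox_2-1])$, and $0$ elsewhere, matching the set $E(\Dfin_2)$ and determining the sign $s$.

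Set $u := \lsca\z$, $k_0 := \lsca x_*$, and $k^* := \argmin_{k\in \Z}|u - k|$. Since $k_0$ is an integer minimizing $|u - k|$ over $\lsca E(\Dfin_2)\subseteq \Z$, either $k^* = k_0$ or $k^*\in\Z\setminus \lsca E(\Dfin_2)$ (a \emph{hole}), in which case $|k^* - k_0|\ge 1$. Using the identity $\pi\cot(\pi u) = 1/(u-k^*) + g(u - k^*)$, with $g(v):=\pi\cot(\pi v) - 1/v$ extended holomorphically at $0$ and satisfying $\sup_{|v|\le 1/2}|g(v)|<\infty$, the cotangent pole at $k^*$ pairs with the finite-sum poles at $k^*$ in one of two ways: (i) when $k^* = k_0 \in \lsca E(\Dfin_2)$, the finite sums contribute two poles at $k^*$ (giving $s=+1$) or zero poles (giving $s=-1$) at $u = k^*$, leaving behind exactly $s/(u-k_0) - g(u-k^*)$ together with a sum over integers distinct from $k^*$; (ii) when $k^*$ is a hole, \emph{exactly one} of the two finite sums has a pole at $k^*$ (since $[\oox_2-\oot_2+1,\oox_2-1]\setminus \{\sfd_i\}$ and $\{\sfd_i\}\setminus [\oox_2-\oot_2+1,\oox_2-1]$ are disjoint), which exactly cancels the $1/(u-k^*)$ from the cotangent, leaving only $-g(u-k^*)$ plus a sum over integers distinct from $k^*$. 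In either case,
\[
R(\z) = -g(u - k^*) + \sum_{k\in S}\frac{\pm 1}{u - k} + \OO(1),
\]
where $S\subseteq (\{\sfd_i\}\cup[\oox_2-\oot_2+1,\oox_2-1])\setminus \{k^*\}$ is a finite set of distinct integers of cardinality $\lesssim \lsca$, and the $\OO(1)$ term absorbs the constant $\log(\beta/(1-\beta))$ together with any residual $\pm 1/(u - k_0)$ term from case (ii) (bounded by $2$ since $|k_0 - k^*|\ge 1$ and $|u - k^*|\le 1/2$).

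The remainder is then controlled by the elementary observation that, because $|\Re u - k^*|\le 1/2$, any integer $k\ne k^*$ satisfies $|u - k|\ge |k - k^*| - 1/2 \ge |k - k^*|/2$. Together with the fact that at most two distinct integers lie at each positive distance from $k^*$, this yields
\[
\sum_{k\in S}\frac{1}{|u - k|}\le 2\sum_{k\in S}\frac{1}{|k - k^*|}\lesssim \log|S|\lesssim \log \lsca,
\]
and combined with $|g(u-k^*)|\lesssim 1$ this gives $|R(\z)|\lesssim \log \lsca$ in the main regime $r_0 := |u - k^*|\le 1/2$. The supplementary case $r_0 > 1/2$ is easier: then $|\pi\cot(\pi u)|\lesssim 1$ uniformly, the $g$-decomposition is not needed, and the same counting estimate on each of the three finite sums together with $|1/(u-k_0)|\le 2$ yields the bound. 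The argument for $\Dfin_1'$ is identical after replacing $[\oox_2-\oot_2+1,\oox_2-1]$ by $[\oox_1-\oot_1,\oox_1]$. The step I expect to be the main obstacle is the bookkeeping in case (ii), where the pole one must subtract sits at $k_0$ while the cotangent pole to be canceled sits at the nearby hole $k^*$; the observation that exactly one of the two finite sums always supplies the requisite canceling pole at $k^*$ is what makes the estimate clean.
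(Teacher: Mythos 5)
Your argument is correct: the residue bookkeeping (residue $+1/\lsca$ on $\lsca^{-1}(\{\sfd_i\}\cap[\oox_2-\oot_2+1,\oox_2-1])$, $-1/\lsca$ on $\lsca^{-1}(\Z\setminus(\{\sfd_i\}\cup[\oox_2-\oot_2+1,\oox_2-1]))$, zero elsewhere) matches $E(\Dfin_2)$, the hole-case cancellation is exactly right (precisely one of the two finite sums supplies the pole killing the cotangent's $1/(u-k^*)$, and the subtracted $s/(\lsca(\z-x_*))$ is then $\OO(1)$ since $|u-\lsca x_*|\ge 1/2$), and the counting bound $\sum_{k\ne k^*}|u-k|^{-1}\lesssim\log\lsca$ over the $\OO(\lsca)$ poles of the two finite sums finishes both regimes. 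The only cosmetic slips are that your residual sum is really over a multiset (an integer can appear in both finite sums) with all coefficients $+1$, and that ties in the choice of $x_*$ can only occur when $\dist(\lsca\z,\Z)\ge 1/2$, where every term is $\OO(1)$ anyway; neither affects the estimate.

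Your route differs from the paper's in how the cotangent is treated. The paper starts from the same formula \eqref{eq:D2'z} but then applies the full partial-fraction identity \eqref{eq:cotapp} (which it already needs for \Cref{lem:dercomp}), so that the cotangent and the middle finite sum combine into two \emph{infinite} tail sums $\sum_{i\le-\oox_2}$ and $\sum_{i\ge-\oox_2+\oot_2}$; it then bounds the $\OO(\lsca)$ poles of $E(\Dfin_2)$ in a bounded window by the same harmonic-sum count you use, and controls the conditionally convergent far tail by comparison with a principal-value integral, which contributes the other $\log\lsca$ plus an $\OO(1)$ error. You instead keep the cotangent, peel off only its nearest pole via the bounded function $\pi\cot(\pi v)-1/v$ on $|v|\le 1/2$ (and use boundedness of $\cot$ at distance $\ge 1/2$ from $\Z$ in the complementary regime), so that everything left to estimate is a finite sum of length $\lesssim\lsca$; this avoids any cancellation or integral-comparison step, at the price of the case analysis at the nearest integer, which you handle. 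Both approaches reduce to the same lattice counting bound; the paper's treats all poles uniformly and recycles its cotangent expansion, while yours is somewhat more elementary and self-contained.
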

\begin{proof}
We only prove the bound for $\Dfin'_2(\z)$, and the bound for $\Dfin'_1(\z)$ follows from a similar argument.
With the definition of $\Dfin_2$ and \eqref{eq:cotapp}, we can write that
\begin{align}
\Dfin'_2(\z) &= \sum_{i=-M}^N \frac{1}{\lsca\z - \sfd_i} + \sum_{i=-\oox_2+1}^{-\oox_2+\oot_2-1}\frac{1}{\lsca\z+i} - \pi\cot(\pi\lsca\z) + \log(\beta/(1-\beta)) \nonumber\\
&= \sum_{i=-M}^N \frac{1}{\lsca\z - \sfd_i} - \lim_{m\to\infty} \Big( \sum_{i=-m}^{-\oox_2}\frac{1}{\lsca\z+i} + \sum_{i=-\oox_2+\oot_2}^{m}\frac{1}{\lsca\z+i}\Big) + \log(\beta/(1-\beta)).\label{eq:D2'z}
\end{align}
Thus, we have
\[
\Big| \Dfin'_2(\z) - \frac{s}{\lsca(\z-x_*)}  \Big| \le
\sum_{x\in E(\Dfin_2)\cap (-C_0,C_0), x\neq x_*} \frac{1}{\lsca|\z-x|} + \lim_{m\to\infty}\Big| \sum_{i\in\qq{-m,m}, |i|\ge C_0\lsca, i\neq x_*\lsca} \frac{1}{\lsca \z-i}\Big| + |\log(\beta/(1-\beta))|,
\]
where $C_0>0$ is a large enough constant so that $-C_0\lsca < 2\sfd_{-M}<2\sfd_N<C_0\lsca$.
The first term is bounded by
\[
\sum_{x\in E(\Dfin_2)\cap (-C_0,C_0), x\neq x_*} \frac{1}{\lsca|\z-x|} \lesssim \sum_{i=1}^{\lceil 2C_0\lsca \rceil} \frac{1}{i-1/2} \lesssim \log \lsca.
\]
We also have that
\[
\sum_{i\in\qq{-m,m}, |i|\ge C_0\lsca, i\neq x_*\lsca} \frac{1}{\lsca \z-i}
= \int_{I} \frac{\rd y}{\lsca\z+y} 
+ \OO\Big( \int_{I} \frac{\rd y}{|\lsca\z+y|^2} \Big),
\]
where $I$ is the union of $[i-1, i]$ for all $i\in\qq{-m,m}, |i|\ge C_0\lsca, i< x_*\lsca$, and $[i, i+1]$ for all $i\in\qq{-m,m}, |i|\ge C_0\lsca, i> x_*\lsca$.
It is straightforward to check that 
\[
\lim_{m\to\infty} \int_{I} \frac{\rd y}{\lsca\z+y}  = \OO(\log \lsca), \quad \int_{I} \frac{\rd y}{|\lsca\z+y|^2} = \OO(1),
\]
and the conclusion follows.
\end{proof}

When $|\z|$ is large, we can directly approximate $\Dfin_1(\z)$ and $\Dfin_2(\z)$ using the linear function $\z\mapsto \z[\pi\i+\log(\beta/(1-\beta))]$, 
as follows.
\begin{lem}  \label{lem:Dlz}
For any $\z\in\HH$ with $|\z|>\lsca$, we have
\[
\Dfin_1(\z)-\z[\pi\i+\log(\beta/(1-\beta))],\ \Dfin_2(\z)-\z[\pi\i+\log(\beta/(1-\beta))]  \lesssim [-\lsca^{-1}\log(\min_{i\in\Z}|\lsca\z-i|)]\vee 0 +  \log |\z|   .
\]
\end{lem}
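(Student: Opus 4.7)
The plan is to split the difference using the definition of $\Dfin_1$ (the analysis for $\Dfin_2$ is identical, with $\oot_2-1$ in place of $\oot_1+1$ terms in the second sum):
\[
\Dfin_1(\z)-\z[\pi\i+\log(\beta/(1-\beta))]=S_1(\z)+S_2(\z)+T(\z),
\]
where $S_1(\z)=\frac{1}{\lsca}\sum_{i=-M}^N\log(\z-\sfd_i/\lsca)$, $S_2(\z)=\frac{1}{\lsca}\sum_{i=-\oox_1}^{-\oox_1+\oot_1}\log(\z+i/\lsca)$, and $T(\z)=-\frac{1}{\lsca}\log\sin(\pi\lsca\z)-\pi\i\z$, all with the branches of log fixed by the defining specification of $\Dfin_1$. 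The $\z\log(\beta/(1-\beta))$ piece cancels directly, and the three remaining contributions will be estimated separately. Throughout, I will use $|\z|>\lsca$ so $\log|\z|\geq \log\lsca$ is large, absorbing any $O(1)$ and $O(1/\lsca)$ errors.

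For $S_1$ and $S_2$, observe that $|\sfd_i|/\lsca$ and $|i|/\lsca$ are uniformly bounded (since $M,N,\oot_1\lesssim\lsca$ and $\sfd_i,\oox_1$ scale with $\lsca$). Hence for each index one has $\log(\z\pm x)=\log\z+\log(1\pm x/\z)=\log\z+O(1/|\z|)$ with the appropriate branch. Summing $O(\lsca)$ terms and dividing by $\lsca$ gives $|S_1|,|S_2|\lesssim |\log\z|+O(1)\lesssim \log|\z|$.

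For $T(\z)$, factor $\sin(\pi\lsca\z)=-(2\i)^{-1}e^{-\i\pi\lsca\z}(1-e^{2\i\pi\lsca\z})$. Taking logs with consistent branch choices (the branches specified for $\Dfin_1$ differ from the principal choice by bounded pure imaginary constants, contributing $O(1/\lsca)$ after division by $\lsca$), this gives
\[
T(\z)=-\frac{1}{\lsca}\log(1-e^{2\i\pi\lsca\z})+O(1/\lsca).
\]
An upper bound on the modulus is immediate: $|1-e^{2\i\pi\lsca\z}|\leq 1+e^{-2\pi\lsca\Im\z}\leq 2$ for $\z\in\HH$. The key lower bound uses the identity $1-e^{2\i\pi\lsca\z}=-2\i e^{\i\pi\lsca\z}\sin(\pi\lsca\z)$, yielding $|1-e^{2\i\pi\lsca\z}|=2e^{-\pi\lsca\Im\z}|\sin(\pi\lsca\z)|$, combined with the elementary estimate $|\sin(\pi\lsca\z)|\gtrsim \min(\min_{k\in\Z}|\lsca\z-k|,1)$. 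This latter estimate follows by distinguishing two cases: when $\min_k|\lsca\z-k|$ is small, the Taylor expansion $\sin(\pi u)=\pi u+O(u^3)$ near the nearest integer applies; when it is bounded below, the identity $|\sin(\pi\lsca\z)|^2=\sin^2(\pi\lsca\Re\z)+\sinh^2(\pi\lsca\Im\z)$ shows at least one of the two summands is bounded below by a positive constant. Splitting into the subcases $\lsca\Im\z\geq 1$ (where both $|1-e^{2\i\pi\lsca\z}|\asymp 1$) and $\lsca\Im\z<1$ (where the $e^{-\pi\lsca\Im\z}$ factor is bounded) shows
\[
\bigl|\log|1-e^{2\i\pi\lsca\z}|\bigr|\lesssim [-\log\min_k|\lsca\z-k|]\vee 0+O(1).
\]
Since $1-e^{2\i\pi\lsca\z}$ never takes nonpositive real values on $\HH$ (as $|e^{2\i\pi\lsca\z}|<1$), the principal branch argument is bounded, and the same estimate holds for $|\log(1-e^{2\i\pi\lsca\z})|$. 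Dividing by $\lsca$ gives $|T(\z)|\lesssim [-\lsca^{-1}\log\min_k|\lsca\z-k|]\vee 0+O(1/\lsca)$.

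Combining the three estimates yields the claim. The main (minor) subtlety is matching the specified branches of log in $\Dfin_1,\Dfin_2$ with those used in the factorizations above; this contributes only bounded additive constants after differentiation, which divided by $\lsca$ are absorbed into the $\log|\z|$ term. No harder obstacle is expected, as this lemma is a soft a priori bound used only to control the key functions on the tail of the contour, and the argument for $\Dfin_2$ is entirely parallel to that for $\Dfin_1$.
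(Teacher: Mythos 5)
Your proof is correct and follows essentially the same route as the paper: bound the two logarithmic sums by $\log|\z|$ using $M,N,\oot_1,|\oox_1|\lesssim\lsca$, and control the remaining term by writing $\sin(\pi\lsca\z)$ in terms of $1-e^{2\pi\i\lsca\z}$, so that the real part is governed by $\lsca^{-1}\log\min_{i\in\Z}|\lsca\z-i|$ near the real axis and is $\OO(\lsca^{-1})$ otherwise, with the branch/imaginary-constant ambiguity handled at the end. The only slight imprecision is that the global pure-imaginary constant fixed by the normalization $\Im\Dfin_1=0$ on $(\sfd_{-1}/\lsca,\sfd_0/\lsca)$ is $\OO(1)$ (not $\OO(1/\lsca)$), but as you note this is harmlessly absorbed since $\log|\z|\ge\log\lsca$.
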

\begin{proof}
We only prove the bound for $\Dfin_2$, while the proof for $\Dfin_1$ is similar.
Since $M, N\asymp \lsca$, $|\sfd_{-M}|, |\sfd_N| \asymp \lsca$, $|\oox_2|, |\oot_2| \lesssim \lsca$, and $|\z|>\lsca$, we have
\begin{equation}  \label{eq:logs}
\frac{1}{\lsca}\sum_{i=-M}^N\log\Big(\z-\frac{\sfd_i}{\lsca}\Big)
    +\frac{1}{\lsca}\sum_{i=-\oox_2+1}^{-\oox_2+\oot_2-1}\log\Big(\z+\frac{i}{\lsca}\Big) \lesssim \log |\z|,    
\end{equation}
where the left-hand side is defined to be holomorphic in $\HH$ and real near $(2\lsca)^{-1}$.
Next, we consider $-\frac{1}{\lsca}\log(\sin(\pi\lsca\z))$, which is defined first for $\z\in (0, \lsca^{-1})$, and then analytically extended to $\C\setminus ((-\infty, 0]\cup [\lsca^{-1}, \infty))$.
We note that $\frac{1}{\lsca}\log(\sin(\pi\lsca\z)) + \z\pi\i$ is periodic: 
for any $\z\in\HH$, we have $\frac{1}{\lsca}\log(\sin(\pi\lsca\z)) + \z\pi\i=\frac{1}{\lsca}\log(\sin(\pi\lsca(\z+2\lsca^{-1}))) + (\z+2\lsca^{-1})\pi\i$.
This gives $\Im[\frac{1}{\lsca}\log(\sin(\pi\lsca\z)) +\z \pi\i] \lesssim \lsca^{-1}$ for any $\z\in \HH$.
For $\Re[\frac{1}{\lsca}\log(\sin(\pi\lsca\z)) + \z\pi\i]$, it is equal to
\[
\frac{1}{\lsca}\log(|\sin(\pi\lsca\z )e^{\i\pi\lsca\z}|) = \frac{1}{\lsca}\log\Big(\Big|\frac{e^{2\pi\i\lsca\z} - 1}{2\i}\Big|\Big),
\]
which is of order $\OO(\lsca^{-1})$ when $\Im \z >\lsca^{-1}/5$, and of order $\OO(\lsca^{-1}(1-\log(\min_{i\in \Z}|\lsca\z-i|)))$ when $0<\Im(\z)\le \lsca^{-1}/5$.
Thus, we conclude that for any $\z\in \HH$,
\[
\frac{1}{\lsca}\log(\sin(\pi\lsca\z)) +\z \pi\i
\lesssim  [-\lsca^{-1}\log(\min_{i\in\Z}|\lsca\z-i|)]\vee 0  + \lsca^{-1}.
\]
Finally, to get $\Dfin_2$ from the left-hand side of \eqref{eq:logs} (which is taken to be real near $(2\lsca)^{-1}$) and $-\frac{1}{\lsca}\log(\sin(\pi\lsca\z))$ (which is taken to be real in $(0, \lsca^{-1})$), one only needs to add a pure imaginary number, which is $\OO(1)$ since $M, N\asymp \lsca$. 
Therefore, the conclusion follows.
\end{proof}

\subsection{Contour deformation}
To obtain the estimates of kernels and prove Proposition \ref{prop:Kconvconj}, we will use the steepest descent method. For this purpose, we need to deform the contours for $K^{\textnormal{Bernoulli}}$ in \eqref{eq:kberD}.
In this subsection, we construct these deformed contours.

From the above computations on $\Dfin_1$ and $\Dfin_2$, we expect to deform the contours so that both pass through $\zzc$, and the integrand in \eqref{eq:kberD} decays fast for $\z$ and $\w$ away from $\zzc$.
Specifically, we consider the contours inside and outside $\{\z\in\C: |\z-\zzc|\le \lsca^{-1/4+\ere}\cust\}$ separately. (Recall that $\ere$ is one of the parameters defined at the beginning of this section.)
We now record a useful lemma.
\begin{lem}  \label{lem:dDzce}
For any $\z\in\C$ with $|\z-\zzc|\lesssim  \lsca^{-1/4+\ere}\cust$, we have
\[
\Dfin'_1(\z)+4\cust^{-4}A(\z-\zzc)^3, \;\Dfin'_2(\z)+4\cust^{-4}A(\z-\zzc)^3  \lesssim \lsca^{-3/4+\ere}\cust^{-1},
\]
\[
\Dfin_1(\z)-\Dfin_1(\zzc)+\cust^{-4}A(\z-\zzc)^4, \;\Dfin_2(\z)-\Dfin_2(\zzc)+\cust^{-4}A(\z-\zzc)^4  \lesssim \lsca^{-1+2\ere}.
\]
\end{lem}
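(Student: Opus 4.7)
My plan is to derive this lemma as a direct corollary of the already-established Lemma \ref{lem:dDzc}, by specializing to the smaller window $|\z-\zzc|\lesssim \lsca^{-1/4+\ere}\cust$ and then integrating.

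First, I would check that the hypothesis of Lemma \ref{lem:dDzc} is satisfied. Since $\sct\lesssim \cust$ and $\sct\gtrsim \lsca^{-1/2+\errh}$ by assumption, and $\ere$ was chosen small enough depending on $\errh$, we have $\lsca^{-1/4+\ere}\cust \le c\cust^{3/2}$ (equivalently $\cust^{1/2}\gtrsim \lsca^{-1/4+\ere}$, which follows from $\cust\gtrsim \lsca^{-1/2+\errh}$ once $\ere<\errh/2$). Thus Lemma \ref{lem:dDzc} is applicable on the entire region of interest.

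Next, for the derivative bounds, I would simply estimate each of the three error terms in Lemma \ref{lem:dDzc} under the constraint $|\z-\zzc|\lesssim \lsca^{-1/4+\ere}\cust$:
\begin{align*}
\lsca^{-3/4}\cust^{-1} &\lesssim \lsca^{-3/4+\ere}\cust^{-1},\\
\lsca^{-1/2}\cust^{-2}|\z-\zzc| &\lesssim \lsca^{-1/2}\cust^{-2}\cdot \lsca^{-1/4+\ere}\cust = \lsca^{-3/4+\ere}\cust^{-1},\\
\cust^{-11/2}|\z-\zzc|^4 &\lesssim \cust^{-11/2}\cdot\lsca^{-1+4\ere}\cust^{4} = \lsca^{-1+4\ere}\cust^{-3/2}.
\end{align*}
For the third term to also be dominated by $\lsca^{-3/4+\ere}\cust^{-1}$, I need $\cust^{1/2}\gtrsim \lsca^{-1/4+3\ere}$, which again holds because $\cust\gtrsim \lsca^{-1/2+\errh}$ and $\ere$ is taken small enough so that $\errh > 6\ere$. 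Combining these three estimates yields the first displayed inequality in the lemma, for both $\Dfin_1$ and $\Dfin_2$.

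Finally, for the bounds on $\Dfin_j(\z)-\Dfin_j(\zzc)+\cust^{-4}A(\z-\zzc)^4$ ($j\in\{1,2\}$), I would integrate the derivative bound along a path, say the straight segment from $\zzc$ to $\z$ (which lies in the region of validity since that region is convex). Writing
\[
\Dfin_j(\z)-\Dfin_j(\zzc)+\cust^{-4}A(\z-\zzc)^4 = \int_{\zzc}^{\z}\bigl[\Dfin_j'(\xi)+4\cust^{-4}A(\xi-\zzc)^3\bigr]\,\rd\xi,
\]
and multiplying the pointwise derivative bound $\lsca^{-3/4+\ere}\cust^{-1}$ by the path length $\lesssim \lsca^{-1/4+\ere}\cust$, I obtain $\lsca^{-1+2\ere}$, as claimed. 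The main point — not really an obstacle, but the only subtlety — is the accounting of parameter constraints ($\cust\gtrsim \lsca^{-1/2+\errh}$, $\ere\ll \errh$) to confirm that all three pieces of the Lemma \ref{lem:dDzc} bound collapse into the single cleaner term $\lsca^{-3/4+\ere}\cust^{-1}$ on this smaller contour window.
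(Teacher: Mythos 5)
Your proof is correct and is essentially the paper's argument: the paper likewise obtains the derivative bound by specializing Lemma \ref{lem:dDzc} to the window $|\z-\zzc|\lesssim \lsca^{-1/4+\ere}\cust$ using $\lsca^{-1/2+\errh}<\sct\asymp\cust$ (with $\ere$ small relative to $\errh$), and then gets the second bound by integrating over $\z$. Your explicit accounting of the three error terms and the parameter constraints just fills in the details the paper leaves implicit.
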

The first estimate is directly implied by Lemma \ref{lem:dDzc} and the facts that  $\lsca^{-1/2+\errh}<\sct\asymp\cust$. 
The second estimate is obtained by integrating over $\z$.

For the rest of this section, we use $[\z_1\to\cdots\to\z_k]$ to denote the contour obtained by connecting $\z_1, \ldots, \z_k \in \C$ sequentially using straight line segments.
In such notations, we may also take $\z_1$ or $\z_k$ to be $\infty e^{\pi\i\theta}$ for some $\theta \in \R$, in which case the first or last segment is an infinite straight line in the corresponding direction.
Our first step is the following deformation.
\begin{lem}  \label{lem:deforma}
For \eqref{eq:kberD}, the contours can be replaced by the followings: the $\w$ contour is the union of
\begin{align}
\label{eq:wconta}
[\zzc\to \zzc+e^{3\pi\i/4} \lsca^{-1/4+\ere}\cust &\to -\cust-1 \to \zzc+e^{5\pi\i/4} \lsca^{-1/4+\ere}\cust \to \zzc],    \\
\label{eq:wcontb}
[\zzc \to \zzc+e^{7\pi\i/4} \lsca^{-1/4+\ere}\cust &\to 1 \to \zzc+e^{\pi\i/4} \lsca^{-1/4+\ere}\cust \to \zzc],
\end{align}
and the $\z$ contour is the straight vertical line passing through $\zzc$, traversed upwards.
\end{lem}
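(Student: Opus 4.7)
The plan is to establish the equivalence of contour integrals via a two-stage deformation of \eqref{eq:kberD}. In the first stage I keep the $\w$-contour as small disjoint positively-oriented circles around each pole $\sfd_i/\lsca$ (with $\sfd_i \in \qq{\oox_1-\oot_1,\oox_1}$), and deform the $\z$-contour horizontally from $\Re\z = (\oox_2-\oot_2+1/2)/\lsca$ to $\Re\z = \zzc$. Since the small loops stay uniformly bounded away from the swept vertical strip, the pole at $\w=\z$ is never crossed. The remaining concern is $\z$-poles of the integrand, which occur at integers $k$ only when $k \notin \qq{\oox_2-\oot_2+1, \oox_2-1}$ and $k \notin \{\sfd_i\}$. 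The integers swept across lie in $\qq{\oox_2-\oot_2+1, \lfloor\lsca\zzc\rfloor}$, and the crucial numerical inequality $\lsca\zzc < \oox_2$ forces this range to lie entirely inside $\qq{\oox_2-\oot_2+1, \oox_2-1}$. This inequality follows from combining Lemma \ref{lem:Sderiv} and \eqref{eq:zbtcloser} (giving $\zzc<0$ with $|\zzc| \gtrsim \sct \gtrsim \lsca^{-1/2+\errh}$) against the scaling $\oox_2 = O(\lsca^{1/2})$ from the hypotheses of Proposition \ref{prop:Kconvconj}; at such $k$ the zero of $(\ooz-\oox_2+1)_{\oot_2-1}$ cancels the pole of $1/\sin(\pi\lsca\ooz)$, so no $\z$-pole is crossed.

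In the second stage, with the $\z$-contour now on the vertical line through $\zzc$, I deform the $\w$-contour from the collection of small loops to the two-petal shape \eqref{eq:wconta}-\eqref{eq:wcontb}. For each fixed $\z$, this preserves the $\w$-integral as long as the target contour has the same winding number as the initial one around every puncture of $\C\setminus(\{\sfd_i/\lsca\}\cup\{\z\})$. The winding numbers around each $\sfd_i/\lsca$ are both $+1$: by \eqref{eq:vecal} all relevant $\sfd_i/\lsca$ lie in $(-\cust-1,\zzc)\cup(\zzc,1)$, with $\sfd_{-1}/\lsca<\zzc<\sfd_0/\lsca$; a direct Shoelace-type computation confirms that each petal is a positively-oriented simple quadrilateral, and the left (respectively right) petal encloses the real segment $(-\cust-1,\zzc)$ (respectively $(\zzc,1)$). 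The winding number around $\z$ is zero for every $\z$ on the new $\z$-contour with $\Im\z\neq 0$, because the left petal satisfies $\Re\w\le\zzc$ and the right satisfies $\Re\w\ge\zzc$ with equality only at the common vertex $\zzc$, whereas $\Re\z=\zzc$ and $\Im\z\ne 0$. The deformation itself can be realized by continuously growing each small loop without ever crossing the vertical line $\{\Re\w=\zzc\}$, so the winding number around any $\z$ on the $\z$-contour remains zero throughout.

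The only remaining issue is the exceptional coincidence $\z=\w=\zzc$ where the two new contours meet, a measure-zero subset of each; it is handled by shifting the $\z$-contour to $\Re\z=\zzc+\varepsilon$ for arbitrarily small $\varepsilon>0$, applying the above argument, and letting $\varepsilon\to 0^+$. The main technical obstacle is the verification in the first stage that the $\z$-sweep does not cross any genuine $\z$-pole; this depends crucially on the scaling regime of Proposition \ref{prop:Kconvconj}, whereas the winding-number checks in the second stage are elementary plane geometry given the explicit vertex structure of the two petals.
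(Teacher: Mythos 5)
Your second stage (growing the small $\w$-loops into the two petals and checking winding numbers) is fine and matches the paper, but your first stage contains a genuine error. You claim that while the $\z$-line is moved from $\Re\z=(\oox_2-\oot_2+\frac12)/\lsca$ to $\Re\z=\zzc$, "the small loops stay uniformly bounded away from the swept vertical strip, so the pole at $\w=\z$ is never crossed." This is false: by \eqref{eq:vecal} we have $(\oox_2-\oot_2+1)/\lsca<\sfd_{-1}/\lsca<\zzc$, and $\sfd_{-1}\in\qq{\oox_1-\oot_1,\oox_1}$, so $\sfd_{-1}/\lsca$ is a $\w$-pole lying strictly inside the swept strip (in fact order-$\lsca$ many of the points $\sfd_i/\lsca$ with $i\le -1$ do). For every such $\w$-circle the moving $\z$-line does cross the simple pole at $\z=\w$, and your argument simply drops the resulting residues. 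The correct way to close this gap (and what the paper does) is to compute the residue at $\z=\w$: all the factors $\frac{\sin(\pi\oow)}{\sin(\pi\ooz)}$, $\big(\frac{1-\beta}{\beta}\big)^{\oow-\ooz}$ and $\prod_i\frac{\ooz-\sfd_i}{\oow-\sfd_i}$ collapse to $1$, so the total residue contribution is a constant multiple of $\oint \frac{(\lsca\w-\oox_2+1)_{\oot_2-1}}{(\lsca\w-\oox_1)_{\oot_1+1}}\,\rd\w$ over the $\w$-circles lying in $[(\oox_2-\oot_2+\frac12)/\lsca,\zzc]$; on that interval every zero of $(\lsca\w-\oox_1)_{\oot_1+1}$ is cancelled by a zero of $(\lsca\w-\oox_2+1)_{\oot_2-1}$ (the same integer-containment fact you used for the $1/\sin$ poles), so the integrand is holomorphic there and the residue sum vanishes. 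Without this step the lemma is not proved.

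A secondary omission: shifting the infinite vertical $\z$-contour requires justification at $\pm\i\infty$, i.e.\ that the integrals over the horizontal segments $[(\oox_2-\oot_2+\frac12)/\lsca+\i X\to\zzc+\i X]$ tend to $0$ as $|X|\to\infty$; this is where the paper invokes the growth bound of Lemma \ref{lem:Dlz} on $\Dfin_2$, and your proposal does not address it. The rest of your stage-two geometry (positive orientation of the petals, enclosure of all $\w$-poles since $-\lsca\cust-\lsca<\oox_1-\oot_1<\oox_1<\lsca$, zero winding about the new $\z$-line away from $\zzc$, and the measure-zero coincidence at $\z=\w=\zzc$) is essentially the paper's argument and needs no change.
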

\begin{proof}
We first assume that the contour of $\w$ in \eqref{eq:kberD} is taken to be small circles around the $\w$ poles.
Then we fix $\w$ and deform the contour of $\z$, from the vertical line through $(\oox_2-\oot_2+1/2)/\lsca$ to the vertical line through $\zzc$.
It is straightforward to check that, by Lemma \ref{lem:Dlz}, the integral over $\z$ along $[(\oox_2-\oot_2+1/2)/\lsca+\i X\to \zzc+\i X]$ for some $X\in\R$ would $\to 0$ as $|X| \to \infty$.
Thus, it remains to consider the poles of $\z$.

We note that there is no pole of $\z$ in $[(\oox_2-\oot_2+1/2)/\lsca, \zzc]$, except for $\w$ (when $\w$ is in a small circle around a point in this interval).
For the residue at $\z=\w$, it can be written as a coefficient multiplying
\[
\oint\limits_{\textnormal{all $\w$ poles in }[(\oox_2-\oot_2+1/2)/\lsca, \zzc]}
\frac{(\lsca\w-\oox_2+1)_{\oot_2-1}}{(\lsca\w-\oox_1)_{\oot_1+1}}\rd\w,
\]
which vanishes since the integrand has no $w$ pole in $[(\oox_2-\oot_2+1/2)/\lsca, \zzc]$.
Thus we are done with deforming the contour of $\z$.

For $\w$, the contours \eqref{eq:wconta} and \eqref{eq:wcontb}  enclose all the $\w$ poles $(\{\oox_1-\oot_1,\oox_1-\oot_1+1,\ldots,\oox_1-1,\oox_1\}\cap \{\sfd_{-M},\ldots,\sfd_N\})/\lsca$, since $-\lsca\cust-\lsca<\oox_1-\oot_1<\oox_1<\lsca$.
Hence, $\w$ can be integrated along the union of \eqref{eq:wconta} and \eqref{eq:wcontb}.
\end{proof}

\subsubsection{Steepest descent curves}   \label{sssec:stpcur}
For the part of the contours \eqref{eq:wconta}, \eqref{eq:wcontb} and $\{\z\in\C: \Re(\z)=\zzc\}$ outside $\{\z\in\C: |\z-\zzc|\le \lsca^{-1/4+\ere}\cust\}$, 
we will further deform them to follow the steepest descent curves of $\Re(\Dfin_1)$ and $\Re(\Dfin_2)$.
For this purpose, we need to analyze the critical points of $\Dfin_1$ and $\Dfin_2$.
Define 
\[\Gamma_l=(-\infty, \zzc-\lsca^{-1/4+\ere}\cust]\cup [\zzc+\lsca^{-1/4+\ere}\cust, \infty),\;\Gamma_c=\{\z:\Im(\z)\ge 0, |\z-\zzc|=\lsca^{-1/4+\ere}\cust\},\]
and let $\Gamma=\Gamma_l\cup\Gamma_c$.
Let $U=\{\z:\Im(\z)\ge 0, |\z-\zzc|\ge \lsca^{-1/4+\ere}\cust\}$.
Note $\Gamma$ is the boundary of $U$.
\begin{lem}  \label{lem:Dnz}
The functions $\Dfin_1$ and $\Dfin_2$ have no critical point in the interior of $U$.
\end{lem}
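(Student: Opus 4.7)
The plan is to show $\Dfin_2'$ has no zeros in the interior of $U$; the argument for $\Dfin_1'$ is identical. The strategy combines a Rouch\'e count inside a small disk around $\zzc$ with the argument principle applied to the full upper half-plane, so that the total number of zeros in $\HH^{\textnormal{open}}$ is pinned down to be exactly three and those three can be localized near $\zzc$.

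For the local step, I apply Rouch\'e's theorem on the circle $|\z-\zzc|=\lsca^{-1/4+\ere}\cust$. By Lemma~\ref{lem:dDzce},
\[
\bigl|\Dfin_2'(\z)+4\cust^{-4}A(\z-\zzc)^3\bigr|\lesssim \lsca^{-3/4+\ere}\cust^{-1},
\]
while the leading cubic has magnitude $4A\lsca^{-3/4+3\ere}\cust^{-1}$ on that circle. Since $A\asymp 1$ by Lemma~\ref{lem:Sderiv}, once the Rouch\'e radius is enlarged by a large constant factor independent of $\lsca$ the cubic strictly dominates the error, and Rouch\'e yields exactly three zeros (with multiplicity) of $\Dfin_2'$ inside the disk.

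For the global step, I run the argument principle on $\Omega_R=\HH\cap\{|\z-\zzc|<R\}$, with the base segment along $\R$ pushed up to height $i\epsilon$ to avoid the real-line poles of $\Dfin_2'$. The zero count in $\Omega_R$ is $(2\pi\i)^{-1}\oint_{\partial\Omega_R}\Dfin_2''/\Dfin_2'\,\rd \z$. On the large semicircle of radius $R$, Cauchy's derivative formula applied on a thin annulus to the bound $|\Dfin_2(\z)-\z(\pi\i+\log(\beta/(1-\beta)))|\lesssim \log|\z|$ from Lemma~\ref{lem:Dlz} forces $\Dfin_2'(\z)\to \pi\i+\log(\beta/(1-\beta))\neq 0$, so the large arc contributes no net winding as $R\to\infty$. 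On the horizontal segment, Lemma~\ref{lem:dD} controls $\Dfin_2'$ near each real-line pole: passing above a simple pole contributes $+\pi$ to the argument change of $\Dfin_2'$ regardless of residue sign (classical half-residue phase shift), while between poles $\Dfin_2'$ is real-valued in the $\epsilon\to 0^+$ limit, and each of its real zeros contributes $\pm\pi$ according to sign-change direction. A careful bookkeeping using the explicit residue structure---positive residues at $\sfd_i/\lsca$ with $\sfd_i\in [\oox_2-\oot_2+1,\oox_2-1]$, negative residues at $j/\lsca$ for $j\in(-\infty,\oox_2-\oot_2]\cup[\oox_2,\infty)\setminus\{\sfd_i\}$---together with the sign of $\Dfin_2'$ at $\pm\infty$ fixed by $\log(\beta/(1-\beta))$, identifies the total winding as $3$, matching Step 1.

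The main obstacle is the sign-pattern bookkeeping in Step 2: one must count the real zeros of $\Dfin_2'$ between consecutive poles using the specific cusp arrangement of Assumption~\ref{ass:dtsncusp}, where the $\sfd_i$'s split into two clumps separated by the gap $(\sfd_{-1}/\lsca,\sfd_0/\lsca)$. The sign of $\Dfin_2'$ in each interval is determined by the residue signs at its endpoints and the boundary behavior at $\pm\infty$; summed across all intervals between poles (finitely many in any bounded portion of $\R$, and absorbable at infinity via Lemma~\ref{lem:Dlz}), this bookkeeping is what forces the full upper half-plane zero count to equal exactly three, thereby excluding any critical point in $\mathring U$.
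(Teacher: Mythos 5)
Your Step 1 (Rouch\'e against the cubic $-4\cust^{-4}A(\z-\zzc)^3$ on the circle $|\z-\zzc|=\lsca^{-1/4+\ere}\cust$, using Lemma~\ref{lem:dDzce}) is exactly the local input the paper uses, and it is fine as is -- no enlargement of the radius is needed, since the cubic has size $\asymp\lsca^{-3/4+3\ere}\cust^{-1}$ against an error $\lesssim\lsca^{-3/4+\ere}\cust^{-1}$. The problem is Step 2. First, the global count is asserted rather than proved: the sentence ``a careful bookkeeping \ldots identifies the total winding as $3$'' is precisely the mathematical content of the lemma. The contour at height $\epsilon$ crosses above on the order of $R\lsca$ simple poles of $\Dfin_2'$, each contributing $\pm\pi$ to the argument, and the near-perfect cancellation with real zeros in the intervals between poles is exactly the interlacing structure one must establish (using \eqref{eq:vecal} to identify the at most four exceptional intervals where the residue signs change or the interval is unbounded). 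Without carrying that out, nothing has been shown. Second, your consistency check is conceptually off: the three zeros produced by Rouch\'e live in a disk \emph{straddling} $\R$ centered at the real point $\zzc$; since $\Dfin_2'$ is real on $(\sfd_{-1}/\lsca,\sfd_0/\lsca)$, they are either three real zeros or one real zero plus a conjugate pair. They need not lie in the open upper half-plane at all, so the winding of your contour (which only counts zeros with $\Im\z>\epsilon$) need not equal $3$, and ``matching Step 1'' is not a valid closure of the argument. A repaired version of your route would take the contour to be the boundary of $U\cap\{|\z|<R\}$, indenting around $\Gamma_c$, and show its total winding is $0$; the $\Gamma_c$ piece contributes roughly $\mp 3\pi$ from the cubic, and the real-line bookkeeping must be shown to compensate -- again, the hard part.

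For comparison, the paper avoids all of this delicate argument-tracking: it truncates the infinite sum (passing back by Hurwitz's theorem), clears denominators to see that the truncated $\Dfin_2'$ is a rational function with at most $|E(\Dfin_2)\cap[-m/\lsca,m/\lsca]|$ zeros, produces at least one real zero in every interval between consecutive poles except at most four (the interlacing step, where the residue-sign structure from \eqref{eq:vecal} enters), and then uses your same Rouch\'e count to find three more zeros in the small disk; the degree bound is thereby exhausted, leaving none in the interior of $U$. If you want to keep the argument-principle framing, you would still have to prove the same interlacing/sign facts, handle the unbounded string of poles along $\R$ (your appeal to Lemma~\ref{lem:Dlz} controls $\Dfin_2$, not the argument variation of $\Dfin_2'$ near the real axis), and justify the uniform non-vanishing of $\Dfin_2'$ on the large arc where it approaches $\R$. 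As written, the proposal has a genuine gap at its central step.
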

\begin{proof}
Recall that $\Dfin'_2(\z)$ can be written as \eqref{eq:D2'z} for $\z\in\HH$.
By Hurwitz's theorem, it suffices to show that for all large enough $m$,
\begin{equation}  \label{eq:DXapp}
\sum_{i=-M}^N\frac{1}{\lsca\z-\sfd_i}
    - \sum_{i=-m}^{-\oox_2}\frac{1}{\lsca\z+i} - \sum_{i=-\oox_2+\oot_2}^{m}\frac{1}{\lsca\z+i}+\log \frac{\beta}{1-\beta}    
\end{equation}
has no zero in the interior of $U$.
For this purpose, we multiply \eqref{eq:DXapp} by $\prod_{x\in E(\Dfin_2)\cap[-m/\lsca,m/\lsca]} (\z-x)$, and obtain a polynomial with degree at most $|E(\Dfin_2)\cap[-m/\lsca,m/\lsca]|$.
So \eqref{eq:DXapp} has at most $|E(\Dfin_2)\cap[-m/\lsca,m/\lsca]|$ many zeros.

Consider the poles of \eqref{eq:DXapp}, i.e., $E(\Dfin_2)\cap[-m/\lsca,m/\lsca]$, which divide $\R$ into $|E(\Dfin_2)\cap[-m/\lsca,m/\lsca]|+1$ many intervals. By \eqref{eq:vecal}, except for at most four of these intervals (the left-most and right-most open intervals, and two intervals in the middle where the residues of the poles change signs), there is at least one zero in each interval. 
By Rouch{\'e}'s theorem and Lemma \ref{lem:dDzce}, we see that there are precisely three zeros inside $\{\z\in\C: |\z-\zzc|< \lsca^{-1/4+\ere}\cust\}$. 
Now, we have found at least $|E(\Dfin_2)\cap[-m/n,m/n]|$ many zeros of \eqref{eq:DXapp} in $\R\cup\{\z\in\C: |\z-\zzc|< \lsca^{-1/4+\ere}\cust\}$. Hence, $\Dfin_2'$ has no zero in the interior of $U$.

The statement for $\Dfin_1$ follows a similar argument.
\end{proof}

Since $\Re(\Dfin_2)$ and $\Im(\Dfin_2)$ are harmonic conjugates, the steepest descent curves of $\Re(\Dfin_2)$ starting from around $\zzc$ are given by the set $\Im \Dfin_2 =0$, which can be described as follows.
\begin{lem}  \label{lem:curveDo}
The set $\mathfrak A=\{\z\in \C\setminus E(\Dfin_2):\Im(\Dfin_2(\z))=0\}\cap U$ contains the following parts (cf.~Figure \ref{fig:stec}):
\begin{enumerate}
\item $\Gamma_l \cap (\sfd_{-1}/\lsca, \sfd_0/\lsca)$;
\item Two open intervals $P_{2,-} \subset (-\infty, (\oox_2-\oot_2)/\lsca)$ and $P_{2,+} \subset (\oox_2/\lsca, \infty)$, defined as
\[
P_{2,-}:=\left\{x\in \mathfrak A , x<(\oox_2-\oot_2)/\lsca: |\{-M\le i\le -1: \sfd_i > x\lsca \}| = |\{i\in \Z: x\lsca< i\le \oox_2-\oot_2 \}|\right\},
\]
\[
P_{2,+}:=\left\{x\in \mathfrak A , x>\oox_2/\lsca: |\{0\le i\le N: \sfd_i < x\lsca \}| = |\{i\in \Z: \oox_2  \le i<x\lsca\}|\right\}.
\]
By \eqref{eq:vecal}, $P_{2,-}$ and $P_{2,+}$ are non-empty finite intervals.
\item Three disjoint, smooth, and self-avoiding curves $\oell_{2,1}$, $\oell_{2,2}$, and $\oell_{2,3}$, such that $\oell_{2,1}$ is from $\zzc+e^{\i \theta_1} \lsca^{-1/4+\ere}\cust\in\Gamma_c$ to some $\z_-\in P_{2,-}$, $\oell_{2,3}$ is from $\zzc+e^{\i \theta_3} \lsca^{-1/4+\ere}\cust\in\Gamma_c$ to some $\z_+\in P_{2,+}$, and $\oell_{2,2}$ is from $\zzc+e^{\i \theta_2} \lsca^{-1/4+\ere}\cust\in \Gamma_c$ to $\infty$. 
Here, $\theta_1, \theta_2, \theta_3 \in \R$ and satisfy 
\begin{equation}  \label{eq:thetat}
|\theta_3-\pi/4|, |\theta_2-\pi/2|, |\theta_1-3\pi/4| \lesssim \lsca^{-2\ere}.
\end{equation}
Except for the endpoints, these curves ($\oell_{2,1}$, $\oell_{2,2}$, and $\oell_{2,3}$) are contained in the interior of $U$. 
\end{enumerate}
Starting from $\Gamma_c$, $\Re \Dfin_2 $ is strictly decreasing along $\oell_{2,2}$, and strictly increasing along each of $\oell_{2,1}$ and $\oell_{2,3}$. Moreover, we have
\begin{equation}\label{eq:inicur}
\begin{split}  
&\Dfin_2(\z_c+e^{\i \theta_1} \lsca^{-1/4+\ere}\cust)-\Dfin_2(\zzc)-A\lsca^{-1+4\ere} \lesssim \lsca^{-1+2\ere}, \\
&\Dfin_2(\z_c+e^{\i \theta_2} \lsca^{-1/4+\ere}\cust)-\Dfin_2(\zzc)+A\lsca^{-1+4\ere} \lesssim \lsca^{-1+2\ere}, \\
&\Dfin_2(\z_c+e^{\i \theta_3} \lsca^{-1/4+\ere}\cust)-\Dfin_2(\zzc)-A\lsca^{-1+4\ere} \lesssim \lsca^{-1+2\ere}.
\end{split}    
\end{equation}
Similar statements hold for $\Dfin_1$ and the set $\{\z\in \C\setminus E(\Dfin_1):\Im(\Dfin_1(\z))=0\}\cap U$.
\end{lem}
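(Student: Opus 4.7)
My plan is to analyze the level set $\{\Im \Dfin_2 = 0\} \cap U$ by combining the local quartic approximation of $\Dfin_2$ near $\zzc$ with global information coming from the absence of critical points in $U$, the asymptotic behavior at infinity, and the restriction of $\Dfin_2$ to $\R$. I will carry out the argument for $\Dfin_2$; the statement for $\Dfin_1$ follows by the same argument since the relevant estimates in Lemmas \ref{lem:dDzce}, \ref{lem:Dnz}, \ref{lem:dD}, and \ref{lem:Dlz} hold symmetrically.

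First I will locate the endpoints of the three curves on $\Gamma_c$. By Lemma \ref{lem:dDzce}, on $\Gamma_c$ we have
\[
\Dfin_2(\z) = \Dfin_2(\zzc) - A\cust^{-4}(\z-\zzc)^4 + \OO(\lsca^{-1+2\ere}).
\]
Writing $\z = \zzc + \lsca^{-1/4+\ere}\cust \cdot e^{\i\theta}$, the leading term becomes $-A\lsca^{-1+4\ere} e^{4\i\theta}$, whose imaginary part $-A\lsca^{-1+4\ere}\sin(4\theta)$ vanishes at $\theta \in \{0, \pi/4, \pi/2, 3\pi/4, \pi\}$. Since the derivative of $\sin(4\theta)$ is of order one at each of $\pi/4, \pi/2, 3\pi/4$ and the error $\OO(\lsca^{-1+2\ere})$ is dominated by the leading term once $|\theta - \theta_j| \gtrsim \lsca^{-2\ere}$, the intermediate value theorem produces exactly three angles $\theta_1, \theta_2, \theta_3$ in $\Gamma_c \cap \HH$ satisfying \eqref{eq:thetat}; evaluating the real part at those points yields \eqref{eq:inicur}.

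Next I will extend each of these three roots to a smooth arc in the interior of $U$ and identify its other endpoint. By Lemma \ref{lem:Dnz}, $\Dfin_2'$ does not vanish in the interior of $U$, so the implicit function theorem guarantees that $\{\Im \Dfin_2 = 0\} \setminus E(\Dfin_2)$ is a smooth $1$-manifold there, and each point on $\Gamma_c$ extends uniquely to a maximal smooth arc $\oell_{2,j}$ inside $U$. Such an arc can terminate only on $\Gamma_l$ or escape to infinity. For the arc starting near $\theta_2 \approx \pi/2$, Lemma \ref{lem:Dlz} yields $\Dfin_2(\z) = \z[\pi\i + \log(\beta/(1-\beta))] + \oo(|\z|)$ at large $|\z|$, so the asymptotic zero set of $\Im \Dfin_2$ in $\HH$ is a single ray from the origin, forcing this arc to escape to infinity along that ray and giving $\oell_{2,2}$. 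For the other two arcs, a winding/compactness argument forces termination on $\R$; to identify the termination points I will use that $\Im \Dfin_2$ on $\R$ is piecewise constant with jumps $\pm \pi/\lsca$ at each point of $E(\Dfin_2)$ and vanishes on $(\sfd_{-1}/\lsca, \sfd_0/\lsca)$ by our normalization. A direct count of these jumps then singles out exactly the two intervals $P_{2,-}, P_{2,+}$ described in the lemma as the intersections of $\{\Im \Dfin_2 = 0\}$ with $\Gamma_l$ outside $(\sfd_{-1}/\lsca, \sfd_0/\lsca)$, so $\oell_{2,1}$ and $\oell_{2,3}$ must land in $P_{2,-}$ and $P_{2,+}$ respectively.

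Finally, the monotonicity of $\Re \Dfin_2$ on each arc is a direct consequence of the Cauchy--Riemann equations: along a level curve of $\Im \Dfin_2$ parametrized by arc length, $\tfrac{d}{ds}\Re\Dfin_2 = \pm|\Dfin_2'|$ with constant sign on each connected arc, since $\Dfin_2'$ is nonvanishing in the interior of $U$. The sign is read off from \eqref{eq:inicur}: $\Re \Dfin_2$ starts slightly above $\Re \Dfin_2(\zzc)$ on $\oell_{2,1}$ and $\oell_{2,3}$, and slightly below on $\oell_{2,2}$, so monotonicity forces strict increase along $\oell_{2,1}, \oell_{2,3}$ and strict decrease along $\oell_{2,2}$ as we move away from $\Gamma_c$. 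The hardest part will be rigorously excluding pathological global behavior of the level curves---ruling out that a curve branches at a point of $E(\Dfin_2)$ on $\partial U$, self-intersects, or returns to $\Gamma_c$. This is precisely where the no-critical-points input from Lemma \ref{lem:Dnz}, combined with the pole control from Lemma \ref{lem:dD} and the asymptotic control from Lemma \ref{lem:Dlz}, must be used in concert, together with the counting description of $\{\Im \Dfin_2 = 0\} \cap \R$, to produce a complete topological picture matching the three-curve structure claimed in the lemma.
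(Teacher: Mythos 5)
Your outline follows the same general route as the paper (zeros of $\Im\Dfin_2$ on $\Gamma_c$, level curves in a region $U$ free of critical points, the counting description of $\{\Im\Dfin_2=0\}\cap\R$, and the behavior at infinity), but the two steps that actually pin down the global topology are either wrong or deferred. First, the direction of monotonicity of $\Re\Dfin_2$ along each arc cannot be ``read off from \eqref{eq:inicur}'': knowing that the starting value at $\zzc+e^{\i\theta_j}\lsca^{-1/4+\ere}\cust$ lies above or below $\Dfin_2(\zzc)$ says nothing about the sign of the derivative of $\Re\Dfin_2$ along the level curve leaving that point. The paper determines this sign by computing $\Dfin_2'$ at the three boundary points via Lemma \ref{lem:dDzce} (this is \eqref{eq:devDt}) and pairing it with the tangent direction of the level curve into $U$, and, on the real side, by showing $\Dfin_2''>0$ on $P_{2,-}$ and $P_{2,+}$, so that each contains a unique critical point $\z_\pm$ whose transversal branch into $\HH$ is descending; you establish neither fact, and without them the labels ``ascending'' and ``descending'' attached to $\oell_{2,1},\oell_{2,3}$ versus $\oell_{2,2}$ are unjustified.

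Second, identifying $\oell_{2,2}$ as the arc that escapes to infinity does not follow from Lemma \ref{lem:Dlz} alone: that lemma shows there is a single asymptotic ray of $\{\Im\Dfin_2=0\}$ at infinity, but it does not tell you which of the three arcs emanating from $\Gamma_c$ (or an arc emanating from $\z_\pm$) connects to it. The paper's mechanism is: ascending arcs cannot reach infinity (along the asymptotic ray $\Re\Dfin_2$ decreases, by Lemma \ref{lem:Dlz}), so the arcs from $\theta_1$ and $\theta_3$, being ascending, must terminate at the $\theta_2$ point or at a critical point $\z_\pm$ (termination at a noncritical real point is impossible since the level set there is locally just $\R$); the value comparison in \eqref{eq:inicur} excludes termination at the $\theta_2$ point; at each simple critical point only one branch enters $\HH$, so $\oell_{2,2}$ is forced out to infinity; and planarity then fixes the pairing $\theta_1\leftrightarrow\z_-$, $\theta_3\leftrightarrow\z_+$. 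You explicitly defer exactly these exclusions (``the hardest part will be rigorously excluding pathological global behavior''), but they are the substance of the proof rather than a technical afterthought: without the derivative computation at the $\Gamma_c$ points, the convexity of $\Dfin_2$ on $P_{2,\pm}$, and the no-ascending-to-infinity/value-comparison/planarity chain, your argument does not determine the claimed three-curve structure.
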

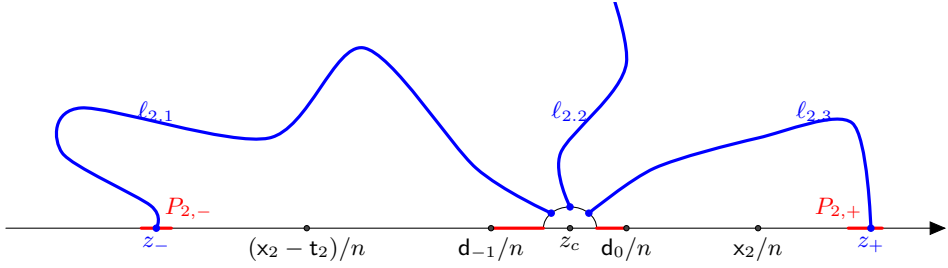
\begin{figure}[hbt!]
    \centering
\begin{tikzpicture}[line cap=round,line join=round,>=triangle 45,x=0.25cm,y=0.2cm]
\clip(-1,-2.2) rectangle (51,15);

\draw [->] (0,0) -- (50,0);

\begin{scope}
    \clip (28,0) rectangle (32,2);
    \draw (30,0) circle (1.4142);
\end{scope}

\draw [red] [very thick] (25.8,0) -- (30-1.4142,0);
\draw [red] [very thick] (30+1.4142,0) -- (33,0);
\draw [red] [very thick] (7.2,0) -- (8.8,0);
\draw [red] [very thick] (44.8,0) -- (46.6,0);

\begin{small}
\draw (30,0) node[anchor=north]{$\zzc$};

\draw (25.8,0) node[anchor=north]{$\sfd_{-1}/\lsca$};
\draw (33,0) node[anchor=north]{$\sfd_0/\lsca$};

\draw (16,0) node[anchor=north]{$(\oox_2-\oot_2)/\lsca$};
\draw (40,0) node[anchor=north]{$\oox_2/\lsca$};

\draw [blue] (8,0) node[anchor=north]{$\z_-$};
\draw [blue] (46,0) node[anchor=north]{$\z_+$};

\draw [red] (8,0) node[anchor=south west]{$P_{2,-}$};
\draw [red] (46,0) node[anchor=south east]{$P_{2,+}$};

\draw [blue] (8,9) node[anchor=north]{$\oell_{2.1}$};
\draw [blue] (30,9) node[anchor=north]{$\oell_{2.2}$};
\draw [blue] (43,9) node[anchor=north]{$\oell_{2.3}$};
\end{small}

\draw [blue] [very thick] plot [smooth] coordinates {(8,0) (7.7,1.5) (3,5) (4,8) (14,6) (19,12) (25,4) (29,1)};

\draw [blue] [very thick] plot [smooth] coordinates {(31,20) (33,11) (29.5,5) (30,1.4142)};

\draw [blue] [very thick] plot [smooth] coordinates {(46,0) (45,7) (40,6) (34.5,4) (31,1)};

\draw [blue] [fill=blue] (8,0) circle (1.2pt);
\draw [blue] [fill=blue] (46,0) circle (1.2pt);

\draw [blue] [fill=blue] (29,1) circle (1.2pt);
\draw [blue] [fill=blue] (31,1) circle (1.2pt);
\draw [blue] [fill=blue] (30,1.4142) circle (1.2pt);

\draw [fill=uuuuuu] (30,0) circle (1pt);

\draw [fill=uuuuuu] (25.8,0) circle (1pt);
\draw [fill=uuuuuu] (33,0) circle (1pt);

\draw [fill=uuuuuu] (16,0) circle (1pt);
\draw [fill=uuuuuu] (40,0) circle (1pt);

\end{tikzpicture}
\caption{An illustration of the set $\{\z\in \C\setminus E(\Dfin_2):\Im(\Dfin_2(\z))=0\}\cap U$.}
\label{fig:stec}
\end{figure}
\begin{proof}
It is straightforward to check (from the definition of $\Dfin_2$) that for $\z\in \Gamma_l \cap (\sfd_{-1}/\lsca, \sfd_0/\lsca)$ or $\z \in P_{2,-}\cup P_{2,+}$, we have $\Im(\Dfin_2(\z))=0$, and $\Im(\Dfin_2(\z))\neq 0$ for all other $\z \in \Gamma_l \setminus E(\Dfin_2)$.

We next consider the half-circle $\Gamma_c$.
Define the function $f:[0, \pi]\to \R$ as
\[
f:\theta \mapsto \Im(\Dfin_2(\zzc+e^{\i \theta} \lsca^{-1/4+\ere}\cust)).
\]
Note that $f(0)=f(\pi)=0$.
By Lemma \ref{lem:dDzce}, we have
\[
f'(\theta) + 4\cos(4\theta) \lsca^{-1+4\ere}A \lesssim \lsca^{-1+2\ere}.
\]
With this estimate, we can conclude that $f$ has five zeros $0<\theta_3<\theta_2<\theta_1<\pi$, and they satisfy \eqref{eq:thetat} and \eqref{eq:inicur}.
Using Lemma \ref{lem:dDzce} again, we get that
\begin{equation}\label{eq:devDt}
\begin{split}  
\Dfin'_2(\zzc+e^{\i \theta_3} \lsca^{-1/4+\ere}\cust)& = -4A e^{3\i \pi/4} \lsca^{-3/4+3\ere}\cust^{-1}(1+\OO(\lsca^{-2\ere})),\\
\Dfin'_2(\zzc+e^{\i \theta_2} \lsca^{-1/4+\ere}\cust)& = -4A e^{3\i \pi/2} \lsca^{-3/4+3\ere}\cust^{-1}(1+\OO(\lsca^{-2\ere})),\\
\Dfin'_2(\zzc+e^{\i \theta_1} \lsca^{-1/4+\ere}\cust)& = -4A e^{\i \pi/4} \lsca^{-3/4+3\ere}\cust^{-1}(1+\OO(\lsca^{-2\ere})).
\end{split}    
\end{equation}

It is straightforward to check that $\Dfin''_2>0$ on $P_{2,-}$ and $P_{2,+}$, so $\Dfin_2$ has a unique critical point inside each of $P_{2,-}$ and $P_{2,+}$, and we denote them by $\z_-$ and $\z_+$, respectively.

As $\Dfin_2$ is holomorphic and contains no critical point in the interior of $U$ by Lemma \ref{lem:Dnz}, 
for each $\z_*$ in the interior of $U$, if $\Im(\Dfin_2(\z_*))=0$, one can then take the steepest descent curve of $\Re \Dfin_2 $ from $\z_*$.
Along this curve $\Im \Dfin_2 =0$, and $\Re \Dfin_2$ is strictly monotone due to the absence of critical points.
This curve inside $U$ is smooth and self-avoiding; in each direction, it either ends at one of $\zzc+e^{\i \theta_1} \lsca^{-1/4+\ere}\cust$, $\zzc+e^{\i \theta_2} \lsca^{-1/4+\ere}\cust$, $\zzc+e^{\i \theta_3} \lsca^{-1/4+\ere}\cust$, or ends at a critical point of $\Dfin_2$ in $P_{2,-}$ or $P_{2,+}$ (i.e., one of $\z_-$ and $ \z_+$), or goes to $\infty$. All such curves do not intersect each other.

Consider the $\Re\Dfin_2$ steepest descent curves starting from $\zzc+e^{\i \theta_1} \lsca^{-1/4+\ere}\cust$, $\zzc+e^{\i \theta_2} \lsca^{-1/4+\ere}\cust$, $\zzc+e^{\i \theta_3} \lsca^{-1/4+\ere}\cust$, and $\z_-$, $\z_+$, towards the interior of $U$.
Due to the above estimates \eqref{eq:devDt} on $\Dfin'_2$, and the fact $\Dfin''_2>0$ on $P_{2,-}$ and $P_{2,+}$, we observe that $\Re\Dfin_2$ is decreasing along the curves from $\zzc+e^{\i \theta_2} \lsca^{-1/4+\ere}\cust$ and $\z_-$, $\z_+$, while $\Re\Dfin_2$ is increasing along the curves from $\zzc+e^{\i \theta_1} \lsca^{-1/4+\ere}\cust$ and $\zzc+e^{\i \theta_3} \lsca^{-1/4+\ere}\cust$.

By Lemma \ref{lem:Dlz}, we have that any $\Im \Dfin_2 =0$ curve in the ascending direction of $\Re\Dfin_2$ cannot go to $\infty$ in $U$, and thus must terminate at one of $\zzc+e^{\i \theta_2} \lsca^{-1/4+\ere}\cust, \z_-, \z_+$.
These imply that the $\Re \Dfin_2 $ steepest descent curves from $\zzc+e^{\i \theta_2} \lsca^{-1/4+\ere}\cust, \z_-, \z_+$ are all such $\Im \Dfin_2 =0$ curves,
and we denote them by $\oell_{2,2}$, $\oell_{2,1}$, and $\oell_{2,3}$.

For the $\Re(\Dfin_2)$ steepest descent curves from $\zzc+e^{\i \theta_1} \lsca^{-1/4+\ere}\cust$ and $\zzc+e^{\i \theta_3} \lsca^{-1/4+\ere}\cust$, they cannot end at $\zzc+e^{\i \theta_2} \lsca^{-1/4+\ere}\cust$.
This is because, by \eqref{eq:inicur} we have that
\[
\Dfin_2(\zzc+e^{\i \theta_1} \lsca^{-1/4+\ere}\cust) - \Dfin_2(\zzc+e^{\i \theta_2} \lsca^{-1/4+\ere}\cust) - 2A\lsca^{-1+4\ere} \lesssim \lsca^{-1+2\ere},
\]
so $\Dfin_2(\zzc+e^{\i \theta_1} \lsca^{-1/4+\ere}\cust) > \Dfin_2(\zzc+e^{\i \theta_2} \lsca^{-1/4+\ere}\cust)$; similarly, we have $\Dfin_2(\zzc+e^{\i \theta_3} \lsca^{-1/4+\ere}\cust) > \Dfin_2(\zzc+e^{\i \theta_2} \lsca^{-1/4+\ere}\cust)$.
Thus, we conclude that $\oell_{2,2}$ must go to $\infty$ in $U$.
Since $\oell_{2,2}$ does not intersect $\oell_{2,1}$ or $\oell_{2,3}$ in the interior of $U$, we must have that $\oell_{2,1}$ connects $\z_-$ and $e^{\i \theta_1} \lsca^{-1/4+\ere}\cust$, and $\oell_{2,3}$ connects $\z_+$ and $e^{\i \theta_3} \lsca^{-1/4+\ere}\cust$.
\end{proof}
For the convenience of later applications, for the similar statement on $\Dfin_1$, we denote the corresponding intervals as $P_{1,-}$ and $P_{1,+}$, and the corresponding curves as $\oell_{1,1}$ from $\zzc+e^{\i \vartheta_1} \lsca^{-1/4+\ere}\cust\in\Gamma_c$ to some $\w_-\in P_{1,-}$, $\oell_{1,3}$ from $\zzc+e^{\i \vartheta_3} \lsca^{-1/4+\ere}\cust\in\Gamma_c$ to some $\w_+\in P_{1,+}$, and $\oell_{1,2}$ from $\zzc+e^{\i \vartheta_2} \lsca^{-1/4+\ere}\cust\in \Gamma_c$ to $\infty$. 
Here, $\vartheta_1, \vartheta_2, \vartheta_3$ are real numbers satisfying that 
\[
|\vartheta_3-\pi/4|, |\vartheta_2-\pi/2|, |\vartheta_1-3\pi/4| \lesssim \lsca^{-2\ere}.
\]

We next provide a technical lemma that will be used later.
\begin{lem}  \label{lem:oodis}
The curve $\oell_{1,1}$ (resp. $\oell_{1,2}$, $\oell_{1,3}$, $\oell_{2,1}$, $\oell_{2,2}$, $\oell_{2,3}$) is disjoint from the $\lsca^{-3}$ neighborhood of $\R\setminus P_{1,-}$ (resp. $\R$, $\R\setminus P_{1,+}$, $\R\setminus P_{2,-}$, $\R$, $\R\setminus P_{2,+}$).
\end{lem}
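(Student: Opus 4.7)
The plan is to argue by contradiction, treating all six curves similarly; for concreteness, I outline the argument for $\oell_{2,1}$. Suppose there is a non-endpoint $z_0 \in \oell_{2,1}$ with $|z_0 - x_*| < \lsca^{-3}$ for some $x_* \in \R \setminus P_{2,-}$. Since the endpoint on $\Gamma_c$ has imaginary part of order $\lsca^{-1/4+\ere}\cust \gg \lsca^{-3}$, the point $z_0$ lies strictly inside $U$, so $\Im(\Dfin_2(z_0)) = 0$. The key structural input is that on $\R \setminus E(\Dfin_2)$, the boundary value of $\Im \Dfin_2$ is piecewise constant, takes values in $\pi \lsca^{-1} \Z$ (with jumps of $\pm \pi/\lsca$ at each pole), and vanishes precisely on $P_{2,-} \cup P_{2,+} \cup ((\sfd_{-1}/\lsca, \sfd_0/\lsca) \cap \Gamma_l)$. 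The plan is to split into two regimes: $\dist(x_*, E(\Dfin_2)) > \lsca^{-2}$ and $\dist(x_*, E(\Dfin_2)) \le \lsca^{-2}$.

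In the away-from-pole regime, \Cref{lem:dD} gives $|\Dfin_2'| = O(\lsca)$ in a disk of radius $\lsca^{-2}$ around $x_*$, so integrating along the segment from $x_*$ to $z_0$ yields $|\Im \Dfin_2(z_0) - \Im \Dfin_2(x_*)| = O(\lsca^{-2})$, which is far smaller than the minimum nonzero value $\pi/\lsca$. Hence $\Im \Dfin_2(x_*) = 0$, forcing $x_* \in P_{2,+} \cup ((\sfd_{-1}/\lsca, \sfd_0/\lsca) \cap \Gamma_l)$. Since $\Dfin_2'(x_*) \in \R \setminus \{0\}$ away from the critical points $\z_\pm$ and $\zzc$, the level set $\{\Im \Dfin_2 = 0\}$ is tangent to $\R$ at $x_*$ with nonzero vertical gradient, and the quantitative expansion $\Im \Dfin_2(z_0) = (\Im z_0)\Re \Dfin_2'(x_*) + O((\Im z_0)^2 \log \lsca)$ yields $\Im \Dfin_2(z_0) \ne 0$, a contradiction. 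Near the critical points themselves, the contradiction instead comes from the disjointness of $\oell_{2,1}$ from $\oell_{2,2}, \oell_{2,3}$ given by \Cref{lem:curveDo}, together with the fact that near $\z_+$ (resp.~$\zzc$) the only $\{\Im \Dfin_2 = 0\}$ component in $\HH$ is $\oell_{2,3}$ (resp.~the three rays of \Cref{lem:curveDo}).

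In the near-pole regime, let $y_\alpha \in E(\Dfin_2)$ be the nearest pole, so $|z_0 - y_\alpha| < 2 \lsca^{-2}$, and write $\Dfin_2(z) = \sigma_\alpha \lsca^{-1} \log(z - y_\alpha) + h(z)$, where $h$ is holomorphic on $B_{1/(2\lsca)}(y_\alpha)$ (since $E(\Dfin_2) \subset \lsca^{-1} \Z$) and $|h'| = O(\log \lsca)$ by \Cref{lem:dD}. A direct computation gives $\frac{\rd}{\rd \theta} \Im \Dfin_2(y_\alpha + r e^{\i \theta}) = \sigma_\alpha \lsca^{-1} + O(r \log \lsca)$, which has constant sign for $r \le c(\lsca \log \lsca)^{-1}$, and hence each half-circle $\partial B_r(y_\alpha) \cap \HH$ with $r$ in this range meets $\{\Im \Dfin_2 = 0\}$ in at most one point. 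Choosing any $r \in (2 \lsca^{-2}, c(\lsca \log \lsca)^{-1})$, the curve $\oell_{2,1}$ contains $z_0 \in B_r(y_\alpha)$ and has both endpoints outside $B_r(y_\alpha)$, so it must cross the half-circle at least twice, contradicting the single-intersection bound. The main obstacle is the edge case where the endpoint $\z_- \in P_{2,-}$ falls inside $B_r(y_\alpha)$; this will be handled using the local quadratic expansion $\Dfin_2(z) - \Dfin_2(\z_-) = \tfrac{1}{2} \Dfin_2''(\z_-)(z - \z_-)^2 + O(|z - \z_-|^3)$ with $\Dfin_2''(\z_-) \asymp \cust^{-2}$ real and positive, which forces $\oell_{2,1}$ to leave $\z_-$ nearly perpendicular to $\R$ and thus keeps its horizontal distance to any $y_\alpha \ne \z_-$ bounded below by $|\Re \z_- - y_\alpha|$; controlling the latter quantitatively is the most delicate part of the proof.
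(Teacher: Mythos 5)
There is a genuine gap — in fact three, all concentrated at the points you yourself flag as delicate. (i) In your away-from-pole regime the error term $O((\Im z_0)^2\log\lsca)$ is not justified: \Cref{lem:dD} bounds $\Dfin_2'$ minus its nearest-pole singularity by $\log\lsca$, it does not bound $\Dfin_2''$, which at distance $\asymp\lsca^{-2}$ from a pole can be of size $\lsca^{3}$; moreover, on the portion of $(\sfd_{-1}/\lsca,\sfd_0/\lsca)$ just outside the disc $\{|\z-\zzc|\le\lsca^{-1/4+\ere}\cust\}$ one only has $|\Dfin_2'(x_*)|\asymp\lsca^{-3/4+3\ere}\cust^{-1}\ll\log\lsca$ by \Cref{lem:dDzce}, so the claimed domination of the linear term by no means follows from what you state (it is repairable, but only with position-dependent second-derivative bounds, which you never set up). (ii) The assertion that near $\z_+$ (or near $\Gamma_c$) the only component of $\{\Im\Dfin_2=0\}$ in $\HH$ is $\oell_{2,3}$ (or the three rays) is precisely the kind of statement the lemma is trying to establish; \Cref{lem:curveDo} only says the level set \emph{contains} those curves, and to use your claim at scale $\lsca^{-3}$ you would need a quantitative Morse-type analysis at $\z_\pm$, i.e.\ a lower bound on $\Dfin_2''(\z_\pm)$ against the higher derivatives and the distance from $\z_\pm$ to the poles — none of which is available in the paper or supplied by you. (iii) Your near-pole crossing-parity argument requires both endpoints of $\oell_{2,1}$ to lie outside $B_r(y_\alpha)$, and the patch for the edge case $\z_-\in B_r(y_\alpha)$ rests on the unsupported claim $\Dfin_2''(\z_-)\asymp\cust^{-2}$; you acknowledge that the remaining quantitative control is open, so the argument does not close.

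For comparison, the paper's proof avoids every one of these local analyses. Near poles it shows directly that $\Im(\Dfin_2(\z))\notin\pi\lsca^{-1}\Z$ for any $\z$ in the upper half-plane within $\lsca^{-2}$ of a pole (the phase increment $\lsca^{-1}s\arg(\z-x_*)$ is nonzero but of modulus less than $\pi\lsca^{-1}$), so no level-set point — hence no curve point, regardless of the curve's global topology — can be there; this also covers your endpoint edge case for free. At real points where $\Im\Dfin_2\neq 0$ it argues as you do, via \eqref{eq:appdDd}. The ingredient you are missing is the treatment of the ``wrong'' zero intervals: instead of local level-set structure, the paper compares values of $\Re\Dfin_2$ — along $\oell_{2,1}$ and $\oell_{2,3}$ the value stays above $\Dfin_2(\zzc)+c\lsca^{-1+4\ere}$ by the monotonicity statement in \Cref{lem:curveDo} together with \eqref{eq:inicur}, while within $\lsca^{-3}$ of $(\sfd_{-1}/\lsca,\sfd_0/\lsca)$ outside the disc the value is below $\Dfin_2(\zzc)-c\lsca^{-1+4\ere}$ by integrating \Cref{lem:dDzce} along the real axis; the exclusion of $\oell_{2,1}$ from near $P_{2,+}$ (and of $\oell_{2,2}$ from near $(\sfd_{-1}/\lsca,\sfd_0/\lsca)$) then follows from planarity and the disjointness of the three curves. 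I recommend you replace your steps (ii)–(iii) by this value-comparison plus planarity mechanism, or else supply the missing quantitative inputs at $\z_\pm$ and $\z_-$ explicitly.
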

\begin{proof}
We first show that $\oell_{2,1}$ is disjoint from the $\lsca^{-2}$ neighborhood of $E(\Dfin_2)$.
For any $x_*\in E(\Dfin_2)$, take $\z\in\HH$ such that $|\z-x_*|<\frac{1}{2\lsca}$. By Lemma \ref{lem:dD}, we have that
\[
\Dfin'_2(\z) = \frac{s}{\lsca(\z-x_*)} + \OO(\log \lsca ),
\]
where $s\in\{1,-1\}$, depending on the residue of $\Dfin'_2$ at $x_*$.
Without loss of generality, assume that $\Re \z \ge x_*$.
Then, by integrating over $z$, we get that
\[
\Im(\Dfin_2(\z)) - \Im(\Dfin_2(x_*+(2\lsca)^{-1})) = \lsca^{-1}s\arg(\z-x_*) + \OO(\log\lsca\cdot \im z),
\]
where $\arg$ takes value in $(0, \pi/2]$ as $\z-x_*\in\HH$ and $\Re(\z-x_*)\ge 0$. 
Note that $\Im\z< \arg(\z-x_*)|\z-x_*|$. Hence, when $|\z-x_*|\lesssim \lsca^{-2}$, we have that
\[
\Im(\Dfin_2(\z)) - \Im(\Dfin_2(x_*+(2\lsca)^{-1})) = \lsca^{-1}s\arg(\z-x_*) (1+\OO(\log(\lsca)/\lsca)) \in (-\pi\lsca^{-1}, \pi\lsca^{-1})\setminus\{0\}.
\]
Since $x_*+(2\lsca)^{-1}\in\R$, we have $\Im(\Dfin_2(x_*+(2\lsca)^{-1}))\in \pi\lsca^{-1}\Z$. Thus, we conclude that $\Im(\Dfin_2(\z))\not\in \pi\lsca^{-1}\Z$ whenever $|\z-x_*|\lesssim \lsca^{-2}$.
This implies that $\oell_{2,1}$ is at least $\lsca^{-2}$ away from $E(\Dfin_2)$.

For any $\z\in\HH$ and $y\in \R$, by Lemma \ref{lem:dD}, integrating $\Dfin_2'(z)$ along a curve connecting $y$ and $\z$ yields that
\begin{equation}  \label{eq:appdDd}
\Dfin_2(\z)-\Dfin_2(y) \lesssim \frac{|\z-y|}{\inf_{x\in E(\Dfin_2)}\lsca(|\z-x|\wedge |y-x|)} + \log(\lsca)|\z-y|.    
\end{equation}
If $\z$ is in the $\lsca^{-3}$ neighborhood of $\R\setminus (P_{2,-}\cup P_{2,+}\cup (\sfd_{-1}/\lsca,\sfd_0/\lsca))$ but not the $\lsca^{-2}$ neighborhood of $E(\Dfin_2)$, then we must have that $\inf_{x\in E(\Dfin_2)} |\Re\z-x| \gtrsim \lsca^{-2}$ and $\Re\z\in \R\setminus (P_{2,-}\cup P_{2,+}\cup (\sfd_{-1}/\lsca,\sfd_0/\lsca))$, which give that $|\Im(\Dfin_2(\Re \z))|\ge \pi\lsca^{-1}$. Then, \eqref{eq:appdDd} for $y=\Re\z$ implies that $|\Dfin_2(\z)-\Dfin_2(\Re z)|\lesssim \lsca^{-2}$, so $\Im(\Dfin_2(\z))\neq 0$, meaning that $\z\not\in\oell_{2,1}$.

So far, we have shown that $\oell_{2,1}$ is disjoint from the $\lsca^{-3}$ neighborhood of $\R\setminus (P_{2,-}\cup P_{2,+}\cup (\sfd_{-1}/\lsca,\sfd_0/\lsca))$, and the $\lsca^{-2}$ neighborhood of $E(\Dfin_2)$.
Similarly, these statements also hold for $\oell_{2,2}$ and  $\oell_{2,3}$.

We next consider $\z\in\HH$ in the $\lsca^{-3}$ neighborhood of $(\sfd_{-1}/\lsca, \sfd_0/\lsca)$, while $|\z-\zzc|>\lsca^{-1/4+\ere}\cust$ and $|\z-\sfd_{-1}/\lsca|, |\z-\sfd_0/\lsca|\ge \lsca^{-2}$.
Take $y$ to be some point in $(\sfd_{-1}/\lsca, \zzc-\lsca^{-1/4+\ere}\cust)\cup (\zzc+\lsca^{-1/4+\ere}\cust, \sfd_0/\lsca)$ with $\z-y\lesssim \lsca^{-3}$.
By Lemma \ref{lem:dDzce}, we have that $\Dfin_2'>0$ in $(\sfd_{-1}/\lsca, \zzc-\lsca^{-1/4+\ere}\cust)$ and $\Dfin_2'<0$ in $(\zzc+\lsca^{-1/4+\ere}\cust, \sfd_0/\lsca)$, and that $\Dfin_2(y)-\Dfin_2(\zzc)<-c\lsca^{-1+4\ere}$ for a constant $c>0$.
So by \eqref{eq:appdDd}, we have 
\[\Re(\Dfin_2(\z)) < \Re(\Dfin_2(y)) + \OO(\lsca^{-2}) < \Re(\Dfin_2(\zzc))-c\lsca^{4\ere-1}+ \OO(\lsca^{-2}).
\]
On the other hand, by Lemma \ref{lem:curveDo} 
(more precisely, \eqref{eq:inicur} and the fact that $\Re(\Dfin_2)$ is increasing along $\oell_{2,1}$ and $\oell_{2,3}$ from $\Gamma_c$), we conclude that $\z\not\in \oell_{2,1}\cup \oell_{2,3}$.
In other words, $\oell_{2,1}$ and $\oell_{2,3}$ are disjoint from the $\lsca^{-3}$ neighborhood of $(\sfd_{-1}/\lsca, \sfd_0/\lsca)$.
By similar arguments, we can show that $\oell_{2,2}$ is disjoint from the $\lsca^{-3}$ neighborhood of $P_{2,-}\cup P_{2,+}$.

Since $\oell_{2,1}$, $\oell_{2,2}$, $\oell_{2,3}$ are disjoint, by planarity we must have that $\oell_{2,2}$ is disjoint from the $\lsca^{-3}$ neighborhood of $(\sfd_{-1}/\lsca, \sfd_0/\lsca)$, 
$\oell_{2,1}$ is disjoint from the $\lsca^{-3}$ neighborhood of $P_{2,+}$, and  
$\oell_{2,3}$ is disjoint from the $\lsca^{-3}$ neighborhood of $P_{2,-}$.

The results for $\oell_{1,1}$, $\oell_{1,2}$, $\oell_{1,3}$ can be proved analogously. Putting all these together concludes the proof. 
\end{proof}

With all the above preparations (on the properties of $\Dfin_1$, $\Dfin_2$ around $\zzc$ and their steepest descent curves) in the above two subsections, we are now ready to prove the approximation of the NBRW kernel by the Pearcey kernel, i.e., Proposition \ref{prop:Kconvconj}.

\subsection{Convergence to Pearcey}  \label{ssec:convKP}
Using \eqref{eq:kberD} and Lemma \ref{lem:deforma}, Proposition \ref{prop:Kconvconj} can be deduced from the following lemma, plus an estimate of the binomial term $\binom{\oot_1-\oot_2}{\oox_1-\oox_2}$ in Lemma \ref{lem:gaussp} below.

\begin{lem}  \label{lem:Kconvconj}
Under the setting of Proposition \ref{prop:Kconvconj},
consider the integral
\begin{equation}  \label{eq:pearcov}
\BE^{\oox_1-\oox_2}(1-\BE)^{\oot_1-\oot_2+\oox_2-\oox_1}\frac{\oot_1!}{(\oot_2-1)!}\frac{\lsca^{\oot_2-\oot_1-1}}{(2\pi\i)^{2}} 
        \iint
       \frac{d\w d\z}{\w-\z}\exp(\lsca\Dfin_2(\z)-\lsca\Dfin_1(\w)).
\end{equation}
We divide the contours into two parts: inside or outside $\{\z\in\C: |\z-\zzc|\le \lsca^{-1/4+\ere}\cust\}$.
\begin{enumerate}
    \item[\textbf{Inner part:}] When the $\w$ contour is taken to be $[\zzc+e^{\pi\i/4} \lsca^{-1/4+\ere}\cust \to \zzc \to \zzc+e^{3\pi\i/4} \lsca^{-1/4+\ere}\cust]$ and $[\zzc+e^{5\pi\i/4} \lsca^{-1/4+\ere}\cust \to \zzc \to \zzc+e^{7\pi\i/4} \lsca^{-1/4+\ere}\cust]$, and the $\z$ contour is taken to be $[\zzc+e^{3\pi\i/2} \lsca^{-1/4+\ere}\cust \to \zzc+e^{\pi\i/2} \lsca^{-1/4+\ere}\cust]$, the integral \eqref{eq:pearcov} is equal to
\begin{multline}  \label{eq:pczwio}
\frac{1}{\sqrt{2}\lsca^{1/4} A^{1/4}\BE(1-\BE)} \frac{1}{(2\pi\i)^2}
\iint \frac{d\pcw d\pcz}{\pcw-\pcz}\exp\Big(\frac{-\pcz^4 + \pcw^4}{4}
+ \frac{\pcx_1\pcw-\pcx_2\pcz}{\sqrt{2}A^{1/4}\BE(1-\BE)}
+ \frac{\pct_1\pcw^2-\pct_2\pcz^2}{4A^{1/2}\BE(1-\BE)} \Big)\\ +\OO(\lsca^{-1/4-\errd}),
\end{multline}
where the $\pcw$ and $\pcz$ contours are respectively
\begin{equation}\label{eq:pcwzcon}
    [\infty e^{\pi\i/4} \to 0 \to \infty e^{3\pi\i/4}] \cup [\infty e^{5\pi\i/4} \to 0 \to \infty e^{7\pi\i/4}], \quad
    [\infty e^{3\pi\i/2}  \to  0\to \infty e^{\pi\i/2}].
\end{equation}
    \item[\textbf{Outer part:}]
    When either (i) the $\w$ contour is taken to be $[\zzc+e^{3\pi\i/4} \lsca^{-1/4+\ere}\cust \to -\cust-1 \to \zzc+e^{5\pi\i/4} \lsca^{-1/4+\ere}\cust]$ and $[\zzc+e^{7\pi\i/4} \lsca^{-1/4+\ere}\cust \to 1 \to \zzc+e^{\pi\i/4} \lsca^{-1/4+\ere}\cust]$, and the $\z$ contour is taken to be $[\infty e^{3\pi\i/2}\to \zzc \to \infty e^{\pi\i/2}]$, or (ii) the $\w$ contour is taken to be $[\zzc+e^{\pi\i/4} \lsca^{-1/4+\ere}\cust \to \zzc \to \zzc+e^{3\pi\i/4} \lsca^{-1/4+\ere}\cust]$ and $[\zzc+e^{5\pi\i/4} \lsca^{-1/4+\ere}\cust \to \zzc \to \zzc+e^{7\pi\i/4} \lsca^{-1/4+\ere}\cust]$, and the $\z$ contour is taken to be $[\infty e^{3\pi\i/2}\to \zzc+e^{3\pi\i/2} \lsca^{-1/4+\ere}\cust]$ and $[\zzc+e^{\pi\i/2} \lsca^{-1/4+\ere}\cust\to \infty e^{\pi\i/2}]$, the integral \eqref{eq:pearcov} is $\lesssim\exp(-c\lsca^{4\ere})$.
\end{enumerate}
\end{lem}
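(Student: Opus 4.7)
The plan is to split \eqref{eq:pearcov} at the threshold $|\z-\zzc|=|\w-\zzc|=\lsca^{-1/4+\ere}\cust$: the \emph{inner part}, where both variables lie near $\zzc$, is handled by Laplace-type asymptotics at the quadruple saddle and produces the Pearcey integral; the \emph{outer part}, where at least one variable lies outside this ball, is controlled by deforming onto the steepest descent curves of Lemma \ref{lem:curveDo}.

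For the inner part, I would apply the rescaling $\z=\zzc+\cust\pcz/(4A\lsca)^{1/4}$ and $\w=\zzc+\cust\pcw/(4A\lsca)^{1/4}$. This maps the inner portions of the original contours onto segments of length $\OO(\lsca^\ere)$ of the Pearcey contours in \eqref{eq:pcwzcon}, and extracts a Jacobian $\cust/(\sqrt{2}\,A^{1/4}\lsca^{1/4})$ out of $\rd\z\,\rd\w/(\w-\z)$. The key step is a fourth-order Taylor expansion of $\lsca\Dfin_i$ at $\zzc$. Combining Lemma \ref{lem:Sderivs} (which gives $\Sfin''''(\zzc)=-24A\cust^{-4}$ while $\Sfin'(\zzc)=\Sfin''(\zzc)=\Sfin'''(\zzc)=0$), Lemma \ref{lem:dercomp} (controlling the difference $\oSfin-\Sfin$), and a direct computation of the explicit rational correction $\psi_i(\z):=\oSfin'(\z)-\Dfin_i'(\z)$ taken from the proof of Lemma \ref{lem:dDS}, I would verify
\[
\Dfin_i'(\zzc)\approx-\tfrac{\pcx_i}{\lsca^{3/4}\BE(1-\BE)\cust},\quad
\Dfin_i''(\zzc)\approx-\tfrac{\pct_i}{\lsca^{1/2}\BE(1-\BE)\cust^{2}},\quad
\Dfin_i''''(\zzc)\approx-24A\cust^{-4},
\]
where the leading-order identifications rely on the cancellation $\zzc+\BE\cust=0$ between two $\lsca^{3/2}$-size terms inside $\psi_i(\zzc)$ (equivalent to $\BE=-\zzc/\cust$ from \eqref{eq:defA} with the WLOG normalization $\cusx=0$). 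Under the rescaling, these Taylor contributions become exactly $-\pcx_i\pcz/(\sqrt{2}\,A^{1/4}\BE(1-\BE))$, $-\pct_i\pcz^2/(4A^{1/2}\BE(1-\BE))$, and $-\pcz^4/4$ (with analogues for $\pcw$), matching the exponent in \eqref{eq:pczwio}. The cubic Taylor term $\lsca\Dfin_i'''(\zzc)(\z-\zzc)^3/6$ and the fifth-order residual (bounded via Cauchy estimates on $\psi_i$ on a disk of radius $\asymp\cust$ around $\zzc$, together with Lemma \ref{lem:dDzce}) are both $\OO(\lsca^{-1/4+3\ere})$ in the rescaled exponent, contributing $\OO(\lsca^{-1/2+3\ere})$ to the integral after including the Jacobian, within the target $\lsca^{-1/4-\errd}$ once $\errd+3\ere<1/4$. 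For the multiplicative prefactor, I would apply Stirling's formula to $\lsca^{\oot_2-\oot_1-1}\oot_1!/(\oot_2-1)!$, combine it with the expansion of $\log[\BE^{\oox_1-\oox_2}(1-\BE)^{\oot_1-\oot_2+\oox_2-\oox_1}]$ and with the Euler--Maclaurin approximation of $\lsca(\Dfin_2(\zzc)-\Dfin_1(\zzc))$; using $\BE=-\zzc/\cust$ and $1-\BE=(\zzc+\cust)/\cust$, the $\lsca^{1/2}(\pct_1-\pct_2)$- and $\lsca^{1/4}(\pcx_1-\pcx_2)$-order contributions cancel pairwise, leaving exactly the factor $1/(\sqrt{2}\,\lsca^{1/4}A^{1/4}\BE(1-\BE))$ appearing in \eqref{eq:pczwio}. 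Completion of the truncated contour (of length $\OO(\lsca^\ere)$) to the full \eqref{eq:pcwzcon} costs only $\OO(\exp(-c\lsca^{4\ere}))$ since $\Re(-\pcz^4/4),\Re(\pcw^4/4)\to-\infty$ along the tails.

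For the outer part, in case (i) I would deform the $\z$ vertical line outside $\Gamma_c$ onto $\oell_{2,2}$ together with its $\bar\z$-reflection through $\HH^-$ (ultimately closing toward $\pm\i\infty$, with tail convergence given by Lemma \ref{lem:Dlz}), and deform the outer $\w$ contour onto the union $\oell_{1,1}\cup\oell_{1,3}$ together with their $\bar\w$-reflections, connected through $\w_\pm\in P_{1,\pm}$ so as to form closed loops enclosing all the $\w$ poles; Lemma \ref{lem:oodis} ensures no pole is crossed during this deformation, so Cauchy's theorem preserves the integral. In case (ii), only the analogous $\z$-deformation onto $\oell_{2,2}$ and its reflection is needed. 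On every deformed curve, the monotonicity of $\Re\Dfin_i$ along the $\oell_{i,j}$ (Lemma \ref{lem:curveDo}) combined with the initial drop \eqref{eq:inicur} yields
\[
\Re\bigl[\lsca\Dfin_2(\z)-\lsca\Dfin_1(\w)\bigr]\le \lsca\Re\bigl[\Dfin_2(\zzc)-\Dfin_1(\zzc)\bigr]-cA\lsca^{4\ere}.
\]
Multiplied by the $\OO(1)$ combination of prefactor and $\exp(\lsca(\Dfin_2(\zzc)-\Dfin_1(\zzc)))$ established in the inner analysis and by the polynomial (in $\lsca$) length of the deformed contour, the outer contribution is $\OO(\exp(-c\lsca^{4\ere}))$.

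The main obstacle will be the refined inner analysis: extracting the linear and quadratic Pearcey coefficients requires the exact cancellation $\zzc+\BE\cust=0$ inside $\psi_i(\zzc)$, and the error budget must tolerate $\cust$ as small as $\lsca^{-1/2+\errh}$, which amplifies the $\cust^{-k}$ factors arising at the $k$-th Taylor order; a parallel but essentially routine bookkeeping is needed to push the Stirling/Euler--Maclaurin asymptotics of the prefactor to the required accuracy. Both are accommodated by the hierarchy of smallness $\err\ll\errd\ll\ere\ll\errh$ declared at the start of Section \ref{s:nibr}, each chosen small enough relative to the previous (and to the drift bound $\phi$) so that every residual falls strictly below the $\lsca^{-1/4-\errd}$ tolerance in the lemma.
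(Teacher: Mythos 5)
Your treatment of the inner part is essentially the paper's own argument in slightly different clothing: the paper also isolates the prefactor via Stirling (Lemma \ref{lem:conspart}), approximates $\Dfin_i$ by $\oSfin$ plus the explicit rational correction coming from the shifted time window, and uses exactly the cancellation $\zzc+\BE\cust=0$ to extract the linear and quadratic Pearcey terms before rescaling; your Taylor-at-$\zzc$ bookkeeping, with the cubic and quintic remainders absorbed using $\cust\gtrsim\lsca^{-1/2+\errh}$ and $\ere,\err\ll\errh$, is a workable repackaging of that.

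The outer part, however, has a genuine gap. You deform the $\z$-contour onto $\oell_{2,2}$ (and its reflection) and simultaneously the $\w$-contour onto $\oell_{1,1}\cup\oell_{1,3}$ (and reflections), and assert that ``Lemma \ref{lem:oodis} ensures no pole is crossed.'' Lemma \ref{lem:oodis} only keeps the steepest-descent curves away from small neighborhoods of the real axis; it says nothing about whether $\oell_{1,1}$ or $\oell_{1,3}$ intersects $\oell_{2,2}$, and there is no a priori reason they are disjoint (the paper explicitly flags this: the geometry of these curves at distance $\gg\lsca^{-1/4+\ere}\cust$ from $\zzc$ is not controlled). If they do intersect, the simultaneous deformation crosses the pole of $1/(\w-\z)$ at $\w=\z$, producing residue contributions $\int \exp\bigl(\lsca(\Dfin_2(\z)-\Dfin_1(\z))\bigr)\,\rd\z$ along arcs between intersection points; these are \emph{not} controlled by the separate monotonicity of $\Re\Dfin_1$ and $\Re\Dfin_2$ along their own level curves, and the paper needs the auxiliary level set $\oell_d$ of $\Im(\Dfin_1-\Dfin_2)$ (Lemma \ref{lem:difcur}) together with the estimates \eqref{eq:difredot}--\eqref{eq:dfinoti2} to show they are still $\lesssim\exp(-c\lsca^{4\ere})$. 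Relatedly, even off the intersection points you need a quantitative lower bound on $|\w-\z|$ and a bound on the contour lengths to convert the pointwise exponential decay into a bound on the integral; your appeal to ``the polynomial (in $\lsca$) length of the deformed contour'' is unsupported for the continuous steepest-descent curves (they lie in a ball of radius $\lesssim\lsca$, but that does not bound their length, and $1/|\w-\z|$ can blow up near tangencies). Both problems are precisely what the paper's lattice discretization is built to solve: replacing $\oell_{1,1},\oell_{1,3}$ by $\Lat$-curves and $\oell_{2,2}$ by a $\Lat'$-curve (Lemmas \ref{lem:disa}--\ref{lem:disc}, with mesh $\ssca=\lsca^{-100}$) forces length $\lesssim\lsca^{2}\ssca^{-1}$, separation $|\w-\z|\ge\ssca$ away from the half-lattice intersection points, and places those intersection points within $3\ssca$ of $\oell_d$ so the residues can be deformed onto (a discretization of) $\oell_d$ and bounded. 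Without some substitute for this mechanism, your outer estimate does not go through.
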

We next prove the inner part of Lemma \ref{lem:Kconvconj}, and the outer part will be proved in \Cref{ssec:small}.

\subsubsection{Inner contour integral}
We first analyze the factor in front of the integral in \eqref{eq:pearcov} at $z=z_c$ .
\begin{lem} \label{lem:conspart}
We have
\begin{multline} \label{eq:conspart}
\log\left(\BE^{\oox_1-\oox_2}(1-\BE)^{\oot_1-\oot_2+\oox_2-\oox_1}\frac{\oot_1!}{(\oot_2-1)!}\right) + \lsca \Dfin_2(\zzc)-\lsca \Dfin_1(\zzc) \\ = (\oot_1-\oot_2+1)\log \lsca  - \log(\cust\BE(1-\BE)) + \OO(\lsca^{-1/2}\cust^{-1}).
\end{multline}
\end{lem}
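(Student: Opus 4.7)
The plan is to first reduce both sides to explicit ratios of Gamma functions, then apply Stirling asymptotics. The critical observation, which we highlight below, is that the natural deviation parameter is of order $\lsca^{1/4}$ rather than $\lsca^{1/2}$, which is what makes all the error bounds fall into place.

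Since $\Im\Dfin_1=\Im\Dfin_2=0$ on $(\sfd_{-1}/\lsca,\sfd_0/\lsca)\ni\zzc$, the $\sfd_i$-sums, the $\z\log(\beta/(1-\beta))$ term, and the $\log\sin(\pi\lsca\z)$ term all cancel in $\lsca\Dfin_2(\zzc)-\lsca\Dfin_1(\zzc)$, leaving only
\[
\lsca\Dfin_2(\zzc)-\lsca\Dfin_1(\zzc)=\sum_{j=1}^{\oot_2-1}\log|\lsca\zzc-\oox_2+j|-\sum_{j=0}^{\oot_1}\log|\lsca\zzc-\oox_1+j|+(\oot_1-\oot_2+2)\log\lsca.
\]
For each sum, split at the sign change at $j=\lfloor\oox-\lsca\zzc\rfloor$, and use the Euler reflection identity $\Gamma(u)\Gamma(1-u)=\pi/\sin(\pi u)$ applied to the fractional part $u=\{\oox-\lsca\zzc\}$ to write the product $\prod_{j}|\lsca\zzc-\oox+j|$ as $(\sin(\pi u)/\pi)\Gamma(\cdot)\Gamma(\cdot)$. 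Setting
\[
A_1=\oox_2-\lsca\zzc,\quad A_2=\oot_2-\oox_2+\lsca\zzc,\quad A_3=\oox_1-\lsca\zzc+1,\quad A_4=\oot_1-\oox_1+\lsca\zzc+1,
\]
and noting that the fractional parts $\{\oox_1-\lsca\zzc\}=\{\oox_2-\lsca\zzc\}$ agree (so the $\sin$ factors cancel) gives the exact identity
\[
\lsca\Dfin_2(\zzc)-\lsca\Dfin_1(\zzc)=\log\frac{\Gamma(A_1)\Gamma(A_2)}{\Gamma(A_3)\Gamma(A_4)}+(\oot_1-\oot_2+2)\log\lsca.
\]

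The next step uses the Beta identity $\Gamma(a)\Gamma(b)=B(a,b)\Gamma(a+b)$, with $A_1+A_2=\oot_2$ and $A_3+A_4=\oot_1+2$. Combining the resulting $(\oot_2-1)!$ and $(\oot_1+1)!$ with the explicit $\oot_1!/(\oot_2-1)!$ prefactor collapses to
\[
\log\frac{\oot_1!}{(\oot_2-1)!}+\log\frac{\Gamma(A_1)\Gamma(A_2)}{\Gamma(A_3)\Gamma(A_4)}=\log B(A_1,A_2)-\log B(A_3,A_4)-\log(\oot_1+1).
\]
Now apply the standard Stirling asymptotic (valid with error $\OO(1/L)\lesssim\lsca^{-1/2}\cust^{-1}$ since $L:=\lsca\cust\gtrsim\lsca^{1/2+\errh}$):
\[
\log B(a,b)=a\log\tfrac{a}{a+b}+b\log\tfrac{b}{a+b}-\tfrac12\log\tfrac{ab}{a+b}+\tfrac12\log(2\pi)+\OO(1/(a+b)).
\]

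The decisive step is the Taylor expansion of $A_i\log(A_i/(A_i+A_{i\pm1}))$ around the natural leading value $\BE$ or $1-\BE$. Writing $A_1=\BE\oot_2+\delta$ with $\delta=\oox_2-\BE\hat t_2$ (where $\hat t_2=\oot_2-\lsca\cust$), the key observation is that \emph{by the specific choice of rescaling in Proposition \ref{prop:Kconvconj}} we have $\oox_2=\BE\lsca^{1/2}\pct_2+\lsca^{1/4}\pcx_2+\OO(1)$ and $\BE\hat t_2=\BE\lsca^{1/2}\pct_2+\OO(1)$, so the $\lsca^{1/2}$-scale contributions cancel and
\[
\delta=\OO(\lsca^{1/4}),\qquad\text{similarly }\ \delta''=A_3-\BE(\oot_1+2)=\OO(\lsca^{1/4}).
\]
Consequently $\delta^2/\oot_2,\delta''^2/(\oot_1+2)\lesssim\lsca^{1/2}/L=\lsca^{-1/2}\cust^{-1}$, and $\delta^3/\oot_2^2,\delta^4/\oot_2^3$ are already of smaller order. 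This makes the Taylor expansion through second order sufficient: each of $A_1\log(A_1/\oot_2)+A_2\log(A_2/\oot_2)$ and its $A_3,A_4$ analogue contributes a linear-in-$\delta$ piece plus an $\OO(\lsca^{-1/2}\cust^{-1})$ remainder, and the $-\tfrac12\log(A_iA_j/\cdot)$ pieces contribute $-\tfrac12\log(\BE(1-\BE)\oot_2)$, $-\tfrac12\log(\BE(1-\BE)(\oot_1+2))$ up to $\OO(\lsca^{-1/2}\cust^{-1})$.

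Finally, collecting coefficients: the Taylor expansion supplies $(\oox_2-\oox_1-1)\log\BE+(\oot_2-\oot_1+\oox_1-\oox_2-1)\log(1-\BE)$, which when added to the prefactor $(\oox_1-\oox_2)\log\BE+(\oot_1-\oot_2+\oox_2-\oox_1)\log(1-\BE)$ leaves precisely $-\log\BE-\log(1-\BE)$. The remaining terms $-\log(\oot_1+1)+(\oot_1-\oot_2+2)\log\lsca$ combine via $\log((\oot_1+1)/L)=\OO(\lsca^{-1/2}\cust^{-1})$ into $(\oot_1-\oot_2+1)\log\lsca-\log\cust$, matching the right-hand side of \eqref{eq:conspart}. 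The main obstacle in this proof is the bookkeeping of which cancellations happen at the exact coefficient level (integer-valued differences like $\oox_2-\oox_1$, $\oot_2-\oot_1$) versus which merely hold up to the target error; the clean identity $B(A_1,A_2)/B(A_3,A_4)$ makes this bookkeeping manageable, and the sharpened bound $\delta,\delta''=\OO(\lsca^{1/4})$ is what prevents the Taylor remainder from exceeding $\OO(\lsca^{-1/2}\cust^{-1})$.
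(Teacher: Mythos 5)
Your proposal is correct, and it reaches \eqref{eq:conspart} by a genuinely different route than the paper. The paper works directly with the two sums $\sum\log|\zzc+i/\lsca|$, replaces them by integrals (picking up the endpoint corrections $-\log(-\zzc)-\log(\cust+\zzc)$), evaluates the integrals explicitly, applies Stirling to $\oot_1!/(\oot_2-1)!$, and then kills the residual expression \eqref{eq:zlzt} by a derivative argument: it vanishes at $\oox_2'=-\zzc(\oot_2-\lsca\cust)/\cust$, its $\oox_2$-derivative is $\lesssim\lsca^{-3/4}\cust^{-1}$, and $|\oox_2-\oox_2'|\lesssim\lsca^{1/4}$. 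You instead keep everything exact until the last step: the sums become $\log\bigl(\Gamma(A_1)\Gamma(A_2)/(\Gamma(A_3)\Gamma(A_4))\bigr)+(\oot_1-\oot_2+2)\log\lsca$ (the reflection factors cancel because $\{\oox_1-\lsca\zzc\}=\{\oox_2-\lsca\zzc\}$; equivalently one can cancel the common $\Gamma(u)\Gamma(1-u)$ directly, which also disposes of the degenerate case $\lsca\zzc\in\Z$), the Beta identity absorbs the factorial prefactor, and a single Stirling expansion plus a second-order Taylor expansion around $\BE\oot_2$, $\BE(\oot_1+2)$ finishes. Crucially, both arguments hinge on the same cancellation, $\delta=\oox_2-\BE(\oot_2-\lsca\cust)=\OO(\lsca^{1/4})$ (and its index-$1$ analogue), so that $\delta^2/\oot_2\lesssim\lsca^{-1/2}\cust^{-1}$; this is exactly the paper's $|\oox_2-\oox_2'|\lesssim\lsca^{1/4}$. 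Your route buys an exact pre-limit identity and cleaner integer-level bookkeeping of the $\log\BE$, $\log(1-\BE)$ coefficients via $A_1-A_3$, $A_2-A_4$; the paper's route avoids Gamma-function manipulations and reuses machinery (sum-to-integral comparison, the $\lsca^{1/4}$ derivative trick) that also appears in Lemma \ref{lem:dDS}. Two cosmetic points: the Stirling error for $\log B(a,b)$ should be stated as $\OO(1/a+1/b)$ rather than $\OO(1/(a+b))$ — harmless here since $A_1,A_2,A_3,A_4\asymp\lsca\cust$ so both are $\OO(\lsca^{-1}\cust^{-1})$ — and it is worth noting explicitly that $\tfrac12\log\bigl((\oot_1+2)/\oot_2\bigr)$ and $\log\bigl((\oot_1+1)/(\lsca\cust)\bigr)$ are $\OO(\lsca^{-1/2}\cust^{-1})$, as your final assembly implicitly uses.
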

\begin{proof}
The left-hand side of \eqref{eq:conspart} is equal to
\begin{multline}  \label{eq:conspartle}
(\oox_1-\oox_2)\log \BE  + (\oot_1-\oot_2+\oox_2-\oox_1)\log(1-\BE) + \log(\oot_1!/(\oot_2-1)!) \\ + \sum_{i=-\oox_2+1}^{-\oox_2+\oot_2-1}\log \Big|\zzc+\frac{i}{\lsca}\Big|  - \sum_{i=-\oox_1}^{-\oox_1+\oot_1}\log \Big|\zzc+\frac{i}{\lsca}\Big| .    
\end{multline}
Using $\oox_1, \oox_2, \oot_1-\lsca\cust, \oot_2-\lsca\cust\lesssim \lsca^{1/2}$ and $\zzc, \cust+\zzc\asymp \cust$, we obtain that
\begin{align*}
&\sum_{i=-\oox_2+1}^{-\oox_2+\oot_2-1}\log \Big|\zzc+\frac{i}{\lsca}\Big|  - \sum_{i=-\oox_1}^{-\oox_1+\oot_1}\log \Big|\zzc+\frac{i}{\lsca}\Big|  \\
=&
\lsca \int_{-\frac{\oox_2}{\lsca}}^{\frac{-\oox_2+\oot_2}{\lsca}} \log |\zzc+x|  \rd x - 
\lsca \int_{-\frac{\oox_1}{\lsca}}^{\frac{-\oox_1+\oot_1}{\lsca}} \log |\zzc+x|  \rd x - \log(-\zzc) - \log(\cust+\zzc) + \OO(\lsca^{-1/2}\cust^{-1}).
\end{align*}
The first two terms in the last line are equal to
\begin{align*}
& (-\lsca\zzc+\oox_2)\log\Big(-\zzc+\frac{\oox_2}{\lsca}\Big) - (-\lsca\zzc+\oox_2)
+ (\lsca\zzc-\oox_2+\oot_2)\log\Big(\zzc+\frac{-\oox_2+\oot_2}{\lsca}\Big) - (\lsca\zzc-\oox_2+\oot_2)
\\
- & (-\lsca\zzc+\oox_1)\log\Big(-\zzc+\frac{\oox_1}{\lsca}\Big) + (-\lsca\zzc+\oox_1)
-(\lsca\zzc-\oox_1+\oot_1)\log\Big(\zzc+\frac{-\oox_1+\oot_1}{\lsca}\Big) + (\lsca\zzc-\oox_1+\oot_1),
\end{align*}
which further simplifies to (recall that $\BE=-\zzc\cust^{-1}$)
\begin{align*}
& (-\lsca\zzc+\oox_2)\log\Big(1+\frac{\oox_2}{-\lsca\zzc}\Big) 
-
(-\lsca\zzc+\oox_1)\log\Big(1+\frac{\oox_1}{-\lsca\zzc}\Big) \\
+ &
(\lsca\zzc-\oox_2+\oot_2)\log\Big(1+\frac{-\oox_2+\oot_2-\lsca\cust}{\lsca(\zzc+\cust)}\Big) 
-(\lsca\zzc-\oox_1+\oot_1)\log\Big(1+\frac{-\oox_1+\oot_1-\lsca\cust}{\lsca(\zzc+\cust)}\Big) \\
+ & (\oox_2-\oox_1)\log(\BE) + (-\oox_2+\oox_1+\oot_2-\oot_1) \log(1-\BE) + (\oot_2-\oot_1)\log(\cust) 
+\oot_1-\oot_2.
\end{align*}
Using Stirling's approximation and that $\oot_1-\lsca\cust, \oot_2-\lsca\cust\lesssim \lsca^{1/2}$, we also get that 
\begin{multline*}
\log(\oot_1!/(\oot_2-1)!) = \oot_1\log(\oot_1) - \oot_2\log(\oot_2) - \oot_1 + \oot_2 + \log(\lsca\cust) + \OO(\lsca^{-1/2}\cust^{-1}) \\
= (\oot_1-\oot_2+1)\log(\lsca\cust)+
\oot_1\log\Big(1+\frac{\oot_1-\lsca\cust}{\lsca\cust}\Big) - \oot_2\log\Big(1+\frac{\oot_2-\lsca\cust}{\lsca\cust}\Big) - \oot_1 + \oot_2  + \OO(\lsca^{-1/2}\cust^{-1}).
\end{multline*}
By summing up the above expressions, we have that \eqref{eq:conspartle} is equal to 
\begin{equation}  \label{eq:zxtot}
\begin{split}
&(-\lsca\zzc+\oox_2)\log\Big(1+\frac{\oox_2}{-\lsca\zzc}\Big) 
-
(-\lsca\zzc+\oox_1)\log\Big(1+\frac{\oox_1}{-\lsca\zzc}\Big) \\
+ &
(\lsca\zzc-\oox_2+\oot_2)\log\Big(1+\frac{-\oox_2+\oot_2-\lsca\cust}{\lsca(\zzc+\cust)}\Big) 
-(\lsca\zzc-\oox_1+\oot_1)\log\Big(1+\frac{-\oox_1+\oot_1-\lsca\cust}{\lsca(\zzc+\cust)}\Big) \\
+ & \oot_1\log\Big(1+\frac{\oot_1-\lsca\cust}{\lsca\cust}\Big) - \oot_2\log\Big(1+\frac{\oot_2-\lsca\cust}{\lsca\cust}\Big)
\\
+&(\oot_1-\oot_2+1)\log \lsca 
 - \log(\cust\BE(1-\BE)) + \OO(\lsca^{-1/2}\cust^{-1}).
 \end{split}
\end{equation}
The last line matches the right-hand side of \eqref{eq:conspart}. 
It remains to show that the first three lines are of order $\OO(\lsca^{-1/2}\cust^{-1})$.
We first consider 
\begin{equation}  \label{eq:zlzt}
(-\lsca\zzc+\oox_2)\log\Big(1+\frac{\oox_2}{-\lsca\zzc}\Big)  
+ (\lsca\zzc-\oox_2+\oot_2)\log\Big(1+\frac{-\oox_2+\oot_2-\lsca\cust}{\lsca(\zzc+\cust)}\Big) - \oot_2\log\Big(1+\frac{\oot_2-\lsca\cust}{\lsca\cust}\Big) .
\end{equation}
For this expression, its derivative with respect to $\oox_2$ is
\[\log\Big(1+\frac{\oox_2}{-\lsca\zzc}\Big) - \log\Big(1+\frac{-\oox_2+\oot_2-\lsca\cust}{\lsca(\zzc+\cust)}\Big)
= \log\Big( 1+\frac{\cust(\oox'_2-\oox_2)}{\zzc(\lsca\zzc-\oox_2+\oot_2)} \Big)\lesssim \lsca^{-3/4}\cust^{-1},\]
where $\oox_2'=\frac{-\zzc}{\cust}(\oot_2-\lsca\cust)$ and we used that $\oox_2'-\oox_2\lesssim \lsca^{1/4}$. In addition, we have that that value of \eqref{eq:zlzt} vanishes when we replace $\oox_2$ by $\oox_2'$, i.e.,
\[
(-\lsca\zzc+\oox_2')\log\Big(1+\frac{\oox_2'}{-\lsca\zzc}\Big)  
+ (\lsca\zzc-\oox_2'+\oot_2)\log\Big(1+\frac{-\oox_2'+\oot_2-\lsca\cust}{\lsca(\zzc+\cust)}\Big) - \oot_2\log\Big(1+\frac{\oot_2-\lsca\cust}{\lsca\cust}\Big) = 0.
\]
By integrating over $\oox_2$, we obtain that \eqref{eq:zlzt} is of order $\OO( \lsca^{-1/2}\cust^{-1})$.
Similarly, we have
\[
(-\lsca\zzc+\oox_1)\log\Big(1+\frac{\oox_1}{-\lsca\zzc}\Big)  
+ (\lsca\zzc-\oox_1+\oot_1)\log\Big(1+\frac{-\oox_1+\oot_1-\lsca\cust}{\lsca(\zzc+\cust)}\Big) - \oot_1\log\Big(1+\frac{\oot_1-\lsca\cust}{\lsca\cust}\Big)\lesssim \lsca^{-1/2}\cust^{-1}.
\]
Plugging these two estimates into \eqref{eq:zxtot}, the conclusion follows.
\end{proof}

We now finish the estimate on the contour integral inside $\{\z\in\C: |\z-\zzc|\le \lsca^{-1/4+\ere}\cust\}$.
\begin{proof}[Proof of Lemma \ref{lem:Kconvconj}: Inner part]
By Lemma \ref{lem:conspart}, using $\lsca^{-1/2}\cust^{-1}\lesssim \lsca^{-\errd}$ (due to that $\lsca^{-1/2+\errh}<\sct\asymp\cust$), it suffices to prove the same estimate for
\begin{equation}  \label{eq:sfwz}
\frac{1}{\cust\BE(1-\BE)} \frac{1}{(2\pi\i)^2}
\iint \frac{d\w d\z}{\w-\z}\exp\left(\lsca\Dfin_2(\z)-\lsca\Dfin_1(\w)-\lsca \Dfin_2(\zzc) + \lsca \Dfin_1(\zzc)\right),
\end{equation}
where the contours for $\w$ and $\z$ are as stated in Lemma \ref{lem:Kconvconj} (Inner part).

We want to approximate $\Dfin_1$ and $\Dfin_2$ by $\oSfin$. For this purpose, we estimate $\Dfin'_2 - \oSfin'$. Compared to Lemma \ref{lem:dDS}, here we consider $\z$ much closer to $\zzc$, thereby getting more refined estimates.
Take any $\z\in\C$ with $|\z-\zzc|\lesssim \cust^{3/2}$, we have that
\begin{align*}
\Dfin'_2(\z) - \oSfin'(\z) &=\sum_{i=-\oox_2+1}^{-\oox_2+\oot_2-1} \frac{1}{\lsca\z + i} - \sum_{i=0}^{\oot_c-1} \frac{1}{\lsca\z + i}
\\ &=\log(\lsca\z/(\lsca\z-\oox_2)) + \log((\lsca\z-\oox_2+\oot_2)/(\lsca\z+\oot_c)) + \OO(\lsca^{-1}\cust^{-1})\\
&=
\log\left(1+\frac{\lsca\z(\oot_2-\oot_c)+\oox_2\oot_c}{(\lsca\z-\oox_2)(\lsca\z+\oot_c)}\right)+
\OO(\lsca^{-1}\cust^{-1}).
\end{align*}
Using $\frac{\zzc}{\cust}(\oot_2-\lsca\cust)+\oox_2=\lsca^{1/4}\pcx_2+\OO(1)\lesssim \lsca^{1/4}$, $|\z-\zzc|\lesssim \cust^{3/2}$, and $\zzc, \zzc+\cust\asymp \cust$, we obtain that  
$$\frac{\lsca\z(\oot_2-\oot_c)+\oox_2\oot_c}{(\lsca\z-\oox_2)(\lsca\z+\oot_c)}=\OO(\lsca^{-3/4}\cust^{-1}).$$ 
With this estimate, we can derive that
\begin{equation}   \label{eq:oSDdiff}
\Dfin'_2(\z) - \oSfin'(\z) =\frac{\lsca\z(\oot_2-\oot_c)+\oox_2\oot_c}{(\lsca\z-\oox_2)(\lsca\z+\oot_c)}+
\OO(\lsca^{-1}\cust^{-1})
=
\frac{\lsca^{1/2}(\z-\zzc)\pct_2+\lsca^{1/4}\cust\pcx_2}{(\lsca\z-\oox_2)(\z+\cust)}+
\OO(\lsca^{-1}\cust^{-1}),
\end{equation}
where we also used $\oot_2-\oot_c=\lsca^{1/2}\pct_2+\OO(1)$ for the second equality. 
Then, using $|\z-\zzc|\lesssim \cust^{3/2}$, $\oox_2\lesssim \lsca^{1/2}$, and $\zzc, \zzc+\cust\asymp \cust$, we get that  
\[
\frac{\lsca^{1/2}(\z-\zzc)\pct_2+\lsca^{1/4}\cust\pcx_2}{(\lsca\z-\oox_2)(\z+\cust)} = \frac{\lsca^{-1/2}(\z-\zzc)\pct_2+\lsca^{-3/4}\cust\pcx_2}{\zzc(\zzc+\cust)}\left[1+\OO\left(|\z-\zzc|\cust^{-1}+\lsca^{-1/2}\cust^{-1}\right)\right].
\]
If we further assume that $|\z-\zzc|\le \lsca^{-1/4+\ere}\cust$, this expression reduces to
\[
\frac{\lsca^{1/2}(\z-\zzc)\pct_2+\lsca^{1/4}\cust\pcx_2}{(\lsca\z-\oox_2)(\z+\cust)}
=
-\frac{\lsca^{-1/2}(\z-\zzc)\pct_2}{\cust^2\BE(1-\BE)} - \frac{\lsca^{-3/4}\pcx_2}{\cust\BE(1-\BE)} + 
\OO\left(\lsca^{-3/4+\ere}\cust^{-1}(\lsca^{-1/4+\ere}+\lsca^{-1/2}\cust^{-1})\right).
\]
Plugging this estimate into \eqref{eq:oSDdiff} and taking an integration over $\z$, we obtain that
\[
\Dfin_2(\z) - \Dfin_2(\zzc) - \oSfin(\z) + \oSfin(\zzc)
+ \frac{\lsca^{-1/2}(\z-\zzc)^2\pct_2}{2\cust^2\BE(1-\BE)} + \frac{\lsca^{-3/4}(\z-\zzc)\pcx_2}{\cust\BE(1-\BE)}
 \lesssim \lsca^{-5/4+3\ere}+\lsca^{-3/2+2\ere}\cust^{-1},
\]
when $|\z-\zzc|\le \lsca^{-1/4+\ere}\cust$.
By Lemma \ref{lem:ospoc}, when $|\z-\zzc|\le \lsca^{-1/4+\ere}\cust$, we have that
\[\oSfin(\z) - \oSfin(\zzc)+\cust^{-4}A(\z-\zzc)^4\lesssim 
\lsca^{-5/4+5\ere}\cust^{-1/2}.\]
Then, using $\lsca^{-1/2+\errh}<\sct\asymp\cust$ and the fact that $\ere$ is small enough depending on $\errh$, we conclude that
\[
\Dfin_2(\z) - \Dfin(\zzc) + \cust^{-4}A(\z-\zzc)^4
+ \frac{\lsca^{-1/2}(\z-\zzc)^2\pct_2}{2\cust^2\BE(1-\BE)} + \frac{\lsca^{-3/4}(\z-\zzc)\pcx_2}{\cust\BE(1-\BE)}
 \lesssim \lsca^{-1-\errd},
\]
where $\errd>0$ is a small enough constant depending on $\errh$ and $\ere$. Similarly, we have
\[
\Dfin_1(\w) - \Dfin(\zzc) + \cust^{-4}A(\w-\zzc)^4
+ \frac{\lsca^{-1/2}(\w-\zzc)^2\pct_1}{2\cust^2\BE(1-\BE)} + \frac{\lsca^{-3/4}(\w-\zzc)\pcx_1}{\cust\BE(1-\BE)}
\lesssim \lsca^{-1-\errd},
\]
when $|\w-\zzc|\le \lsca^{-1/4+\ere}\cust$. Plugging the above two estimates into \eqref{eq:sfwz}, we obtain
\begin{multline*}
\frac{1}{\cust\BE(1-\BE)} \frac{1}{(2\pi\i)^2}
\iint \frac{d\w d\z}{\w-\z}\exp\Big(-\lsca\cust^{-4}A(\z-\zzc)^4 + \lsca\cust^{-4}A(\w-\zzc)^4\\
-\lsca^{1/2}\frac{ \pct_2(\z-\zzc)^2-\pct_1(\w-\zzc)^2}{2\cust^2\BE(1-\BE)}
-\lsca^{1/4}\frac{ \pcx_2(\z-\zzc)-\pcx_1(\w-\zzc)}{\cust\BE(1-\BE)}
+\OO(\lsca^{-\errd})
\Big).
\end{multline*}
Introducing the rescaled variables $\pcw=\sqrt{2}(\w-\zzc)\lsca^{1/4}\cust^{-1}A^{1/4}$ and $\pcz=\sqrt{2}(\z-\zzc)\lsca^{1/4}\cust^{-1}A^{1/4}$, we get \eqref{eq:pczwio}, with the $\pcw$ contour being 
\begin{equation*}
    [\sqrt{2}e^{\pi\i/4} \lsca^{\ere}A^{1/4} \to 0 \to \sqrt{2}e^{3\pi\i/4} \lsca^{\ere}A^{1/4}]\cup [\sqrt{2}e^{5\pi\i/4} \lsca^{\ere}A^{1/4} \to 0 \to \sqrt{2}e^{7\pi\i/4} \lsca^{\ere}A^{1/4}],
\end{equation*}
and the $\pcz$ contour being
\begin{equation*}
[\sqrt{2}e^{3\pi\i/2} \lsca^{\ere}A^{1/4} \to \sqrt{2}e^{\pi\i/2} \lsca^{\ere}A^{1/4}].
\end{equation*}
We note that by replacing the $\pcw$ and $\pcz$ contours with \eqref{eq:pcwzcon}, the integral in \eqref{eq:pczwio} changes by $\OO(\exp(-\lsca^{4\ere}A/2))$, because the integrand along these contours is at most $\lesssim \exp((-\pcz^4+\pcw^4)/8)$. 
This concludes the proof.
\end{proof}

\subsubsection{Binomial and Gaussian}
By classical CLT, it is expected that the binomial term in the NBRW kernel would lead to the Gaussian term in the Pearcey kernel.
We provide a detailed derivation here.
\begin{lem}  \label{lem:gaussp}
If $\pct_1-\pct_2>\lsca^{-\errd}$, then we have that
\begin{multline*}
\BE^{\oox_1-\oox_2}(1-\BE)^{\oot_1-\oot_2+\oox_2-\oox_1}\binom{\oot_1-\oot_2}{\oox_1-\oox_2}\\=
\frac{\lsca^{-1/4}}{\sqrt{2\pi (\pct_1-\pct_2) \BE(1-\BE)}} \exp\left[ - \frac{(\pcx_1-\pcx_2)^2}{2\BE(1-\BE)(\pct_1-\pct_2)}\right] + \OO\left(\lsca^{-1/2+3\errd}\right).
\end{multline*}
\end{lem}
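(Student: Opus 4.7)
My plan is to recognize the left-hand side as the probability mass function of a $\mathrm{Binomial}(\oot_1-\oot_2,\BE)$ random variable evaluated at $\oox_1-\oox_2$, and to carry out a quantitative local central limit theorem directly via Stirling's formula and Taylor expansion. Setting $n=\oot_1-\oot_2$, $k=\oox_1-\oox_2$, and $p=\BE$, the scaling assumptions of Proposition~\ref{prop:Kconvconj} give
\[
n = \lsca^{1/2}(\pct_1-\pct_2) + \OO(1), \qquad k-np = \lsca^{1/4}(\pcx_1-\pcx_2) + \OO(1),
\]
where the leading $\BE\lsca^{1/2}(\pct_1-\pct_2)$ contributions in $k$ and $np$ cancel exactly (since the coefficient $B$ in the hypothesis on $\oox_i$ is precisely $\BE$). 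Combined with $p\asymp 1$ and the hypothesis $\pct_1-\pct_2>\lsca^{-\errd}$, this yields $n,k,n-k \gtrsim \lsca^{1/2-\errd}$, so in particular $0<k<n$ and the binomial coefficient is nonzero.

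The first step is Stirling's formula $\log(m!) = m\log m - m + \tfrac12\log(2\pi m) + \OO(m^{-1})$ applied to each of $n!,k!,(n-k)!$, with combined relative error $\OO(n^{-1}) = \OO(\lsca^{-1/2+\errd})$; including the factor $p^k(1-p)^{n-k}$, the logarithm of the left-hand side becomes
\[
-\tfrac12\log\bigl(2\pi n p(1-p)\bigr) - k\log\tfrac{k}{np} - (n-k)\log\tfrac{n-k}{n(1-p)} + \OO(\lsca^{-1/2+\errd}).
\]
The second step is Taylor expansion about $k=np$: writing $\xi=(k-np)/(np)$ and $\eta=-(k-np)/(n(1-p))$, both of order $\OO(\lsca^{-1/4+\errd})$, I expand $\log(1+\xi)$ and $\log(1+\eta)$ through cubic order. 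The linear-in-$(k-np)$ contributions cancel between the two terms, and using the identity $(np)^{-1} + (n(1-p))^{-1} = (np(1-p))^{-1}$ the quadratic parts collapse into
\[
-k\log\tfrac{k}{np} - (n-k)\log\tfrac{n-k}{n(1-p)} = -\frac{(k-np)^2}{2np(1-p)} + \OO\!\Bigl(\tfrac{|k-np|^3}{(np)^2}\Bigr) = -\frac{(\pcx_1-\pcx_2)^2}{2\BE(1-\BE)(\pct_1-\pct_2)} + \OO(\lsca^{-1/4+2\errd}),
\]
while the Gaussian prefactor satisfies $1/\sqrt{2\pi n p(1-p)} = \lsca^{-1/4}/\sqrt{2\pi\BE(1-\BE)(\pct_1-\pct_2)} \cdot (1+\OO(\lsca^{-1/2+\errd}))$.

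Exponentiating and recombining, since the exponential factor is of order $\OO(1)$ the $\OO(\lsca^{-1/4+2\errd})$ correction in the exponent becomes a multiplicative factor $1+\OO(\lsca^{-1/4+2\errd})$ on a leading term of size $\OO(\lsca^{-1/4})$, producing additive error $\OO(\lsca^{-1/2+2\errd}) \subset \OO(\lsca^{-1/2+3\errd})$, as claimed. The main obstacle is not conceptual---this is a classical local CLT computation---but rather careful bookkeeping of the three simultaneous scales $\lsca^{1/2}$ for $n$, $\lsca^{1/4}$ for the deviation $k-np$, and $\OO(1)$ for the rounding in each coordinate. The lower bound $\pct_1-\pct_2>\lsca^{-\errd}$ plays an essential dual role: it guarantees both that Stirling's relative error $n^{-1}$ is sufficiently smaller than the target accuracy, and that the cubic Taylor remainder $|k-np|^3/(np)^2$ stays under control, since both become comparable to the target precision $\lsca^{-1/4}$ only up to an absorbable power of $\lsca^{\errd}$.
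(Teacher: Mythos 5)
Your proof is correct and takes essentially the same route as the paper's: Stirling's approximation for the binomial coefficient followed by a Taylor expansion of the exponent around the mean, with the hypothesis $\pct_1-\pct_2>\lsca^{-\errd}$ controlling both the Stirling remainder and the cubic term, and the $\OO(\lsca^{-1/4+2\errd})$ exponent error converted to an additive $\OO(\lsca^{-1/2+3\errd})$ at the end. One minor bookkeeping quibble: replacing $k(n-k)$ by $n^2p(1-p)$ inside the square-root prefactor costs $\OO(\lsca^{-1/4+\errd})$ in the logarithm rather than the $\OO(\lsca^{-1/2+\errd})$ labeled in your first display, but this error is absorbed by exactly the recombination argument in your last paragraph, so the stated bound is unaffected.
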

\begin{proof}
Denote $\oot'=\oot_1-\oot_2$ and $\oox'=\oox_1-\oox_2$. If $\pct_1-\pct_2>\lsca^{-\errd}$, then we have $\oot'-\oox'=\oot_1-\oot_2 - \oox_1+\oox_2> (1-B)\lsca^{1/2-\errd}-C\lsca^{1/4}$ and $\oox'=\oox_1-\oox_2>B\lsca^{1/2-\errd}-C\lsca^{1/4}$. Then, by Stirling's approximation, we have
\[
\binom{\oot_1-\oot_2}{\oox_1-\oox_2} = \binom{\oot'}{\oox'}= \frac{\oot'!}{\oox'!(\oot'-\oox')!}= \left[1+\OO(\lsca^{-1/2+\errd})\right] \sqrt{\frac{\oot'}{2\pi \oox'(\oot'-\oox')}} (\oot')^{\oot'} (\oox')^{-\oox'} (\oot'-\oox')^{-\oot'+\oox'}.
\]
Using $\oot'=(\pct_1-\pct_2)\lsca^{1/2}+\OO(1)$ and $\oox'=\BE(\pct_1-\pct_2)\lsca^{1/2} + (\pcx_1-\pcx_2)\lsca^{1/4}+ \OO(1)$, we get that
\begin{equation}  \label{eq:binori}
\sqrt{\frac{\oot'}{2\pi \oox' (\oot'-\oox')}} 
=
\frac{\lsca^{-1/4}(1+\OO(\lsca^{-1/4+\errd}))}{\sqrt{2\pi (\pct_1-\pct_2)\BE(1-\BE)}} ,
\end{equation}
and 
\begin{align}  
&\log\Big(\BE^{\oox'}(1-\BE)^{\oot'-\oox'} (\oot')^{\oot'} (\oox')^{-\oox'} (\oot'-\oox')^{-\oot'+\oox'} \Big)
=\oox'\log \frac{\BE \oot'}{\oox'} +(\oot'-\oox')\log\frac{(1-\BE)\oot'}{\oot'-\oox'} \nonumber\\
=&-\left[\BE(\pct_1-\pct_2)\lsca^{1/2}+(\pcx_1-\pcx_2)\lsca^{1/4}\right] \log\left(1+\BE^{-1}\frac{\pcx_1-\pcx_2}{\pct_1-\pct_2}\lsca^{-1/4}\right) \nonumber\\
&  -\left[(1-\BE)(\pct_1-\pct_2)\lsca^{1/2}-(\pcx_1-\pcx_2)\lsca^{1/4}\right]  \log\left(1-(1-\BE)^{-1}\frac{\pcx_1-\pcx_2}{\pct_1-\pct_2}\lsca^{-1/4}\right) + \OO(\lsca^{-1/4+\errd}),\label{eq:xpr}
\end{align}
where for the second estimate, we used that the derivatives of the left-hand side with respect to $\oox'$ and $\oot'$ are
\[
\log \frac{\BE}{1-\BE}  - \log \frac{\oox'}{\oot'-\oox'} \lesssim \lsca^{-1/4+\errd},\quad \log(1-\BE) - \log \frac{\oot'-\oox'}{\oot'} \lesssim \lsca^{-1/4+\errd}.
\]
Taking the Taylor expansion of the logarithms in \eqref{eq:xpr}, we get
\[
\eqref{eq:xpr}=-\frac{(\pcx_1-\pcx_2)^2}{2\BE(1-\BE)(\pct_1-\pct_2)}
+\OO(\lsca^{-1/4+3\errd}).
\]
Multiplying the exponential of this estimate with the right-hand side of \eqref{eq:binori} concludes the proof.
\end{proof}

\subsection{Smallness of outer contour integral}  \label{ssec:small}

It remains to prove the outer part of Lemma \ref{lem:Kconvconj}.
By Lemma \ref{lem:conspart}, it suffices to show that the following integral 
\begin{equation}  \label{eq:contwz}
\iint\frac{d\w d\z}{\w-\z}\exp\left(\lsca\Dfin_2(\z)-\lsca\Dfin_1(\w) -\lsca\Dfin_2(\zzc) + \lsca\Dfin_1(\zzc) \right)    
\end{equation}
over the contours stated in the outer part of Lemma \ref{lem:Kconvconj} is bounded by $\OO(\exp(-c\lsca^{4\ere}))$.

We recall from Section \ref{sssec:stpcur} the curves $\oell_{2,1}$, $\oell_{2,2}$, $\oell_{2,3}$ on which $\Im(\Dfin_2)=0$, and the curves $\oell_{1,1}$, $\oell_{1,2}$, $\oell_{1,3}$ on which $\Im(\Dfin_1)=0$.
We will deform the $\w$ contours 
$[\zzc+e^{3\pi\i/4} \lsca^{-1/4+\ere}\cust \to -\cust-1 \to \zzc+e^{5\pi\i/4} \lsca^{-1/4+\ere}\cust]$ and $[\zzc+e^{7\pi\i/4} \lsca^{-1/4+\ere}\cust \to 1 \to \zzc+e^{\pi\i/4} \lsca^{-1/4+\ere}\cust]$
to curves that closely follow $\oell_{1,1}$, $\oell_{1,3}$ and their complex conjugates (denoted by $\ooell_{1,1}$ and $\ooell_{1,3}$). We will deform the $\z$ contours $[\infty e^{3\pi\i/2}\to \zzc+e^{3\pi\i/2} \lsca^{-1/4+\ere}\cust]$ and $[\zzc+e^{\pi\i/2} \lsca^{-1/4+\ere}\cust\to \infty e^{\pi\i/2}]$ to curves that closely follow $\oell_{2,2}$ and its complex conjugate (denoted by $\ooell_{2,2}$).
Along these curves, $\lsca (\Dfin_2(\z)-\Dfin_2(\zzc))$ and $-\lsca(\Dfin_1(\w)-\Dfin_1(\zzc))$ are (almost) real and negative, and of order at least $\gtrsim \lsca^{4\ere}$ by Lemma \ref{lem:curveDo}.

As indicated earlier, some issues may appear as we lose precise control of the behaviors of these curves.
First, it is unclear whether $\oell_{1,1}$ and $\oell_{1,3}$ are disjoint from $\oell_{2,2}$, so we need to consider the residues resulting from their possible intersections.
Second, it could be technical to control the length of these curves. We will circumvent this issue by discretizing these curves.

\subsubsection{Intersections}
For possible intersections between the curves $\oell_{1,1}$, $\oell_{1,3}$, and $\oell_{2,2}$, we consider the zero set of $\Im(\Dfin_1-\Dfin_2)$ (which must contain all these intersections). We denote 
\[
E(\Dfin_1-\Dfin_2)=\lsca^{-1}(\qq{\oox_1-\oot_1, \oox_1}\triangle \qq{\oox_2-\oot_2+1,\oox_2-1}),
\]
which is the set of poles of $\Dfin'_1-\Dfin'_2$ ($\triangle$ denotes the symmetric difference).
Similar to Lemma \ref{lem:dD}, we have that for any $\z\not\in E(\Dfin_1-\Dfin_2)$,
\begin{equation}  \label{eq:ddD}
\Dfin'_1(\z)-\Dfin'_2(\z) \lesssim \frac{1}{\min_{x\in E(\Dfin_1-\Dfin_2)}\lsca|\z-x|} + \log \lsca .    
\end{equation}
The zero set of $\Im(\Dfin_1-\Dfin_2)$ can be described as follows.
\begin{lem}   \label{lem:difcur}
The set $\{\z\in \HH\cup\R\setminus E(\Dfin_1-\Dfin_2) :\Im(\Dfin_1(\z)-\Dfin_2(\z))=0\}$ contains the following parts:
\begin{enumerate}
\item The connected component of $\R\setminus E(\Dfin_1-\Dfin_2)$ containing $\zzc$;
\item 
If either (i) $\oox_1-\oot_1<\oox_2-\oot_2+1$ and $\oox_1>\oox_2-1$, or (ii) $\oox_1-\oot_1>\oox_2-\oot_2+1$ and $\oox_1<\oox_2-1$, the set $\{\z\in \HH\cup\R\setminus E(\Dfin_1-\Dfin_2) :\Im(\Dfin_1(\z)-\Dfin_2(\z))=0\}$ also contains a smooth and self-avoiding curve $\oell_d$ from the connected component of $\R\setminus E(\Dfin_1-\Dfin_2)$ containing $\zzc$ to $\infty$. Except for the starting point, $\oell_d$ is contained in $\HH$.
Moreover, $\Re(\Dfin_1-\Dfin_2)$ is strictly monotone along $\oell_d$, and $\oell_d$ is disjoint from the $\lsca^{-3}$ neighborhood of $E(\Dfin_1-\Dfin_2)$.
\end{enumerate}
\end{lem}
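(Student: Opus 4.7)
The plan is to mirror the proof of Lemma~\ref{lem:curveDo}, exploiting the fact that $\Dfin_1-\Dfin_2$ collapses to an explicit finite closed form. In the definitions of $\Dfin_1,\Dfin_2$ the sin-terms and the sums over $\{\sfd_i\}$ cancel, leaving $\lsca(\Dfin_1(\z)-\Dfin_2(\z))=\log Q(\z)+c_0$ with $c_0\in\i\R$ and $Q$ the real rational function $Q(\z)=\prod_{i\in A\setminus B}(\z+i/\lsca)\big/\prod_{i\in B\setminus A}(\z+i/\lsca)$, where $A=\qq{\oox_1-\oot_1,\oox_1}$ and $B=\qq{\oox_2-\oot_2+1,\oox_2-1}$. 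The zeros/poles of $Q$ are precisely the points of $E(\Dfin_1-\Dfin_2)$, and $\Im\log Q$ is locally constant on $\R\setminus E(\Dfin_1-\Dfin_2)$, jumping by $\pm\pi$ across each zero/pole.

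Part~(1) is then immediate from the normalization in Section~\ref{ssec:kfunc}: both $\Im\Dfin_1$ and $\Im\Dfin_2$ vanish on $(\sfd_{-1}/\lsca,\sfd_0/\lsca)\ni\zzc$, and $\Im(\Dfin_1-\Dfin_2)$ is locally constant on $\R\setminus E(\Dfin_1-\Dfin_2)$, hence identically zero on the connected component containing $\zzc$.

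For part~(2), I will treat case~(i) in detail; case~(ii) is symmetric via $Q\leftrightarrow 1/Q$. Here $B\subsetneq A$, and one checks from the inequalities defining case~(i) that $|A\setminus B|\ge 2$, so $Q$ is a real polynomial of positive degree $d=\oot_1-\oot_2+2\ge 1$ with only simple real zeros. Two observations drive the argument. First, in $\HH$ the logarithmic derivative $Q'(\z)/Q(\z)=\sum_{i\in A\setminus B}(\z+i/\lsca)^{-1}$ has strictly negative imaginary part, so $\Dfin_1-\Dfin_2$ has no critical points in the open upper half plane. Second, on a large semicircle $\{Re^{\i\theta}:\theta\in(0,\pi)\}$, $\Im\log Q(Re^{\i\theta})=d\theta+\OO(R^{-1})$, so $\Im\log Q$ sweeps monotonically through roughly $[0,d\pi]$. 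Combined with the vanishing of $\Im\log Q$ on the $\zzc$-component of $\R\setminus E(\Dfin_1-\Dfin_2)$, the absence of critical points forces the level set $\{\Im\log Q=0\}\cap\overline\HH$ to contain a connected smooth curve $\oell_d$ joining a point of the $\zzc$-component to $\infty$. Since $\Re(\Dfin_1-\Dfin_2)$ is harmonic conjugate to $\Im(\Dfin_1-\Dfin_2)$ and $\Dfin_1-\Dfin_2$ has no critical points along $\oell_d$, $\Re(\Dfin_1-\Dfin_2)$ varies strictly monotonically along $\oell_d$ and the curve is self-avoiding. Disjointness of $\oell_d$ from the $\lsca^{-3}$-neighborhood of $E(\Dfin_1-\Dfin_2)$ is obtained exactly as in Lemma~\ref{lem:oodis}, using \eqref{eq:ddD} to show that near each pole/zero $x_*$ of $Q$ one has $\Im(\Dfin_1-\Dfin_2)(\z)=\pm\lsca^{-1}\arg(\z-x_*)+\OO(\log(\lsca)\,\Im\z)$, which is nonzero for $0<|\z-x_*|\lesssim\lsca^{-3}$ in $\HH$.

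The main technical point I anticipate is the topological extraction of a single connected curve escaping to $\infty$. A clean way is to view $\{\Im\log Q=0\}\cap\overline\HH$ as a plane graph whose only vertices lie on $\R\cup\{\infty\}$ (because of the absence of interior critical points); then the winding count $d\pi$ around a large semicircle, together with the local branching structure at the real zeros/poles of $Q$, forces at least one infinite edge, which by continuity is attached to the $\zzc$-component of $\R\setminus E(\Dfin_1-\Dfin_2)$. The $\lsca^{-3}$-separation argument then rules out that this edge wanders into a neighborhood of the finite real singular set, completing the construction.
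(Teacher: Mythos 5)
Your proposal is correct and follows essentially the route the paper indicates for this (omitted) proof — analyzing the critical points of $\Dfin_1-\Dfin_2$ and reusing the arguments of \Cref{lem:curveDo} and \Cref{lem:oodis} — with the explicit reduction $\lsca(\Dfin_1-\Dfin_2)=\log Q+c_0$, $Q$ rational with zeros/poles exactly at $E(\Dfin_1-\Dfin_2)$, and the observation that under (i)/(ii) there are no critical points in $\HH$ being exactly the intended mechanism. One bookkeeping point to fix in the write-up: with the normalization $\Im(\Dfin_1-\Dfin_2)=0$ on the $\zzc$-component, the relevant level of the principal-branch $\Im\log Q$ (continued from $\HH$) is $\pi\cdot\#\{\text{points of } E(\Dfin_1-\Dfin_2) \text{ to the right of } \zzc\}$, which lies strictly in $(0,d\pi)$ precisely because conditions (i)/(ii) force $E$-points on both sides of the $\zzc$-component; this is what guarantees the unique crossing of the level set on a large semicircle at an interior angle, whereas your sketch simultaneously asserts vanishing on the $\zzc$-component and a sweep through $[0,d\pi]$, mixing two branch conventions.
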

For convenience, if neither (i) nor (ii) in (2) holds, we denote $\oell_d=\emptyset$. 
This lemma can be proved by analyzing the critical points of $\Dfin_1-\Dfin_2$ and using arguments similar to the proofs of Lemmas \ref{lem:curveDo} and \ref{lem:oodis}. We omit the details here.

\subsubsection{Curve discretization}

Choose $\ssca=\lsca^{-100}$, we denote
\[\Lat=2\ssca\Z+2\i\ssca\Z, \quad \Lat'=\Lat+(1+\i)\ssca.\]
For any discrete interval $I$ (i.e., $I$ is a set of consecutive integers) and $\{\z_i\}_{i\in I}$, we call $\{\z_i\}_{i\in I}$ a \emph{$\Lat$-path} (resp. \emph{$\Lat'$-path}) if the followings hold: all these $\z_i$ are different points in $ \Lat$ (resp. $\Lat'$), and every pair $\z_i$ and $\z_{i+1}$ are nearest neighbors on the lattice $\Lat$ (resp. $\Lat'$). The curve obtained by connecting every pair of points $\z_i$ and $\z_{i+1}$ using a line segment is called the corresponding \emph{$\Lat$-curve} (resp. \emph{$\Lat'$-curve}).
We first discretize (i.e., approximate) $\oell_{2,2}$ by a $\Lat'$-curve.
\begin{lem}   \label{lem:disa}
There exists a $\Lat'$-curve $\soell_{2,2}$ from a point in the intersection of $\Lat'$ and the $2\ssca$-neighborhood of $\zzc+e^{\i \theta_2} \lsca^{-1/4+\ere}\cust$ to $\infty$, such that 
(1) for any $\z\in \Lat'\cap\soell_{2,2}$, we have $(\z+[-\ssca, \ssca] + \i [-\ssca, \ssca])\cap \oell_{2,2}\neq \emptyset$, and (2) $\soell_{2,2}$ is disjoint from the $\lsca^{-3}/2$-neighborhood of $\R$.
\end{lem}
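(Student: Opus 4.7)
The plan is to obtain $\soell_{2,2}$ by tracing which face of the lattice $\Lat$ (equivalently, which vertex of $\Lat'$) the smooth curve $\oell_{2,2}$ currently lies in, as one moves along $\oell_{2,2}$ from its starting endpoint out to infinity. Concretely, parametrize $\oell_{2,2}$ by arc length as $\gamma:[0,\infty)\to\C$ with $\gamma(0)=\zzc+e^{\i\theta_2}\lsca^{-1/4+\ere}\cust$ and $|\gamma(s)|\to\infty$. For each $s$ let $\z'(s)$ be the unique point of $\Lat'$ with $\gamma(s)\in \z'(s)+(-\ssca,\ssca)+\i(-\ssca,\ssca)$, when $\gamma(s)$ does not lie on the grid lines $\Re \z\in\ssca(2\Z+1)$ or $\Im \z\in \ssca(2\Z+1)$. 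The value $\z'(s)$ changes only when $\gamma$ crosses such a grid line, and across each isolated crossing it jumps to a nearest neighbor in $\Lat'$.

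To avoid ambiguous corner crossings, I would first perturb the entire lattice by a generic translation of size $\ll\ssca$; since $\oell_{2,2}$ is a smooth, analytic self-avoiding curve, the exceptional translations (for which $\gamma$ passes through some corner of $\Lat$, i.e., through a midpoint of edges of $\Lat'$-cells) form a set of measure zero, so a good choice exists. With this done, the sequence of visited faces gives an infinite sequence $\z'(s_0)=\z'_0,\z'_1,\z'_2,\ldots$ of points in $\Lat'$, successive ones being nearest neighbors. Apply loop erasure: if $\z'_i=\z'_j$ for some $i<j$, delete $\z'_{i+1},\ldots,\z'_j$; iterating makes the sequence a true $\Lat'$-path. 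Call the resulting curve $\soell_{2,2}$. Its starting point $\z'_0$ satisfies $|\z'_0-\gamma(0)|\le\sqrt{2}\ssca<2\ssca$, so it lies in the required $2\ssca$-neighborhood of $\zzc+e^{\i\theta_2}\lsca^{-1/4+\ere}\cust$, and $\soell_{2,2}$ tends to infinity because $|\gamma(s)|$ does.

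Condition (1) is then immediate for the points that survive loop erasure: each surviving $\z_i$ equals $\z'(s)$ for some $s$, so $\gamma(s)\in \z_i+[-\ssca,\ssca]+\i[-\ssca,\ssca]$ and thus this square meets $\oell_{2,2}$. Condition (2) follows from Lemma \ref{lem:oodis} (applied to $\oell_{2,2}$ and $\R$): $\oell_{2,2}$ stays at distance at least $\lsca^{-3}$ from $\R$, so any $\z_i\in\soell_{2,2}$ has $\Im \z_i\ge \Im\gamma(s)-\ssca\ge \lsca^{-3}-\ssca$, and consecutive vertices of $\soell_{2,2}$ have imaginary parts differing by at most $2\ssca$; hence every point on the piecewise-linear interpolation $\soell_{2,2}$ has imaginary part at least $\lsca^{-3}-3\ssca>\lsca^{-3}/2$ since $\ssca=\lsca^{-100}$.

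The main subtlety I expect is purely bookkeeping: justifying the generic perturbation that rules out $\gamma$ passing through lattice corners, and verifying that the jumps of $\z'(s)$ are indeed by single nearest-neighbor edges (not diagonal) away from a measure-zero set of parameter values. A small modification avoids even this: instead of a perturbation, one can slightly deform $\gamma$ itself near any corner crossing, at a scale much less than $\ssca$, without leaving the $\ssca$-neighborhood of $\gamma$, which preserves both conditions (1) and (2). Everything else — loop erasure, the distance-to-$\R$ bound, and the endpoint condition — is an elementary consequence of $\ssca=\lsca^{-100}$ being much smaller than all other relevant scales.
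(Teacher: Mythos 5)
Your construction is correct in substance, but it takes a genuinely different route from the paper. You build $\soell_{2,2}$ constructively, by tracing which cell of the $2\ssca$-grid (equivalently, which vertex of $\Lat'$) the curve $\oell_{2,2}$ occupies as it runs from $\zzc+e^{\i \theta_2}\lsca^{-1/4+\ere}\cust$ to infinity, and then loop-erasing to obtain a self-avoiding $\Lat'$-path; the paper instead argues by contradiction via a discrete duality: if no admissible $\Lat'$-curve existed, the component of the starting vertex inside the set of ``good'' vertices (those whose $\ssca$-squares meet $\oell_{2,2}$) would be finite, so there would be a closed circuit of ``bad'' vertices, with consecutive steps of length at most $2\sqrt{2}\ssca$, enclosing the starting point of $\oell_{2,2}$; since $\oell_{2,2}$ goes to $\infty$ it must cross this circuit, and any crossing point lies in the closed $\ssca$-square of a circuit vertex, a contradiction. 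The paper's route buys brevity: it needs no parametrization of $\oell_{2,2}$, no genericity or transversality of crossings, and no loop erasure — only the connectedness of $\oell_{2,2}$ from its starting point to infinity. Your route yields an explicit curve but pays in bookkeeping, which you correctly flag and which is indeed repairable: local finiteness of grid-line crossings (the level curve of $\Im\Dfin_2$ is real-analytic away from critical points, or use your small deformation), corner hits, and the fact that chronological loop erasure of a transient walk is well defined, keeps the first vertex, and still tends to infinity. Two caveats. First, do not perturb the lattice: $\Lat$ and $\Lat'$ are fixed, and their relative offset is used later so that $\soell_{1,1}\cap\soell_{2,2}\subset\Lat+\{\ssca,\i\ssca\}$; your alternative fix is the right one, and in fact no deformation is needed either, since a corner hit places a point of $\oell_{2,2}$ on the boundary of all four adjacent closed squares, so condition (1) holds for whichever faces you assign (inserting an intermediate vertex if the face sequence would otherwise jump diagonally). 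Second, the cell boundaries are the lines $\Re \z\in 2\ssca\Z$ and $\Im \z\in 2\ssca\Z$, not $\ssca(2\Z+1)$ — a harmless indexing slip. Your verification of (2) from Lemma \ref{lem:oodis} together with $\ssca=\lsca^{-100}$ coincides with the paper's.
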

\begin{proof}
Let $\z_*$ be a vertex in $\Lat'\cap (\zzc+e^{\i \theta_2} \lsca^{-1/4+\ere}\cust+[-\ssca, \ssca] + \i [-\ssca, \ssca])$.
Denote
\[
\Lat'_{2,2}:=\{\z\in U\cap\Lat': (\z+[-\ssca, \ssca] + \i [-\ssca, \ssca])\cap \oell_{2,2}\neq \emptyset \}.
\]
If we cannot find a $\Lat'$-curve satisfying (1), then the set of points that are connected to $\z_*$ through a $\Lat'$-curve contained in $\Lat'_{2,2}$ must be finite. Then, we can find a sequence of numbers $\z_1, \ldots, \z_k \in \Lat'\setminus \Lat'_{2,2}$, such that $\z_k=\z_1$, $|\z_{i+1}-\z_i|\le 2\sqrt{2}\ssca$ for each $i$, and $\zzc+e^{\i \theta_2} \lsca^{-1/4+\ere}\cust$ is enclosed by $[\z_1\to\cdots\to\z_k]$. In this case, $\oell_{2,2}$ must intersect $[\z_1\to\cdots\to\z_k]$, contradicting the fact that $\z_1, \ldots, \z_k \not\in \Lat'_{2,2}$.
The statement (2) follows immediately from (1) and Lemma \ref{lem:oodis}.
\end{proof}
For $\oell_d$ from Lemma \ref{lem:difcur}, we have a similar discretization. Its proof is similar to that of Lemma \ref{lem:disa}, so we omit the details.
\begin{lem}   \label{lem:disb}
For any two points $\z_1, \z_2\in\oell_d$, there exists a $\Lat'$-curve connecting two $\Lat'$ points lying in the $2\ssca$-neighborhoods of $\z_1$ and $\z_2$, respectively, such that 
this $\Lat'$-curve is in the $2\ssca$-neighborhood of the part of $\oell_d$ between $\z_1$ and $\z_2$,
and is disjoint from the $\lsca^{-3}/2$-neighborhood of $E(\Dfin_1-\Dfin_2)$.
\end{lem}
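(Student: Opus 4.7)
The plan is to mimic the proof strategy of Lemma \ref{lem:disa}, with the endpoints $\z_*^{(1)}, \z_*^{(2)} \in \Lat'$ playing the role that the single point $\z_*$ played there. Concretely, I will fix $\Lat'$-vertices $\z_*^{(1)}, \z_*^{(2)}$ in the $2\ssca$-neighborhoods of $\z_1, \z_2$ respectively, let $\oell_d^{[\z_1,\z_2]}$ denote the arc of $\oell_d$ between $\z_1$ and $\z_2$, and define
\[
\Lat'_d := \bigl\{\z\in\Lat' : (\z+[-\ssca,\ssca]+\i[-\ssca,\ssca])\cap \oell_d^{[\z_1,\z_2]}\neq\emptyset\bigr\}.
\]
By construction, $\z_*^{(1)},\z_*^{(2)}\in\Lat'_d$, and $\Lat'_d$ is finite (since $\oell_d^{[\z_1,\z_2]}$ is compact). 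I will then show that $\z_*^{(1)}$ and $\z_*^{(2)}$ lie in the same $\Lat'$-connected component of $\Lat'_d$, so that a $\Lat'$-curve joining them can be extracted, automatically lying in the $2\ssca$-neighborhood of $\oell_d^{[\z_1,\z_2]}$.

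The connectivity step will proceed by contradiction, exactly as in Lemma \ref{lem:disa}. If $\z_*^{(1)}$ and $\z_*^{(2)}$ lie in different components of $\Lat'_d$, then by planarity I can enclose the component of $\z_*^{(1)}$ by a closed sequence $\z_1',\ldots,\z_k'=\z_1'$ in $\Lat'\setminus\Lat'_d$ with consecutive displacements of length at most $2\sqrt{2}\ssca$, and this loop must separate $\z_*^{(1)}$ from $\z_*^{(2)}$. Since $\oell_d^{[\z_1,\z_2]}$ is a continuous arc connecting neighborhoods of these two points, it must cross the polygonal loop $[\z_1'\to\cdots\to\z_k']$ at some point $\z_\star$; but then $\z_\star$ sits within $\ssca$ of some $\z_i'$, contradicting $\z_i'\notin\Lat'_d$.

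For the second property of the lemma (disjointness from the $\lsca^{-3}/2$-neighborhood of $E(\Dfin_1-\Dfin_2)$), I will invoke Lemma \ref{lem:difcur}, which guarantees that $\oell_d$ is disjoint from the $\lsca^{-3}$-neighborhood of $E(\Dfin_1-\Dfin_2)$. Since $\ssca=\lsca^{-100}\ll\lsca^{-3}$, the $2\ssca$-neighborhood of $\oell_d^{[\z_1,\z_2]}$ (which contains the constructed $\Lat'$-curve by property (1)) stays at distance at least $\lsca^{-3}-2\ssca>\lsca^{-3}/2$ from $E(\Dfin_1-\Dfin_2)$, giving the claim.

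I do not expect any genuinely new obstacle here: the planar separation argument is the same Jordan-type reasoning used for Lemma \ref{lem:disa}, the only mild subtlety being that one must handle two prescribed endpoints rather than enclose a single point. Since $\oell_d$ is smooth and self-avoiding with prescribed endpoints, this causes no real difficulty. The choice $\ssca=\lsca^{-100}$ has been made precisely so that the discretization error is dwarfed by all relevant scales ($\lsca^{-3}$ in the present lemma, and $\lsca^{-1/4+\ere}\cust$ elsewhere), which makes the quantitative estimates in both properties essentially automatic.
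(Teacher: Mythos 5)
Your overall route is the one the paper intends (its proof of this lemma is omitted as being ``similar to Lemma \ref{lem:disa}''), and the second assertion is handled correctly: by Lemma \ref{lem:difcur} the curve $\oell_d$ avoids the $\lsca^{-3}$-neighborhood of $E(\Dfin_1-\Dfin_2)$, and since $\ssca=\lsca^{-100}$ any curve within $\OO(\ssca)$ of $\oell_d$ stays outside its $\lsca^{-3}/2$-neighborhood. The gap is in the connectivity step, exactly at the point you dismiss as a ``mild subtlety''. In Lemma \ref{lem:disa} the contradiction works because $\oell_{2,2}$ runs to infinity, so any closed circuit of $\Lat'$-points outside the covering set $\Lat'_{2,2}$ that encloses the starting point of the curve must be crossed by the curve. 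Here both endpoints are finite, and your claim that the circuit enclosing the component of $\z_*^{(1)}$ ``must separate $\z_*^{(1)}$ from $\z_*^{(2)}$'' is not justified and is false as a general planar statement: the circuit surrounds the \emph{filled} hull of the component of $\z_*^{(1)}$, and the component containing $\z_*^{(2)}$ could a priori sit in a hole of that filled region, in which case the arc of $\oell_d$ joining the two endpoints stays inside the circuit, never crosses it, and no contradiction is produced.

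The gap is fixable, and the fix should be stated. Either (i) observe that two disjoint finite components cannot each lie inside the filled hull of the other, so you may run the circuit argument around whichever component does not enclose the other endpoint, after which the crossing argument goes through as in Lemma \ref{lem:disa}; or (ii) avoid the contradiction argument altogether: every point of the arc of $\oell_d$ between $\z_1$ and $\z_2$ lies in the closed $\ssca$-box of its nearest $\Lat'$-point, which therefore belongs to $\Lat'_d$, and as one traverses the arc the nearest $\Lat'$-point changes only to an adjacent point (a diagonal change forces the arc through a common box corner, where all four surrounding $\Lat'$-points lie in $\Lat'_d$, so the diagonal step can be replaced by two nearest-neighbour steps); this produces the desired $\Lat'$-curve from $\z_*^{(1)}$ to $\z_*^{(2)}$ directly. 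One cosmetic point: with only the box condition, interior points of the edges of the $\Lat'$-curve can be at distance up to $(1+\sqrt2)\ssca$ from $\oell_d$, slightly exceeding $2\ssca$; this is harmless for all later uses (everything is $\OO(\ssca)$ against scales like $\lsca^{-3}$), but your phrase ``automatically lying in the $2\ssca$-neighborhood'' is not literally implied by the construction.
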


We next state the discretizations for $\oell_{1,1}$ and $\oell_{1,3}$, which use the lattice $\Lat$ instead of $\Lat'$, so that their possible intersections with $\soell_{2,2}$ are points in the lattice $\Lat+\{\ssca, \i\ssca\}$.
We also require these points to be close to $\oell_d$, in order to handle the residues resulting from these intersections. Therefore, the discretizations for $\oell_{1,1}$ and $\oell_{1,3}$ are slightly more delicate than that for $\oell_{2,2}$.
\begin{lem}   \label{lem:disc}
There exists a $\Lat$-curve $\soell_{1,1}$ (resp.~$\soell_{1,3}$) from a point in the intersection of $\Lat$ and the $5\ssca$-neighborhood of $\zzc+e^{\i \vartheta_1} \lsca^{-1/4+\ere}\cust$ (resp.~$\zzc+e^{\i \vartheta_3} \lsca^{-1/4+\ere}\cust$) to $P_{1,-}$ (resp.~$P_{1,+}$), such that (1) it is contained in the $5\ssca$ neighborhood of $\oell_{1,1}$ (resp.~$\oell_{1,3}$), (2) it is disjoint from the $\lsca^{-3}/2$-neighborhood of $\R\setminus P_{1,-}$ (resp.~$\R\setminus P_{1,+}$), and (3) the set $\soell_{1,1}\cap\soell_{2,2}$ (resp. $\soell_{1,3}\cap\soell_{2,2}$) is contained in the $3\ssca$-neighborhood of $\oell_d$.
\end{lem}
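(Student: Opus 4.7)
The plan is to adapt the planarity argument of Lemma \ref{lem:disa} to build a $\Lat$-discretization of $\oell_{1,1}$, and then absorb possible intersections with $\soell_{2,2}$ by routing through a neighborhood of $\oell_d$. I focus on $\soell_{1,1}$; the construction of $\soell_{1,3}$ is symmetric. The baseline step is identical to Lemma \ref{lem:disa}: set
\[
\Lat_{1,1}:=\{\z\in \Lat\cap U: (\z+[-\ssca,\ssca]+\i[-\ssca,\ssca])\cap \oell_{1,1}\neq \emptyset\},
\]
and argue by planarity that $\Lat_{1,1}$ contains a $\Lat$-path from a vertex in the $5\ssca$-neighborhood of $\zzc+e^{\i\vartheta_1}\lsca^{-1/4+\ere}\cust$ to a vertex near $\w_-$; otherwise the connected component of the starting vertex would be finite, and an enclosing $\Lat$-cycle of vertices outside $\Lat_{1,1}$ would be forced to cross $\oell_{1,1}$. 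This delivers (1), and (2) follows from the $5\ssca$-closeness combined with the corresponding statement for $\oell_{1,1}$ in Lemma \ref{lem:oodis}.

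For (3), the key algebraic input is that every continuous intersection $\z^\ast \in \oell_{1,1}\cap \oell_{2,2}$ satisfies $\Im \Dfin_1(\z^\ast)=\Im \Dfin_2(\z^\ast)=0$, hence belongs to the zero set of $\Im(\Dfin_1-\Dfin_2)$ in $\HH$. By Lemma \ref{lem:difcur}, this forces $\z^\ast \in \oell_d$ whenever one of the ordering conditions (i), (ii) holds; if neither holds, a parallel steepest-descent analysis for $\Dfin_1-\Dfin_2$ shows that the zero set has no component in the open upper half-plane, so $\oell_{1,1}$ and $\oell_{2,2}$ are disjoint and, after the baseline construction, their $\OO(\ssca)$-approximations are disjoint as well, making (3) vacuous. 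Otherwise, let $\z_1^\ast,\dots,\z_k^\ast$ be the continuous intersections, ordered along $\oell_{1,1}$. On the sub-arcs of $\oell_{1,1}$ at bounded distance from all $\z_j^\ast$, I run the baseline percolation construction. In a short window around each $\z_j^\ast$, I instead invoke Lemma \ref{lem:disb} applied to the portion of $\oell_d$ between two points slightly before and after $\z_j^\ast$, so that the local segment of the discrete curve tracks $\oell_d$ rather than $\oell_{1,1}$. Stitching these pieces together on $\Lat$ (possible because consecutive endpoints lie within $\OO(\ssca)$ of each other and in $\Lat$) produces $\soell_{1,1}$, and by construction any lattice-scale intersection with $\soell_{2,2}$ must occur in a detour window, hence inside the $3\ssca$-neighborhood of $\oell_d$.

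The main obstacle is a local transversality analysis near each $\z_j^\ast$: the detour window must be short enough to keep the detour segment inside the $5\ssca$-tube of $\oell_{1,1}$ (for (1)) but long enough to absorb every lattice-scale crossing of the two discrete curves (for (3)). This reduces to showing that near each $\z_j^\ast$ the three level curves $\Im\Dfin_1=0$, $\Im\Dfin_2=0$, $\Im(\Dfin_1-\Dfin_2)=0$ are mutually close in Hausdorff distance and that the relevant gradients do not vanish, which follows from the derivative estimates in Lemma \ref{lem:dD} together with the absence of critical points of $\Dfin_1$, $\Dfin_2$, and $\Dfin_1-\Dfin_2$ in the interior of $U$ (Lemma \ref{lem:Dnz} and its analogue for $\Dfin_1-\Dfin_2$). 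Since $\ssca=\lsca^{-100}$ is vastly smaller than any polynomial-in-$\lsca$ scale coming from these bounds, there is ample room to realize the two competing requirements simultaneously; the verification of (2) on the detour pieces is then immediate from the $\lsca^{-3}$-avoidance statement in Lemma \ref{lem:disb} combined with planarity.
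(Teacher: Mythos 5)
Your baseline step (the percolation/planarity discretization of $\oell_{1,1}$, as in Lemma \ref{lem:disa}) and your verification of (1) and (2) are fine, but the argument for (3) has a genuine gap. You only take detours along $\oell_d$ in short windows around the \emph{continuous} intersection points $\z_j^\ast\in\oell_{1,1}\cap\oell_{2,2}$, and then assert that every lattice-scale intersection of $\soell_{1,1}$ with $\soell_{2,2}$ must occur in such a window. That is not true for the naive box-discretization: a crossing of the two discrete curves can be produced by a \emph{near miss}, i.e.\ a place where $\oell_{1,1}$ and $\oell_{2,2}$ approach within $\OO(\ssca)$ of each other without intersecting. Nothing in the available lemmas gives a quantitative lower bound on the separation of the two continuous curves where they are disjoint (the derivative estimates such as Lemma \ref{lem:dD} are upper bounds only, and there is no transversality or {\L}ojasiewicz-type input), so such near misses can occur far from every $\z_j^\ast$ and far from $\oell_d$; at such a spot your $\soell_{1,1}$ coincides with the baseline curve and may cross $\soell_{2,2}$, violating (3). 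The same issue invalidates the step in your ``neither (i) nor (ii)'' case, where you infer disjointness of the $\OO(\ssca)$-approximations from disjointness of $\oell_{1,1}$ and $\oell_{2,2}$. A smaller but related problem: near a $\z_j^\ast$ all three level curves $\Im\Dfin_1=0$, $\Im\Dfin_2=0$, $\Im(\Dfin_1-\Dfin_2)=0$ meet, and you have no lower bound on the angles between them, so the claim that the detour along $\oell_d$ stays inside the $5\ssca$-tube of $\oell_{1,1}$ while absorbing all crossings is not justified by the cited estimates.

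The paper's construction is designed precisely to kill the near-miss scenario, and it does so not by detouring but by making the discretization one-sided relative to $\oell_{1,1}$ and sensitive to $\oell_{2,2}$: one takes the lattice points $\z\in\HH\cap\Lat'$ whose $\ssca$-box either lies in the bounded component $U_{1,-}$ of $(\HH\cup\R)\setminus(\oell_{1,1}\cup[\zzc\to\zzc+e^{\i\vartheta_1}\lsca^{-1/4+\ere}\cust])$, or meets $U_{1,-}\cap\oell_{2,2}$ but \emph{not} $U_{1,+}\cap\oell_{2,2}$, and defines $\soell_{1,1}$ as (part of) the boundary of the corresponding union of boxes. With this rule, if $\oell_{2,2}$ merely grazes $\oell_{1,1}$ from one side, the discrete boundary is pushed to the other side and no crossing occurs; and whenever $\soell_{1,1}$ and $\soell_{2,2}$ do cross at some $\z_0$, the two adjacent $\Lat'$-points of $\soell_{2,2}$ straddling $\soell_{1,1}$ force points of $\oell_{2,2}$ in $U_{1,-}$ and in $U_{1,+}$ within $3\ssca$ of $\z_0$; there $\Im\Dfin_2=0$ while $\Im\Dfin_1>0$ on the $U_{1,-}$ side and $\Im\Dfin_1\le 0$ on the $U_{1,+}$ side (the latter ruled in via $\oell_{1,2}$ and the values from \eqref{eq:inicur}), so $\Im(\Dfin_1-\Dfin_2)$ changes sign on a segment of length $<6\ssca$, producing a point of $\oell_d$ within $3\ssca$ of $\z_0$. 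To repair your proof you would need either this kind of side-sensitive membership rule or an a priori quantitative separation bound between $\oell_{1,1}$ and $\oell_{2,2}$ off of $\oell_d$, which the paper never establishes and which your proposal does not supply.
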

\begin{proof}
We note that the set $(\HH\cup\R)\setminus(\oell_{1,1}\cup[\zzc\to\zzc+e^{\i \vartheta_1} \lsca^{-1/4+\ere}\cust])$ contains two connected components, and we denote the bounded one by $U_{1,-}$, and the unbounded one plus the boundary $\oell_{1,1}\cup[\zzc\to\zzc+e^{\i \vartheta_1} \lsca^{-1/4+\ere}\cust]$ by  $U_{1,+}$.
Let $\Lat'_-$ denote the set of $\z\in \HH\cap\Lat'$, such that either
$
(\z+[-\ssca, \ssca] + \i [-\ssca, \ssca]) \subset U_{1,-},
$ or
\begin{equation}  \label{eq:zlttemp}
(\z+[-\ssca, \ssca] + \i [-\ssca, \ssca])\cap (U_{1,-} \cap \oell_{2,2}) \neq \emptyset, \quad
(\z+[-\ssca, \ssca] + \i [-\ssca, \ssca]) \cap (U_{1,+} \cap \oell_{2,2}) =\emptyset.    
\end{equation}
Then, $\Lat'_-$ is a finite set.
In particular, we have
\begin{equation}  \label{eq:zlatmrel}
(\z+[-\ssca, \ssca] + \i [-\ssca, \ssca])\cap U_{1,-} \neq \emptyset, \  \forall \z \in \Lat'_-; \quad (\z+[-\ssca, \ssca] + \i [-\ssca, \ssca])\cap U_{1,+} \neq \emptyset, \  \forall \z \in (\HH\cap\Lat')\setminus \Lat'_-.
\end{equation}

Denote by $W_-$ the union of the set $\{\z\in \HH\cup\R: |\z-\zzc|\le \lsca^{-1/4+\ere}\cust\}\cap U_{1,-}$ and the $\lsca^{-3}$ neighborhood of $[\w_-, \zzc-\lsca^{-1/4+\ere}\cust]\setminus P_{1,-}$ in $U$. 
Denote by $W_+$ the $\lsca^{-3}$ neighborhood of $(-\infty, \w_-]\setminus P_{1,-}$ in $U$. See Figure \ref{fig:discr} for an illustration of these sets.
By Lemma \ref{lem:oodis}, we have $W_-\subset U_{1,-}$ and $U_{1,-}\cap W_+=\emptyset$.
Thus, we have
\[
\{\z\in \Lat':\z+[-\ssca, \ssca] + \i [-\ssca, \ssca] \subset W_-\} \subset \Lat'_-, \quad 
\{\z\in \Lat': \z+[-\ssca, \ssca] + \i [-\ssca, \ssca] \subset W_+\} \cap \Lat'_- = \emptyset.
\]
Now, consider the set $\cup_{\z\in \Lat'_-}(\z+[-\ssca, \ssca] + \i [-\ssca, \ssca])$, and denote its connected component that intersects $W_-$ as $W_*$.
The boundary of $W_*$ is a $\Lat$-curve, which contains a vertex in $\Lat$ (denoted by $\z_{*,c}$) within distance $5\ssca$ to $\zzc+e^{\i \vartheta_1} \lsca^{-1/4+\ere}\cust$.
It also contains $[\w_-, \zzc-5\ssca]\setminus P_{1,-}$ and is disjoint from $(-\infty, \w_-] \setminus P_{1,-}$. 
We denote its left-most intersection with $P_{1,-}$ as $\z_{*,l}$.
We let $\soell_{1,1}$ be the part of the boundary of $W_*$, going from $\z_{*,c}$ to $\z_{*,l}$ counter-clockwisely. See Figure \ref{fig:discr} for an illustration of $\soell_{1,1}$ and these related objects.
\begin{figure}[hbt!]
    \centering
\begin{tikzpicture}[line cap=round,line join=round,>=triangle 45,x=0.2cm,y=0.2cm]
\clip(-11,-1.7) rectangle (55,17);
\fill[green!30] (40,0) --  (35,5) arc(135:174.9:7.071068) -- (9.6,0.7) arc(90:180:0.7) --  cycle;
\fill[green!30] (-10,0) --  (-10,0.7) -- (6.4,0.7) arc(90:0:0.7) --  cycle;
\draw [thick] [->] (-10,0) -- (54,0);

\draw [cyan] [thick] plot [smooth] coordinates {(36,25) (23,14) (25.6,10) (30.3,10) (29.2,14) (36,12) (40.1,9) (40,7.071068)};

\draw [blue] [thick] plot coordinates {(40,0) (35,5)};

\draw [blue] [thick] plot [smooth] coordinates {(8,0) (7.7,1.5) (5,5) (6,8) (14,7) (19,12) (29,12) (32,7.5) (35,5)};

\draw [red] [thick] plot coordinates {(6.4,0) (9.6,0)};

\begin{scope}
    \clip (30,0) rectangle (50,10);
    \draw [dotted] (40,0) circle (7.071068);
\end{scope}

\begin{scriptsize}
\draw ($(39.5-0.3*104-0.5,0-0.4)$) node[anchor=south west]{$\z_{*,l}$};
\draw ($(39.5-0.3*15+1.2,0.3*15+0.4)$) node[anchor=north east]{$\z_{*,c}$};

\draw (10,10) node[anchor=west]{$U_{1,+}$};
\draw (10,5) node[anchor=west]{$U_{1,-}$};

\draw (8,0) node[anchor=north]{$\w_-$};

\draw (40,0) node[anchor=north]{$\zzc$};
\draw (34.7,5.5) node[anchor=west]{$\zzc+e^{\i \vartheta_1} \lsca^{-1/4+\ere}\cust$};
\draw (39.7,7.571068) node[anchor=west]{$\zzc+e^{\i \theta_2} \lsca^{-1/4+\ere}\cust$};

\draw [cyan] (36,12) node[anchor=south]{$\oell_{2,2}$};
\draw [blue] (20,12) node[anchor=south]{$\oell_{1,1}$};
\draw [red] (8,0) node[anchor=south east]{$P_{1,-}$};

\draw [orange] (20,3) node[anchor=south]{$W_*$};
\draw [orange] (17,6) node[anchor=south]{$\soell_{1,1}$};

\draw [darkgreen] (35,0) node[anchor=south]{$W_-$};
\draw [darkgreen] (-3,-0.7) node[anchor=south]{$W_+$};
\end{scriptsize}

\draw [fill=uuuuuu] (40,0) circle (.6pt);
\draw [fill=uuuuuu] (35,5) circle (.6pt);
\draw [fill=uuuuuu] (8,0) circle (.6pt);

\draw [fill=uuuuuu] (40,7.071068) circle (.6pt);

  \foreach \x in {0,...,15}
    \draw [orange] [very thin] ($(39.5-0.3*\x,0.3*\x)$) -- ($(39.5-0.3*\x,0.3*\x+0.3)$) -- ($(39.5-0.3*\x-0.3,0.3*\x+0.3)$);
    
\draw [orange] [very thin] ($(39.5-0.3*15,0.3*15+0.3)$) -- ($(39.5-0.3*16-0.3,0.3*15+0.3)$);    
  \foreach \x in {16,...,19}
    \draw [orange] [very thin] ($(39.2-0.3*\x,0.3*\x)$) -- ($(39.2-0.3*\x,0.3*\x+0.3)$) -- ($(39.2-0.3*\x-0.3,0.3*\x+0.3)$);
    
\draw [orange] [very thin] ($(39.5-0.3*22,0.3*20)$) -- ($(39.5-0.3*21,0.3*20)$); 

  \foreach \x in {20,...,23}
    \draw [orange] [very thin] ($(38.9-0.3*\x,0.3*\x)$) -- ($(38.9-0.3*\x,0.3*\x+0.3)$) -- ($(38.9-0.3*\x-0.3,0.3*\x+0.3)$);
    
\draw [orange] [very thin] ($(39.5-0.3*26,0.3*24)$) -- ($(39.5-0.3*26,0.3*25)$); 
    
  \foreach \x in {25,26}
    \draw [orange] [very thin] ($(39.2-0.3*\x,0.3*\x)$) -- ($(39.2-0.3*\x,0.3*\x+0.3)$) -- ($(39.2-0.3*\x-0.3,0.3*\x+0.3)$);
    
\draw [orange] [ultra thin] ($(39.5-0.3*28,0.3*27)$) -- ($(39.5-0.3*28,0.3*30)$) -- ($(39.5-0.3*29,0.3*30)$) -- ($(39.5-0.3*29,0.3*32)$) -- ($(39.5-0.3*30,0.3*32)$) -- ($(39.5-0.3*30,0.3*36)$) -- ($(39.5-0.3*32,0.3*36)$) -- ($(39.5-0.3*32,0.3*37)$) -- ($(39.5-0.3*33,0.3*37)$) -- ($(39.5-0.3*33,0.3*38)$) -- ($(39.5-0.3*34,0.3*38)$) -- ($(39.5-0.3*34,0.3*39)$) -- ($(39.5-0.3*36,0.3*39)$) -- ($(39.5-0.3*36,0.3*40)$) -- ($(39.5-0.3*43,0.3*40)$) -- ($(39.5-0.3*43,0.3*41)$) -- ($(39.5-0.3*59,0.3*41)$) -- ($(39.5-0.3*59,0.3*40)$) -- ($(39.5-0.3*67,0.3*40)$) -- ($(39.5-0.3*67,0.3*39)$) -- ($(39.5-0.3*69,0.3*39)$) -- ($(39.5-0.3*69,0.3*38)$) -- ($(39.5-0.3*71,0.3*38)$) -- ($(39.5-0.3*71,0.3*37)$) -- ($(39.5-0.3*72,0.3*37)$) -- ($(39.5-0.3*72,0.3*36)$) -- ($(39.5-0.3*73,0.3*36)$) -- ($(39.5-0.3*73,0.3*35)$) -- ($(39.5-0.3*74,0.3*35)$) -- ($(39.5-0.3*74,0.3*33)$) -- ($(39.5-0.3*75,0.3*33)$) -- ($(39.5-0.3*75,0.3*32)$) -- ($(39.5-0.3*76,0.3*32)$) -- ($(39.5-0.3*76,0.3*31)$) -- ($(39.5-0.3*77,0.3*31)$) -- ($(39.5-0.3*77,0.3*29)$) -- ($(39.5-0.3*78,0.3*29)$) -- ($(39.5-0.3*78,0.3*28)$) -- ($(39.5-0.3*79,0.3*28)$) -- ($(39.5-0.3*79,0.3*26)$) -- ($(39.5-0.3*80,0.3*26)$) -- ($(39.5-0.3*80,0.3*25)$) -- ($(39.5-0.3*81,0.3*25)$) -- ($(39.5-0.3*81,0.3*24)$) -- ($(39.5-0.3*82,0.3*24)$) -- ($(39.5-0.3*82,0.3*23)$) -- ($(39.5-0.3*84,0.3*23)$) -- ($(39.5-0.3*84,0.3*22)$) -- ($(39.5-0.3*93,0.3*22)$) -- ($(39.5-0.3*93,0.3*23)$) -- ($(39.5-0.3*98,0.3*23)$) -- ($(39.5-0.3*98,0.3*24)$) -- ($(39.5-0.3*103,0.3*24)$) -- ($(39.5-0.3*103,0.3*25)$) -- ($(39.5-0.3*108,0.3*25)$) -- ($(39.5-0.3*108,0.3*26)$) -- ($(39.5-0.3*112,0.3*26)$) -- ($(39.5-0.3*112,0.3*25)$) -- ($(39.5-0.3*113,0.3*25)$) -- ($(39.5-0.3*113,0.3*24)$) -- ($(39.5-0.3*114,0.3*24)$) -- ($(39.5-0.3*114,0.3*16)$) -- ($(39.5-0.3*113,0.3*16)$) -- ($(39.5-0.3*113,0.3*14)$) -- ($(39.5-0.3*112,0.3*14)$) -- ($(39.5-0.3*112,0.3*13)$) -- ($(39.5-0.3*111,0.3*13)$) -- ($(39.5-0.3*111,0.3*12)$) -- ($(39.5-0.3*110,0.3*12)$) -- ($(39.5-0.3*110,0.3*11)$) -- ($(39.5-0.3*109,0.3*11)$) -- ($(39.5-0.3*109,0.3*10)$)  -- ($(39.5-0.3*108,0.3*10)$) -- ($(39.5-0.3*108,0.3*9)$) -- ($(39.5-0.3*107,0.3*9)$) -- ($(39.5-0.3*107,0.3*7)$) -- ($(39.5-0.3*106,0.3*7)$) -- ($(39.5-0.3*106,0.3*6)$) -- ($(39.5-0.3*105,0.3*6)$) -- ($(39.5-0.3*105,0.3*4)$) -- ($(39.5-0.3*104,0.3*4)$) -- ($(39.5-0.3*104,0)$) -- ($(39.5,0)$);     

\draw [fill=uuuuuu] ($(39.5-0.3*104,0)$) circle (.6pt);
\draw [fill=uuuuuu] ($(39.5-0.3*15,0.3*15)$) circle (.6pt);

\end{tikzpicture}
\caption{An illustration of discretizing $\oell_{1,1}$ into $\soell_{1,1}$.}
\label{fig:discr}
\end{figure}
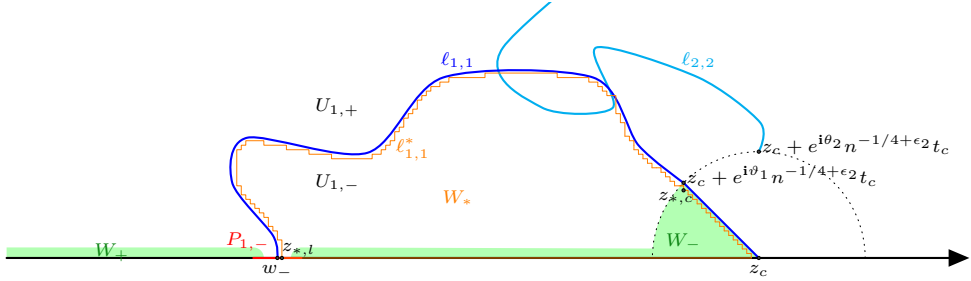

Next, we check that $\soell_{1,1}$ constructed this way satisfies the requirements (1), (2), and (3). 
\begin{enumerate}[leftmargin=*]
    \item[(1)] By \eqref{eq:zlatmrel}, $\Lat'_-$ is contained in the $\sqrt{2}\ssca$ neighborhood of $U_{1,-}$, and $(\HH\cap\Lat')\setminus\Lat'_-$ is contained in the $\sqrt{2}\ssca$ neighborhood of $U_{1,+}$. Hence, any point in $\soell_{1,1}$ is within distance $2\sqrt{2}\ssca$ to both $U_{1,-}$ and $U_{1,+}$, thereby within distance $2\sqrt{2}\ssca$ to $\oell_{1,1}\cup[\zzc\to\zzc+e^{\i \vartheta_1} \lsca^{-1/4+\ere}\cust]$. As $\soell_{1,1}$ is from $\z_{*,c}$ to $\z_{*,l}$, we have that it must be contained in the $5\ssca$ neighborhood of $\oell_{1,1}$.
    \item[(2)] This follows from (1) and Lemma \ref{lem:oodis}.
    \item[(3)] Take any $\z_0\in \soell_{1,1}\cap\soell_{2,2}$, we can find $\z_{0,-}, \z_{0,+}\in \Lat'\cap\soell_{2,2}$ such that $|\z_{0,+}-\z_{0,-}|=2\ssca$, $\z_0=(\z_{0,+}+\z_{0,-})/2$, and $\z_{0,-}$, $\z_{0,+}$ are in different sides of $\soell_{1,1}$. Then, only one of them is in $W_*$, and hence only one of them is in $\Lat'_-$.
    Without loss of generality, we assume that 
    $\z_{0,-}\in \Lat'_-$ and $\z_{0,+}\in (\HH\cap\Lat')\setminus\Lat'_-$.
    
    As $\z_{0,+}\in\soell_{2,2}$, we can find $\z_{b,+} \in \oell_{2,2}\cap (\z_{0,+}+[-\ssca, \ssca] + \i [-\ssca, \ssca])$. We can always choose $\z_{b,+}$ so that $\z_{b,+} \in U_{1,+}$, since otherwise $\z_{0,+} \in \Lat'_-$ by \eqref{eq:zlttemp}.
    On the other hand, we can also find $\z_{b,-} \in \oell_{2,2}\cap (\z_{0,-}+[-\ssca, \ssca] + \i [-\ssca, \ssca])$.
    Since $\z_{0,-}\in \Lat'_-$, if $(\z_{0,-}+[-\ssca, \ssca] + \i [-\ssca, \ssca])\subset U_{1,-}$, we can always choose $\z_{b,-}$ such that $\z_{b,-} \in U_{1,-}$.
Now, we have found $\z_{b,-} \in U_{1,-} \cap \oell_{2,2}$ and $\z_{b,+} \in U_{1,+} \cap \oell_{2,2}$, with $|\z_{b,-}-\z_0|, |\z_{b,+}-\z_0| < 3\ssca$.
Then, we have $\z_{b,-}, \z_{b,+}\in U$ (since $\oell_{2,2}\subset U$), and 
\begin{equation}  \label{eq:dfore1}
\Im(\Dfin_2(\z_{b,-}))=\Im(\Dfin_2(\z_{b,+}))=0.
\end{equation}
Also, notice that 
\begin{equation}\label{eq:dfore2}
\Im(\Dfin_1(\z_{b,-}))>0,    
\end{equation}
since $\Im \Dfin_1 >0$ on $U_{1,-}\setminus \R$ and $\oell_{2,2}\cap\R=\emptyset$.

We next claim that
\begin{equation}  \label{eq:dfore3}
\Im(\Dfin_1(\z_{b,+}))\le 0.
\end{equation}
Otherwise, $\z_{b,-}$ and $\z_{b,+}$ must be in different components of $U\setminus \oell_{1,2}$, so $[\z_{b,-}\to\z_{b,+}]$ (which is in $U$ and has length $\lesssim \ssca$) intersects both $\oell_{1,1}$ and $\oell_{1,2}$.
Take any $\z_{b,1}\in [\z_{b,-}\to\z_{b,+}]\cap\oell_{1,1}$ and $\z_{b,2}\in [\z_{b,-}\to\z_{b,+}]\cap\oell_{1,2}$.
By property (2) above, we have that $\z_{b,1}$ and $\z_{b,2}$ are not in the $\lsca^{-3}/3$ neighborhood of $\R\setminus P_{1,-}$, so Lemma \ref{lem:dD} implies that $\Dfin_1(\z_{b,1})-\Dfin_1(\z_{b,2})\lesssim \lsca^2\ssca$.
On the other hand, since $\z_{b,1}\in \oell_{1,1}$ and $\z_{b,2}\in\oell_{1,2}$, we have $\Dfin_1(\z_{b,1})-\Dfin_1(\zzc), \Dfin_1(\z_{b,2})-\Dfin_1(\zzc)\in \R$, and $\Dfin_1(\z_{b,1})-\Dfin_1(\zzc)\ge \Dfin_1(e^{\i \vartheta_1} \lsca^{-1/4+\ere}\cust)-\Dfin_1(\zzc) \asymp \lsca^{-1+4\ere}$, $-\Dfin_1(\z_{b,2})+\Dfin_1(\zzc)\ge -\Dfin_1(e^{\i \vartheta_2} \lsca^{-1/4+\ere}\cust)+\Dfin_1(\zzc) \asymp \lsca^{-1+4\ere}$ by (the $\Dfin_1$ version of) \eqref{eq:inicur}.
Hence, we have $\Dfin_1(\z_{b,1})-\Dfin_1(\z_{b,2})\gtrsim \lsca^{-1+4\ere}$, which leads to a contradiction. Thus, we conclude \eqref{eq:dfore3}.

Now, combining \eqref{eq:dfore1}, \eqref{eq:dfore2}, and \eqref{eq:dfore3}, we obtain that $\Im(\Dfin_1(\z_{b,-})-\Dfin_2(\z_{b,-}))>0\ge \Im(\Dfin_1(\z_{b,+})-\Dfin_2(\z_{b,+}))$.
Thus, there exists some $\z_{b,*}\in [\z_{b,-}\to\z_{b,+}]$ such that $\Im(\Dfin_1(\z_{b,*})-\Dfin_2(\z_{b,*}))=0$, i.e., $\z_{b,*}\in \oell_d$. Note that we must have $|\z_{b,*}-\z_0|<3\ssca$ since  $|\z_{b,-}-\z_0|, |\z_{b,+}-\z_0| < 3\ssca$, so $\z_0$ is contained in the $3\ssca$-neighborhood of $\oell_d$.
\end{enumerate}

Finally, the construction of $\soell_{1,3}$ and corresponding properties follow similar arguments.
\end{proof}

We are now ready to finish the proof of Lemma \ref{lem:Kconvconj} by deforming the contours to $\soell_{2,2}$ and $\soell_{1,1}$, $\soell_{1,3}$.

\begin{proof}[Proof of Lemma \ref{lem:Kconvconj}: Outer part]
As stated above, we just need to control \eqref{eq:contwz} over the contours stated in Lemma \ref{lem:Kconvconj} (Outer part).
For the convenience of notation, we introduce the following definitions.
Let $L_1$ be the contour of $[\zzc+e^{3\pi\i/4} \lsca^{-1/4+\ere}\cust \to \iota(\soell_{1,1})]$ followed by $\soell_{1,1}$, where $\iota(\soell_{1,1})$ is the starting point of $\soell_{1,1}$, with $|\iota(\soell_{1,1})-(\zzc+e^{\i \vartheta_1} \lsca^{-1/4+\ere}\cust)|<5\ssca$.
Let $L_2$ be the contour of $[\zzc+e^{\pi\i/2} \lsca^{-1/4+\ere}\cust \to \iota(\soell_{2,2})]$ followed by $\soell_{2,2}$, where $\iota(\soell_{2,2})$ is the starting point of $\soell_{2,2}$, with $|\iota(\soell_{2,2})-(\zzc+e^{\i \theta_2} \lsca^{-1/4+\ere}\cust)|<2\ssca$.

We first claim that there exists a constant $c>0$ such that
\begin{equation}  \label{eq:wzzcdb}
 \Re(\Dfin_1(\w)-\Dfin_1(\zzc))>c\lsca^{-1+4\ere}, \quad \Re(\Dfin_2(\z)-\Dfin_2(\zzc))<-c\lsca^{-1+4\ere},
\end{equation}
for any $\w\in L_1$ and $\z\in L_2$.

We now prove this claim. For any $\w\in\soell_{1,1}$, by Lemma \ref{lem:disc} we can find $\w'\in\oell_{1,1}$ such that $|\w-\w'|<5\ssca$; and $\w$ is disjoint from the $\lsca^{-3}/2$-neighborhood of $\R\setminus P_{1,-}$. By Lemma \ref{lem:curveDo}, we have $\Re(\Dfin_1(\w')-\Dfin_1(\zzc))>c\lsca^{-1+4\ere}$. Then, by Lemma \ref{lem:dD}, the first inequality in \eqref{eq:wzzcdb} holds for any $\w\in\soell_{1,1}$.
    For any $\w\in[\zzc+e^{ \i \theta_1} \lsca^{-1/4+\ere}\cust \to \iota(\soell_{1,1})]$, we must have that $|\w-(\zzc+e^{\i \vartheta_1} \lsca^{-1/4+\ere}\cust)|\lesssim \lsca^{-1/4-\ere}\cust$ by Lemma \ref{lem:curveDo} and Lemma \ref{lem:disc}. Then, using Lemma \ref{lem:dDzce}, we obtain the first inequality in \eqref{eq:wzzcdb}.
The second inequality in \eqref{eq:wzzcdb} can be proved in a similar way by using Lemmas \ref{lem:dD}, \ref{lem:dDzce}, \ref{lem:curveDo}, and \ref{lem:disa}.

We next analyze the contours in the outer part of Lemma \ref{lem:Kconvconj}, which can be further deformed and decomposed into several parts. By symmetry, it suffices to consider the following few cases.
\begin{itemize}[leftmargin=*]
    \item[(a)] The contour of $\z$ is $L_2':=[\zzc+e^{3\pi\i/2} \lsca^{-1/4+\ere}\cust\to \zzc+e^{\pi\i/2} \lsca^{-1/4+\ere}\cust]$, and the contour of $\w$ is $L_1$.
    We note that $L_1$ is deformed from $[\zzc+e^{3\pi\i/4} \lsca^{-1/4+\ere}\cust \to -\cust-1]$, and this deformation is allowed because the integrand in \eqref{eq:contwz} has no $\w$ pole in $\HH$ or $[-\cust-1,\w_-]\cup P_{1,-}$.
    Note that the contours $L_2'$ and $L_1$ do not intersect, and the distance between them is of order $\gtrsim \lsca^{-1/4+\ere}\cust$ by Lemma \ref{lem:disc} and the fact that the distance between $\oell_{1,1}$ and $L_2'$ is of order $\gtrsim \lsca^{-1/4+\ere}\cust$. 
    
    For any $\z\in L_2'$, we have  $ \Re(\Dfin_2(\z)-\Dfin_2(\zzc))\le C\lsca^{-1+2\ere}$ for a constant $C>0$ by Lemma \ref{lem:dDzce} (note it can be negative).    
Then, by \eqref{eq:wzzcdb}, for any $\z\in L_2'$ and $ \w\in L_1$, we have $\Re(\Dfin_2(\z)-\Dfin_1(\w) -\Dfin_2(\zzc) + \Dfin_1(\zzc))<-c\lsca^{-1+4\ere}$.
    By Lemma \ref{lem:Dlz}, $\oell_{1,1}$ is contained in a ball of radius $\lesssim \lsca$. Thus, $L_1$ is also contained in a ball of radius $\lesssim \lsca$, from which we see that its length satisfies $\lesssim \lsca^2\ssca^{-1}$.
    Now, the integral \eqref{eq:contwz} along these contours is at most of order 
    \[\lsca^2\ssca^{-1} \cdot \lsca^{1/4-\ere}\cust^{-1} \cdot \exp(-c\lsca^{4\ere}) \lesssim \exp(-c\lsca^{4\ere}/2).\]
    \item[(b)] The contour of $\w$ is $L_1':=[\zzc+e^{5\pi\i/4} \lsca^{-1/4+\ere}\cust\to \zzc\to \zzc+e^{3\pi\i/4} \lsca^{-1/4+\ere}\cust]$, and the contour of $\z$ is $L_2$.
    We note that $L_2$ is deformed from $[\zzc+e^{\pi\i/2} \lsca^{-1/4+\ere}\cust \to \infty e^{\pi\i/2}]$. This deformation is allowed because the integrand of \eqref{eq:contwz} has no $\z$ pole in $\HH$, and $|\exp(\lsca\Dfin_2(\z))|$
    decays exponentially along any direction between $\i$ and the asymptotic direction of  $\soell_{2,2}$, due to Lemma \ref{lem:Dlz}. Not that the contours $L_1'$ and $L_2$ do not intersect, and the distance between them is of order $\gtrsim \lsca^{-1/4+\ere}\cust$ by Lemma \ref{lem:disa} and the fact that the distance between $\oell_{2,2}$ and $L_1'$ is of order $\gtrsim\lsca^{-1/4+\ere}\cust$. 
    
    For any $\w\in L_1'$, we have $ \Re(-\Dfin_1(\w)+\Dfin_1(\zzc))\le C\lsca^{-1+2\ere}$ by Lemma \ref{lem:dDzce}.
Then, by \eqref{eq:wzzcdb}, for any $\z\in L_2$ and $\w\in L_1'$, we have $\Re(\Dfin_2(\z)-\Dfin_1(\w) -\Dfin_2(\zzc) + \Dfin_1(\zzc))<-c\lsca^{-1+4\ere}$.
    In addition, if $|\z|>\lsca$, by Lemma \ref{lem:Dlz} there is 
    \begin{equation}  \label{eq:bdredlaz}
        \Re(\Dfin_2(\z)-\Dfin_2(\zzc))<-c|\z|.
    \end{equation}
Therefore, the integral \eqref{eq:contwz} along these contours is at most of order
\[
 \lsca^2\ssca^{-1} \cdot \lsca^{1/4-\ere}\cust^{-1} \cdot \exp(-c\lsca^{4\ere}) + \sum_{W\in\Z, W>\lsca} W\ssca^{-1} \cdot \lsca^{1/4-\ere}\cust^{-1} \cdot \exp(-c\lsca W)  \lesssim \exp(-c\lsca^{4\ere}/2)  ,
\]
where the first term accounts for the part of $L_2$ where $|\z|\le \lsca$ (and hence has length $\lesssim \lsca^2\ssca^{-1}$), and the summand for each $W$ accounts for the part of $L_2$ where $||\z|-W|\le 1$ (and hence has length $\lesssim W\ssca^{-1}$).
\item[(c)] The contour of $\w$ is $L_1$, and the contour of $\z$ is the complex conjugate of $L_2$. The distance between them is $\gtrsim\lsca^{-3}$ by Lemmas \ref{lem:disa} and \ref{lem:disc}. 
By bounding the lengths of the contours as in (a) and (b), and using \eqref{eq:wzzcdb} and \eqref{eq:bdredlaz}, we conclude that  the integral \eqref{eq:contwz} along these contours is also $\lesssim \exp(-c\lsca^{4\ere})$.
\item[(d)] The contour of $\w$ is $L_1$, and the contour of $\z$ is $L_2$.
These contours $L_1$ and $L_2$ may intersect, since $\soell_{1,1}$ and $\soell_{2,2}$ may intersect. (As we have seen in (a) and (b), the distance between $[\zzc+e^{3\pi\i/4} \lsca^{-1/4+\ere}\cust \to \iota(\soell_{1,1})]$ and $\soell_{2,2}$ is of order $\gtrsim\lsca^{-1/4+\ere}$, and the distance between $[\zzc+e^{3\pi\i/4} \lsca^{-1/4+\ere}\cust \to \iota(\soell_{2,2})]$ and $\soell_{1,1}$ is also of order $\gtrsim\lsca^{-1/4+\ere}$.)
Denote $S=\soell_{1,1}\cap \soell_{2,2}=L_1\cap L_2$.
Since $\soell_{1,1}$ is a $\Lat$-curve and $\soell_{2,2}$ is a $\Lat'$-curve, $S$ is contained in the lattice $\Lat+\{\ssca, \i\ssca\}=\Lat'+\{\ssca, \i\ssca\}$. Using the fact that $L_1$ is contained in a ball of radius $\lesssim \lsca$ as shown above, we get the trivial bound $|S|\lesssim (\lsca\ssca^{-1})^2$.
For any $\w\in L_1$ and $\z\in L_2$, unless $\w, \z\in \z_*+(-\ssca,\ssca)+\i(-\ssca,\ssca)$ for some $\z_*\in S$, we must have $|\w-\z|\ge \ssca$.
For each $\z_*\in S$, the parts of $L_1$ and $L_2$ inside $\z_*+(-\ssca,\ssca)+\i(-\ssca,\ssca)$ are two orthogonal line segments, each having length $2\ssca$.
Therefore, by \eqref{eq:wzzcdb}, the integral \eqref{eq:contwz} over these segments is $\lesssim \ssca^{-1} \exp(-c\lsca^{4\ere})$.
Summing over all $\z_*\in S$ and bounding the integral \eqref{eq:contwz} over the rest parts of the $\w$ and $\z$ contours as in (a) and (b), we again get the bound $\lesssim \exp(-c\lsca^{4\ere})$.
\end{itemize}
In case (d), the $\w$ and $\z$ contours are deformed from $[\zzc+e^{3\pi\i/4} \lsca^{-1/4+\ere}\cust \to -\cust-1]$ and $[\zzc+e^{\pi\i/2} \lsca^{-1/4+\ere}\cust \to \infty e^{\pi\i/2}]$, respectively.
When $S\neq\emptyset$, this procedure potentially leads to residues at $\w=\z$, which are given by
\begin{equation}  \label{eq:residue}
\int d\z \exp(\lsca \Dfin_2(\z)-\lsca \Dfin_1(\z)-\lsca\Dfin_2(\zzc) + \lsca\Dfin_1(\zzc)),
\end{equation}
where the integral is along some curves $[\z_{*,1}\to \z_{*,2}]$, $\ldots$, $[\z_{*,2k-1}\to \z_{*,2k}]$, such that $\{\z_{*,i}\}_{i=1}^{2k}=S$.

We assume that $S\neq\emptyset$ and bound the integral \eqref{eq:residue} along $[\z_{*,2i-1}\to \z_{*,2i}]$ for each $i=1, \ldots, k$.
By Lemmas \ref{lem:disb} and \ref{lem:disc}, the curve $\oell_d\neq\emptyset$ and we can find $\z_{*,2i-1}'', \z_{*,2i}''\in \oell_d$ and a $\Lat'$-curve $L_{*,i}$ with endpoints $\z_{*,2i-1}'$, $\z_{*,2i}'$, such that $|\z_{*,2i-1}-\z_{*,2i-1}''|, |\z_{*,2i}-\z_{*,2i}''|\le 5\ssca$, and $|\z_{*,2i-1}'-\z_{*,2i-1}'', |\z_{*,2i}'-\z_{*,2i}''|\le 2\ssca$.
In addition, the $\Lat'$-curve $L_{*,i}$ is contained in the $2\ssca$ neighborhood of $\oell_d$ between $\z_{*,2i-1}''$ and $\z_{*,2i}''$, and is disjoint from the $\lsca^{-3}/2$-neighborhood of $E(\Dfin_1-\Dfin_2)$.

We deform the contour $[\z_{*,2i-1}\to\z_{*,2i}]$ into the contour consisting of $[\z_{*,2i-1}\to\z_{*,2i-1}']$, $L_{*,i}$, and $[\z_{*,2i}'\to\z_{*,2i}]$.
For each $\z$ on this contour,
by \eqref{eq:ddD} and the fact that $\Re(\Dfin_1-\Dfin_2)$ is monotone along $\oell_d$, we have that
\begin{equation}  \label{eq:difredot}
\begin{split}
&\Re(\Dfin_2(\z)-\Dfin_1(\z)) < \Re(\Dfin_2(\z'')-\Dfin_1(\z''))+ \OO(\ssca \lsca^2) \\ < & \Re(\Dfin_2(\z_{*,2i-1}'')-\Dfin_1(\z_{*,2i-1}'')) \vee \Re(\Dfin_2(\z_{*,2i}'')-\Dfin_1(\z_{*,2i}'')) + \OO(\ssca \lsca^2) \\ < &\Re(\Dfin_2(\z_{*,2i-1})-\Dfin_1(\z_{*,2i-1})) \vee \Re(\Dfin_2(\z_{*,2i})-\Dfin_1(\z_{*,2i})) + \OO(\ssca \lsca^2),
\end{split}
\end{equation}
where $\z''$ is a point in $\oell_d$ between $\z_{*,2i-1}''$ and $\z_{*,2i}''$, with $|\z''-z|\le 5\ssca$.
Using Lemma \ref{lem:dD} and the facts that $\z_{*,2i-1}\in S\subset \soell_{1,1}$ and $\oell_{1,1}$ is disjoint from the $\lsca^{-3}$ neighborhood of $E(\Dfin_1)$ by Lemma \ref{lem:oodis}, we obtain that
\begin{equation}   \label{eq:dfinoti}
\Re(\Dfin_1(\z_{*,2i-1})) > \Re(\Dfin_1(\z_{*,2i-1}'''))  +\OO(\ssca \lsca^2),    
\end{equation}
for some $\z_{*,2i-1}'''\in \oell_{1,1}$ with $|\z_{*,2i-1}'''-\z_{*,2i-1}|\lesssim \ssca$. By Lemma \ref{lem:curveDo}, we have $\Re(\Dfin_1(\z_{*,2i-1}''')-\Dfin_1(\zzc))>c\lsca^{-1+4\ere}$, which, together with \eqref{eq:dfinoti}, implies that $\Re(\Dfin_1(\z_{*,2i-1})-\Dfin_1(\zzc))>c\lsca^{-1+4\ere}$.
Similarly, we have 
\[
 \Re(\Dfin_1(\z_{*,2i})-\Dfin_1(\zzc)) > c\lsca^{-1+4\ere} , \quad
\Re(\Dfin_2(\z_{*,2i-1})-\Dfin_2(\zzc)), \Re(\Dfin_2(\z_{*,2i})-\Dfin_2(\zzc)) < -c\lsca^{-1+4\ere} .\]
Combining the above estimates with \eqref{eq:difredot}, we conclude that 
\begin{equation}   \label{eq:dfinoti2}
\Re(\Dfin_2(\z)-\Dfin_1(\z)- \Dfin_2(\zzc) + \Dfin_1(\zzc))<-c\lsca^{-1+4\ere}.
\end{equation}

We next bound the length of $L_{*,i}$.
Since $\z_{*,2i-1}, \z_{*,2i}\in S\subset \soell_{1,1}$, we have that $\z_{*,2i-1}, \z_{*,2i}\lesssim \lsca$ by Lemma \ref{lem:disc}, because $\oell_{1,1}$ is contained in a ball of radius $\lesssim \lsca$ as discussed above.
We claim that $L_{*,i}$ is also contained in a ball of radius $\lesssim \lsca$.
First, we have $\z_{*,2i-1}'', \z_{*,2i}''\lesssim \lsca$.
Next, consider the semi-circle $\{\z\in \HH: |\z|=r\}$ for $r=|\z_{*,2i-1}''|+|\z_{*,2i}''|+\lsca$.
Since $\oell_d\neq\emptyset$, by Lemma \ref{lem:difcur} we have either (i) $\oox_1-\oot_1<\oox_2-\oot_2+1$ and $\oox_1>\oox_2-1$, or (ii) $\oox_1-\oot_1>\oox_2-\oot_2+1$ and $\oox_1<\oox_2-1$.
In either case, the function $\Im(\Dfin_1-\Dfin_2)$ is strictly monotone along the semi-circle.
Therefore, the semi-circle $\{\z\in \HH: |\z|=r\}$ intersects $\oell_d$ (which is contained in $\HH$ and goes to $\infty$) exactly once, so the intersection cannot lie on the part of $\oell_d$ between $\z_{*,2i-1}''$ and $\z_{*,2i}''$.
As a result, the part of $\oell_d$ between $\z_{*,2i-1}''$ and $\z_{*,2i}''$ is contained in a ball of radius $\lesssim \lsca$, and the claim holds.
Thus, the length of $L_{*,i}$ is at most $\lesssim \lsca^2\ssca^{-1}$.

Using the estimate \eqref{eq:dfinoti2}, the bound on the length of $L_{*,i}$, and $|\z_{*,2i-1}-\z'_{*,2i-1}|, |\z_{*,2i}-\z_{*,2i}'|\lesssim \ssca$, we conclude that the integral \eqref{eq:residue} along $[\z_{*,2i-1}\to\z_{*,2i-1}']$, $L_{*,i}$, and $[\z_{*,2i}'\to\z_{*,2i}]$ is at most $\lesssim \exp(-c\lsca^{4\ere})$.
Since $|S|\lesssim (\lsca\ssca^{-1})^2$ (by Lemma \ref{lem:Dlz}), summing over $i$ yields the desired bound $\lesssim \exp(-c\lsca^{4\ere})$.
\end{proof}

\section{Optimal rigidity around cusps}\label{s:optimalR}

In this section, we prove the optimal height function concentration estimate for random lozenge tilings around cusps, which will imply the first part of \Cref{lem:sfqes}.

 \subsection{Concentration of height function}

Fix a rational polygonal set $\fP$ satisfying \Cref{pa}, and denote its liquid region and arctic curve by $\mathfrak{L} = \mathfrak{L} (\mathfrak{P})$ and $\mathfrak{A} = \mathfrak{A} (\mathfrak{P})$, respectively. 
Let $H^*$ denote the limiting height function of $\fP$.
Let $n$ be a large integer such that $\mathsf{P}=n\fP$ is a tileable domain, and let $\mathsf{H}$ denote the height function associated with the uniformly random tiling of $\mathsf{P}$, that has boundary value $nH^*$ on $\partial \sfP$.
As in \Cref{sec:univproof}, all the constants in this section (including those hidden in $\lesssim,\gtrsim,\asymp,\OO$) can depend on $\fP$.
 
 We recall the following height function concentration statement from \cite{huang2021edge}. 
	\begin{thm}
	\label{t:rigidity}
  Take any constant $\delta > 0$.
  Let $\mathfrak{L}_+ (\mathfrak{P})=\{u\in \overline{\fP}: \dist(u, \mathfrak L)\le n^{\delta-2/3}\}$ be the augmented liquid region.
  Then, the following two statements hold with overwhelming probability. 
		\begin{enumerate}
			\item $\big| \mathsf{H} (nv) - n H^* (v) \big| < n^{\delta}$ for any $v \in \overline{\mathfrak{P}}$.
			\item For any $v \in \overline{\mathfrak{P}} \setminus \mathfrak{L}_+ (\mathfrak{P})$, we have $\mathsf{H} (nv) = n H^* (v)$.
		\end{enumerate}
	\end{thm}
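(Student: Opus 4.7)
The plan is to bootstrap the weak concentration from the variational principle (\Cref{hzh}) to the optimal $n^\delta$ scale, using a combination of monotonicity couplings, local Gibbs property, and comparison with explicitly solvable models (such as the NBRW studied in \Cref{s:nibr}, or random tilings of hexagons / trapezoids with known edge behavior). Since \Cref{hzh} only gives $|\sfH(nv)-nH^*(v)|=\oo(n)$, the improvement to $n^\delta$ must come from analyzing fluctuations at finer and finer scales, which is naturally done in a multi-scale induction.

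\textbf{Step 1: Localization via Gibbs property.} For any point $v_0\in\overline{\fP}$, carve out a mesoscopic neighborhood $v_0+[-r,r]^2$ with $r=n^{-\alpha}$ for some $\alpha\in(0,1)$. By the Gibbs property \Cref{lem:Gibbstiling}, conditional on the boundary height function on the rectangle's boundary, the tiling inside is uniformly random with that fixed boundary. If we can inductively establish that the boundary height function concentrates at scale $n^{\eps_k}$ for some $\eps_k$ at scale $r_k$, then the comparison lemmas \Cref{comparewalks}, \Cref{lem:hfmon} allow us to sandwich our tiling between uniformly random tilings on simpler (e.g., trapezoidal) domains with boundary data perturbed by $n^{\eps_k}$. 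The goal is to iterate, showing that fluctuations at scale $r_{k+1}\ll r_k$ improve to $n^{\eps_{k+1}}$ with $\eps_{k+1}<\eps_k$, until $\eps_k$ stabilizes near $\delta$.

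\textbf{Step 2: Base case via comparison with solvable models.} For the inductive base, in the bulk of $\fL$, we can use known optimal rigidity for uniformly random tilings on trapezoidal/hexagonal domains (obtained via dynamic loop equations or Nekrasov equations, e.g.~\cite{gorin2019universality}). Near the arctic curve $\fA$, in the smooth and tangent regimes we use the Airy/GUE-corners input from \cite{aggarwal2021edge,ERT} to seed the induction at scale $n^{1/3+\delta}$; near the cusp, one needs a corresponding input at the $n^{1/4+\delta}$ scale, which can be supplied by a comparison to an NBRW whose density profile matches $\rho_t^*$ locally (this is precisely the construction of \Cref{ssec:cnbrw}).

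\textbf{Step 3: Frozen region by monotonicity sandwich.} Part (2) of the theorem — exact agreement outside $\fL_+(\fP)$ — is proved by a direct monotonicity argument rather than bootstrap. For $v\in\overline{\fP}\setminus\fL_+(\fP)$, we know $\nabla H^*(v)\in\{(0,0),(1,0),(1,-1)\}$ by \Cref{pla}(1), so locally one of the three tile types is forbidden in the limit shape. The strategy is to couple the random tiling with an extremal tiling of a slightly enlarged frozen region (using \Cref{comparewalks} and \Cref{lem:hfmon}), where the enlargement has width $\asymp n^{\delta-2/3}$. Using the $n^{1/3+\delta}$ Airy-type concentration for smooth points and the $n^{1/4+\delta}$ cusp concentration (both provided by Step~2 at the arctic boundary), the extreme path cannot cross into the strict frozen region, forcing $\sfH(nv)=nH^*(v)$ identically there.

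\textbf{Main obstacle.} The hardest part is achieving the \emph{sharp} polynomial exponent $\delta$ (no logs) uniformly in $v\in\overline{\fP}$, especially as $v$ approaches the singular points of $\fA$. In the smooth bulk of $\fL$ this is already nontrivial and relies on controlling the complex Burgers equation solution $f^*$ with good uniform estimates; near a cusp, the complex slope degenerates ($\Sfin'=\Sfin''=\Sfin'''=0$ at $\zzc$ by \Cref{lem:Sderivs}), and the induction must accommodate the anisotropic Pearcey scaling $n^{1/2}\times n^{1/4}$ rather than the isotropic Airy scaling. Handling this transition — propagating sharp bulk rigidity into the degenerate cusp neighborhood — is where the most delicate work lies, and it requires careful choice of the intermediate scales $r_k$ so that the inductive gain per step is not eaten up by the accumulating error from boundary mismatches in the monotonicity couplings.
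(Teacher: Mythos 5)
First, note that the paper does not prove \Cref{t:rigidity} at all: it is quoted verbatim from \cite{huang2021edge} (``We recall the following height function concentration statement from \cite{huang2021edge}''), so there is no internal proof to match your argument against; the relevant question is whether your sketch would stand on its own, and it does not.

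The central gap is that your plan never specifies the mechanism that makes the multi-scale induction close, and the one concrete input you propose for the hardest regime is circular. A monotonicity sandwich (\Cref{comparewalks}, \Cref{lem:hfmon}) only transports a boundary error of size $n^{\eps_k}$ inward without loss; the improvement $\eps_{k+1}<\eps_k$ has to come from (i) the intrinsic fluctuation of the comparison model at scale $r_k$ being $\ll n^{\eps_k}$ \emph{and} (ii) the limit shape of the comparison model (trapezoid, hexagon, or NBRW) matching $H^*$ on the box $v_0+[-r_k,r_k]^2$ to accuracy $o(n^{\eps_{k+1}-1})$. Item (ii) is the crux: it requires quantitative control of $H^*$ via the complex Burgers equation, uniformly up to and including the degenerate neighborhoods of cusps and tangency points, and your sketch acknowledges this only as ``the main obstacle'' without resolving it — but without it the per-step gain is consumed by the limit-shape mismatch and the induction does not improve the exponent. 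Worse, your proposed base case near the cusp (``supplied by a comparison to an NBRW whose density profile matches $\rho_t^*$ locally, this is precisely the construction of \Cref{ssec:cnbrw}'') is circular within this paper's logic: that construction takes as initial data the tiling paths at time $nt_0$ and its usefulness rests on the concentration estimates \Cref{lem:sfqes} and \Cref{lem:tsfQes}, which are themselves deduced from \Cref{t:rigidity} together with \Cref{t:cusprigidity} (whose proof in \Cref{s:optimalR} again invokes \Cref{t:rigidity} to control the boundary data of the carved trapezoid). Finally, for part (2) your Step 3 gives only a plausibility argument: exact equality $\sfH(nv)=nH^*(v)$ outside $\fL_+(\fP)$ with overwhelming probability requires overwhelming-probability localization of the extreme paths at distance $n^{\delta-2/3}$ from $\fA$ uniformly along the whole arctic curve, including near cusps, tangency locations and polygon corners where the Airy input of \cite{aggarwal2021edge} does not apply and where the geometry of the frozen facets (which of the three slopes $(0,0),(1,0),(1,-1)$ occurs, and how the facets meet $\partial\fP$) has to be handled case by case; none of this is supplied.
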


Theorem \ref{t:rigidity} does not give optimal rigidity estimates close to cusp locations. In this section, we prove an optimal version, as stated in the following theorem.

\begin{thm}\label{t:cusprigidity}
  Fix a cusp point $(x_c, t_c) \in \mathfrak{A}$.  By possibly rotating $\fP$ by $180^\circ$, the arctic curve $\fA$ in a neighborhood of $(x_c,t_c)$ consists of two analytic pieces $\{(E_{-}(t),t): t_c-\fs\le t\le t_c\}$ and $\{(E_{+}(t),t): t_c-\fs\le t\le t_c\}$, for some small constant $\fs>0$.
Then, for any constant $\delta>0$, the following holds with overwhelming probability: for any $v=(x,t)$ such that $t_c-\fs\le t\le t_c$ and $x\in [E_{-}(t)+ (t_c-t)^{1/6}n^{\delta-2/3}, E_{+}(t)- (t_c-t)^{1/6}n^{\delta-2/3}]$, there is
\begin{align}\label{e:Optrigidity2}
\sfH(nv)= nH^*(v),
\end{align}
\end{thm}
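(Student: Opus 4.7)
Since $H^* = 0$ in the frozen pocket between the two arctic branches, the identity $\sfH(nv) = nH^*(v)$ for $v$ in the claimed region is equivalent to the assertion that none of the Bernoulli paths associated with the tiling (in the sense of Section~\ref{ssec:nbp}) enters the interval $[E_-(t) + \tau^{1/6} n^{\delta-2/3},\; E_+(t) - \tau^{1/6}n^{\delta-2/3}]$ at height $nt$, where $\tau := t_c - t$. By the left-right symmetry between the two branches, the central task is to show with overwhelming probability that
\begin{equation*}
\sfq_0(nt)/n \ge E_+(t) - \tau^{1/6} n^{\delta-2/3}, \qquad t \in [t_c - \fs, t_c].
\end{equation*}
Since $E_+(t) - E_-(t) \asymp \tau^{3/2}$, the claimed interval is empty when $\tau \lesssim n^{-1/2+O(\delta)}$, so one may restrict to $\tau$ above that threshold. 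Heuristically, the scaling $\tau^{1/6} n^{-2/3}$ is exactly the Airy edge fluctuation associated with the local curvature $\tau^{-1/2}$ of the branch $E_+$ at a smooth point at distance $\tau$ from the cusp tip, which suggests a curvature-adapted Airy rigidity on dyadic bands of $\tau$.

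\textbf{Dyadic multi-scale sandwich.}
Partition the admissible range of $\tau$ into $O(\log n)$ dyadic bands $[\tau_k,2\tau_k]$. For each band, cover the corresponding time window with a lattice grid of base points $t_0$ satisfying $\tau_k \le t_c - t_0 \le 2\tau_k$, and for each grid point fix a mesoscopic trapezoidal domain $R \subset n\fP$ of size $\asymp n\tau_k^{3/2} \times n\tau_k$ straddling the arctic boundary around $(nE_+(t_0), nt_0)$. By Theorem~\ref{t:rigidity}, $\sfH|_{\partial R}$ lies within an additive $n^{\epsilon}$-window of $nH^*|_{\partial R}$ with overwhelming probability. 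The monotonicity principle of Lemma~\ref{lem:hfmon} then sandwiches $\sfH|_R$ between two uniform random tilings of $R$ whose deterministic boundary heights equal $nH^*|_{\partial R}\pm n^\epsilon$ (up to a tileability correction). Rescaling $R$ horizontally by $\tau_k^{-3/2}$ and vertically by $\tau_k^{-1}$ produces, in rescaled coordinates, a domain of macroscopic size on which the limiting arctic curve has $O(1)$ curvature at the corresponding point; the smooth-point extreme-path rigidity of~\cite{aggarwal2021edge} applied to this rescaled model yields extreme-path fluctuations of order $n^{\delta/2}$ in rescaled lattice units. Unscaling returns this to $\tau_k^{1/6} n^{\delta-2/3}$ in the original coordinates of $\fP$, and a union bound over the $n^{O(\delta)}$ grid points per band and the $O(\log n)$ dyadic bands completes the proof.

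\textbf{Main obstacle.}
The main technical difficulty is that \cite{aggarwal2021edge}'s smooth-point rigidity is stated under the assumption of bounded arctic-curve curvature, whereas near the cusp the curvature $\tau_k^{-1/2}$ can be as large as $n^{1/4}$; invoking that theorem as a black box would produce constants that degrade with $k$. One must therefore either re-derive the smooth-point rigidity tracking its error constants explicitly as a function of curvature, or equivalently verify that the rescaling step above preserves all ingredients (in particular the Kasteleyn and plausible-boundary inputs) uniformly in $\tau_k$. A secondary obstacle is the construction of the sandwich trapezoids $R$: the deterministic $\pm n^\epsilon$ shifts of $nH^*|_{\partial R}$ must be realizable as plausible boundary height functions (i.e., must arise from actual tilings of some enlarged domain), which requires careful matching of frozen and liquid slopes along $\partial R$ and is most delicate when $\tau_k$ is near the lower threshold $n^{-1/2+O(\delta)}$, where $R$ contains only $n^{O(\delta)}$ lattice faces per side.
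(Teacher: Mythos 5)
Your reduction to extreme-path concentration and the heuristic scaling $\tau^{1/6}n^{-2/3}$ (coming from the edge density $\rho_t^*(x)\asymp\sqrt{x-E_+(t)}\,(t_c-t)^{-1/4}$) are correct, but the core quantitative step of your argument is not justified, and it is exactly the step where all the work lies. You propose to obtain curvature-adapted edge rigidity in each mesoscopic trapezoid $R$ of size $n\tau_k^{3/2}\times n\tau_k$ by rescaling $R$ anisotropically and then invoking the smooth-point rigidity of \cite{aggarwal2021edge} as a black box. This does not work: an anisotropic rescaling of the plane is not a lattice symmetry, so the ``rescaled model'' is not a uniformly random lozenge tiling of any domain, and the results of \cite{aggarwal2021edge} (proved for tilings of fixed polygonal domains, with arctic curvature of order one and deterministic polygonal boundary data) simply do not apply to a mesoscopic sub-domain whose boundary height is the random restriction of the global tiling and is known only to $n^\epsilon$ accuracy. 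You flag this yourself as the ``main obstacle,'' but resolving it is not a matter of tracking constants in \cite{aggarwal2021edge}: one needs an optimal rigidity statement for tilings of a trapezoid containing the cusp with essentially arbitrary (indeed random) boundary data on one side, with the cusp-adapted distance function $(t_c-t)^{1/6}n^{\delta-2/3}$ built in. This is precisely \Cref{p:rigidityBB}, quoted from \cite[Proposition 4.4]{huang2021edge}, and it is the actual engine of the paper's proof; your proposal in effect assumes it.

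There is also a problem with your sandwich itself. Comparing $\sfH$ on $R$ with uniform tilings of $R$ whose boundary heights are $nH^*|_{\partial R}\pm n^\epsilon$ is circular in your scheme: the optimal edge rigidity of those reference tilings (general boundary, mesoscopic domain, curvature $\tau_k^{-1/2}$) is exactly what is being proved, and a vertical $\pm n^\epsilon$ shift of a boundary height function need not be a plausible boundary height function at all. The paper avoids both issues: it fixes one trapezoid $\fD$ around the cusp (not a dyadic family), restricts to the event (overwhelming by \Cref{t:rigidity}) that $\sfH|_{\partial\sfD}$ is $n^{\fd}$-close to $nH^*$ with frozen west/east sides, compares with the random-boundary tiling $\widetilde\sfH$ of \Cref{p:rigidityBB} via \Cref{lem:hfmon} after a \emph{horizontal} translation by $n^{4\fd}$ lattice units (using that $\partial_xH^*$ is bounded away from $0$ and $1$ on the liquid part of $\partial\fB$, so a horizontal shift dominates the $n^{\fd}$ height discrepancy and keeps the boundary function plausible), and concludes with the Gibbs property (\Cref{lem:Gibbstiling}). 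If you want to salvage your multi-scale route, you would have to prove a general-boundary, mesoscopic, curvature-uniform analogue of the smooth-point rigidity from scratch, which is essentially reproving \cite[Proposition 4.4]{huang2021edge}.
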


 \begin{rem}
 When $t_c-t\asymp 1$, the statement \eqref{e:Optrigidity2} reduces to item (2) in \Cref{t:rigidity}. However, when $t_c-t\ll 1$, \eqref{e:Optrigidity2} is stronger than \Cref{t:rigidity}. Such optimal height function concentration is crucial for the tiling path estimate  (\Cref{lem:sfqes}), where we consider a mesoscopic box around the cusp location $(x_c, t_c)$.
 \end{rem}

This optimal rigidity estimate will be proved based on an estimate from \cite{huang2021edge} on random lozenge tilings in a trapezoid with random boundary, stated as \Cref{p:rigidityBB} below, and a comparison argument invoking \Cref{lem:hfmon}. For this, we start by carving out a trapezoid domain from $\fP$ around a cusp.

\subsection{Trapezoid domain}
\label{s:stripd} 

As $\fP$ satisfies \Cref{pa},
for definiteness, for the rest of this section, we assume (without loss of generality) that the axis $\ell$ in \Cref{pa} is the horizontal axis $\{ \sft = 0 \}$.
We take a cusp point $(x_c, t_c) \in \mathfrak{A}$. Then, $(x_c, t_c)$ is not a tangency location. 
By possibly rotating $\fP$ by $180^\circ$, we can assume that the cusp `points upwards', i.e., in a small neighborhood of $(x_c, t_c)$ the arctic curve $\fA$ is contained on or below the line $t=t_c$.
Note this notion is weaker than `upward oriented' from \Cref{sr}, because the axis $\ell$ is now fixed as the horizontal axis.
Next, we carve out a \emph{trapezoid} around $(x_c, t_c)$.

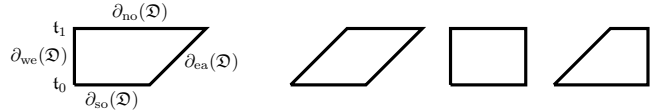
\begin{figure}[!ht]
		
		\begin{center}		
			
			\begin{tikzpicture}[
				>=stealth,
				auto,
				style={
					scale = .25
				}
				]
				
				\draw[black, very thick] (0, 0) node[left, scale = .7]{$\ft_0$} -- (4, 0) -- (7, 3) -- (0, 3) node[left, scale = .7]{$\ft_1$} -- (0, 0);
				\draw[black, very thick] (6.5+5, 0) -- (10.5+5, 0) -- (13.5+5, 3) -- (9.5+5, 3) -- (6.5+5, 0);
				\draw[black, very thick] (15+5, 0) -- (19+5, 0) -- (19+5, 3) -- (15+5, 3) -- (15+5, 0);
				\draw[black, very thick] (20.5+5, 0) -- (25.5+5, 0) -- (25.5+5, 3) -- (23.5+5, 3) -- (20.5+5, 0);
				
				\draw[] (3.5, 3) node[above, scale = .7]{$\partial_{\north} (\mathfrak{D})$};
				\draw[] (5.5, 1.5) node[below = 2, right, scale = .7]{$\partial_{\ea} (\mathfrak{D})$};
				\draw[] (0, 1.5) node[left, scale = .7]{$\partial_{\we} (\mathfrak{D})$};
				\draw[] (2, 0) node[below, scale = .7]{$\partial_{\so} (\mathfrak{D})$};	
				
			\end{tikzpicture}
			
		\end{center}
		
		\caption{\label{ddomain} Shown above are the four possibilities for $\mathfrak{D}$.}
		
	\end{figure} 
 
A \emph{trapezoid} is a subset of $\R^2$ of the following form
	\begin{flalign}
		\label{d}
		\mathfrak{D} = \big\{ (x, t) \in \mathbb{R} \times [\ft_0,\ft_1] : \mathfrak{a} (t) \le x \le \mathfrak{b} (t) \big\},
	\end{flalign}
where $\ft_0<\ft_1$, and $\mathfrak{a}, \mathfrak{b} $ are linear functions on $ [\ft_0, \ft_1]$ with $\mathfrak{a}' (t), \mathfrak{b}' (t) \in \{ 0, 1 \}$ and $\mathfrak{a} (t) \le \mathfrak{b} (t)$ for each $t \in [\ft_0, \ft_1]$. We denote its four boundaries by 
	\begin{flalign}
		\label{dboundary}
		\begin{aligned} 
		&\partial_{\so} (\mathfrak{D}) = \big\{ (x, t) \in {\mathfrak{D}}: t = \ft_0 \big\}; \qquad \quad
		\partial_{\north} (\mathfrak{D}) = \big\{ (x, t) \in {\mathfrak{D}}: t = \ft_1 \big\}; \\ 
		& \partial_{\we} (\mathfrak{D}) = \big\{ (x, t) \in {\mathfrak{D}}: x = \mathfrak{a} (t) \big\}; \qquad 
		\partial_{\ea} (\mathfrak{D}) = \big\{ (x, t) \in {\mathfrak{D}}: x = \mathfrak{b} (t) \big\}.
		\end{aligned} 
	\end{flalign}
	\noindent We refer to \Cref{ddomain} for a depiction.

   We now construct the trapezoid $\mathfrak{D}$ associated with $(x_c, t_c)$.
			Let $x_1 \in \mathbb{R}$ and $x_2 \in \mathbb{R}$ be the maximal and minimal numbers such that $x_1 < x_0 < x_2$, $u_1 = (x_1, t_c) \in \mathfrak{A}$, and $u_2 = (x_2, t_c) \in \mathfrak{A}$. By \Cref{pa}, neither $u_1$ nor $u_2$ is a cusp of $\mathfrak{A}$. If $u_1 \in \partial \mathfrak{P}$, then it is a (non-horizontal) tangency location of $\mathfrak{A}$, so it lies along a side of $\partial \mathfrak{P}$ with slope $1$ or $\infty$. We then let this side contain the west boundary of $\mathfrak{D}$. If instead $u_1 \notin \partial \mathfrak{P}$, then there exist $\varepsilon = \varepsilon (\mathfrak{P}, u) > 0$ and $r = r(\mathfrak{P}, u) \in (0,\varepsilon)$ such that the radius $r$ disk $\bB_{r} (x_1 - \varepsilon, t_c)$ does not intersect $\overline{\mathfrak{L}}$. Then, depending on whether $\nabla H^* (x_1, t_c) = (1, -1)$ or $\nabla H^* (x_1, t_c) \in \big\{ (0, 0), (1, 0) \big\}$ (one of them must hold by the first statement of \Cref{pla}), the west boundary of $\mathfrak{D}$ is contained in the segment obtained as the intersection between $\bB_{r} (x_1 - \varepsilon, t_c)$ and the line passing through $(x_1 - \varepsilon, t_c)$ with slope $1$ or $\infty$, respectively.

   So far, we've specified a segment containing the west boundary of $\mathfrak{D}$, and one containing its east boundary can be specified similarly. Then, we choose the interval $[\ft_0, \ft_1]$ as $\ft_0 = t_c - \fs$ and $\ft_1 = t_c + \fs$, where $\fs$ is chosen sufficiently small so that the east and west boundary of $\mathfrak{D}$ are contained in the segments specified above. These information determine the trapezoid $\fD$ associated with $(x_c, t_c)$.

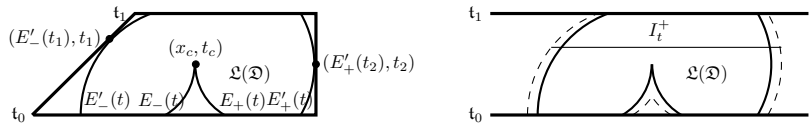
\begin{figure}[!hbt]
		
		\begin{center}		
			
			\begin{tikzpicture}[
				>=stealth,
				auto,
				style={
					scale = .45
				}
				]
				
				\draw[black, very thick] (-4.9, 0) node[left, scale = .7]{$\ft_0$}-- (3.45, 0)--(3.45,3)--(-1.9,3)node[left, scale = .7]{$\ft_1$}-- (-4.934877870428601, 0);
				
				\draw[black, very thick] (8.6, 0) node[left, scale = .7]{$\ft_0$}-- (18, 0);
				\draw[black, very thick] (8.6, 3) node[left, scale = .7]{$\ft_1$}-- (18, 3);
				
				\draw[black] (10.4, 2)-- (17.2, 2);
				\draw[] (13.6, 2) node[above, scale = .7]{$I_t^+$};
				
				\draw[black,fill=black] (-2.65,2.25) circle (.1);
				\draw[](-2.7,2.25) node[left, scale=.7]{$(E'_{-}(t_1),t_1)$};
				
				\draw[black,fill=black] (3.45,1.5) circle (.1);
				\draw[](3.45,1.5) node[right, scale=.7]{$(E'_{+}(t_2),t_2)$};
				
				\draw[black,fill=black] (-0.1,1.5) circle (.1);
				\draw[](-0.1,1.5) node[above, scale=.7]{$(x_c,t_c)$};
				
				\draw[](-3.6,0.4) node[right, scale=.7]{$E'_{-}(t)$};
				\draw[](-2,0.4) node[right, scale=.7]{$E_{-}(t)$};
				\draw[](0.4,0.4) node[right, scale=.7]{$E_{+}(t)$};
				\draw[](1.8,0.4) node[right, scale=.7]{$E'_{+}(t)$};

				\draw[black, thick] (-3.5, 0) arc (180:110:3.2);

				\draw[black, thick] (3, 0) arc (-30:30:3);

				\draw[black, thick] (-1, 0) arc (-60:0:1.732);
				\draw[black, thick] (.75, 0) arc (240:180:1.732);
				
				\draw[black, thick] (10, 0) arc (180:110:3.2);
				\draw[black, dashed] (9.7, 0) arc (180:110:3.2);
				
				\draw[black, thick] (16.5, 0) arc (-30:30:3);
				\draw[black, dashed] (16.8, 0) arc (-30:30:3);

				\draw[black, thick] (12.5, 0) arc (-60:0:1.732);
				\draw[black, thick] (14.25, 0) arc (240:180:1.732);
				\draw[black, dashed] (12.8, 0) arc (-60:-33:1.732);
				\draw[black, dashed] (13.95, 0) arc (240:212:1.732);	
		
				\draw[] (1.5, 1.2)  node[scale=0.7]{$\mathfrak{L}(\fD)$};
				\draw[] (15, 1.2)  node[scale=0.7]{$\mathfrak{L}(\fD)$};
				
			\end{tikzpicture}
			
		\end{center}
		
		\caption{\label{f:It} Left: the tangency locations $(E'_{-}(t_1), t_1), (E'_{+}(t_2), t_2)$, and the cusp $(x_c,t_c)$. Right: the enlarged liquid region, with its time slices $I_t^+$ as in \eqref{e:defI_t1} and \eqref{e:defI_t2}.}
		
	\end{figure}

In summary, for a polygonal set $\fP$ satisfying \Cref{pa}, and a cusp point $(x_c, t_c) \in \mathfrak{A}$, (by possibly rotating $\fP$ by $180^\circ$) we can carve out a trapezoid $\fD$ contained in $\fP$ and the time strip $[\ft_0=t_c-\fs, \ft_1=t_c+\fs]$ for a small enough $\fs>0$, such that the followings hold.
		\begin{enumerate} 
			\item The limiting height function $H^*$ is constant along both $\partial_{\ea} (\mathfrak{D})$ and $\partial_{\we} (\mathfrak{D})$.
			\item 
   Denote by $\fA(\fD)=\fD\cap \fA$ the arctic curve in $\fD$, $\fL(\fD)=\fD\cap\fL$ the liquid region restricted to $\fD$, and by $I^*_t$ the closure of $\{x: (x,t)\in \fL(\fD)\}$ for each $\ft_0\le t\le \ft_1$.
   Then, $\fA(\fD)=\{(E_{-}(t), t), (E_{+}(t),t): \ft_0\le t\le t_c\}\cup \{(E'_{-}(t), t), (E'_{+}(t),t): \ft_0\le t\le \ft_1\}$ (See Figure \ref{f:It}). For $t_c\le t\le \ft_1$, the slice $I^*_t$ is a single interval
$
I^*_t= \big[E_{-}'(t), E_{+}'(t) \big];
$
 for $\ft_0\le t\le t_c$, the slice $I^*_t$ consists of two intervals
$
I^*_t= \big[E'_{-}(t), E_{-}(t) \big] \cup \big[E_{+}(t), E'_{+}(t) \big]$.
The complement $\{x:(x,t)\in\fD\}\setminus I^*_t$ consists of several intervals. On each interval, we  have either $\del_xH^*(x,t)\equiv 0$ 
or $\del_xH^*(x,t)\equiv 1$.
\item 
Any tangency location along $ \fA(\fD)$ is of the form $\min I_t^*$ or $\max I_t^*$ for some $t \in (\ft_0, \ft_1)$. At most one tangency location is of the form $\min I_t^*$, and at most one is of the form $\max I_t^*$. Moreover, these tangent locations are contained in either $\del_{\we}(\fD)$ or $\del_{\ea}(\fD)$.
\end{enumerate}

\subsection{Lozenge tiling in a trapezoid with random boundary}
For the trapezoid $\fD$ given above, we next state an optimal rigidity estimate for  uniformly random lozenge tilings on it, with a \emph{random} north boundary height function.
To state it, we need an enlarged version of the time slice
$I^*_t$. 
Fix an arbitrarily small constant $\fd>0$.
For any $\big( E'_\pm(t),t \big)$ on the arctic curve $\fA(\fD)$, we define the distance function 
\begin{align}\label{e:disf}
\tau \big( E'_\pm(t),t \big)=
|t-t_c|^{2/3}n^{6\fd-2/3}\vee n^{ -1+10\fd}.
\end{align}

\noindent Moreover, for any $\big( E_\pm(t), t \big)$ on the arctic curve $\fA(\fD)$ with  $t\le t_c$,  we define the distance function $\tau \big( E_\pm(t),t \big):=
(t_c-t)^{1/6}n^{6\fd-2/3}$.
We then define the enlarged intervals: for $t_c \le t \le \ft_1$,
\begin{align}\label{e:defI_t1}
I_t^+ = \big[ E'_{-}  (t) - \tau (E'_{-} (t), t), E'_{+} (t) + \tau (E'_{+} (t), t) \big], 
\end{align}
and for $\ft_0 \le t < t_c$,
\begin{align}\label{e:defI_t2}
I_t^+= \big[ E'_{-}(t)-\tau(E'_{-}(t), t), E_{-}(t)+\tau(E_{-}(t), t) \big] \cup \big[ E_{+}(t)-\tau(E_{+}(t), t), E'_{+}(t)+\tau(E'_{+}(t), t) \big].
\end{align}
By \Cref{p:rhot} below, $E_{+}(t)-E_{-}(t)\asymp (t_c-t)^{3/2}$. Hence, for $t\ge t_c-n^{-1/2+4\fd}$, \eqref{e:defI_t2} reduces to a single interval 
\begin{align}\label{e:defI_t3}
&I_t^+= \left[E'_{-}(t)-\tau \big( E'_{-}(t), t \big), E'_{+}(t)+\tau \big(E'_{+}(t), t \big) \right].
\end{align}
See Figure \ref{f:It} for an illustration.

Due to some rounding issues, the set $n\fD$ may not be a tileable domain for lozenge tilings. Therefore, we use the notions of plausible boundary height functions and uniformly random height functions from \Cref{defn:uhf}.
\begin{prop}\label{p:rigidityBB}
Denote $\mathsf{D}=n \mathfrak{D}$. 
Let $\sfh:\partial\sfD\to \R$ be a plausible boundary height functions of $\mathsf{D}$, such that (1) $\sfh$ is constant on $\del_{\rm we}\sfD$ and on $\del_{\rm ea}\sfD$, respectively, and (2) on $\del_{\rm so}\sfD$, 
\begin{equation}\label{e:boundaryc0}
\begin{split}
 \big|\sfh(nv)- nH^*(v) \big| \le n^{\mathfrak{d}/3 },\quad & \text{for}\quad  v=(x,\ft_0),\quad \dist(x, I_{\ft_0}^*)\le n^{\fd/3-2/3}, \\
\sfh(nv)= nH^*(v),\quad & \text{for}\quad v=(x,\ft_0), \quad  \dist(x, I_{\ft_0}^*)> n^{\fd/3-2/3}.
\end{split}    
\end{equation}
Then, there exists a \emph{random plausible boundary height function} $\widetilde \sfh$ of $\sfD$, which is equal to $\sfh$ on $\del_{\rm we}\sfD\cup \del_{\rm ea}\sfD\cup \del_{\rm so}\sfD$, such that the following holds with overwhelming probability. Denote by $\widetilde\sfH$ the uniformly random height function of $\mathsf{D}$ with boundary $\widetilde\sfh$. For any $\ft_0\le t\le\ft_1$, we have
\begin{align}\label{e:Optrigidity}
	\begin{aligned}
 \big| \widetilde\sfH(nv)- nH^*(v) \big| \le n^{3\fd },\quad &\text{for}\quad v=(x,t),\quad x\in I_t^+, \\
 \widetilde\sfH(nv)= nH^*(v),\quad &\text{for}\quad v=(x,t),\quad x\not\in I_t^+.
\end{aligned} 
\end{align}
\end{prop}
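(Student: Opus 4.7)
The plan is to prove \Cref{p:rigidityBB} by coupling the tiling on $\sfD$ with a tiling on a slightly enlarged domain where \Cref{t:rigidity} is readily available, then upgrading the resulting concentration to the sharp cusp scaling via a multi-scale monotone sandwich argument.

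\textbf{Construction of $\widetilde \sfh$.} First I would attach a thin tileable ``cap'' to $\sfD$ above $\partial_{\no}\sfD$, obtaining an enlarged domain $\sfD^+ \supset \sfD$, and extend $\sfh$ to a plausible boundary height function $\sfh^+$ on $\partial \sfD^+$ that stays within $n^{\fd/3}$ of $n H^*$ on the liquid pieces of $\partial \sfD^+$ and agrees with $n H^*$ on the frozen pieces. Sample the uniformly random height function $\sfH^+$ on $\sfD^+$ with boundary $\sfh^+$, and declare $\widetilde \sfh := \sfH^+|_{\partial_{\no}\sfD}$. By \Cref{lem:Gibbstiling}, the law of $\sfH^+|_\sfD$ conditional on $\widetilde \sfh$ is exactly the uniformly random height function of $\sfD$ with boundary $\widetilde\sfh$, so it suffices to prove both bounds of \eqref{e:Optrigidity} for $\sfH^+|_\sfD$.

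\textbf{First bound of \eqref{e:Optrigidity}.} By applying \Cref{t:rigidity} to $\sfH^+$ (after embedding $\sfD^+$ in a suitable rational polygonal set respecting the chosen boundary data, or by adapting the concentration proof directly to this setting using \Cref{lem:hfmon}), we immediately obtain $|\sfH^+(nv)-nH^*(v)| \le n^{3\fd}$ with overwhelming probability for every $v\in\sfD$, which is the first line of \eqref{e:Optrigidity}.

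\textbf{Second bound of \eqref{e:Optrigidity} and the cusp refinement.} The nontrivial content is the equality $\widetilde \sfH(nv)=nH^*(v)$ for $x\notin I_t^+$, which amounts to showing that the extreme non-intersecting Bernoulli paths do not protrude into the enlarged frozen region. For the smooth pieces $E'_\pm(t)$ of the arctic curve, the distance scale $\tau(E'_\pm(t),t)=|t-t_c|^{2/3}n^{6\fd-2/3}\vee n^{-1+10\fd}$ is the standard Airy prediction, and the concentration argument of \cite{aggarwal2021edge} applies essentially unchanged, since these points are non-tangent smooth points of $\mathfrak A$. For the cusp pieces $x=E_\pm(t)$ with $t<t_c$ the required scale is $\tau(E_\pm(t),t)=(t_c-t)^{1/6}n^{6\fd-2/3}$; this matches the Airy heuristic at a smooth arc of curvature $\asymp(t_c-t)^{-1/2}$, which the pieces $E_\pm(t)$ have. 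To convert the heuristic into a proof, I would carry out a dyadic multi-scale sandwich: for each scale $s$ in the dyadic range $n^{-1/2+4\fd}\le s\le\fs$, consider the sub-trapezoid of $\sfD$ lying in the time window $[t_c-2s,t_c-s]$, use \Cref{lem:Gibbstiling} to expose the (random) top and bottom boundary of $\sfH^+$ on that window, and then use \Cref{comparewalks} to sandwich the outermost paths between two deterministic non-intersecting Bernoulli path ensembles whose limit shape along $E_\pm(t)$ has curvature $\asymp s^{-1/2}$ and to which the smooth-point rigidity of \cite{aggarwal2021edge} applies, yielding a rigidity window of size $n^{6\fd-2/3}s^{1/6}\asymp(t_c-t)^{1/6}n^{6\fd-2/3}$ on that scale. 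Patching the dyadic scales via a union bound and using \eqref{e:defI_t3} to absorb the merging regime $t_c-t\le n^{-1/2+4\fd}$ into the general bulk bound completes the proof.

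\textbf{Main obstacle.} The crux is the last step: producing, on each dyadic window near the cusp, a reference non-intersecting Bernoulli ensemble whose limit shape matches $H^*$ up to an order smaller than $n^{-2/3}s^{1/6}$ along $E_\pm(t)$, while preserving the Gibbs comparison. The analytic refinement of the arctic curve expansion recorded in \eqref{xyql}, together with the fact that $E_\pm(t)$ are genuine analytic arcs away from $(x_c,t_c)$, is what makes this matching possible; the combinatorial refinement of extracting the sandwich boundary data through \Cref{lem:Gibbstiling} and \Cref{lem:hfmon} is where the most care is required.
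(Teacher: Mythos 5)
You should first note that the paper does not prove \Cref{p:rigidityBB} internally at all: it is imported verbatim from \cite[Proposition 4.4]{huang2021edge} via the tilings/paths dictionary of \Cref{ssec:nbp}. So your sketch is an attempt to reprove a result whose entire technical content lives in that companion work, and the question is whether your outline actually supplies that content. It does not, in two places. First, both concentration inputs you invoke are not available off the shelf. \Cref{t:rigidity} is stated for the uniformly random tiling of the fixed polygon $n\fP$ with its own boundary data; it does not apply to a capped trapezoid $\sfD^+$ with boundary data merely within $n^{\fd/3}$ of $nH^*$, and an $n^{3\fd}$-scale liquid-region bound for such domains is itself the hard theorem of \cite{huang2021edge}, not a consequence of \Cref{hzh} or of \Cref{t:rigidity}. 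Likewise, the ``smooth-point rigidity of \cite{aggarwal2021edge}'' is proved for fixed macroscopic polygons with bounded curvature, whereas your dyadic reference ensembles live on windows of height $s$ with edge curvature $\asymp s^{-1/2}$ degenerating as $s\downarrow n^{-1/2+4\fd}$; what you need is a version of that rigidity uniform in the $n$-dependent scale $s$, which is exactly what \cite[Proposition 4.4]{huang2021edge} provides and what a self-contained proof would have to establish.

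Second, the sandwich step is underspecified precisely where the gain must come from. On each window $[t_c-2s,t_c-s]$ the time-slice nearer the cusp is one at which, at that stage of the induction, you only control the configuration at the a priori scale $n^{\fd-2/3}$ (or the unimproved scale), and the monotone coupling of \Cref{comparewalks}/\Cref{lem:hfmon} requires dominating the \emph{entire} boundary configuration on both conditioning slices --- all paths, i.e.\ the full height profile, whose density vanishes like $(x-E_+(t))^{1/2}(t_c-t)^{-1/4}$ by \eqref{e:cubiceq} --- not just the outermost path. You must therefore construct, uniformly in $s$, a reference ensemble whose limit shape dominates these profiles at the conditioning slices, returns to within $o\big(n^{-2/3}(t_c-t)^{1/6}\big)$ of $E_\pm$ on the interior sub-window where you conclude (a second-order, curvature-matched reference leaves an error $\asymp s^{3/2}$ over the full window, so the osculation must be arranged asymmetrically, with the offset absorbed at the conditioning slices only), and which itself enjoys optimal edge rigidity with exponent strictly smaller than $6\fd$. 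None of this is carried out, and it is the crux. A smaller but real issue is the north boundary: the cap construction only relocates the problem, since to verify that $\widetilde\sfh=\sfH^+|_{\partial_{\north}\sfD}$ is frozen outside $I_{\ft_1}^+$ you need the same optimal rigidity statement for the enlarged ensemble at an interior slice, so the construction cannot be quoted as a reduction --- it must be proved together with the rest. Your curvature heuristic $(t_c-t)^{1/6}n^{-2/3}$ and the absorption of the regime $t_c-t\le n^{-1/2+4\fd}$ via \eqref{e:defI_t3} are correct, but as written the proposal replaces the cited proposition by an outline whose essential steps are themselves the cited proposition.
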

This proposition is the same as \cite[Proposition 4.4]{huang2021edge}, via the equivalence between lozenge tilings and non-intersecting Bernoulli paths stated in Section \ref{ssec:nbp}.
 
With the above preparations, we complete the proof of \Cref{t:cusprigidity} using the height function comparison (i.e., \Cref{lem:hfmon}) between the uniformly random tiling of $n\fP$ and the uniformly random tiling of $n\fD$ (carved out around a cusp point) with random boundary.

\begin{proof}[Proof of \Cref{t:cusprigidity}]
We carve out a trapezoid $\fD$ around the cusp point $(x_c,t_c)$, as given above, and denote $\sfD=n\fD$.
By \Cref{pla}, $\nabla H^*\equiv (0,0)$, $(1,0)$, or $(1,-1)$ in $\{(x,t): \ft_0\le t\le t_c, x\in [E_{-}(t), E_{+}(t)]\}$.
For the rest of the proof, we assume the first case, while the proofs in the other two cases are very similar and thus omitted.
This assumption implies that $H^*$ is constant in this region. We assume that $H^*\equiv 0$ without loss of generality, as we can always add a global constant to $H^*$.

Take an arbitrarily small constant $\fd>0$.
We denote by $\Omega$ the set of plausible boundary height functions $\sfh:\partial\sfD\to\R$, such that (1) $\sfh$ is constant on $\del_{\rm we}\sfD$ and on $\del_{\rm ea}\sfD$, respectively, and (2) \eqref{e:boundaryc0} holds on $\del_{\rm so}\sfD$, and (3) with overwhelming probability,
the uniformly random height function $\widehat\sfH$ of $\sfD$ with boundary $\sfh$ satisfies
\begin{align}\label{e:rigidity}
    |\widehat \sfH(nv)-nH^*(v)|\le n^{\fd}, \quad v\in \overline\fP\cap \fD.
\end{align}
Then, using \Cref{t:rigidity} and the fact that the west and east boundaries of $\fD$ either coincide with the boundary of $\fP$ or are in the frozen region and bounded away from the liquid region, we see that: with overwhelming probability, the restriction of $\sfH$ on $\del \sfD$ is in $\Omega$.

In the rest of the proof, we fix a $\sfh\in\Omega$, and denote by $\widehat\sfH$ the uniformly random height function $\sfD$ with boundary $\sfh$.
By \Cref{p:rigidityBB}, there is a random plausible boundary height function $\widetilde \sfh$ of $\sfD$, such that (1) $\widetilde\sfh=\sfh$ on $\del_{\we}\sfD\cup \del_{\ea}\sfD\cup \del_{\so}\sfD$, 
and (2) if $\widetilde\sfH$ is a uniformly random height function of $\mathsf{D}$ with boundary $\widetilde\sfh$, then with overwhelming probability, \eqref{e:Optrigidity} holds for any $\ft_0\le t\le \ft_1$.

We consider a small box around the cusp location: $\fB:=[x_c-\fc, x_c+\fc]\times [\ft_0=t_c-\fs, \ft_1=t_c+\fs]$ with $\fc>0$ being a small constant.
By taking $\fc$ small and then $\fs$ small, we can ensure that $\fB\subset \fD$ and the west, north, and east boundaries of $\fB$ are all in the liquid region.
In particular, we have $x_c-\fc\le E_-(t)$ and $x_c+\fc\ge E_+(t)$ for each $\ft_0\le t\le t_c$. 
We next show that with overwhelming
probability, for any $(x,t)\in\partial\fB$, 
\begin{align}\begin{split}\label{e:heightshift}
&\widetilde\sfH(nx-n^{4\fd}, nt)\le  \widehat\sfH(nx,nt),
\quad (t,x)\in \partial \fB.
\end{split}\end{align}
For the south boundary, since $\widetilde \sfH(nx,n\ft_0)=\widehat \sfH(nx,n\ft_0)=\sfh(nx,\ft_0)$ for $x\in [x_c-\fc, x_c+\fc]$, 
\eqref{e:heightshift} follows trivially from the monotonicity of $\widetilde \sfH$. For $(x,t)\in \partial\fB\setminus \del_{\mathrm so}\fD$, it is in the liquid region, so $\partial_x H^*$ is bounded away from $0$ and $1$ in a neighborhood of $(x,t)$.
Therefore,  
\begin{align}\label{e:heightlipschitz}
    H^*(x,t)-H^*(x-n^{4\fd-1},t)\ge cn^{4\fd-1},
    \quad
    H^*(x+n^{4\fd-1},t)-H^*(x,t)\ge cn^{4\fd-1},
\end{align}
for some constant $c>0$. Combining \eqref{e:Optrigidity} with \eqref{e:heightlipschitz}, we then obtain that
\[
    \widetilde\sfH(nx,nt)\ge nH^*(x,t)-n^{\fd}\ge nH^*(x-n^{4\fd-1},t)+cn^{4\fd}-n^{\fd}\ge \widetilde\sfH(nx-n^{4\fd},nt).
\]

Now, given \eqref{e:heightshift}, using \Cref{lem:hfmon}, we can couple $\widetilde \sfH$ on $n\fB-(n^{4\fd},0)$ with $\widehat \sfH$ on $n\fB$ such that
\begin{equation}\label{e:couplingfrozen}
\widetilde\sfH(nx-n^{4\fd}, nt)\le  \widehat\sfH(nx,nt)
\end{equation}
for any $(x,t)\in \fB$. Thus, under the above assumption that $H^*\equiv 0$ in $\{(x,t): \ft_0\le t\le t_c, x\in [E_{-}(t), E_{+}(t)]\}$,
\eqref{e:Optrigidity} and \eqref{e:couplingfrozen} imply that with overwhelming probability, $\widehat\sfH(nx, nt)\ge 0$ for any $\ft_0\le t\le t_c$ and 
\begin{equation}\label{eq:intervalx}
    x\in [E_{-}(t)+ (t_c-t)^{1/6}n^{6\fd-2/3}+n^{-1+4\fd}, E_{+}(t)- (t_c-t)^{1/6}n^{6\fd-2/3}-n^{-1+4\fd}].
\end{equation}
With a similar inequality $\widetilde\sfH(nx+n^{4\fd}, nt)\ge  \widehat\sfH(nx,nt)$, we can show that with overwhelming probability, $\widehat\sfH(nx, nt)\le 0$ for all such $t$ and $x$, thereby concluding $\widehat\sfH(nx, nt)=0$. 
Recall that the interval in \eqref{eq:intervalx} is non-empty only when $t\le t_c-n^{-1/2+4\fd}$, in which case the term $n^{-1+4\fd}$ is negligible. Hence, we have obtained that $\widehat\sfH(nx, nt)=0$ for any $\ft_0\le t\le t_c$ and $x\in [E_{-}(t)+ (t_c-t)^{1/6}n^{\delta-2/3}, E_{+}(t)- (t_c-t)^{1/6}n^{\delta-2/3}] $ as long as we take $\delta>6\fd$. Since $\sfh$ is arbitrarily taken from $\Omega$, by Lemma \ref{lem:Gibbstiling} and the above fact that the restriction of $\sfH$ on $\del \sfD$ is in $\Omega$ with overwhelming probability, the conclusion follows.
\end{proof}

\section{Complex slope and proofs of some deterministic estimates}
\label{s:det}

In this section, we analyze the limiting height function using the complex Burgers equation (\Cref{fequation}). Combining the obtained estimates with the optimal height function concentration results from \Cref{s:optimalR}, we will finish the proofs of the remaining statements in \Cref{sec:univproof}.

We work under the same setup as in \Cref{sec:univproof}.
More precisely, we fix a rational polygonal set $\fP$ satisfying \Cref{pa} and a cusp point $(x_c, t_c) \in \fA$, which is upward oriented as in \Cref{sr} with curvature parameters $\fr,\fq$.
Let $n$ be a large integer such that $n\fP$ is a tileable domain.
All the constants in this section may depend on $\fP$.

We denote $\Delta t=n^{-\omega}$ for some constant $\omega\in(0,1/2)$, and take $t_0<t_c<t_1$, such that $t_0, t_1\in n^{-1}\Z$ and $t_c-t_0, t_1-t_c\asymp \Delta t$. 
Around $(x_c, t_c)$ and between time $t_0$ and $t_1$, the arctic curve $\fA$ contains two analytic pieces $\{(E_{-}(t),t): t_0\le t\le t_c\}$ and $\{(E_{+}(t),t): t_0\le t\le t_c\}$.
Let $\fc>0$ be a small enough constant depending on $\fP$ and $(x_c, t_c)$. Then, $M, N\in\N$ are  defined such that
\[\qq{-M, N}=\{i\in \Z: H^*(x_c-\fc,t_0) \le H^*(x_c,t_c)+i/n < H^*(x_c+\fc,t_0)\},
\]
where $H^*$ is the limiting height function. We denote the density $\rho^*_t(x)=\del_x H^*(x,t)$, which is defined almost everywhere and takes values in $[0,1]$ since $H^*$ is admissible.

Besides the setup in \Cref{sec:univproof}, 
we further assume that $H^*(x,t)=0$ for $t_0 \le t \le t_c$ and $E_-(t)\le x \le E_+(t)$.  Then, the $\rho^*_t$ quantiles $\gamma_i(t)$ are defined through the relation $H^*(\gamma_i(t),t)=i/n$ (and  $\gamma_0(t)$ is chosen to equal $E_+(t)$). 
We also denote $c(t)=x_c+(t-t_c) /\fr$.

\subsection{Density estimate: Proofs of Lemmas \ref{lem:gammaies} and \ref{lem:sfqes}}

We start with the following estimate of the density $\rho_t^*$ in a neighborhood of the cusp location $(x_c,t_c)$. 
\begin{prop}\label{p:rhot}
The followings hold for a sufficiently small constant $c_0>0$ and arbitrarily large $C>0$:
\begin{enumerate}
\item For $t_0\le t\le t_c$, we have $E_{+}(t)-E_{-}(t)\asymp (t_c-t)^{3/2}$. For $0\le x-E_{+}(t) \le C(t_c-t)^{3/2}$, we have 
\begin{align}\label{e:cubiceq}
\rho_t^*(x)=\frac{\sqrt{C_*(x-E_{+}(t))}}{(t_c-t)^{1/4}} +\OO\left((t_c-t)^{1/4}|x-E_{+}(t)|^{1/2}+\frac{|x-E_{+}(t)|}{t_c-t}\right),
\end{align}
where $C_*>0$ is a constant. 
For $C(t_c-t)^{3/2}\le x-E_+(t)\le c_0$, we have $\rho_t^*(x) \asymp  | x-c(t) |^{1/3}$.
Analogous statements hold for $0\le E_{-}(t)-x\le c_0$. 
\item For $t_c< t\le t_1$, we have $\rho_t^*(x) \asymp (t-t_c)^{1/2}\vee | x-c(t) |^{1/3}$ when $|x-c(t)|\le c_0$.
\end{enumerate}
\end{prop}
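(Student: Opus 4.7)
The plan is to analyze the complex slope $f^*_t(x)$ near $(x_c, t_c)$ via the complex Burgers equation \eqref{ftx} and the method of characteristics; the first assertion $E_+(t) - E_-(t) \asymp (t_c - t)^{3/2}$ is an immediate consequence of the cusp expansion \eqref{xyql}. Along a characteristic of \eqref{ftx}, the complex slope is preserved and the spatial coordinate evolves as $dx/dt = f/(f+1)$. Choose a reference time $t^* > t_c$ close enough to $t_c$ so that the liquid region has a single smooth component near the relevant locus at $t^*$ and $f^*_{t^*}$ is analytic in a $\mathbb{C}$-neighborhood of $z^* := x_c + (t^* - t_c)/\fr$, which is the position at time $t^*$ of the characteristic through $(x_c, t_c)$ carrying the value $f_c := 1/(\fr-1)$. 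Setting $g(z) := f^*_{t^*}(z)/(f^*_{t^*}(z) + 1)$, one has the implicit representation $x = F(z, t) := z - (t^* - t)\, g(z)$ and $f^*_t(x) = f^*_{t^*}(z)$ for $(x,t)$ near $(x_c, t_c)$, with $z \in \overline{\HH^-}$ corresponding to the liquid region.

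The cusp at $(x_c, t_c)$ corresponds to a triple critical point of $F(\cdot, t_c)$ at $z^*$: the vanishing of $\partial_z F$ and $\partial_z^2 F$ yields $(t^* - t_c)\, g'(z^*) = 1$ and $g''(z^*) = 0$, while upward orientation with $\fq > 0$ forces $g'''(z^*) < 0$. Taylor expanding $F$ around $(z^*, t_c)$ then gives
\[
x - c(t) = \beta\, (t - t_c)\,(z - z^*) + \alpha\,(z - z^*)^3 + R(z - z^*, t - t_c),
\]
where $c(t) := x_c + (t - t_c)/\fr$, $\alpha := -(t^* - t_c)\, g'''(z^*)/6 > 0$, $\beta := 1/(t^* - t_c) > 0$, and $R$ collects the higher-order terms, which are of lower order in the relevant regime. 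The discriminant of the principal cubic vanishes precisely on the curves $x - c(t) = \pm 2\beta^{3/2}(t_c - t)^{3/2}/(3\sqrt{3\alpha})$, which matches \eqref{xyql} and pins down $\alpha\,\fq^2 = \beta^3$.

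The density is extracted from $\rho^*_t(x) = -\pi^{-1}\arg^* f^*_t(x) \approx \pi^{-1} f_c^{-1}\, |\Im f^*_{t^*}(z)| = \pi^{-1} f_c^{-1}\, \partial_z f^*_{t^*}(z^*)\, |\Im \zeta|$, with $\zeta := z - z^* \in \overline{\HH^-}$ the appropriate complex root of the perturbed cubic and $\partial_z f^*_{t^*}(z^*) = (f_c+1)^2/(t^* - t_c) > 0$ derived from $g'(z^*) = 1/(t^* - t_c)$. For $t < t_c$ and $0 < x - E_+(t) \ll (t_c-t)^{3/2}$, expanding the cubic near its double root at $\Delta = 0$ yields $|\Im \zeta| = (1 + o(1))\sqrt{x - E_+(t)}\,\big/\big(3\alpha\beta(t_c - t)\big)^{1/4}$, which after substitution produces \eqref{e:cubiceq} with an explicit positive constant $C_* = C_*(\fr, \fq)$; the analogous expansion near $E_-(t)$ yields the symmetric statement. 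For $C(t_c-t)^{3/2} \le x - E_+(t) \le c_0$, the cubic is dominated by $\alpha\,\zeta^3 \approx x - c(t)$, so $|\Im \zeta| \asymp |x - c(t)|^{1/3}$, completing part~(1). For $t > t_c$ and $|x - c(t)|$ small, $\beta(t - t_c)/\alpha > 0$ makes the discriminant strictly negative, and an elementary analysis of the cubic gives $|\Im \zeta| \asymp \sqrt{t - t_c} \vee |x - c(t)|^{1/3}$, which yields part~(2).

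The main technical obstacle is controlling the remainder $R$ uniformly over the regime $|\zeta| \lesssim (t_c - t)^{1/2} + |x - c(t)|^{1/3}$ and $|t - t_c| \ll 1$, so that the idealized cusp-catastrophe analysis transfers to rigorous asymptotics with the stated error terms, in particular for the leading constant $C_*$ in \eqref{e:cubiceq}. This reduces to ensuring that the complex solutions of the full implicit equation $x = F(z, t)$ remain within the domain of analyticity of $f^*_{t^*}$ near $z^*$, and to bounding the Taylor remainder of $g$ to sufficient order. Smoothness of the arctic curve away from the cusp (guaranteed by \Cref{pla}), combined with a standard implicit-function/perturbation argument, then upgrades the cubic asymptotics into the claimed density estimates.
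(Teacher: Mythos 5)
Your proposal is correct in substance, and its analytic core is the same as the paper's: reduce the complex slope near the cusp to a perturbed depressed cubic coming from a triple degeneracy, then read the density off the imaginary part of the relevant root, treating the two regimes $|x-c(t)|\lesssim (t_c-t)^{3/2}$ and $(t_c-t)^{3/2}\lesssim|x-c(t)|\le c_0$ separately (this is exactly the computation the paper does in \Cref{s:proofd}, based on the cubic \eqref{zf_tc2} of \Cref{lem:eq_around_cusp2}). Your cubic-formula asymptotics, including the leading constant in \eqref{e:cubiceq} and the independence of the auxiliary reference time, check out. The differences are in packaging and in what is proved versus quoted: the paper cites \cite[Proposition~3.3]{huang2021edge} for part (1) and only proves part (2), whereas you sketch part (1) as well; and you anchor the expansion at a post-cusp reference slice $t^*>t_c$ via backward characteristics, while the paper anchors at $t_c$ using the algebraic function $Q$ from \Cref{xhh2}(4) together with the characteristics solution of the Burgers equation.

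The one place where your write-up leans on an assertion rather than an argument is the structural input at $t^*$: that $f^*_{t^*}$ extends analytically to a full complex neighborhood of $z^*$ (including the lower half-plane side, which is what the backward characteristics actually hit), and that $(t^*-t_c)g'(z^*)=1$, $g''(z^*)=0$, $g'''(z^*)\ne 0$. These do not follow from \eqref{xyql} alone; in the paper they come from the algebraicity/classification input encoded in \Cref{xhh2}(4) (the cusp is a triple, non-quadruple root of $f\mapsto Q(f)-x_c(f+1)+t_cf$), and the derivative identities are then obtained by implicit differentiation exactly as in \Cref{lem:xtzdeft} (the same computation at $t^*>t_c$ gives $f'_{t^*}(z^*)=(f_c+1)^2/(t^*-t_c)>0$, consistent with your formula). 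So you should either cite that structure or rederive it; likewise the sign $g'''(z^*)<0$ should be derived — your discriminant-matching with \eqref{xyql} is fine and is essentially the paper's argument for $a>0$ via the two real fold points $E_\pm(t)$, $t<t_c$. Finally, the "technical obstacle" you flag (keeping the implicit solution inside the analyticity domain and controlling the remainder uniformly for $|x-c(t)|\le c_0$) is handled in the paper precisely because the error term in \eqref{zf_tc2} is $\OO(|w|^4)$ on a fixed neighborhood supplied by \Cref{xhh2}; with that input your perturbation argument goes through as claimed.
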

Part (1) of this lemma has been proved in \cite[Proposition 3.3]{huang2021edge}. 
The proof of part (2) will be given in \Cref{s:proofd}.

The following lemma gives an estimate on $\gamma_0(t)$, which will be proved in \Cref{sec:compare}. 
\begin{lem}\label{lem:gamma0}
    There exists a constant $\fC>0$ such that for any $t_c< t\le t_1$, $|\gamma_0(t)-c(t)|\le \fC(t-t_c)^{3/2}$.
\end{lem}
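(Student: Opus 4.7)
The plan is to reduce the estimate to a second-order bound on $H^*$ along the characteristic line, and then extract the cubic rate using the density lower bound. By \Cref{p:rhot}(2), we have $\rho_t^*(y)\asymp (t-t_c)^{1/2}$ for $|y-c(t)|\lesssim (t-t_c)^{3/2}$. Combined with $H^*(\gamma_0(t),t)=0=H^*(x_c,t_c)$ and $H^*(\gamma_0(t),t)-H^*(c(t),t)=\int_{c(t)}^{\gamma_0(t)}\rho_t^*(y)\,dy$, a short bootstrap on $|\gamma_0(t)-c(t)|$ reduces the lemma to showing
\[
|H^*(c(t),t)|\lesssim (t-t_c)^2, \qquad t_c<t\le t_1.
\]

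To establish this, I would write
\[
H^*(c(t),t)=\int_{t_c}^t\Big[\partial_t H^*(c(s),s)+\fr^{-1}\partial_x H^*(c(s),s)\Big]\,ds,
\]
and use \eqref{fh} to rewrite the integrand as $\pi^{-1}[\arg^*(f^*+1)-\fr^{-1}\arg^* f^*]$ evaluated at $(c(s),s)$. Setting $F=f^*-1/(\fr-1)$ and Taylor expanding the two $\arg^*$ terms around $F=0$, the linear-in-$F$ contributions cancel exactly: this is precisely the defining property of the tangent direction at the cusp, namely $\fr=(f^*_{t_c}(x_c)+1)/f^*_{t_c}(x_c)$, so that $x=c(t)$ is tangent to $\fA$ at the cusp. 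Hence the integrand is $\OO(|F(c(s),s)|^2)$, and it remains to establish $|F(c(s),s)|\lesssim (s-t_c)^{1/2}$.

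The bound $|\Im F(c(s),s)|\asymp (s-t_c)^{1/2}$ is immediate from \Cref{p:rhot}(2), since $\rho_s^*(c(s))=-\pi^{-1}\arg^* f^*(c(s),s)$ and $\arg^* f^*\approx (\fr-1)\Im F$ to leading order. For the real part, I would exploit the local cubic normal form of $f^*$ near an ordinary cusp. Since $\fA$ is algebraic (\Cref{pla}) with the expansion \eqref{xyql}, a standard Puiseux-type analysis, combined with the complex Burgers equation \eqref{ftx}, yields
\[
F^3+A(t-t_c)F+B(x-c(t))+\OO\big((|F|+|t-t_c|^{1/2}+|x-c(t)|^{1/3})^4\big)=0
\]
with real constants $A,B$, $A>0$, and $A^3=\fq^2 B^2$ (so that the discriminant $-4A^3(t-t_c)^3-27B^2(x-c(t))^2=0$ of the unperturbed cubic recovers the cusp shape \eqref{xyql}). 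Specializing to $x=c(t)$ and $t>t_c$ gives $F(F^2+A(t-t_c))=0$ modulo higher-order terms, whose root lying in $\HH^-$ is $F(c(t),t)=-\ii\sqrt{A(t-t_c)}+\OO(t-t_c)$; in particular, $|\Re F(c(t),t)|\lesssim t-t_c\lesssim (t-t_c)^{1/2}$.

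Combining these bounds, $|\partial_t H^*+\fr^{-1}\partial_x H^*|(c(s),s)\lesssim s-t_c$, so $|H^*(c(t),t)|\lesssim (t-t_c)^2$, and the lemma follows. The main obstacle is the rigorous derivation of the cubic normal form for $F$ with uniform error control, since this is what yields the nontrivial bound on $|\Re F|$ along $x=c(t)$; a possible alternative (avoiding explicit Puiseux expansions) is to propagate the bound on $|\Re F|$ directly along the characteristics of the complex Burgers equation emanating from a small neighborhood of the cusp.
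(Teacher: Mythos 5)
Your route is genuinely different from the paper's and, modulo one input discussed below, it works. The paper does not pass through $H^*$ along the tangent line at all: it uses the quantile evolution equation \eqref{evol_quant} together with the expansion $\gamma_0'(t)=\fr^{-1}+\OO(|w_t(\gamma_0(t))-\fr^{-1}|)$ from \eqref{eq:vf-vf0}, writes $\gamma_0(t)-c(t)=\int_{t_c}^t(\gamma_0'-\fr^{-1})$, and couples this with the cubic relation \eqref{zf_tc2} evaluated at the \emph{moving} point $\gamma_0(t)$; since the resulting bound on $\varpi(t)=w_t(\gamma_0(t))-\fr^{-1}$ involves the memory term $\int_{t_c}^t|\varpi|$, the paper closes the loop with a H{\"o}lder-continuity/continuation bootstrap, obtaining $|\varpi(t)|\lesssim(t-t_c)^{1/2}$ and hence the claim. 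Your argument instead exploits the exact cancellation $\arg^*(f^*+1)-\fr^{-1}\arg^*f^*=\OO(|F|^2)$ (which is the same local expansion the paper uses in \eqref{eq:vf-vf0}, but in the one linear combination where the first-order term vanishes), controls $F$ pointwise along the \emph{fixed} line $x=c(s)$, integrates to get $|H^*(c(t),t)|\lesssim(t-t_c)^2$, and converts back to the quantile via the density lower bound of \Cref{p:rhot}(2). What each buys: the paper's version needs no density two-sided bound and no conversion step, but pays with the self-consistent bootstrap; yours decouples the slope estimate from the quantile (only a soft continuity argument is needed to keep $|\gamma_0(t)-c(t)|\le c_0$ so that \Cref{p:rhot}(2) applies), at the cost of invoking \Cref{p:rhot}(2) and the cusp normal form.

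The one point to fix is the step you yourself flag as the main obstacle. The cubic normal form you postulate is not something to re-derive by a ``standard Puiseux-type analysis'' from \eqref{xyql} and \eqref{ftx}: in this paper it rests on the real-analytic function $Q$ of \Cref{xhh2}(4) (triple-root characterization of the cusp), and it is precisely \Cref{lem:eq_around_cusp2}, equation \eqref{zf_tc2}, written in the variable $\varpi=w_t-\fr^{-1}$ rather than $F=f^*-(\fr-1)^{-1}$ (the two forms are equivalent under the analytic change of variables $w=f/(1+f)$, up to harmless $\OO((t-t_c)F^2)$ cross terms). Since \Cref{lem:eq_around_cusp2} is available before the proof of \Cref{lem:gamma0}, you should simply cite it: setting $z=c(t)$ in \eqref{zf_tc2} gives $(t-t_c)\varpi+\tfrac{a}{3}\varpi^3+\g(\varpi)=0$, and since $\varpi\neq 0$ (the density is positive there) with $\Im\varpi<0$, a small perturbation of the roots of $\varpi^2=-3(t-t_c)/a$ yields $\varpi(c(t),t)=-\ii\sqrt{3(t-t_c)/a}+\OO(t-t_c)$, which is exactly the bound $|F(c(s),s)|\lesssim(s-t_c)^{1/2}$ (indeed $|\Re F|\lesssim s-t_c$) that your argument requires; this also makes your separate appeal to \Cref{p:rhot}(2) for $\Im F$ unnecessary, leaving \Cref{p:rhot}(2) only in the final conversion step. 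With that substitution your proof is complete and correct, though it ultimately consumes the same key ingredient (\Cref{lem:eq_around_cusp2}) as the paper's proof.
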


Combining \Cref{p:rhot} and \Cref{lem:gamma0}, we readily conclude the proof of \Cref{lem:gammaies}.
\begin{proof}[Proof of Lemma \ref{lem:gammaies}]
Recall that $t_c-t_0\asymp\Delta t=n^{-\omega}$ with $\omega\in (0,1/2)$. 
For $1\le i\lesssim \Delta t^2 n$, we can integrate \eqref{e:cubiceq} to get
\begin{align}\label{e:gamma_irelation}
    \frac{i}{n}=\int^{\gamma_i(t_0)}_{E_+(t_0)}\rho_{t_0}^*(x)\rd x\asymp \frac{(\gamma_i(t_0)-E_+(t_0))^{3/2}}{\Delta t^{1/4}},
\end{align}
which yields $\qut_i(t_0) - \Ep(t_0) \asymp \Delta t^{1/6} (i/\lsca)^{2/3}$. This gives the first relation in \eqref{e:gammailoc1}.  The second one follows similarly.

Next, we prove the first relation in \eqref{e:gammailoc2}, and the second relation can be proven in the same way. We recall that  $c(t)=x_c+(t-t_c)/\fr\in [E_-(t), E_+(t)]$. For $t\le t_c$, similar to \eqref{e:gamma_irelation}, using Item 1 of \Cref{p:rhot}, we have that for $i\gtrsim \Delta t^2 n$, 
\begin{align}
    \frac{i}{n}=\int^{\gamma_i(t)}_{E_+(t)}\rho_{t}^*(x)\rd x\asymp (\gamma_i(t)-c(t))^{4/3},
\end{align}
which concludes that $\gamma_i(t)-c(t)\asymp (i/n)^{3/4}$.

For $t\ge t_c$, by \Cref{lem:gamma0} and Item 2 of \Cref{p:rhot}, for any $x\ge c(t)+ 2\fC(t-t_c)^{3/2}$ we have
\begin{align*}
    &\int^{x}_{\gamma_0(t)}\rho_{t}^*(y) \rd y
    \le \int^{x}_{c(t)-\fC\Delta t^{3/2}}\rho_{t}^*(y)\rd y\lesssim \int^{x}_{c(t)-\fC\Delta t^{3/2}}(t-t_c)^{1/2}\vee |y-c(t)|^{1/3}
    \rd y\lesssim (x-c(t))^{4/3},\\
    &\int^{x}_{\gamma_0(t)}\rho_{t}^*(y)\rd y
    \ge \int^{x}_{c(t)+\fC\Delta t^{3/2}}\rho_{t}^*(y)\rd y\gtrsim \int^{x}_{c(t)+\fC\Delta t^{3/2}}(t-t_c)^{1/2}\vee |y-c(t)|^{1/3}
    \rd y\gtrsim (x-c(t))^{4/3}.
\end{align*}
Namely, we have $\int^{x}_{\gamma_0(t)}\rho_{t}^*(y) \rd y \asymp (x-c(t))^{4/3}$ for $x\ge c(t)+ 2\fC(t-t_c)^{3/2}$.
This implies that $\gamma_i(t)-c(t)\asymp (i/n)^{3/4}$ when $i\ge C\Delta t^2 n$ for some sufficiently large constant $C$.
This finishes the proof of \eqref{e:gammailoc2}.
\end{proof}

For \Cref{lem:sfqes}, we also need to use the optimal rigidity proved in \Cref{s:optimalR}.
\begin{proof}[Proof of \Cref{lem:sfqes}]
By \Cref{t:rigidity}, with overwhelming probability, for any $x$ and $t\in [t_0, t_1]\cap n^{-1}\Z$,
\begin{align}  \label{eq:61app}
|\{i\ge 0: \sfq_i(nt)\le nx\}|
=n H^*(x,t)+\OO(n^{\fd}).
\end{align}
It then follows that there is a sufficiently large constant $S>0$, such that for any $j\ge 0$, 
\begin{align}\label{e:gammai_0}
\sfq_0(nt)/n \vee \gamma_{j-\lfloor Sn^{\fd}\rfloor}(t)\le \sfq_j(nt)/n\le \gamma_{j+ \lfloor Sn^{\fd}\rfloor}(t).
\end{align}
By \Cref{t:cusprigidity}, with overwhelming probability, we have $\sfH(nv)= nH^*(v)$ for $v=(x,t)$ with $t\in [t_0, t_c]$ and $x\in [E_{-}(t)+ (t_c-t)^{1/6}n^{\fd-2/3}, E_{+}(t)- (t_c-t)^{1/6}n^{\fd-2/3}]$. We then have $E_{+}(t)- (t_c-t)^{1/6}n^{\fd-2/3} \le \sfq_0(nt)$, which, together with \eqref{e:gammai_0} and \Cref{lem:gammaies}, gives
\begin{align*}
E_+(t_0)-n^{-2/3+\fd}(t_c-t_0)^{1/6}\le \sfq_0(nt_0)/n\le \gamma_{\lfloor S n^{\fd}\rfloor}(t_0)
\le E_+(t_0)+n^{-2/3+\fd}\Delta t^{1/6}.
\end{align*}
Thus, we conclude that $\sfq_0(nt_0)/n-E_+(t_0)\lesssim n^{-2/3+\fd}\Delta t^{1/6}$.
A similar argument leads to the bound for $\sfq_{-1}(nt_0)$ and concludes \eqref{e:concentrate1}.
The statement \eqref{e:concentrate2} is a consequence of  \eqref{eq:61app}, by noticing that
\begin{align*}
    &|\{i \in \llbracket -M, N\rrbracket: \sfq_i(nt_0)<x\lsca\}|=\sfH(x,t_0)+M + \OO(1),\\
    &|\{i \in \llbracket -M, N\rrbracket: \qut_i(t_0)<x\}|=nH(x,t_0)+M + \OO(1).
\end{align*} 

We next prove the first estimate of \eqref{e:concentrate3}.
Recall $L=\lceil n^{1+\delta}\Delta t^2\rceil\gg \Delta t^2n$. Using  \eqref{e:gammai_0}, \eqref{e:gammailoc2}, and Item 1 of \Cref{p:rhot}, we get
\begin{align*}
\sfq_L(nt)/n -\gamma_L(t)\le \gamma_{L + \lfloor S n^{\fd}\rfloor}(t)-\gamma_L(t) \lesssim \frac{n^{\fd}}{n\rho^*_t(\gamma_{L}(t))}
&\lesssim 
\frac{n^{\fd}}{n^{3/4}L^{1/4}},\\
\gamma_L(t)-\sfq_L(nt)/n \le \gamma_L(t) - \gamma_{L-\lfloor Sn^{\fd}\rfloor}(t)\lesssim \frac{n^{\fd}}{n\rho^*_t(\gamma_{L-\lfloor Sn^{\fd}\rfloor}(t))}
&\lesssim 
 \frac{n^{\fd}}{n^{3/4}|L-\lfloor Sn^{\fd}\rfloor|^{1/4}}.
\end{align*}
Then, using that $L\gg n^\delta$ and choosing $\fd$ sufficiently small depending on $\delta$, we conclude that $\sfq_{L}(nt)/n-\gamma_L(t)\lesssim n^{-3/4-\fd}$ for any $t \in [t_0, t_1]\cap n^{-1}\Z$.
The proofs of the second estimate of \eqref{e:concentrate3} and \eqref{e:concentrate4} are similar.
\end{proof}

\subsection{Complex slope revisit}
Recall (from \Cref{s:cslope}) the complex slope $f^*_t(x)$ for $(x,t)\in \fL$.
Consider a box around the cusp location $(x_c,t_c)$ as $\fB=[x_c-\fc', x_c+\fc']\times [t_c-\fc', t_c+\fc']$ for a small enough constant $\fc'>0$.
We denote $\fit=t_c-\fc'$ and $\fft=t_c+\fc'$. 
We can take $\fc'$ small enough such that the complex slope on the liquid region $\fL(\fB)=\fL\cap \fB$ can be reparametrized as an analytic function. 
Such a reparametrization has been done in \cite{huang2021edge}, as summarized in the following proposition.
\begin{prop}
		\label{xhh2} 
There exists a small enough constant $\fc'>0$ such that the followings hold:
\begin{enumerate}
    \item For any $t\in [\fit,\fft]$, let $\fB^t=\{(x,s)\in \fL(\fB): t\le s\le \fft\}$. The following map 
			\begin{align}\label{e:xrmap0}
\varphi_t: (x,s)\in \fB^t\mapsto  x + (t-s)\frac{   f^*_s( x)}{ f^*_s(x)+1}\in \HH\cup \R
\end{align}
is a bijection to its image.
In addition, $(x,s)\mapsto f^*_s(x)$ can be continuously extended to the boundary of $\fB^t$, therefore \eqref{e:xrmap0} can also be continuously extended to the boundary of $\fB^t$. It maps the north, west, and east boundaries of $\fB^t$, $\partial_{\north} \fB^t\cup \partial_{\we}\fB^t\cup \partial_{\ea}\fB^t$, to a curve in the upper half-plane, and the remaining boundary of $\fB^t$ to an interval in $\R$.
Therefore, \eqref{e:xrmap0} and its complex conjugate together give a bijection from two copies of  $\fB^t$, glued along the arctic curve, to a symmetric domain $\mathscr U_t\subseteq \bC$. 

\item 
The complex slope induces a family of analytic functions $f_t: \mathscr U_t\cap \bH \to \bH^-$ for $t\in [\fit,\fft]$, satisfying the following relation:
\begin{align}\label{e:xrmap}
f_t\left( \varphi_t(x,s)\right)= f_s^*(x),\quad (x,s)\in \fB^t.
\end{align}
In particular, for $s=t$ we have $f_t(x)=f_t^*(x)$.
On $\mathscr U_t\cap \HH$, $f_t$ satisfies the complex Burgers equation
\begin{align}\label{e:Burgeq}
 \del_{t}  f_t( z)+\del_{z}  f_t( z ) \frac{f_t( z )}{ f_t( z)+1}=0.
\end{align}
\item \label{i:decompf} 
Recall the density $\rho^*_t(x)=\del_x H^*(x,t)$ defined in \Cref{sec:univproof}, and denote its Stieltjes transform as
\begin{align}\label{e:stieltjes}
 m_t^*(z)=\int \frac{\rho_t^*(x)\rd x}{z-x}.
\end{align}
Then $f_t$ can be extended to the whole domain $\mathscr U_t$, and we have the decomposition
\begin{align}\label{e:gszmut}
 f_t(z)=e^{m_t^*(z)+g_t(z)},
\end{align}
where $g_t$ is a real analytic function on $ {\mathscr U}_t$.\footnote{A function $g$ defined on a subset of $\bC$ is called real analytic if it is analytic and satisfies $\overline{g(z)}=g(\overline z)$.}
\item \label{i:defQ}  
When $\fc'$ sufficiently small, there is a one-variable real analytic function $Q$, such that for any $(x,t)$ in the closure of $\fL(\fB)$, 
\begin{equation}\label{q0f}
    Q(f^*_t(x))=x \big(  f^*_t(x) + 1 \big) - t  f^*_t(x).
\end{equation}
Also, for any $(x,t)\in \fB$, $(x,t)\in \fA(\fB)=\fA\cap\fB$ if and only if $f_{t}(x)$ is a double root of $f\mapsto Q(f)-x(f+1)+tf$, except for that $f_{t_c}(x_c)=f^*_{t_c}(x_c)$ is a triple root (but not a quadruple root) of $f\mapsto Q(f)-x_c(f+1)+t_cf$.
\end{enumerate}
\end{prop}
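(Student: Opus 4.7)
The plan is to follow the framework established in \cite{huang2021edge}, which is the main source cited by the proposition, and supplement it with the cusp-specific analysis needed for item (4). The four items correspond to four standard steps in the method of characteristics for the complex Burgers equation \eqref{ftx}: construction of the characteristic map, analytic extension of the solution, decomposition via the Stieltjes transform, and existence of the conserved quantity.

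First I would establish items (1) and (2) together. The map $\varphi_t$ in \eqref{e:xrmap0} is the characteristic map of \eqref{ftx}: along the trajectory $r \mapsto (\varphi_r(x,s), r)$ starting from $(x,s)$, the complex slope is constant, equal to $f^*_s(x)$. This is essentially built into Proposition \ref{fequation}. Since $\Im f^*_s(x) < 0$ in the interior of $\fL(\fB)$, characteristics emanating from distinct interior points have distinct slopes and hence cannot collide for small enough $\fc'$, giving injectivity of $\varphi_t$ on the interior by a compactness argument. On the arctic boundary, I would use the explicit parametrization \eqref{xyql} of the cusp, combined with the fact (from \eqref{e:slope}) that the tangent slope at a boundary point equals $(f^*_t(x)+1)/f^*_t(x)$, to verify that characteristics reach the arctic boundary in a one-to-one manner (with the unique triple convergence precisely at the cusp). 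Then $f_t$ defined by \eqref{e:xrmap} is analytic on $\varphi_t(\fB^t)$ because $\varphi_t$ is a biholomorphism onto its image, and the complex Burgers equation \eqref{e:Burgeq} follows tautologically from $f_t$ being constant along characteristics. Schwarz reflection then extends $f_t$ and the domain to the symmetric $\mathscr U_t$.

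Second, for item (3), the decomposition follows from boundary value matching. By definition of the complex slope \eqref{fh}, for $(x,t) \in \fL$ one has $\arg^* f^*_t(x) = -\pi \rho^*_t(x)$; by the Plemelj--Sokhotski formula applied to \eqref{e:stieltjes}, $\Im m^*_t(x+\ii 0^+) = -\pi \rho^*_t(x)$ almost everywhere. Hence $\arg f_t(z) - \Im m^*_t(z) \to 0$ as $z$ approaches $\mathscr U_t \cap \R$ from $\mathbb{H}$, which means $g_t(z) := \log f_t(z) - m^*_t(z)$ has vanishing imaginary part on $\mathscr U_t \cap \R$. By Schwarz reflection, $g_t$ extends to a real analytic function on all of $\mathscr U_t$.

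Third, for item (4), the conserved quantity along characteristics is $x(f^*_s(x)+1) - sf^*_s(x)$: substituting $z = \varphi_t(x,s)$ and $f_t(z) = f^*_s(x)$ from \eqref{e:xrmap0}--\eqref{e:xrmap} into $z(f_t(z)+1) - tf_t(z)$ and expanding verifies this. Since the restriction $f^*_{t_c}|_{(x_c-\epsilon,x_c+\epsilon)} \to \R$ is a local homeomorphism away from the cusp and is real-valued (because $f^*_{t_c}(x_c) \in \R \setminus \{0,-1\}$), the map $x \mapsto f^*_{t_c}(x)$ is invertible on an interval, which inverts to give $Q$ satisfying \eqref{q0f} on that interval; the reality property $\overline{Q(f)} = Q(\bar f)$ (built in by real-analyticity of $g_t$) then extends $Q$ across the real axis via Schwarz reflection. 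The cusp characterization follows by differentiating \eqref{q0f} with respect to $z$: $f^*_t{}'(x) = (f^*_t(x)+1)^2/(Q'(f^*_t(x)) - (x-t)(\cdots))$, so $f^*_t$ has a branch point at $(x,t)$ precisely when $f^*_t(x)$ is a double root of $f \mapsto Q(f) - x(f+1) + tf$. The cusp at $(x_c,t_c)$ corresponds to a higher-order degeneration (triple root) because both the tangent direction and curvature of the arctic curve align, as read off from \eqref{xyql}.

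The main obstacle will be the last step: rigorously showing that $f^*_{t_c}(x_c)$ is exactly a triple root and not a quadruple root. The quadruple root condition would force the leading-order term in \eqref{xyql} to vanish, contradicting $\fq > 0$ from Definition \ref{sr}; more precisely, by comparing the Taylor expansion of the cubic root branch $f \mapsto Q(f) - x(f+1) + tf$ near $(x_c, t_c, f^*_{t_c}(x_c))$ with \eqref{xyql}, one identifies the curvature parameter $\fq$ with the nonvanishing fourth derivative $Q''''$, so $\fq > 0$ rules out the quadruple root.
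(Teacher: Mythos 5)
Your overall framework (complex characteristics for \eqref{ftx}, Schwarz reflection for the decomposition, the conserved quantity $z(f+1)-tf$ along characteristics to produce $Q$) is indeed the mechanism behind this proposition; note, though, that the paper itself does not reprove it but imports it from \cite[Propositions 3.1, 3.4, A.2]{huang2021edge} together with the classification of singular points in \cite{LSCE}, so a blind proof has to reconstruct that material, and several of your key steps do not. (a) Injectivity of $\varphi_t$: the argument ``characteristics emanating from distinct points have distinct slopes and hence cannot collide'' is false as a principle — two lines with distinct slopes meet at exactly one time, so distinctness of slopes by itself rules out nothing; what must be shown is that no such meeting occurs within the relevant time window and domain (including pairs with $s_1\neq s_2$), and this is precisely the nontrivial content of the bijectivity claim, not something a compactness remark supplies. (b) Analyticity of $f_t$: $\varphi_t$ in \eqref{e:xrmap0} is a map of the two \emph{real} variables $(x,s)$ into $\bC$, so calling it a ``biholomorphism onto its image'' is a category error — analyticity of $f_t$ in $z$ has to be derived, e.g.\ from an analytic implicit relation such as $Q(f)=z(f+1)-tf$ with nonvanishing $f$-derivative, or by verifying $\bar\partial f_t=0$ from the Burgers equation; as written the step assumes what is to be proved.

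(c) Your construction of $Q$ inverts $x\mapsto f^*_{t_c}(x)$ on a real interval around $x_c$ on the grounds that it is real-valued there; this is false: for $x\neq x_c$ near the cusp the point $(x,t_c)$ lies in the liquid region and $f^*_{t_c}(x)\in\HH^-$, with $f^*_{t_c}(x_c)$ the only real value. The correct route is to define $Q(f)=z(f+1)-tf$ as a function of the characteristic label $f=f_t(z)$ on $\mathscr U_t$ (constancy along characteristics makes the right-hand side depend only on $f$), which again leans on the injectivity/invertibility you have not established, and real-analyticity of $Q$ then comes from the symmetry of $\mathscr U_t$ and $f_t$, as in \cite[Proposition A.2]{huang2021edge}. (d) The triple-versus-quadruple dichotomy is governed by $Q'''$, not $Q''''$: with $R(f)=Q(f)-x(f+1)+tf$ one has $R''=Q''$ and $R'''=Q'''$, so ``triple but not quadruple'' is exactly $Q''(f^*_{t_c}(x_c))=0\neq Q'''(f^*_{t_c}(x_c))$ (cf.\ \eqref{e:cuspcharacterization}), and the paper's identification \eqref{eq:aQ3} shows $\fq^{-2}\propto Q'''(f^*_{t_c}(x_c))$; a quadruple root would mean $Q'''=0$ and a cusp with a different power law than the $3/2$ exponent in \eqref{xyql}, so your plan of comparing with \eqref{xyql} targets the wrong derivative. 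Finally, be careful with the direction of the logic: in the paper and in \cite{LSCE,DMCS} the root classification (double roots on $\fA$, triple root at an ordinary cusp) is obtained from the algebraic classification of singularities of the arctic curve, and the expansion \eqref{xyql} of \Cref{sr} is a consequence; if you instead derive the root structure from \eqref{xyql}, you must justify the existence of that expansion with $0<\fq<\infty$ without invoking the classification you are trying to prove.
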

The first and third items follow from \cite[Proposition 3.4]{huang2021edge}.
The second item follows from \cite[Proposition 3.1]{huang2021edge}. The last item follows from \cite[Proposition A.2]{huang2021edge}, and the classification of singular points (see the discussion
at the end of \cite[Section 1.6]{LSCE}).

The complex Burgers equation \eqref{e:Burgeq} can be solved readily using the characteristic flow. Fix any time $  t\in [\fit, \fft]$ and $u\in \mathscr U_t$, we have that for $  s \in [t, \fft]$,
\begin{align}\label{e:ccff}
\del_s f_s(z_s(u))=0, \quad \del_s z_s(u)= \displaystyle\frac{f_s(z_s(u))}{f_s(z_s(u))+1}=\frac{f_t(u)}{f_t(u)+1}, \quad z_t(u)=u.
\end{align}
The characteristic flow maps the sub-region 
$\{u\in \mathscr U_t\cap \HH: \Im u\ge -(s-t) \Im[f_t(u)/(f_t(u)+1)]\}$ bijectively to $\mathscr U_s\cap \HH$. 
It then follows that $f_s$ satisfies 
\begin{align}\label{e:Burger:sol0}
    f_s\left(u+(s-t)\frac{f_t(u)}{f_t(u)+1}\right)=f_t(u),\quad \fit\le t \le s\le \fft,\quad u\in \mathscr U_t.
\end{align}
For simplicity of notations, we introduce 
$w_t(z):= f_t(z)/(f_t(z)+1)$.
Then \eqref{e:Burger:sol0} can be rewritten as 
\begin{equation}\label{e:Burger:sol1}
    w_s\left(z+(s-t)w_t(z)\right)=w_t(z). 
\end{equation}
Performing Taylor expansion of $Q$ around $f_{t_c}^*(x_c)$ and using \eqref{e:Burger:sol1}, we can show that $w_t(z)$ satisfies the following equation \eqref{zf_tc2}.

\begin{lem}\label{lem:eq_around_cusp2}
        For any $t\in [\fit, \fft]$ and $z\in \mathscr U_t$, we have  
\begin{equation}\label{zf_tc2}
    z - x_c + (t_c-t)w_t(z)  = \frac{\coffa}{3} (w_{t}(z)-w_{t_c}(x_c))^3 +  \g(w_{t}(z)-w_{t_c}(x_c)),
\end{equation}
where $w_{t_c}(x_c)=\mathfrak r^{-1}\in (0,1)$, $\coffa=\frac{\fr^5}{2(\fr-1)^5} Q'''(f^*_{t_c}(x_c)) $ is a positive constant, and $\g$ is an analytic function in a neighborhood around $0$ satisfying $\g(w)=\OO(|w|^4)$.
\end{lem}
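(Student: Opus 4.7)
\medskip

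\noindent\textbf{Proof proposal for \Cref{lem:eq_around_cusp2}.}
The plan is to convert \eqref{q0f} into an equation directly in the variable $w_t(z)$, then Taylor expand around the cusp value $w_{t_c}(x_c)$, using the triple-root condition of Item~\ref{i:defQ} of \Cref{xhh2} to verify that the linear and quadratic coefficients vanish. First, by Item~\ref{i:defQ}, the identity $Q(f^*_t(x))=x(f^*_t(x)+1)-tf^*_t(x)$ holds on the closure of $\fL(\fB)$. By \eqref{e:xrmap} together with analyticity of $f_t$ on $\mathscr U_t$, both sides of this identity are real analytic on $\mathscr U_t$ and coincide on the arctic curve, hence by analytic continuation
\begin{equation}\label{eq:proposalQft}
Q(f_t(z)) = z(f_t(z)+1) - t f_t(z), \qquad z \in \mathscr U_t.
\end{equation}
Changing variables via the Möbius transformation $f=w/(1-w)$ (so that $f+1 = 1/(1-w)$), and defining the real analytic function $\tilde Q(w) := (1-w)\,Q(w/(1-w))$ on a neighborhood of $w_c := f_c/(f_c+1)$ with $f_c := f^*_{t_c}(x_c)$, dividing \eqref{eq:proposalQft} by $f_t(z)+1$ yields the clean relation
\begin{equation}\label{eq:proposalclean}
\tilde Q(w_t(z)) = z - t\, w_t(z).
\end{equation}
Note that $w_c = f_c/(f_c+1) = \fr^{-1}$, since by \Cref{sr} we have $\fr = (f_c+1)/f_c$.

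Evaluating \eqref{eq:proposalclean} at $(z,t)=(x_c,t_c)$ and subtracting, then adding and subtracting $t_c\, w_t(z)$, one obtains
\begin{equation}\label{eq:proposalrearr}
z - x_c + (t_c - t)\, w_t(z) \;=\; \bigl[\tilde Q(w_t(z)) - \tilde Q(w_c)\bigr] + t_c\bigl(w_t(z) - w_c\bigr).
\end{equation}
Setting $W := w_t(z) - w_c$ and Taylor expanding $\tilde Q$ around $w_c$ (which is permissible on a neighborhood of $w_c$ by analyticity), the right-hand side of \eqref{eq:proposalrearr} becomes
\[
[\tilde Q'(w_c) + t_c]\,W + \frac{\tilde Q''(w_c)}{2}W^2 + \frac{\tilde Q'''(w_c)}{6}W^3 + \OO(|W|^4).
\]
Hence to establish \eqref{zf_tc2} it suffices to show $\tilde Q'(w_c) + t_c = 0$, $\tilde Q''(w_c) = 0$, and $\tilde Q'''(w_c)/6 = \coffa/3$ with $\coffa$ as claimed.

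These three identities are a chain-rule computation driven by the cusp conditions provided by Item~\ref{i:defQ}: since $f_c$ is a triple (but not quadruple) root of $f\mapsto Q(f)-x_c(f+1)+t_cf$, we have $Q(f_c)=x_c(f_c+1)-t_cf_c$, $Q'(f_c)=x_c-t_c$, $Q''(f_c)=0$, and $Q'''(f_c)\neq 0$. Writing $P(w):=Q(w/(1-w))$ and using $1-w_c = (f_c+1)^{-1} = (\fr-1)/\fr$, direct differentiation of $\tilde Q=(1-w)P$ gives
\[
\tilde Q'(w_c) = -Q(f_c) + (f_c+1)Q'(f_c) = -t_c,
\]
so the linear coefficient vanishes. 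The second-derivative calculation $\tilde Q''(w_c) = -2P'(w_c) + (1-w_c)P''(w_c)$ evaluates to zero after substituting $P'(w_c)=(x_c-t_c)(f_c+1)^2$ and $P''(w_c)=2(x_c-t_c)(f_c+1)^3$ (the latter using $Q''(f_c)=0$). Finally, the cubic coefficient
\[
\tilde Q'''(w_c) \;=\; -3P''(w_c) + (1-w_c)P'''(w_c) \;=\; (f_c+1)^5\,Q'''(f_c) \;=\; \frac{\fr^5}{(\fr-1)^5}\,Q'''(f_c)
\]
yields exactly $\coffa = \tilde Q'''(w_c)/2$ as stated. The main (though still routine) obstacle is the third-derivative bookkeeping: one must carefully track the contributions of the chain-rule terms that do \emph{not} vanish despite $Q''(f_c)=0$, to see that the $(1-w_c)\,Q'''(f_c)(f_c+1)^6$ term survives while the remaining $(x_c-t_c)$-terms cancel against $-3P''(w_c)$. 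Positivity of $\coffa$ (equivalently $Q'''(f_c)>0$) then follows from consistency with the upward-oriented cusp expansion \eqref{xyql}: at $t=t_c$, the leading relation $z-x_c\approx (\coffa/3)W^3$ with $w_{t_c}$ mapping a neighborhood of $x_c$ in the liquid region into $\HH^-$ (since $f_{t_c}(\HH\cap\mathscr U_{t_c})\subset \HH^-$ and $f\mapsto f/(f+1)$ preserves $\HH^-$), combined with the sign of the $\pm (t_c-t)^{3/2}$ coefficient in \eqref{xyql}, forces the cubic coefficient to be positive.
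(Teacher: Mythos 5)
Your derivation of the identity itself is correct and takes a slightly different, arguably cleaner route than the paper. The paper first establishes $Q(f_{t_c}(z))=z(f_{t_c}(z)+1)-t_cf_{t_c}(z)$ only at the cusp time, Taylor expands $Q$ in the variable $f$ to get \eqref{zf_tc}, and then transports the relation to general $t$ by substituting the Burgers characteristic flow \eqref{e:Burger:sol0w} before finally changing variables $f=w/(1-w)$. You instead establish the $Q$-relation at every time $t$ at once (via \eqref{e:xrmap}, \eqref{q0f} and analytic continuation from the real liquid slice where $f_t=f_t^*$ — equivalently one can substitute $z=\varphi_t(x,s)$ as the paper does at $t_0$ in its later computation), absorb the change of variables once into $\tilde Q(w)=(1-w)Q(w/(1-w))$, and expand $\tilde Q$ around $w_{t_c}(x_c)$. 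This avoids the flow substitution entirely, and your chain-rule computation of $\tilde Q'(w_c)=-t_c$, $\tilde Q''(w_c)=0$, $\tilde Q'''(w_c)=(f_c+1)^5Q'''(f_c)=\fr^5(\fr-1)^{-5}Q'''(f_c)$ checks out, reproducing exactly the constant $\coffa$ in the lemma.

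There is, however, a genuine gap in your argument for the positivity of $\coffa$, which is part of the statement. The picture at $t=t_c$ does not determine the sign: for real $x$ near $x_c$ the relation $x-x_c\approx(\coffa/3)W^3$ with $W=w_{t_c}(x)-\fr^{-1}\in\HH^-$ is consistent with either sign of $\coffa$ (one just gets $\arg W$ near $-\pi/3$ or $-2\pi/3$ on the two sides, in one order or the other), and ``the sign of the $\pm(t_c-t)^{3/2}$ coefficient in \eqref{xyql}'' cannot be used as an input, since both signs occur (the two branches) and $\fq>0$ is merely a normalization convention. The information that actually forces $\coffa>0$ is the existence, for $t<t_c$, of the two real arctic branches $E_\pm(t)$, at which $f_t(E_\pm(t))$ is a \emph{double} root of $f\mapsto Q(f)-E_\pm(t)(f+1)+tf$ (Item (4) of \Cref{xhh2}); this is how the paper argues, deriving the quadratic relation \eqref{eq:www} which can have two real solutions near $w_{t_c}(x_c)$ only if $\coffa>0$. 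In your framework the same fix is short: the double-root condition reads $\tilde Q'(w_t(E_\pm(t)))=-t$, and Taylor expanding $\tilde Q'$ (using $\tilde Q'(w_c)=-t_c$, $\tilde Q''(w_c)=0$) gives $\coffa\,W_\pm^2+\OO(|W_\pm|^3)=t_c-t>0$ with $W_\pm=w_t(E_\pm(t))-\fr^{-1}$ real, which is possible only when $\coffa>0$. Without some such use of the $t<t_c$ branch structure, the positivity claim is unproved.
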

\begin{proof}
Recall that the slope $\mathfrak r$ of the tangent line through $(x_c,t_c)$ is in $(1,\infty)$. Together with \eqref{e:slope}, it implies that $w_{t_c}(x_c)=\mathfrak r^{-1} \in (0,1)$ and $f_{t_c}^*(x_c)=(\fr - 1)^{-1}\in (0,\infty)$. Hence, as long as $\fc'$ is chosen sufficiently small, we have $f_{t_c}(z)$ (resp. $w_t(z)$) for $z\in \mathscr U_{t_c}$ is away from $\{-1,0,\infty\}$ (resp. $\{0,1,\infty\}$) by a distance of order 1. 

By \Cref{i:defQ} of \Cref{xhh2}, \eqref{q0f} holds for any $(x,t)\in  \overline {\fL(\fB)}$, and 
\begin{align}\begin{split}\label{e:cuspcharacterization}
    &Q(f_{t_c}^*(x_c))=x_c \big(  f^*_{t_c} (x_c) + 1 \big) - t_c  f^*_{t_c} (x_c),\\
    &Q'(f_{t_c}^*(x_c))=x_c -t_c,\quad
    Q''(f_{t_c}^*(x_c))=0,\quad Q'''(f_{t_c}^*(x_c))\neq 0.
\end{split}\end{align}
Next, with \eqref{e:xrmap} and \eqref{q0f}, we can derive that for $z \in \mathscr U_{t_c}$, the following equation holds:
$$ Q(f_{t_c}(z))=z(f_{t_c}(z)+1)-t_c f_{t_c}(z).$$
Then, with \eqref{e:cuspcharacterization}, performing the Taylor expansion of $Q$ at $f^*_{t_c}(x_c)$ gives that
\begin{equation}\label{zf_tc0}
    \left(z - x_c\right)\left[ 1+f_{t_c}(z)\right]= \frac{1}{6}{Q'''(f^*_{t_c}(x_c))}(f_{t_c}(z)-f^*_{t_c}(x_c))^3 +  {\g_2(f_{t_c}(z)-f^*_{t_c}(x_c))},
\end{equation}
where $\g_2(w)=\OO(|w|^4)$ is an analytic function in a neighborhood around $0$. 
We further write \eqref{zf_tc0} as
\begin{equation}\label{zf_tc}
    z - x_c = \frac{\cofa}{3} (f_{t_c}(z)-f^*_{t_c}(x_c))^3 +  {\g_1(f_{t_c}(z)-f^*_{t_c}(x_c))}, 
\end{equation}
where $\g_1(w)=\OO(|w|^4)$ is the analytic function obtained in this expansion and 
$$\cofa:=\frac{1}{2} \frac{Q'''(f^*_{t_c}(x_c))}{1+f_{t_c}^*(x_c)}=\frac{\fr-1}{2\fr} Q'''(f^*_{t_c}(x_c)) .$$ 
Recall that $f_t(z)$ satisfies \eqref{e:Burger:sol0}, which implies that 
\begin{equation}\label{e:Burger:sol0w}
    f_{t}\left(z\right)=f_{t_c}\left(z+(t_c - t)\frac{f_{t}(z)}{f_{t}(z)+1}\right).
\end{equation}  
By plugging $z$ in \eqref{zf_tc} as $z+(t_c-t)f_t(z)/(f_t(z)+1)$, we get that
\begin{equation}\label{zf_tc1}
    z - x_c +(t_c - t)\frac{f_{t}(z)}{f_{t}(z)+1}= \frac{\cofa}{3} (f_{t}(z)-f^*_{t_c}(x_c))^3 +  {\g_1(f_{t}(z)-f^*_{t_c}(x_c))}.
\end{equation}
Then, plugging $f_t=w_t/(1-w_t)$ into \eqref{zf_tc1}, we can deduce  \eqref{zf_tc2}.

It remains to show that $a$ is positive. In fact, as the cusp is upward oriented, for  $t\in [t_0,t_c)$, $w_t(x)$ is real for $x \in [E_-(t), E_+(t)]$. 
By \Cref{i:defQ} of \Cref{xhh2}, $f_t(E_{\pm}(t))$ are double roots of $f\mapsto Q(f)-E_\pm(t)(f+1)+tf$, so
$$ Q'\left(f_t(E_{\pm}(t))\right)=E_\pm(t) - t. $$
Then, performing the Taylor expansion of $Q'$ around $f_{t_c}^*(x_c)$ and using \eqref{e:cuspcharacterization}, we obtain that 
$$ x_c-t_c + \frac{1}{2} {Q'''(f^*_{t_c}(x_c))}(f_{t}(E_\pm(t))-f^*_{t_c}(x_c))^2 +  \OO(|f_{t}(E_\pm(t))-f^*_{t_c}(x_c)|^3)=E_\pm(t) - t.$$
Plugging into $f_t=w_t/(1-w_t)$ and using \eqref{zf_tc2} with $z=E_\pm(t)$, we can rewrite this equation as 
\begin{align}\label{eq:www0}
     \frac{\fr^4}{2(\fr-1)^4} {Q'''(f^*_{t_c}(x_c))}(w -w_{t_c}(x_c))^2 +  \OO(|w -w_{t_c}(x_c)|^3)=(t_c - t) \left(1 - w \right)
\end{align} 
for $w=w_t(E_\pm(t))$.
Writing the right-hand side as $1 - w= 1-\fr^{-1} - (w-w_{t_c}(x_c))$, we can reduce \eqref{eq:www0} to 
\begin{equation}\label{eq:www}
   a(w - w_{t_c}(x_c))^2 + \frac{\fr}{\fr-1} (t_c-t)(w - w_{t_c}(x_c)) -( t_c - t)= \OO(|w -w_{t_c}(x_c)|^3).
\end{equation} 
Note that as an equation for $w$, \eqref{eq:www} has two real roots around $w_{t_c}(x_c)$, i.e., $w_t(E_\pm(t))$; but that only happens when $a>0$. This concludes the proof.
\end{proof}

\subsection{Matching the curvature parameters: Proof of \Cref{lem:relapears}}

First, we notice that the two analytic pieces $E_{\pm}(t)$, $t\in [t_0,t_c]$, near the cusp are determined by $a$ as follows. 
\begin{lem}
For any $t\in [t_0,t_c]$, we have
    \begin{align}\label{eq:Epm0}
    \begin{aligned} 
E_-(t)=x_c- \frac{t_c-t}{\mathfrak r}-\frac{2 (t_c-t)^{3/2}}{3\sqrt{a}}+\OO(|t_c-t|^2),\quad & w_t(E_-(t))=w_{t_c}(x_c) + \sqrt{\frac{t_c-t}{a}}+\OO(|t_c-t|),\\ 
E_+(t)=x_c- \frac{t_c-t}{\mathfrak r} +\frac{2(t_c-t)^{3/2}}{3\sqrt{a}} +\OO(|t_c-t|^2),\quad & w_t(E_+(t))=w_{t_c}(x_c)- \sqrt{\frac{t_c-t}{a}}+\OO(|t_c-t|).
\end{aligned} 
\end{align}
\end{lem}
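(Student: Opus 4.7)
The plan is to extract the expansions directly from \eqref{zf_tc2} in \Cref{lem:eq_around_cusp2}, combined with the double-root characterization of the arctic boundary from item \ref{i:defQ} of \Cref{xhh2}. Concretely, because $(E_\pm(t),t)\in \fA$, the value $f_t(E_\pm(t))$ is a double root of $f\mapsto Q(f)-E_\pm(t)(f+1)+tf$, hence $w_t(E_\pm(t))\in\R$ is a double root of the corresponding equation in the $w$-variable obtained from \eqref{zf_tc2}. This double-root condition is precisely what was already used inside the proof of \Cref{lem:eq_around_cusp2} to derive the quadratic relation \eqref{eq:www}, namely
\[
a\bigl(w-w_{t_c}(x_c)\bigr)^{2}+\frac{\fr}{\fr-1}(t_c-t)\bigl(w-w_{t_c}(x_c)\bigr)-(t_c-t)=\OO\!\left(\left|w-w_{t_c}(x_c)\right|^{3}\right),
\]
with $w=w_t(E_\pm(t))$. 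Thus the first step is simply to invoke this relation.

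Next, I would solve this quadratic perturbatively. Since $a>0$ and the right-hand side is of higher order, the leading balance is $a(w-w_{t_c}(x_c))^2 = (t_c-t)+\OO((t_c-t)^{3/2})$, giving two real solutions $w-w_{t_c}(x_c)=\pm\sqrt{(t_c-t)/a}\,(1+\OO(\sqrt{t_c-t}))=\pm\sqrt{(t_c-t)/a}+\OO(|t_c-t|)$. Since the cusp is upward oriented, $E_-(t)<x_c-(t_c-t)/\fr<E_+(t)$, and using the monotonicity of $w\mapsto z$ read off from \eqref{zf_tc2} near $w_{t_c}(x_c)$, the sign assignment is forced to be
\[
w_t(E_-(t))=w_{t_c}(x_c)+\sqrt{\tfrac{t_c-t}{a}}+\OO(|t_c-t|),\quad
w_t(E_+(t))=w_{t_c}(x_c)-\sqrt{\tfrac{t_c-t}{a}}+\OO(|t_c-t|),
\]
which are the second and fourth claimed asymptotics.

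Finally, plugging $z=E_\pm(t)$ into \eqref{zf_tc2} and using the just-obtained expansion of $w_t(E_\pm(t))$ together with $w_{t_c}(x_c)=1/\fr$ and $\g(w)=\OO(|w|^4)$, I would compute
\[
E_\pm(t)-x_c=-(t_c-t)w_t(E_\pm(t))+\tfrac{a}{3}(w_t(E_\pm(t))-w_{t_c}(x_c))^3+\OO((t_c-t)^2).
\]
The first term contributes $-(t_c-t)/\fr\pm (t_c-t)^{3/2}/\sqrt{a}+\OO((t_c-t)^2)$ and the second contributes $\mp(t_c-t)^{3/2}/(3\sqrt{a})+\OO((t_c-t)^2)$, summing to the $\mp\tfrac{2(t_c-t)^{3/2}}{3\sqrt{a}}$ coefficient asserted in the lemma.

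I do not expect a genuine obstacle here: the double-root relation \eqref{eq:www} from \Cref{lem:eq_around_cusp2} does essentially all the analytic work. The only care required is bookkeeping of the error orders (ensuring the $\OO(|w-w_{t_c}(x_c)|^3)$ remainder in \eqref{eq:www} translates cleanly into $\OO(|t_c-t|^{3/2})$ inside the square root and then into $\OO(|t_c-t|)$ for $w$ and $\OO((t_c-t)^2)$ for $E_\pm(t)$) and fixing the branch of the square root using the upward-oriented convention for the cusp.
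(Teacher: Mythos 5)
Your proposal is correct and takes essentially the same route as the paper: the paper's proof is exactly to note that $w=w_t(E_\pm(t))$ satisfies \eqref{eq:www}, solve that quadratic to get the expansions of $w_t(E_\pm(t))$, and plug them back into \eqref{zf_tc2} to obtain $E_\pm(t)$. The only blemish is a trivial $\pm/\mp$ bookkeeping slip in your last sentence (a first term of $\pm(t_c-t)^{3/2}/\sqrt{a}$ plus a second term of $\mp(t_c-t)^{3/2}/(3\sqrt{a})$ sums to $\pm\tfrac{2(t_c-t)^{3/2}}{3\sqrt{a}}$, indexed by $E_\pm$ as in the lemma), which does not affect the argument.
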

\begin{proof}
In the proof of \Cref{lem:eq_around_cusp2} above, we have seen that $w=w_t(E_{\pm}(t))$ satisfies \eqref{eq:www}. 
Solving it, we get the estimates on $w_t(E_\pm(t))$. Plugging them further into \eqref{zf_tc2}, we obtain the estimates on $E_\pm(t)$.
\end{proof}

Comparing \eqref{eq:Epm0} with \eqref{xyql}, we observe that 
\begin{equation}\label{eq:aQ3}
3\fq^{-2}=a=  \frac{1}{2} \frac{\fr^5}{(\fr-1)^5}  Q'''(f^*_{t_c}(x_c)).
\end{equation}

The following lemma computes the derivatives of the complex slope $f_{t_0}$ at $z_c=x_c-(t_c-t_0)/\fr$, as defined in \eqref{e:defzc}.
For the convenience of notations and easier comparison with \Cref{lem:xtzdef}, for the rest of this section, we shift the domain $\fP$ (by an amount depending on $n$) to assume that $t_0=0$. Note that $t_c$ would then be $n$ dependent with $t_c \asymp \Delta t$.

\begin{lem}  \label{lem:xtzdeft}
We have
\begin{align}\label{eq:fderis}
\begin{aligned}
  &  f_0(z_c)=(\fr-1)^{-1},\quad f_0'(z_c)=-t_c^{-1}\frac{\fr^2}{(\fr-1)^{2}},\quad f_0''(z_c)=2t_c^{-2}\frac{\fr^3}{(\fr-1)^{3}},\\
&f_0'''(z_c)=-t_c^{-4}\frac{\fr^7}{(\fr-1)^7} Q'''(f_{t_c}(x_c))   - 6t_c^{-3}\frac{\fr^4}{(\fr-1)^{4}} .
\end{aligned} 
\end{align}
\end{lem}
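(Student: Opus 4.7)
The plan is to derive all four identities from the key cubic equation \eqref{zf_tc2} of \Cref{lem:eq_around_cusp2}, specialized to $t=0$ and $z=z_c = x_c - t_c/\fr$, then translate derivatives of $w_t$ to derivatives of $f_t$ via the pointwise relation $f_t = w_t/(1-w_t)$. Throughout, I will use that $\g(w) = \OO(|w|^4)$, so $\g(0) = \g'(0) = \g''(0) = \g'''(0) = 0$, meaning the $\g$ term contributes nothing to the computations below up to third order.

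First, specializing \eqref{zf_tc2} at $t=0$, $z=z_c$, the left-hand side becomes $z_c - x_c + t_c\, w_0(z_c) = -t_c/\fr + t_c\, w_0(z_c) = t_c(w_0(z_c) - w_{t_c}(x_c))$. The right-hand side is a cubic plus higher-order terms in $w_0(z_c) - w_{t_c}(x_c)$. The only small-neighborhood solution is therefore $w_0(z_c) = w_{t_c}(x_c) = 1/\fr$, which yields $f_0(z_c) = (1/\fr)/(1 - 1/\fr) = (\fr-1)^{-1}$.

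Next, I will differentiate \eqref{zf_tc2} in $z$ (with $t=0$) up to three times and evaluate at $z = z_c$, where the key simplification is that $w_0(z_c) - w_{t_c}(x_c) = 0$ kills all terms carrying a positive power of $(w_0 - w_{t_c}(x_c))$. One differentiation gives $1 + t_c w_0'(z_c) = 0$, hence $w_0'(z_c) = -1/t_c$. A second differentiation gives $t_c w_0''(z_c) = 0$ (since every surviving term on the right carries a factor of $w_0 - w_{t_c}(x_c)$), hence $w_0''(z_c) = 0$. A third differentiation, using Faà di Bruno and the fact that only the pure cubic $\frac{a}{3}(w_0 - w_{t_c}(x_c))^3$ contributes at this order, yields $t_c w_0'''(z_c) = 2a (w_0'(z_c))^3 = -2a/t_c^3$, so $w_0'''(z_c) = -2a/t_c^4$.

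Finally, I convert these to derivatives of $f_0$ via $f = w/(1-w)$, which at $w = 1/\fr$ has $df/dw = \fr^2/(\fr-1)^2$, $d^2f/dw^2 = 2\fr^3/(\fr-1)^3$, $d^3f/dw^3 = 6\fr^4/(\fr-1)^4$. By the chain rule and Faà di Bruno,
\[
f_0'(z_c) = \tfrac{\fr^2}{(\fr-1)^2} w_0'(z_c), \quad f_0''(z_c) = \tfrac{2\fr^3}{(\fr-1)^3}(w_0'(z_c))^2 + \tfrac{\fr^2}{(\fr-1)^2} w_0''(z_c),
\]
\[
f_0'''(z_c) = \tfrac{6\fr^4}{(\fr-1)^4}(w_0'(z_c))^3 + 3\tfrac{2\fr^3}{(\fr-1)^3} w_0'(z_c) w_0''(z_c) + \tfrac{\fr^2}{(\fr-1)^2} w_0'''(z_c).
\]
Substituting the values of $w_0'(z_c), w_0''(z_c), w_0'''(z_c)$ and using \eqref{eq:aQ3} to rewrite $a\fr^2/(\fr-1)^2 = \frac{\fr^7}{2(\fr-1)^7}Q'''(f^*_{t_c}(x_c))$ yields the four claimed identities. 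There is no serious obstacle here; the step that requires the most attention is the third derivative, where one must track carefully that the $\g$ contribution vanishes (it does, since $\g$ begins at order four) and that the only surviving term from the cubic is $2a(w_0')^3$.
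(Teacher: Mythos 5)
Your derivative computations (steps two through four) are correct and, after the change of variables $f=w/(1-w)$, reproduce exactly the identities in \eqref{eq:fderis}; they amount to the paper's own calculation carried out on the cubic normal form \eqref{zf_tc2} of \Cref{lem:eq_around_cusp2} rather than directly on the relation $Q(f_0(z))=z(f_0(z)+1)$ with the cusp conditions $Q''(f^*_{t_c}(x_c))=0$ — equivalent content, and the use of \eqref{eq:aQ3} to convert $a$ into $Q'''(f^*_{t_c}(x_c))$ is fine.

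The gap is in your very first step, the identification $f_0(z_c)=(\fr-1)^{-1}$, i.e.\ $w_0(z_c)=w_{t_c}(x_c)=\fr^{-1}$. Specializing \eqref{zf_tc2} at $t=0$, $z=z_c$ gives, with $\varpi:=w_0(z_c)-\fr^{-1}$, the equation $t_c\varpi=\frac{a}{3}\varpi^3+\g(\varpi)$, and you claim $\varpi=0$ is ``the only small-neighborhood solution.'' That is not true in the regime of this lemma: in the shifted coordinates the statement uses, $t_c\asymp\Delta t=n^{-\omega}\to0$, so besides $\varpi=0$ the equation has two further roots of size $\approx\pm\sqrt{3t_c/a}\asymp\Delta t^{1/2}$, which also lie in any fixed neighborhood of $0$ once $n$ is large; moreover all three roots are real, since $z_c$ lies strictly inside the gap $(E_-(0),E_+(0))$ (cf.\ \eqref{eq:Epm0}, \eqref{eq:zcingap}), so realness of $w_0(z_c)$ does not select the branch either. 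Indeed $w_0$ genuinely varies by order $\Delta t^{1/2}$ across that gap, so without an a priori bound $|w_0(z_c)-\fr^{-1}|=\oo(\Delta t^{1/2})$ the algebraic equation alone cannot tell you which of the three roots $w_0(z_c)$ is. This step is load-bearing: your subsequent differentiations use $\varpi=0$ exactly to kill every term carrying a positive power of $w_0-w_{t_c}(x_c)$, and with one of the nonzero roots the derivative values would be different. The fix is short and is precisely how the paper opens the proof: by the characteristic-flow/bijection relation (\eqref{e:xrmap}, equivalently \eqref{e:Burger:sol0w}), the cusp point maps to $\varphi_0(x_c,t_c)=x_c-t_c\,w_{t_c}(x_c)=z_c$, hence $f_0(z_c)=f^*_{t_c}(x_c)=(\fr-1)^{-1}$ exactly. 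With that substitution in place of your uniqueness claim, the rest of your argument goes through.
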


\begin{proof}
First, by \eqref{e:Burger:sol0}, we have $f_0(z_c)=f_{t_c}(x_c)=(\fr-1)^{-1}$. Then,  \eqref{e:cuspcharacterization} gives that
\begin{equation}\label{e:cuspcharacterization2}
    Q'(f_0(z_c))=x_c -t_c,\quad
    Q''(f_0(z_c))=0,\quad Q'''(f_0(z_c))\neq 0
\end{equation}
Next, by the relation \eqref{e:xrmap},
\begin{align*}
    f_0(\varphi_0(x,t))=f_t^*(x),\quad \varphi_0(x,t)=x - t\frac{f_t^*(x)}{f_t^*(x)+1}.
\end{align*}
Denoting $z=\varphi_0(x,t)$, and plugging the above line into \eqref{q0f}, we get that
\begin{align}\label{e:Q0f0}
    \frac{Q(f_0(z))}{f_0(z)+1}=x-t\frac{f_t^*(x)}{f_t^*(x)+1}=z \ \ \Rightarrow \ \ Q(f_0(z))=z(f_0(z)+1).
\end{align}
Taking the derivative of \eqref{e:Q0f0} with respect to $z$ gives
\begin{align}\label{e:der1}
    Q'(f_0(z))f_0'(z)=f_0(z)+1+z f_0'(z).
\end{align}
Plugging $z=z_c$ into \eqref{e:der1}, using \eqref{e:cuspcharacterization2} and $z_c=x_c-t_c w_0(z_c)$, we get
\begin{equation}  \label{eq:defcust2}
 1+\cust \frac{\fff'(\zzc)}{(\fff(\zzc)+1)^2}=0,
\end{equation}
from which we can solve $\fff'(\zzc)$. 
Taking one more derivative of \eqref{e:der1} with respect to $z$, we get
\begin{align}\label{e:der2}
    Q''(f_0(z))(f_0'(z))^2+Q'(f_0(z))f_0''(z)=2f_0'(z)+z f_0''(z).
\end{align}
Plugging $z=z_c$ into \eqref{e:der2} and using \eqref{e:cuspcharacterization2} and \eqref{eq:defcust2}, we get
\begin{equation}  \label{eq:defzzzc2}
\fff''(\zzc)-\frac{2\fff'(\zzc)^2}{\fff(\zzc)+1}=0,
\end{equation}
from which we can solve $\fff''(\zzc)$. 
Finally, taking another derivative of \eqref{e:der2} with respect to $z$, we get
\begin{align} \label{e:der3}
    Q'''(f_0(z))(f_0'(z))^3 + 3 Q''(f_0(z)) f_0'(z)  f_0''(z)  +Q'(f_0(z))f_0'''(z)=3f_0''(z)+z f_0'''(z).
\end{align}
Plugging $z=z_c$ into \eqref{e:der3} and using \eqref{e:cuspcharacterization2}, we get
\begin{align*} 
   (z_c-x_c+t_c)f_0'''(z_c)=  Q'''(f_0(z_c))(f_0'(z_c))^3 -3f_0''(z_c) ,
\end{align*}
from which we can solve $f_0'''(z_c)$. 
\end{proof}

Now, we are ready to complete the proof of \Cref{lem:relapears} with the above lemma and \Cref{lem:cuspclos}.
\begin{proof}[Proof of \Cref{lem:relapears}]
By \Cref{lem:cuspclos}, we have $\widetilde B=(\wt x_c-\wt z_c)/\wt t_c=\wt w_{0}(\wt z_c)\to \fr^{-1}$, and that  
$$\widetilde t_c+\widetilde z_c-\widetilde x_c=t_c+z_c-x_c +\OO(\Delta t^2)= (1-\fr^{-1})t_c+\OO(\Delta t^2)\asymp \Delta t,\quad \widetilde x_c-\widetilde z_c = \fr^{-1}t_c+\OO(\Delta t^{2})\asymp \Delta t.$$
These two estimates show that the second and third terms in the definition of $\widetilde A$ are of order $\OO(\Delta t)$. 
Next, by \eqref{eq:Epm0}, we have that 
\begin{equation}\label{eq:zcingap}
     z_c\in (E_-(0), E_+(0)),\quad \text{with}\quad  z_c-E_-(0), E_+(0)-z_c\asymp \Delta t^{3/2},
\end{equation}
and a similar estimate holds for $\wt z_c=z_c+\OO(\Delta t^2)$.
Then, combining \eqref{eq:zcingap} with \eqref{eq:mmmfo}, we get that  
$$ \int \frac{ \widetilde\rho_{0}(x) }{(\widetilde z_c-x)^4}\rd x -\int \frac{ \widetilde\rho_{0}(x) }{(z_c-x)^4} \rd x\lesssim  \Delta t^{-11/2}\Delta t^2 =\Delta t^{-7/2},$$
which is negligible under the scaling $ \Delta t ^4$.  
Furthermore, using the fact that $ z_c$ is away from the support of $ \rho_{0}^*-\widetilde\rho_{0}$, which is contained in $\R\setminus [\gamma_{-M}(0), \gamma_N(0)]$, by a distance of order 1, we easily get that 
$$\widetilde t_c^4 \int \frac{|\rho_{0}^*(x)-\widetilde\rho_{0}(x)|}{4(z_c-x)^4} \rd x =\OO(\Delta t^4). $$
Combining the above facts and using that ${\wt t_c}/{t_c}=(t_c +\OO(\Delta t^2))/{t_c}\to 1$, we observe that to show the limit of $\wt A$, it suffices to prove 
\begin{equation}\label{eq:match_para}
    t_c^4 \int \frac{\rho_{0}^*(x)}{4(z_c-x)^4} \rd x = \frac{-t_c^4}{24}m'''_{0}(z_c)\to \fr^2(\fr-1)^{-1}\fq^{-2}/4.
\end{equation}  
Using the decomposition \eqref{e:gszmut} and that $g_0(z)$ is real analytic, we can calculate that 
\begin{align}\label{eq:m'''}
    m'''_{0}(z_c) = [\log f_0(z_c)]'''+\OO(1)= \frac{f_0'''(z_c)}{f_0(z_c)} - \frac{3f_0''(z_c)f_0'(z_c)}{f_0(z_c)^2}+\frac{2 f_0'(z_c)^3}{f_0(z_c)^3}+\OO(1).
\end{align}  
Plugging \eqref{eq:fderis} into \eqref{eq:m'''}, we obtain that 
\begin{align}\label{eq:m'''2}
-\frac{t_c^4}{24}  m'''_{0}(z_c) \to \frac{\fr^7}{24(\fr-1)^6} Q'''(f_{t_c}(x_c)) .
\end{align}
Finally, plugging \eqref{eq:aQ3} into \eqref{eq:m'''2} concludes \eqref{eq:match_para}.    
\end{proof}

\subsection{Density estimate: Proof of Item 2 of \Cref{p:rhot}}\label{s:proofd}
By \Cref{i:decompf} of \Cref{xhh2}, we can recover the density $\rho^*_t$ as
$$\rho^*_t(x)=-\frac{1}{\pi} \arg^* f_t(x)=-\frac{1}{\pi}\arg^* \frac{w_t(x)}{1-w_t(x)}.$$
For $w_t(x)$ in a sufficiently small neighborhood of $w_{t_c}(x_c)\in (0,1)$, we have 
\begin{equation}\label{eq:wrho} \left|\frac{w_t(x)}{1-w_t(x)}\right| \gtrsim 1 \quad \text{and}\quad -\im \frac{w_t(x)}{1-w_t(x)} = \frac{-\im w_t(x)}{|1-w_t(x)|^2} ,
\end{equation}
which gives that $\rho_t(x)\sim -\im w_t(x)$. Hence, to prove Item 2 in \Cref{p:rhot}, we only need to estimate the order of $\im w_t(x)$. For simplicity of notations, given $x, t$ with $t_c< t \le t_1$ and $|x-c(t)|\le c_0$, we denote
$$ \varpi:=w_{t}(x)-w_{t_c}(x_c),\quad \tau := \frac{t-t_c}{a},\quad y:=\frac{3(x-c(t))}{2a},\quad \e(\varpi):=\frac{3\g(\varpi)}{2a}, $$
where $\g$ is from \Cref{lem:eq_around_cusp2} and $\g(\varpi)=\OO(|\varpi|^4)$.
Then we can rewrite the equation \eqref{zf_tc2} as
\begin{equation}\label{zf_tc3}
\varpi^3 + 3\tau \varpi + 2 \e(\varpi)=2y.
\end{equation}
Using the general cubic formula, we obtain that 
\begin{equation}\label{eq:w_cubic}
 \varpi= \al \left[(y-\e(\varpi))+\sqrt{(y-\e(\varpi))^2+\tau^3}\right]^{1/3} + \al^{-1}\left[(y-\e(\varpi))-\sqrt{(y-\e(\varpi))^2+\tau^3}\right]^{1/3}, 
\end{equation}
where $\al$ is a primitive cube root of unity chosen such that $\im \varpi<0$. Here (and also for the rest of this paper) we use the convention that $z^{1/3}\in\R$ for $z\in\R$.

We first consider the case $|y|\le C_0\tau^{3/2}$ for a large enough constant $C_0>0$. From equation \eqref{zf_tc3}, we obtain that $\varpi=\OO(\tau^{1/2})$. Then, we can expand \eqref{eq:w_cubic} as 
$$  \varpi= \al \left[y+\sqrt{y^2 +\tau^3}+\OO(\tau^2)\right]^{1/3} + \al^{-1}\left[y-\sqrt{y^2 +\tau^3}+\OO(\tau^2)\right]^{1/3}.  $$
Then, we have
\begin{align*}
    -\im \varpi &= \frac{\sqrt{3}}{2}\left[y+\sqrt{y^2 +\tau^3}+\OO(\tau^2)\right]^{1/3}- \frac{\sqrt{3}}{2}\left[y-\sqrt{y^2 +\tau^3}+\OO(\tau^2)\right]^{1/3}\sim \tau^{1/2}.
\end{align*} 
Therefore, we conclude that when $3|x-c(t)|/2a=|y|\le C_0\tau^{3/2}\asymp (t-t_c)^{3/2}$, we have \begin{align}\label{e:brhot1}
    \rho_t(x)\asymp -\Im w_t(x)=-\Im \varpi\asymp \tau^{1/2}\asymp (t-t_c)^{1/2}.
\end{align}

Next, consider the case $C_0\tau^{3/2}\le |y| \le c_0$. From equation \eqref{zf_tc3}, we obtain that $\varpi=\OO( |y|^{1/3})$, with which we can expand \eqref{eq:w_cubic} as 
$$  \varpi= \al \left[y+|y|+\OO\left(|y|^{4/3}+\frac{\tau^3}{|y|}\right)\right]^{1/3} + \al^{-1}\left[y-|y|+\OO\left(|y|^{4/3}+\frac{\tau^3}{|y|}\right)\right]^{1/3}. $$
Then, we have
\begin{align*}
    -\im \varpi &= \frac{\sqrt{3}}{2}\left[2|y|+\OO\left(|y|^{4/3}+\frac{\tau^3}{|y|}\right)\right]^{1/3}- \frac{\sqrt{3}}{2}\left[\OO\left(|y|^{4/3}+\frac{\tau^3}{|y|}\right)\right]^{1/3}\sim |y|^{1/3}.
\end{align*} 
Therefore, we conclude that when $c_0\ge 3|x-c(t)|/2a=|y|\ge C_0\tau^{3/2}\asymp (t-t_c)^{3/2}$, we have 
\begin{align}\label{e:brhot2}
\rho_t(x)\asymp -\Im w_t(x)=-\Im \varpi\asymp |y|^{1/3}\asymp |x-x_c|^{1/3}
\end{align}

Item 2 of \Cref{p:rhot} follows from combining \eqref{e:brhot1} and \eqref{e:brhot2}. 

\subsection{NBRW estimates: Proofs of \Cref{lem:cuspclos} and \Cref{lem:tsfQes}}
We first show that the complex slope corresponding to the limit shape of an NBRW also solves a complex Burgers equation.

\begin{prop}\label{prop:slopeNBRW}
    Take any $\beta\in (0,1)$ and a density $ \widetilde\rho_0:\R\to [0,1]$. There exists a process $\{\widetilde \rho_t\}_{t\ge 0}$ with Stieltjes transform 
    \begin{align}\label{eq:def_wtft}
        \widetilde m_t(z):=\int \frac{\widetilde\rho_t(x)\rd x}{z-x},\quad \widetilde f_t(z):=\frac{\beta}{1-\beta}e^{\widetilde m_t(z)},
    \end{align}
    which solves the complex Burgers equation
    \begin{align}\label{e:cBurger}
        \del_t \widetilde f_t(z)+\del_z \widetilde f_t(z)
        \frac{\widetilde f_t(z)}{\widetilde f_t(z)+1}=0, \quad z\in \bH.
    \end{align}
    
\end{prop}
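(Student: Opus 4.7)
My plan is to combine the method of characteristics with the variational principle for weighted lozenge tilings. The strategy is to construct $\widetilde f_t$ directly by flowing along characteristics from the initial data, and then verify both that the resulting object has the prescribed exponential form in a Stieltjes transform and that it solves the complex Burgers equation.

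First I would collect the relevant analytic properties of the initial data. Since $\widetilde\rho_0 \in [0,1]$ has finite mass, the Stieltjes transform $\widetilde m_0(z) = \int \widetilde\rho_0(x)/(z-x)\,dx$ is analytic on $\HH$ with $\Im \widetilde m_0(z) \in (-\pi,0)$, so $\widetilde f_0(z) = (\beta/(1-\beta)) e^{\widetilde m_0(z)}$ maps $\HH$ into $\HH^-$; a routine computation then shows $\widetilde w_0 := \widetilde f_0/(\widetilde f_0+1)$ also lies in $\HH^-$. Mirroring the construction in \eqref{e:Burger:sol1}, I would define $\widetilde f_t$ implicitly by $\widetilde f_t(\Phi_t(z)) = \widetilde f_0(z)$ with $\Phi_t(z) := z + t\,\widetilde w_0(z)$. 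Verifying the PDE is then immediate: differentiating the characteristic relation in $t$ with $z$ fixed gives $\partial_t \widetilde f_t + \widetilde w_0(z)\,\partial_\zeta \widetilde f_t = 0$ at $\zeta = \Phi_t(z)$, and applying the same characteristic identity to $\widetilde w$ yields $\widetilde w_t(\Phi_t(z)) = \widetilde w_0(z)$, which converts this into \eqref{e:cBurger}.

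The key compatibility check is that $\widetilde f_t$ has the form $(\beta/(1-\beta)) e^{\widetilde m_t}$ for some Stieltjes transform of a density taking values in $[0,1]$. On the set where $\Phi_t$ is locally biholomorphic, $\widetilde f_t$ inherits the range condition $\widetilde f_t(\HH) \subset \HH^-$ from $\widetilde f_0$, and $\widetilde f_t(z) \to \beta/(1-\beta)$ as $|z| \to \infty$ because $\Phi_t$ is asymptotically the identity and $\widetilde m_0$ decays at infinity. Hence $\widetilde m_t := \log \widetilde f_t - \log(\beta/(1-\beta))$, defined by choosing the branch vanishing at $\infty$, is analytic on $\HH$ with $\Im \widetilde m_t \in (-\pi, 0)$ and decays at infinity, so by the Herglotz representation it is the Stieltjes transform of some measurable $\widetilde\rho_t : \R \to [0,1]$.

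The main obstacle is that the characteristic map $\Phi_t$ can fail to remain injective on all of $\HH$ as $t$ grows, which is the familiar shock formation issue for quasilinear first-order PDEs. For the applications in this paper only the regions of $\HH$ on which $\Phi_t$ is a biholomorphism matter (these correspond to the liquid region of the associated NBRW limit shape, where the estimates of Section 4 are applied). A cleaner way to obtain a globally defined process $\widetilde\rho_t$ is to appeal to a variational principle: NBRW on the half-plane with initial density $\widetilde\rho_0$ can be represented as a weighted random lozenge tiling with weight $\beta$ (resp.\ $1-\beta$) per right-jump (resp.\ non-jump), and standard arguments (as in \Cref{hzh}) yield a limit shape $\widetilde H^*$ maximizing an entropy functional whose Euler--Lagrange equation agrees with the unweighted one because the weight contribution is linear in $\nabla H$. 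Setting $\widetilde \rho_t = \partial_x \widetilde H^*(\cdot, t)$ and running the proof of \Cref{fequation}, with the complex slope modified by the gauge factor $\beta/(1-\beta)$ to account for the tile weights, produces exactly \eqref{e:cBurger} for $\widetilde f_t$ as defined, completing the proof.
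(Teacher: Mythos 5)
Your first construction is the right starting point, but it stalls exactly where the real content of the proposition lies, and your fallback does not repair it. The characteristic definition $\widetilde f_t(\Phi_t(z))=\widetilde f_0(z)$, $\Phi_t(z)=z+t\widetilde w_0(z)$, only defines $\widetilde f_t$ on the image of $\Phi_t$, and since $\widetilde w_0$ maps $\HH$ to $\HH^-$ this image neither covers $\HH$ nor stays inside it; so at this stage you do not yet have a function $\widetilde m_t$ on all of $\HH$, hence no density $\widetilde\rho_t$, for any fixed $t>0$, let alone for all $t\ge 0$. The fact that rescues this is not that "only the biholomorphic regions matter" (the proposition asserts a globally defined process whose Stieltjes transform satisfies \eqref{e:cBurger} on all of $\HH$), but that $\Phi_t$ restricted to a suitable subdomain of $\HH$ is a bijection onto $\HH$ — the subordination property of free convolution with the semicircle law. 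This is precisely the paper's route: after recentering by the drift, $\chi_0:=\widetilde w_0-\beta$ has negative imaginary part on $\HH$ and decays, so by the Nevanlinna representation it is the Stieltjes transform of a positive measure $\nu$; the semicircular flow $\nu_t=\nu\boxplus\varrho_{\semci}^{(t)}$ exists for every $t\ge 0$ and its Stieltjes transform $\chi_t$ satisfies $\chi_t(z+t\chi_0(z))=\chi_0(z)$; undoing the Möbius/drift change of variables gives $\widetilde f_t$ globally on $\HH$, \eqref{e:cBurger} via the characteristic identity, and then a second application of Nevanlinna (using $\Im\widetilde m_t\in(-\pi,0)$) produces $\widetilde\rho_t:\R\to[0,1]$. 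Your proposal never supplies this global bijectivity/existence input.

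The variational fallback is not a valid substitute as written. \Cref{hzh} is a statement about compact domains converging to a fixed region with prescribed boundary height, not about a weighted ensemble on the upper half-plane with an arbitrary initial density $\widetilde\rho_0$; \Cref{fequation} is proved in \cite{LSCE} for the unweighted tiling setting; and the assertion that the $\beta$-weight does not affect the maximization is only about the interior Euler--Lagrange equation — on an infinite domain with free behavior at infinity the drift manifestly changes the limit shape, so the "boundary term is fixed" reasoning does not apply and the whole limit-shape theory would have to be re-derived in this weighted, non-compact setting. Even granting all of that, you would obtain a PDE for the limit shape (equivalently for the boundary values of the complex slope in the liquid region), and it still requires an additional argument to conclude that the specific analytic extension $\widetilde f_t(z)=\frac{\beta}{1-\beta}e^{\widetilde m_t(z)}$ solves \eqref{e:cBurger} at every $z\in\HH$, together with existence, measurability in $t$, and the bound $\widetilde\rho_t\in[0,1]$ for all $t\ge 0$. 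So the proposal has a genuine gap: it identifies the characteristic structure but lacks the free-probability (or an equivalent) input that makes the flow globally well defined, and the probabilistic detour invoked to close the gap relies on results that are neither proved in the paper nor available at the stated level of generality.
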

\begin{proof}
We recall the free convolution with the semicircle law from random matrix theory. The \emph{semicircle distribution} is described by the density
		$\varrho_{\semci} (x) = \sqrt{4-x^2}/(2\pi) \cdot \don_{x \in [-2, 2]}$.
For any $t > 0$, we denote the rescaled semicircle density as $
		\varrho_{\semci}^{(t)} (x) := t^{-1/2} \varrho_{\semci} (t^{-1/2} x)
$. 	Given a positive measure $\nu$, the free convolution $\nu_t:= \nu\boxplus\varrho_{\semci}^{(t)}$ of $\nu$ with $\varrho_{\semci}^{(t)}$ is characterized by its Stieltjes transform 
$\chi_t(z)=\int \frac{\rd \nu_t(x)}{z-x}, $
which satisfies the equation
\begin{align}\label{e:chi_t}
    \chi_t(z+t\chi_0(z))=\chi_0(z).
\end{align}

The complex Burgers equation \eqref{e:cBurger} can be solved using characteristic flow as
\begin{align}\label{e:tft}
    \widetilde f_t\left(z+t \frac{\widetilde f_0(z)}{\widetilde f_0(z)+1}\right)=\widetilde f_0(z).
\end{align}
Now, we define
\begin{align}\label{e:chitrelation}
    \chi_0(z):=\frac{\widetilde f_0(z)}{\widetilde f_0(z)+1}-\beta.
\end{align}
Then, for $z\in \bH$, we have $\Im[ \widetilde m_0(z)]\in (-\pi,0)$, and 
\begin{align}
    \Im[\chi_0(z)]= \frac{\Im \widetilde f_0(z)}{|\widetilde f_0(z)+1|^2}
    =\frac{\beta}{1-\beta}\frac{|e^{\widetilde m_0(z)}| \cdot \im e^{\ri \Im[\widetilde m_0(z)]}}{|\widetilde f_0(z)+1|^2}<0.
\end{align}
Moreover, by our construction, $\lim_{z\rightarrow \infty}\chi_0(z)=0$.
Hence, by the Nevanlinna representation, there exists a positive measure $\nu$ such that $\chi_0(z)$ is the Stieltjes transform of $\nu$. 
Then, we can construct $\chi_t(z)$ as in \eqref{e:chi_t}, which is the Stieltjes transform of $\nu_t= \nu\boxplus\varrho_{\semci}^{(t)} $. Once we have constructed $\chi_t(z)$, we let 
\begin{align}
    \chi_t(z)=\frac{\widetilde f_t(z+\beta t)}{\widetilde f_t(z+\beta t)+1}-\beta, \quad  \wt f_t(z)=\frac{\beta}{1-\beta} e^{\widetilde m_t(z)}=\frac{\chi_t(z-\beta t)+\beta}{1-\beta-\chi_t(z-\beta t)}.
\end{align}
With \eqref{e:chi_t}, we can readily check that $\widetilde f_t$ satisfies the complex Burgers equation \eqref{e:tft}.
For $z\in \bH$, we have $\chi_t(z)\in \bH^-$, thus the above construction gives $e^{\widetilde m_t(z)}\in \bH^-$ and $\Im[\widetilde m_t(z)]\in (-\pi,0)$. Moreover, we have $\lim_{z\rightarrow \infty}\widetilde m_t(z)=0$.
Using the Nevanlinna representation again, there exists a density $\wt \rho_t:\R\to [0,1]$ such that 
$
    \widetilde m_t(z)=\int\frac{\widetilde \rho_t(x)\rd x}{z-x}.
$
This gives the construction of the process $\{\widetilde \rho_t(x)\}_{t\ge 0}$.
\end{proof}

For the convenience of notations, in the rest of this section, we also shift the domain $\fP$ by an amount depending on $n$ such that $x_c=(t_c-t_0)/\fr=t_c/\fr$. 
Then, $x_c$ would be $n$ dependent and $z_c=0$ from \eqref{e:defzc}.

Below we take $\widetilde\rho_0=\widetilde\rho_{t_0}$ from \eqref{eq:deftildem}, and let $\widetilde\rho_t$, $\widetilde m_t$, $\widetilde f_t$ for $0\le t\le t_1$ be given by \Cref{prop:slopeNBRW}. Denote $\wt w_t(z):= {\wt f_t(z)}/({\wt f_t(z)+1})$. 
Take $\beta$ from \eqref{e:chosebeta}. Then, we have
\begin{equation}\label{eq:choosebeta2}
    \wt f_{0}(0) = f_{t_c}(x_c) = f_0\left(x_c- t_cw_{t_c}(x_c) \right) = f_0(0),
\end{equation}
where we used \eqref{e:Burger:sol0w} in the second equality and $w_{t_c}(x_c)=\mathfrak r^{-1}$ in the third equality. 

We denote  $\Delta w_0(z)= \wt w_0(z) - w_0(z)$ for $z\in \mathscr U_0$. 
We claim that  
\begin{equation}  \label{eq:g-wtg}
\Delta w_0(z) = \OO( \left|z\right|). 
\end{equation}
For the proof of this claim, notice that $g_0(z)$ in the decomposition \eqref{e:gszmut} is a real analytic function, so  $g_0(z)-g_0(0)\lesssim |z|$. Furthermore,
$$ \left|\left[\wt m_{0}(z) - m_{0}^*(z)\right]-\left[\wt m_{0}(0) - m_{0}^*(0)\right]\right| \le  \int_{x\notin [\gamma_{-M}(0), \gamma_N(0)]}\left|\frac{1}{x-z}-\frac{1}{x}\right|\rho_{0}^*(x)\rd x \lesssim |z|,$$
where we used that $x, x-z \gtrsim 1$ for $x\notin [\gamma_{-M}(0), \gamma_N(0)]$ and $z\in \mathscr U_0$ as long as $\fc'$ is chosen sufficiently small depending on $\fc$. Thus, from \eqref{eq:choosebeta2} and \eqref{e:gszmut}, we derive that 
\begin{align}\label{eq:wtf_f}
    \wt f_{0}(z)= f_{0}(0) e^{\wt m_0(z) - \wt m_0(0)}= f_{0}(z) e^{\wt m_0(z) - \wt m_0(0) - ( m_0^*(z) - m_0^*(0))-(g_0(z)-g_0(0))} = f_0(z)\left( 1+\deltaw(z)\right),
\end{align}
where $\deltaw(z)$ is an analytic function around 0 defined as 
$$ \deltaw(z)=e^{\wt m_0(z) - \wt m_0(0) - ( m_0^*(z) - m_0^*(0))-(g_0(z)-g_0(0))}-1=\OO(|z|).$$
With the above two equations, we conclude that
\begin{align}\label{eq:Deltaw0}
    \Delta w_0(z)=\frac{\wt f_0(z)}{\wt f_0(z)+1}-\frac{f_0(z)}{f_0(z)+1}
    =\frac{\wt f_0(z)-f_0(z)}{(f_0(z)+1)(\wt f_0(z)+1)} = \frac{w_0(z)(1-w_0(z))\deltaw(z)}{w_0(z)\deltaw(z)+1} =\OO(|z|),
\end{align}
where for the last step we used that $w_0(z)$ is bounded.

\subsubsection{Proof of \Cref{lem:cuspclos}}
We first prove the estimate for $\wt t_c-t_c$. Recall from \Cref{lem:xtzdef} that there is an a priori estimate $\wt t_c=\wt t_c-t_0\sim \Delta t$. 
We denote the two edges of $\wt \rho_t$ for $0\le t \le \wt t_c$ as $\wt E_{\pm}(t)$, so that $\wt \rho_t(x)=0$ for $x\in [\wt E_{-}(t), \wt E_{+}(t)]$.
It is known from classical Stieltjes transform theory that $\wt E_{\pm}(t)$ are characterized as the points $x\in \R$ where $\wt m_{t}'(x)$ diverges, which, by the definitions of $\wt f_t$ and $\wt w_t$ in \eqref{eq:def_wtft}, implies that 
$1/ \wt w_t'(\wt E_{\pm}(t))=0.$
Similar to \eqref{e:Burger:sol1}, from equation \eqref{e:cBurger}, we obtain that $ \wt w_t(z)= \wt w_0 (z - t\wt w_t(z))$ for $t\in [0,t_1]$ and $z \in \mathscr U_t$. Then, the implicit differentiation with respect to $z$ yields that 
$ [{\wt w_t'(z)}]^{-1}= [\wt w_0' (z - t\wt w_t(z))]^{-1}+t,$ 
so $\wt E_{\pm}(t)$ satisfy the equation 
\begin{equation}\label{eq:Epmt}
    \frac{1}{\wt w_0'  ( \wt E_{\pm}(t) - t\wt w_t(\wt E_{\pm}(t)) )}+t=0.
\end{equation}
Furthermore, with the definition of $\Delta w_0$ in \eqref{eq:Deltaw0}, we can calculate that  
\begin{equation*} 
\wt w_0'(z)=w_0'(z)+\Delta w_0'(z)=w_0'(z) + \OO(|w_0'(z)\deltaw(z)| + |\deltaw'(z)|)=w_0'(z)(1+\OO( |z|)) + \OO(1).
\end{equation*}
Plugging it into \eqref{eq:Epmt}, we get that
\begin{equation}\label{eq:Epmt2}
\frac{1}{w_0'  ( \wt E_{\pm}(t) - t\wt w_t(\wt E_{\pm}(t)) )} + t +\OO \left(t^2+t |\wt E_{\pm}(t) - t\wt w_t(\wt E_{\pm}(t)) | \right)=0. 
\end{equation}

For simplicity of notations, we rewrite equation \eqref{zf_tc2} with $t=0$ and $x_c=t_c/\fr$ as
\begin{equation}\label{eq_zw}
    z = F_0(w_0(z)-w_{t_c}(x_c)) , \quad \text{for}\quad F_0: w\mapsto  -  t_c w+ \frac{a}{3} w^3 + \cal E(w ).
\end{equation}
Then, the implicit differentiation of \eqref{eq_zw} with respect to $w_0$ and taking $z= \wt E_{\pm}(t) - t\wt w_t(\wt E_{\pm}(t))$ gives 
$$ \frac{1}{w_0'  ( \wt E_{\pm}(t) - t\wt w_t(\wt E_{\pm}(t)) )} = F'_0\left(w_0( \wt E_{\pm}(t) - t\wt w_t(\wt E_{\pm}(t)) )- w_{t_c}(x_c)\right).$$
In addition, taking $z= \wt E_{\pm}(t) - t\wt w_t(\wt E_{\pm}(t))$ in \eqref{eq_zw}, we get that 
\begin{equation}\label{eq:Epmt3}
\wt E_{\pm}(t) - t\wt w_t(\wt E_{\pm}(t)) = F_0(w_0 ( \wt E_{\pm}(t) - t\wt w_t(\wt E_{\pm}(t)) ) - w_{t_c}(x_c) ).
\end{equation}
Hence, \eqref{eq:Epmt2} can be rewritten as the following equation of $w=w_0 ( \wt E_{\pm}(t) - t\wt w_t(\wt E_{\pm}(t)) ) - w_{t_c}(x_c) $:
\begin{equation}\label{eqF2}
a w^2 =t_c - t +\OO \left(t^2+t |F_0(w)| +|w|^3\right)=t_c - t +\OO \left(t^2 +|w|^3\right) .
\end{equation}
At the cusp $(\wt x_c,\wt t_c)$, we have $\wt E_{+}(\wt t_c)=\wt E_{-}(\wt t_c)$. Hence, the above equation \eqref{eqF2} of $w$ has a double root around 0 when $t=\wt t_c$, from which we readily get that $\wt t_c-t_c=\OO(\Delta t^2)$.  

For the estimate on $\wt x_c-x_c$, from \eqref{eqF2}, we can solve that $w_0 ( \wt x_c - \wt t_c \wt w_{\wt t_c }(\wt x_c) ) - w_{t_c}(x_c)=\OO(\Delta t)$. Applying it to \eqref{eq:Epmt3} and \eqref{eq:Deltaw0}, we get that $\wt z_c - z_c=\wt x_c - \wt t_c \wt w_{\wt t_c }(\wt x_c) =\OO(\Delta t^2)$ (recall that $z_c=0$) and 
$$
\wt w_{0}(\wt z_c) - \fr^{-1}= \wt w_0 ( \wt x_c - \wt t_c \wt w_{\wt t_c}(x_c) ) - w_{t_c}(x_c)=\OO(\Delta t + |\wt x_c - \wt t_c \wt w_{\wt t_c }(\wt x_c)|)=\OO(\Delta t).$$
With these two estimates and the estimate on $\wt t_c$, we finally get that 
$$ \wt x_c = \wt t_c \wt w_{\wt t_c }(\wt x_c) + \OO(\Delta t^2) = ( t_c+ \OO(\Delta t^2))(\fr^{-1} + \OO(\Delta t))+ \OO(\Delta t^2) = x_c + \OO(\Delta t^2).$$

\subsubsection{Proof of \Cref{lem:tsfQes}}
We can define the NBRW height function as 
\begin{equation}\label{eq:deftildeH}
    \wt H(x,t)=-M/n + \int_{-\infty}^x \wt \rho_t(y)\rd y.
\end{equation}
Then, for $t \in [0, t_1]$ and $i\in\qq{-M, N}$, we define $\wt\gamma_i(t)$ as in \eqref{eq:defgammai} with $H^*$ replaced by $\wt H$. Similar to \eqref{fh}, the complex slope $\wt f_t$ is related to the height function $\wt H(x,t)$ through
\begin{equation}\label{tildefh}
    \arg^* \wt f_t(x) = - \pi \partial_x \wt H (x,t), \qquad \arg^* \left( \wt f_t(x) + 1 \right) = \pi \partial_t \wt H (x,t).
\end{equation}
\begin{proof} [Proof of \eqref{tildefh}]
The first equation follows directly from the definition of $\wt H$ and \eqref{eq:def_wtft}.
The second equation can be derived by
\begin{align*}
    \pi \partial_t \wt H (x,t) &= -\im \int_{-\infty}^x \partial_t  \wt m_t(y)\rd y=- \im \int_{-\infty}^x \partial_t  \log \wt f_t(y)\rd y =\im \int_{-\infty}^x \partial_y \log (\wt f_t(y)+1)\rd y \\
    &=\im \log (\wt f_t(x)+1) =\pi \arg^* (\wt f_t(x)+1),
\end{align*} 
where we used the complex Burgers equation \eqref{e:cBurger}, rewritten as 
$\partial_t \log \wt f_t(z)+\partial_z \log (\wt f_t(z)+1)=0$.
\end{proof}
We need the following optimal rigidity estimate, for the NBRW  $\widetilde\sfQ=\{\widetilde\sfq_i\}_{i=-M}^N:\qq{0, \infty}\to \Z^{\qq{-M,N}}$ constructed in \Cref{ssec:cnbrw}.
It follows from \Cref{lem:sfqes} and \cite[Proposition 4.4]{huang2021edge} (which has been stated as \Cref{p:rigidityBB} in the tiling setting).
\begin{lem}\label{p:localwalk0}
Under the setting of \Cref{lem:tsfQes}, with overwhelming probability:
\begin{align*}
 \widetilde\sfq_{L}(nt)/n-\wt\gamma_L(t), \ \widetilde\sfq_{-L}(nt)/n-\wt\gamma_{-L}(t)  \lesssim n^{-3/4-\fd},&\quad \forall t \in [0, t_1]\cap n^{-1}\Z,\\
 \widetilde\sfq_{i}(nt_1)/n-\wt \gamma_i(t_1) \lesssim n^{-3/4-\fd},&\quad \forall i\in\qq{-L, L}.\end{align*}
\end{lem}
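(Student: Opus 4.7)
The plan is to prove \Cref{lem:tsfQes} in two stages: first, control the fluctuation of $\widetilde\sfQ$ around its own limit-shape quantiles $\wt\gamma_i(t)$ (defined through the NBRW height function $\wt H$ of \eqref{eq:deftildeH}); second, deterministically compare these NBRW quantiles to the tiling quantiles $\gamma_i(t)$ that appear in the statement. Stage one is exactly \Cref{p:localwalk0}, which (via \Cref{lem:sfqes} and the NBRW analog of \Cref{p:rigidityBB}) furnishes, with overwhelming probability,
\[
|\widetilde\sfq_L(nt)/n - \wt\gamma_L(t)|,\ |\widetilde\sfq_{-L}(nt)/n - \wt\gamma_{-L}(t)| \lesssim n^{-3/4-\fd}
\]
for $t \in [t_0, t_1]\cap n^{-1}\Z$, and $|\widetilde\sfq_i(nt_1)/n - \wt\gamma_i(t_1)| \lesssim n^{-3/4-\fd}$ for $i \in \qq{-L, L}$. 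By the triangle inequality, it therefore suffices to establish the matching deterministic bound $|\wt\gamma_i(t) - \gamma_i(t)| \lesssim n^{-3/4-\fd}$ for the same $(i, t)$.

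For the deterministic stage, I would exploit that the NBRW complex slope $\wt f_t$ (\Cref{prop:slopeNBRW}) and the tiling complex slope $f_t$ (\Cref{xhh2}) satisfy the same complex Burgers equation, so each is determined by its time-$t_0$ value through the characteristic flow \eqref{e:Burger:sol0}. The calibration \eqref{e:chosebeta} enforces $\wt f_0(z_c) = f_0(z_c)$ at $z_c = 0$ (see \eqref{eq:choosebeta2}), and \eqref{eq:wtf_f}--\eqref{eq:Deltaw0} show that the local discrepancy $\wt w_0(z) - w_0(z)$ vanishes linearly at $z_c$. Propagating this through the characteristic flow, together with the cusp-location bound $|\wt x_c - x_c| + |\wt t_c - t_c| \lesssim \Delta t^2$ from \Cref{lem:cuspclos}, I would derive a pointwise estimate on $|\wt H(x,t) - H^*(x,t)|$ throughout a neighborhood of $(x_c, t_c)$. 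The quantile discrepancy then follows from
\[
|\wt\gamma_i(t) - \gamma_i(t)| \lesssim \frac{|\wt H(\gamma_i(t), t) - H^*(\gamma_i(t), t)|}{\rho^*_t(\gamma_i(t))},
\]
combined with the density scaling $\rho^*_t(\gamma_{\pm L}(t)) \asymp (L/n)^{1/4}$ from \Cref{p:rhot} and the location estimates $\gamma_{\pm L}(t) - c(t) \asymp (L/n)^{3/4}$ from \Cref{lem:gammaies}.

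The main obstacle is the deterministic step, where the two limit shapes must be compared very precisely near the cusp. Two delicate features interact. First, the NBRW cusp is displaced from the tiling cusp by only $\OO(\Delta t^2) = \OO(n^{-2\omega})$, but this scale is still much larger than the target quantile-level precision $n^{-3/4-\fd}$, so one must track how the shift contributes to the comparison through the density. Second, in the parameter window $\omega \in (3/8, 1/2)$ and $0 < \delta < (\omega/2) \wedge (2\omega - 3/4)$, the quantile scale $L/n = n^{\delta - 2\omega}$ can lie on either side of $\Delta t^{3/2} = n^{-3\omega/2}$, placing $\gamma_{\pm L}(t)$ in either the \emph{square-root} density regime $\rho^*_t \asymp \sqrt{x - E_+(t)}/\Delta t^{1/4}$ or the \emph{cubic} regime $\rho^*_t \asymp |x - c(t)|^{1/3}$ of \Cref{p:rhot}. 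Extracting the height-function discrepancy uniformly across these regimes, and verifying that the $\OO(\Delta t^2)$ cusp shift propagates to a quantile-level gap below $n^{-3/4-\fd}$, is the technical core of the argument.
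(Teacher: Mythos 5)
Your handling of this statement coincides with the paper's own proof: the paper likewise disposes of \Cref{p:localwalk0} in one line, citing \Cref{lem:sfqes} (which controls the NBRW initial data $\widetilde\sfQ(nt_0)=\{\sfq_i(nt_0)\}$) together with Proposition 4.4 of \cite{huang2021edge}, i.e.\ the NBRW analog of \Cref{p:rigidityBB}, exactly the two ingredients you invoke. Just note that the remainder of your write-up (the deterministic comparison of $\wt\gamma_i$ with $\gamma_i$ via the Burgers equation and \Cref{p:rhot}) is aimed at \Cref{lem:tsfQes}/\Cref{l:locwalk_comp}, not at the present lemma, whose content concerns only the NBRW paths versus their own quantiles $\wt\gamma_i$.
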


Now, to conclude \Cref{lem:tsfQes}, it remains to show that the quantiles $\wt \gamma_i$ are sufficiently close to $\gamma_i$, which is the content of the following lemma. It will be proved in the next subsection.
\begin{lem}\label{l:locwalk_comp}
Under the setting of \Cref{lem:tsfQes}, we have  
\begin{align}\label{eq:bdd_stab}
 |\wt \gamma_{L}(t)-\gamma_L(t)| + |\wt \gamma_{-L}(t)-\gamma_{-L}(t)|\le n^{\delta}\Delta t^{5/2},&\quad \forall t\in [0,t_1],\\\label{eq:bdd_stab2}
 |\wt\gamma_{i}(t_1)-\gamma_i(t_1)|\le n^{\delta}\Delta t^{2} ,&\quad \forall i\in \qq{-L,L}.
\end{align}
\end{lem}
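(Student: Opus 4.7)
The plan is to compare the deterministic limit shapes $H^*$ and $\wt H$ pointwise via their complex slopes, then convert the resulting height-function bound into a quantile bound using the density lower bounds of \Cref{p:rhot}.

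First I set up the initial data. The definition $\wt H(x, 0) = -M/n + \int_{-\infty}^{x} \wt\rho_0(y)\,\rd y$ together with $\wt\rho_0 = \rho_0^* \cdot \mathbf{1}_{[\gamma_{-M}(0), \gamma_N(0)]}$ gives $\wt H(x, 0) \equiv H^*(x, 0)$ for $x \in [\gamma_{-M}(0), \gamma_N(0)]$, so the comparison starts from zero on the relevant interval. On the complex-slope side, \eqref{eq:wtf_f}--\eqref{eq:Deltaw0} already established $\wt f_0(z) - f_0(z) = \OO(|z|)$ and $\wt w_0(z) - w_0(z) = \OO(|z|)$ in a neighborhood of $z_c = 0$. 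Combined with \Cref{lem:cuspclos}, which locates the cusp of $\wt H$ within $\OO(\Delta t^2)$ of $(x_c, t_c)$, these are the quantitative inputs.

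The next step is to propagate these estimates to times $t \in (0, t_1]$ using the characteristic form \eqref{e:Burger:sol1} of the complex Burgers equation. Given any target point $z$ near $c(t)$, let $u$ and $\wt u$ be the characteristic preimages defined implicitly by $u + tw_0(u) = z$ and $\wt u + t\wt w_0(\wt u) = z$, so that $f_t(z) = f_0(u)$ and $\wt f_t(z) = \wt f_0(\wt u)$. Subtracting and using $\wt w_0 = w_0 + \Delta w_0$ with $\Delta w_0 = \OO(|\cdot|)$, I expect a relation of the form
\[
\wt u - u = - t\, \Delta w_0(\wt u) + t\, w_0'(\xi)\, (\wt u - u)
\]
for some intermediate point $\xi$. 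Away from the arctic curve this is a contraction (as $t|w_0'|\ll 1$) and yields $\wt u - u = \OO(t\,|\wt u|)$. Near the arctic curve the linearization fails because $w_0'$ blows up, so I instead use the explicit cubic relation \eqref{zf_tc2} of \Cref{lem:eq_around_cusp2}, taking advantage of $\wt z_c - z_c = \OO(\Delta t^2)$ to align the two cubic expansions (whose leading coefficients both equal $\tfrac{a}{3}$ up to corrections) before subtracting. In either regime, one deduces $\wt f_t(z) - f_t(z) = \OO(|u|)$ uniformly for $z$ in a neighborhood of $c(t)$.

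Passing from this slope bound to a height-function bound via $\partial_x(\wt H - H^*) = -\pi^{-1}(\arg^* \wt f_t - \arg^* f_t)$ and integrating in $x$ from the frozen regions (where $\wt H$ and $H^*$ agree locally by the initial condition), I obtain a pointwise estimate $|\wt H(x, t) - H^*(x, t)| \lesssim n^\delta \Delta t^3$ for $(x,t)$ in a fixed neighborhood of the cusp. The quantile comparisons \eqref{eq:bdd_stab}--\eqref{eq:bdd_stab2} then follow from the mean-value identity $|\wt\gamma_i(t) - \gamma_i(t)| \lesssim |\wt H - H^*| / \min(\rho^*_t, \wt\rho_t)$ combined with \Cref{p:rhot}: Item 1 gives $\rho^*_t(\gamma_{\pm L}(t)) \gtrsim n^{\delta/4} \Delta t^{1/2}$ for $t \le t_c$ (since $L \gg n\Delta t^2$), and Item 2 gives $\rho^*_{t_1}(\gamma_i(t_1)) \gtrsim \Delta t^{1/2}$ for all $|i|\le L$. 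The principal obstacle is the step controlling $\wt f_t - f_t$ uniformly up to the arctic curve, where the characteristic map $u \mapsto u + tw_0(u)$ becomes degenerate; the replacement of the naive linearization by the cubic analysis of \Cref{lem:eq_around_cusp2}, matched between $H^*$ and $\wt H$ via \Cref{lem:cuspclos}, is essential.
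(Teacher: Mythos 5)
Your overall scheme (compare the two limit shapes pointwise through their complex slopes, integrate to a height bound, then divide by the density from \Cref{p:rhot}) is a natural static alternative to the paper's argument, but as written it has a genuine gap at the anchoring step. To turn a slope comparison into a bound on $\wt H(\cdot,t)-H^*(\cdot,t)$ by integrating in $x$, you need, at \emph{each} time $t$, a base point where the two height functions are already known to agree or to be compared; your proposed anchor (the frozen gap, where both are $\equiv 0$) exists and overlaps only while $(t_c-t)^{3/2}$ dominates the $\OO(\Delta t^2)$ cusp mismatch of \Cref{lem:cuspclos}, i.e.\ only for $t_c-t\gg \Delta t^{4/3}$. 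For $t$ near $t_c$ and for all $t\in(t_c,t_1]$ --- in particular at $t=t_1$, which is exactly where \eqref{eq:bdd_stab2} lives --- there is no frozen region near the cusp, and far-away anchors are useless: $\Delta w_0(z)=\OO(|z|)$ from \eqref{eq:Deltaw0} holds only in the $\OO(1)$-neighborhood $\mathscr U_0$, and over a time $\asymp\Delta t$ the two evolutions drift apart at distance $\OO(1)$ from the cusp by far more than the target $n^{\delta}\Delta t^{5/2}$. The paper gets around precisely this by arguing dynamically: the quantiles satisfy $\gamma_{\pm L}(0)=\wt\gamma_{\pm L}(0)$ exactly, their ODEs \eqref{evol_quant} are compared via \eqref{eq:vf-vf} and the cubic relation \eqref{zf_tc2} (with an a priori bound propagated along a fine time net), and Gr\"onwall yields \eqref{eq:bdd_stab}; then \eqref{eq:bdd_stab2} is anchored at $\gamma_{\pm L}(t_1)$ through the exact mass identity \eqref{eq_quantile} combined with the pointwise density comparison $|\rho_{t_1}-\wt\rho_{t_1}|\lesssim\Delta t$. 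Without some such dynamical input, your static comparison cannot even be set up on the time range where it is needed.

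The intermediate slope estimate is also not correct as stated. In the cusp window the product $t\,|w_0'(u)|$ is of order one --- the near-degeneracy of the characteristic map is what produces the cusp --- so the contraction argument fails throughout the window, not only ``near the arctic curve''; and when you fall back on the cubic \eqref{zf_tc2}, the response of the root to the forcing $t\,\Delta w_0(\wt u)$ is governed by $|a\varpi^2-(t_c-t)|$, which vanishes at the band edges for $t<t_c$ (double root) and is small near the cusp time. Near an edge the root displacement scales like the square root of the forcing and is of order $\Delta t$, much larger than $|u|\asymp\Delta t^{3/2}$, so the claimed uniform bound $\wt f_t(z)-f_t(z)=\OO(|u|)$ is false there; the bound one can actually prove depends on the location and degenerates exactly where the density is smallest (this is why the paper, at $t_1$, only obtains $|f_{t_1}-\wt f_{t_1}|\lesssim\Delta t$ via the explicit cubic formula). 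A careful region-by-region analysis may still give a height bound of the right order after integration, but that analysis is the entire content of the step and is missing. Finally, a minor point: your mean-value conversion divides by $\min(\rho^*_t,\wt\rho_t)$, while \Cref{p:rhot} is stated for $\rho^*_t$ only; the analogous lower bound for $\wt\rho_t$ has to be established separately (as the paper notes when it reuses the argument for $\wt\rho_{t_1}$).
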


\subsection{Evolution of quantiles: Proofs of \Cref{lem:gamma0} and \Cref{l:locwalk_comp}}\label{sec:compare}
We first define functions  
\[
    h_t(z):=z + t w_0(z),\quad \wt h_t(z):=z+t\wt w_0(z)=z + t(w_0(z)+\Delta w_0(z)).
\]
Then, \eqref{e:Burger:sol1} and a similar equation for $\wt w_t$ from \eqref{e:cBurger} give that, for $t\in [0,t_1]$ and $\xi \in \mathscr U_t$,
\begin{equation}\label{eq:ht_ht2}
w_t(\xi)=w_0(h_t^{-1}(\xi)),\quad \wt w_t(\xi)=\wt w_0(\wt h_t^{-1}(\xi)).
\end{equation}
Using \eqref{eq:g-wtg}, we get that 
\begin{align*} 
w_t(\xi)-\wt w_t(\xi) & = w_0(h_t^{-1}(\xi)) - w_0(\wt h_t^{-1}(\xi)) - \Delta w_0(\wt h_t^{-1}(\xi))\\
&= w_0(u_t) - w_0({\wt u_t})  - \Delta w_0({\wt u_t}) = \wtt(\xi) - \wwtt(\xi) + \OO(|{\wt u_t}(\xi)|), 
\end{align*}
where we abbreviated that
\[
u_t = u_t(\xi)=h_t^{-1}(\xi),\;  {\wt u_t}= {\wt u_t}(\xi)=\wt h_t^{-1}(\xi),\;
\wtt=\wtt(\xi) = w_0(u_t(\xi))- \fr^{-1} ,\;  \wwtt=\wwtt(\xi) = w_0({\wt u_t}(\xi)) - \fr^{-1} . 
\]
These variables satisfy the following equations:
\begin{align} 
& u_t + t \wtt = \xi - \fr^{-1} t  = {\wt u_t} + t(\wwtt+\Delta w_0({\wt u_t})), \label{eq_zw0}\\
& u_t = F_0(\wtt) =-t_c \wtt + \frac{a}{3} \wtt^3 + \cal E(\wtt), \label{eq_zw1} \\
& {\wt u_t} = F_0(\wwtt)  = -t_c \wwtt + \frac{a}{3} \wwtt ^3 + \cal E(\wwtt),\label{eq_zw2} 
\end{align}
where \eqref{eq_zw1}  and \eqref{eq_zw2} are by \eqref{eq_zw}.

From $\rd H^*(\gamma_i(t),t)/\rd t=0$, using \eqref{fh}, we can derive that
\begin{align}\label{evol_quant}
    \gamma_i'(t)= \frac{\arg^*[f_t(\gamma_i(t))+1]}{\arg^*[f_t(\gamma_i(t))]}.
\end{align}
A similar differential equation for $\wt\gamma_i(t)$ with $f_t$ replaced by $\wt f_t$ can also be derived using \eqref{tildefh}.
Since $f_{t_c}(x_c)=(\fr-1)^{-1}$ is a positive constant, for  $z=f_{t_c}(x_c)+\oo(1)$, we have that
$$\frac{\arg^*(z +1)}{\arg^*(z)}=\frac{\arctan \frac{\im z}{f_{t_c}(x_c)+1+\Re (z-f_{t_c}(x_c))}}{\arctan \frac{\im z}{f_{t_c}(x_c)+\Re (z-f_{t_c}(x_c))}}.$$
Hence, with the Taylor expansion of $\arctan$, we deduce that 
\begin{align}\label{eq:vf-vf0} &\gamma_i'(t)
=\fr^{-1}+\OO(|w_t(\gamma_i(t))-w_{t_c}(x_c)|),\quad \wt\gamma_i'(t)=\fr^{-1}+\OO(|\wt w_t(\wt\gamma_i(t))-w_{t_c}(x_c)|),\\
\label{eq:vf-vf}
   & \gamma_i'(t)-\wt\gamma_i'(t)\lesssim  f_t(\gamma_i(t))-\wt f_t(\wt \gamma_i(t)) \lesssim w_t(\gamma_i(t))-\wt w_t(\wt \gamma_i(t)).&
\end{align}

We next complete the proofs of \Cref{lem:gamma0} and \Cref{l:locwalk_comp} using \eqref{eq_zw0}--\eqref{eq:vf-vf}.

\subsubsection{Proof of \Cref{lem:gamma0}}

At $t=t_c$, we have $\gamma_0(t_c)=x_c$. Then, from \eqref{eq_zw0} and \eqref{eq_zw1}, we obtain that 
$$ (t-t_c)\wtt(\gamma_0(t)) + \frac{a}{3} \wtt(\gamma_0(t))^3 + \cal E(\wtt(\gamma_0(t)))=\gamma_0(t) - c(t) = \int_{t_c}^{t}  (\gamma_0'(t')-\fr^{-1})\rd t' \lesssim \int_{t_c}^{t} |\wtt(\gamma_0(t'))|\rd t',$$
where we used the fact that $t/\fr=c(t)$ (since $x_c=t_c/\fr$) and applied \eqref{eq:vf-vf0} for the last inequality.
We can rewrite the above equation as \eqref{zf_tc3}, with $\varpi=\varpi(t)= \wtt(\gamma_0(t))$, $|\e(\varpi)|\le C_1|\varpi|^4$, $\tau=(t-t_c)/a$, and 
$$ y(\varpi):=\frac{3}{2a}\int_{t_c}^t \gamma_0'(t')-\fr^{-1}\rd t', \quad |y(\varpi)|\le C_2\int_{t_c}^{t} |\varpi(t')|\rd t',$$
for some constants $C_1,C_2>0$. Then, we have $\varpi=q_t(\varpi)$, where the function $q_t(\varpi)$ is defined in terms of $\tau$, $\e(\varpi)$, and $y(\varpi)$ as the right-hand side of \eqref{eq:w_cubic}. Thus,
$$ |q_t(\varpi)|\le 4|y(\varpi)|^{1/3} + 4|\e(\varpi)|^{1/3} + 2\tau^{1/2}\le 4C_2^{1/3}|t-t_c|^{1/3} \sup_{t'\in [t_c,t]}|\varpi(t')|^{1/3} + 4C_1^{1/3}|\varpi(t)|^{4/3}+2\sqrt{\frac{t-t_c}{a}}.$$
With this bound, we can check that there exists a constant $\cal A>0$ depending on $a, C_1, C_2$ such that if $|\varpi(t')|\le \cal A|t-t_c|^{1/2}$ for all $t'\in [t_c,t] $, then we have $|\varpi(t)|\le  \cal A|t-t_c|^{1/2}/2$.  

Combining the above fact with the continuity of $\varpi(t)$, we can conclude that $|\varpi(t)|\le  \cal A|t-t_c|^{1/2}/2$ for all $t\in [t_c,t_1]$. More precisely, we first notice that $\varpi(t)$ is H{\"o}lder-$1/3$ continuous in $t$ (since the right-hand side of \eqref{eq:w_cubic} is H{\"o}lder-$1/3$ continuous in both $t$ and $x$ and $\gamma_0(t)$ is Lipschitz continuous in $t$). Suppose $|\varpi(t')|\le \cal A|t'-t_c|^{1/2}/2\le \cal A|t-t_c|^{1/2}/2$ for all $t'\in [t_c,t]$. Then, for a sufficiently small $\varepsilon$,  $|\varpi(t')|\le \cal A|t'-t_c|^{1/2}$ for all $t'\in [t_c,t+\varepsilon]$, from which we get that $|\varpi(t')|=|q_{t'}(\varpi)|\le  \cal A|t'-t_c|^{1/2}/2$ for all $t'\in [t_c,t+\varepsilon]$. In this way, we can extend the estimate $|\varpi(t)|\le  \cal A|t-t_c|^{1/2}/2$ at $t=t_c$ all the way to $t_1$.

Finally, plugging the estimate $| \varpi(t)|\le  \cal A|t-t_c|^{1/2}/2$ into \eqref{eq_zw1}, we conclude the proof.

\subsubsection{Proof of \eqref{eq:bdd_stab}}
To bound $|\gamma_{L}(t)- \wt\gamma_{L}(t)|$, we now bound the right-hand side of \eqref{eq:vf-vf} for $i=L$.

By \Cref{lem:gammaies} and \eqref{eq:Epm0} (recall that $x_c=t_c/\fr$ and $\wt \gamma_i(0)=\gamma_i(0)$ for $i\in \llbracket -M, N\rrbracket$), we have  
\begin{equation}\label{gammai1}
    \gamma_{L}(0)= \wt \gamma_{L}(0) \sim n^{3\delta/4} \Delta t^{3/2}.
\end{equation}
For $t\in [0,t_1]$, we get from \eqref{eq_zw0}--\eqref{eq_zw2} (and using \eqref{eq:g-wtg}) that
\begin{align} 
 \gamma_{L}(t) -  t/\fr &=  F_0(\wtt(\gamma_{L}(t))) + t \wtt(\gamma_{L}(t))= (t-t_c)\wtt(\gamma_{L}(t))+ \frac{a}{3}\wtt(\gamma_{L}(t))^3+\cal E(\wtt(\gamma_{L}(t))) ,  \label{eq:gamma1}\\
 \wt \gamma_{L}(t) -  t/\fr &=F_0(\wwtt(\wt \gamma_{L}(t))) + t (\wwtt(\wt \gamma_{L}(t))+\Delta w_0({\wt u_t}(\wt \gamma_{L}(t))) ) \label{eq:gamma2}\\
&= (t-t_c)\wwtt(\wt \gamma_{L}(t))+ \frac{a}{3}\wwtt(\wt\gamma_{L}(t))^3+\cal E (\wwtt(\wt \gamma_{L}(t)))+\OO( |{\wt u_t}(\wt \gamma_{L}(t))|\Delta t) . \nonumber  
\end{align}

We first assume the following a priori bound: 
\begin{equation}\label{priori-quant2}
    |\gamma_{L}(t)- t/\fr| \le n^{\delta} \Delta t^{3/2},\quad |\wt \gamma_{L}(t)- t/\fr| \le n^{\delta} \Delta t^{3/2},\quad  \forall t\in [0, t_1].
\end{equation}
Under \eqref{priori-quant2}, using \eqref{eq:gamma1} and \eqref{eq:gamma2}, we can check that for any $t\in [0,t_1]$,
\begin{equation}\nonumber
\begin{split}
  \wtt(\gamma_{L}(t)) = \OO(n^{\delta/3}\Delta t^{1/2}),\quad  &u_t(\gamma_{L}(t)) =F_0( w(\gamma_{L}(t)) ) = \OO( n^{\delta}\Delta t^{3/2}),\\ 
\wwtt(\wt \gamma_{L}(t)) = \OO(n^{\delta/3}\Delta t^{1/2}),\quad &{\wt u_t}(\wt \gamma_{L}(t)) =F_0( \wwtt(\wt \gamma_{L}(t)) ) = \OO( n^{\delta}\Delta t^{3/2}),
\end{split}
\end{equation} 
which imply that 
\begin{equation}\label{eq:pri-quant}
\begin{split}
  &|w_t(\gamma_{L}(t))-w_{t_c}(x_c) | = |\wtt(\gamma_{L}(t))|= \OO(n^{\delta/3}\Delta t^{1/2}),\\
  &|\wt w_t(\wt \gamma_{L}(t))-w_{t_c}(x_c) | =|\wwtt(\wt \gamma_{L}(t))| +\OO(|{\wt u_t}(\wt \gamma_{L}(t))|)= \OO(n^{\delta/3}\Delta t^{1/2}).
\end{split}
\end{equation} 
Plugging these estimates into \eqref{eq:vf-vf0} for $i=L$ yields that for $t\in [0,t_1]$,
\begin{align}
& \gamma_{L}'(t)= \fr^{-1}  +  \OO(n^{\delta/3}\Delta t^{1/2}),\quad \wt \gamma_{L}'(t)= \fr^{-1}  +  \OO(n^{\delta/3}\Delta t^{1/2}). 
\end{align}
By integrating them, we obtain that
\begin{equation}\label{eq:pos-quant}
\gamma_{L}(t)-\gamma_{L}(0)- t/\fr = \OO(n^{\delta/3}\Delta t^{3/2}),\quad \wt \gamma_{L}(t)-\gamma_{L}(0)- t/\fr = \OO(n^{\delta/3}\Delta t^{3/2}),
\end{equation} 
under \eqref{priori-quant2}. Note that \eqref{gammai1} and \eqref{eq:pos-quant} together imply \eqref{priori-quant2}. Thus, to show \eqref{eq:pos-quant} without assuming \eqref{priori-quant2}, we only need to consider an $n^{-10}t_1$-net of $[0,t_1]$ and use a simple induction argument. More precisely, we define a sequence of times $\ft_k:=kn^{-10}t_1$, $k=0,1,\ldots, n^{10}$. First, the estimates \eqref{priori-quant2} and \eqref{eq:pos-quant} hold at $t=\ft_0$ by \eqref{gammai1}. Second, suppose \eqref{eq:pos-quant} holds at some $\ft_k$. With \eqref{eq_zw0}--\eqref{eq_zw2}, we can check that $\gamma_{L}(t)$ and $\wt \gamma_{L}(t)$ are H{\"o}lder-$1/3$ continuous in $t$. Thus, from \eqref{gammai1} and \eqref{eq:pos-quant} at $t=\ft_k$, we obtain that \eqref{priori-quant2} holds uniformly for all $t\in [\ft_k,\ft_{k+1}]$. The arguments above then imply that \eqref{eq:pos-quant} holds at $t=\ft_{k+1}$. With mathematical induction in $k$, we conclude \eqref{eq:pos-quant} for all $t\in [0,t_1]$.

Now, with \eqref{gammai1} and \eqref{eq:pos-quant}, we get that
\begin{equation}\label{posteri-quant3.0}
\gamma_{L}(t)- t/\fr=(1+\oo(1)) (\wt \gamma_{L}(t)- t/\fr)=(1+\oo(1))\gamma_{L}(0)\sim n^{3\delta/4}\Delta t^{3/2}.
\end{equation} 
Applying it to equations \eqref{eq:gamma1} and \eqref{eq:gamma2}, we obtain that 
\begin{equation}\label{posteri-quant3}
\begin{split}
& \wtt(\gamma_{L}(t)) = (1+\oo(1)) \wwtt(\wt \gamma_{L}(t))\sim n^{\delta/4}\Delta t^{1/2},\\
& u_t(\gamma_{L}(t)) = (1+\oo(1)) {\wt u_t}(\wt \gamma_{L}(t)) =(1+\oo(1)) \gamma_{L}(0) \sim n^{3\delta/4}\Delta t^{3/2}.
\end{split}
\end{equation} 
Subtracting the equation \eqref{eq:gamma1} from \eqref{eq:gamma2} and applying \eqref{posteri-quant3} yield that 
\begin{align}
    & | \wtt(\gamma_L(t)) - \wwtt(\wt \gamma_L(t))| \cdot|\wtt(\gamma_{L}(t))|^2  \lesssim \frac{a}{3}| \wtt(\gamma_L(t))^3 - \wwtt(\wt \gamma_L(t))^3| \le |\gamma_{L}(t)-\wt \gamma_{L}(t)| \nonumber\\
    &\quad + |t-t_c|| \wtt(\gamma_L(t)) - \wwtt(\wt \gamma_L(t))|+|\cal E (\wtt(\gamma_{L}(t)))-\cal E (\wwtt(\wt \gamma_{L}(t)))|+\OO( |{\wt u_t}(\wt \gamma_{L}(t))|\Delta t) \nonumber\\
    & \lesssim |\gamma_{L}(t)-\wt \gamma_{L}(t)| + (\Delta t+|\wtt(\gamma_{L}(t))|^3)| \wtt(\gamma_L(t)) - \wwtt(\wt \gamma_L(t))| + n^{3\delta/4}\Delta t^{5/2}.\label{eq_stability2.0}
\end{align}
Thus, we obtain that 
\begin{align}
     | \wtt(\gamma_L(t)) - \wwtt(\wt \gamma_L(t))| &\lesssim \frac{n^{3\delta/4}\Delta t^{5/2} + |\gamma_{L}(t)-\wt \gamma_{L}(t)|}{|\wtt(\gamma_{L}(t))|^2 - \OO(\Delta t+|\wtt(\gamma_{L}(t))|^3)}\nonumber\\
     &\lesssim n^{\delta/4}   \Delta t^{3/2} + n^{-\delta/2} \Delta t^{-1}|\gamma_{L}(t)-\wt \gamma_{L}(t)|,
    \label{eq_stability2}
\end{align}
which, together with \eqref{eq:vf-vf} for $i=L$, implies that
\begin{align*}
|\gamma_{L}'(t)- \wt\gamma_{L}'(t)| \lesssim  n^{\delta/4}   \Delta t^{3/2}+ n^{-\delta/2} \Delta t^{-1}|\gamma_{L}(t)-\wt \gamma_{L}(t)|.  
\end{align*}
Finally, an application of the Gr{\"o}nwall's inequality gives that
\begin{equation*} 
\max_{0\le t\le t_1}|\gamma_{L}(t)- \wt\gamma_{L}(t)| \le   n^{\delta/2} \Delta t^{5/2}.
\end{equation*}
The proof for the bound on $|\gamma_{-L}(t)- \wt\gamma_{-L}(t)| $ is similar.

\subsubsection{Proof of \eqref{eq:bdd_stab2}}
To be concise, we abuse the notations and abbreviate $u(\xi):=u_{t_1}(\xi)$, $\varpi(\xi):=\varpi_{t_1}(\xi)$ and $\wt u(\xi):=\wt u_{t_1}(\xi)$, $\wt\varpi(\xi):=\wt \varpi_{t_1}(\xi)$. 
By \eqref{eq_zw0}--\eqref{eq_zw2} and \eqref{eq:g-wtg}, for any fixed $\xi \in [ \gamma_{-L}(t_1)\wedge \wt\gamma_{-L}(t_1), \gamma_{L}(t_1)\vee \wt\gamma_{L}(t_1) ]$, $\varpi(\xi)$ and $\wt \varpi(\xi)$ satisfy the equations
\begin{align}
    &(t_1-t_c)\varpi +  \frac{a}{3} \varpi^3 + \cal E(\varpi)=\xi-t_1/\fr , \label{eq:wt1}\\
    &(t_1-t_c)\wt \varpi +  \frac{a}{3} \wt \varpi^3 + \cal E(\wt \varpi)=\xi-t_1/\fr+\OO(\Delta t |F_0(\wt \varpi)|).\label{eq:wt2}
\end{align}
By equation \eqref{posteri-quant3.0}, the chosen $\xi$ satisfies $|\xi-t_1/\fr| \lesssim n^{3\delta/4}\Delta t^{3/2}$. Then, from \eqref{eq:wt1}, \eqref{eq:wt2} and \eqref{eq_zw1}, \eqref{eq_zw2}, we obtain that 
\begin{equation}\label{eq:gamma_dist} |\varpi(\xi)|+|\wt \varpi(\xi)| \lesssim n^{\delta/4}\Delta t^{1/2},\quad |u(\xi)|+|\wt u(\xi)|\lesssim n^{3\delta/4}\Delta t^{3/2}.
\end{equation}

First, consider the case where $|\xi-t_1/\fr| > C\Delta t^{3/2}$ for a large enough constant $C>0$, so that the $\frac{a}{3} \varpi^3$ and $\frac{a}{3} \wt\varpi^3$ terms dominate in \eqref{eq:wt1} and \eqref{eq:wt2}. In particular, we can choose $C$ such that 
\begin{equation}\label{priori-stab2}
\varpi \sim \wt \varpi \sim |\xi-t_1/\fr|^{1/3},\quad \frac{a}{3}|\varpi^3-\wt \varpi^3| > 2 (t_1-t_c)|\varpi-\wt \varpi| .
\end{equation}
Subtracting the equations \eqref{eq:wt1} and \eqref{eq:wt2} and using a similar argument as in \eqref{eq_stability2.0} and \eqref{eq_stability2}, we get
\begin{equation}\label{stab2}
    |\varpi-\wt \varpi|\lesssim \frac{\Delta t |F_0(\wt \varpi)|}{|\varpi|^2-\OO(|\varpi|^3)}\lesssim \frac{ \Delta t|\wt \varpi|^3 +\Delta t^2 |\wt \varpi|}{|\varpi|^2} \lesssim n^{\delta/4}\Delta t^{3/2},
\end{equation}  
where we used \eqref{eq:gamma_dist} and \eqref{priori-stab2} in the last step. 

We next consider the case where $|\xi-t_1/\fr| \le C \Delta t^{3/2}$. From \eqref{eq:wt1} and \eqref{eq:wt2}, we get 
\begin{equation}\label{prior_w}
     |\varpi|\lesssim  \Delta t^{1/2},\quad |\wt \varpi|\lesssim  \Delta t^{1/2} .
\end{equation}  
Moreover, we can write \eqref{eq:wt1} and \eqref{eq:wt2} as
\begin{align*}
     \varpi^3 + 3\tau \varpi - 2(y+\e_1) =0,\quad \wt \varpi^3 + 3 \tau\wt \varpi - 2(y+\e_2) =0,
\end{align*}
where $\tau:=(t_1-t_c)/a$, $y:=3(\xi-t_1/\fr)/(2a)$, $\e_1=\OO(|\varpi|^4)$ and $\e_2=\OO(|\wt \varpi|^4+ \Delta t |F_0(\wt \varpi)|)$. Using the general cubic formula, we obtain that 
\begin{equation}\label{eq:cubic}
   \begin{split}
    & \varpi= \al \left[(y+\e_1)+\sqrt{(y+\e_1)^2+\tau^3}\right]^{1/3} + \al^{-1}\left[(y+\e_1)-\sqrt{(y+\e_1)^2+\tau^3}\right]^{1/3}, \\
   & \wt \varpi= \al \left[(y+\e_2)+\sqrt{(y+\e_2)^2+\tau^3}\right]^{1/3} + \al^{-1}\left[(y+\e_2)-\sqrt{(y+\e_2)^2+\tau^3}\right]^{1/3},
\end{split}  
\end{equation}
where $\al$ is a primitive cube root of unity chosen such that $\im \varpi<0$ and $\im \wt \varpi<0$. With the estimates $\tau\gtrsim \Delta t$, $|y| \lesssim  \Delta t^{3/2}$, and $\e_1, \e_2=\OO(\Delta t^2)$ by \eqref{prior_w}, it is easy to check that  
\begin{align*}
    \left|\left[(y+\e_1)\pm\sqrt{(y+\e_1)^2+\tau^3}\right]^{1/3}- \left[(y+\e_2)\pm \sqrt{(y+\e_2)^2+\tau^3}\right]^{1/3}\right|\lesssim  \frac{|\e_1|+|\e_2|}{\tau} \lesssim \Delta t,
\end{align*}
thereby giving that $|\varpi-\wt \varpi| \lesssim \Delta t$.
Combining this and \eqref{stab2}, we obtain that for $\xi \in [ \gamma_{-L}(t_1)\wedge \wt\gamma_{-L}(t_1), \gamma_{L}(t_1)\vee \wt\gamma_{L}(t_1) ]$,
\begin{equation}\label{fw_diff}
    |f_{t_1}(\xi)-\wt f_{t_1}(\xi)|  \lesssim |w_{t_1}(\xi)-\wt w_{t_1}(\xi)|   \lesssim  |\varpi-\wt\varpi| +\OO(|\wt z|)\lesssim \Delta t ,
\end{equation}
where we also used $n^{3\delta/4}\Delta t^{1/2}\ll 1$ since $0<\delta<\omega/2$. 
Since $f_{t_1}$ and $\wt f_{t_1}$ are bounded away from $0$, we have
\begin{equation}\label{density_diff}
    |\rho_{t_1}(\xi)-\wt \rho_{t_1}(\xi)|=\frac{1}{\pi} \left|\arg^* f_{t_1}(\xi)-\arg^* \wt f_{t_1}(\xi)\right| \lesssim |f_{t_1}(\xi)-\wt f_{t_1}(\xi)| \lesssim \Delta t.
\end{equation}
 On the other hand, by Item 2 of \Cref{p:rhot} (whose proof also applies to $\wt \rho_{t_1}$), we have 
\begin{equation}\label{density_t} 
\rho_{t_1}(\xi) \sim \wt\rho_{t_1}(\xi) \sim  \begin{cases}  \Delta t^{1/2}, &  | \xi-{t_1}/\fr|\le C\Delta t^{3/2} \\
|\xi-{t_1}/\fr|^{1/3}, &   C\Delta t^{3/2} \le | \xi-{t_1}/\fr| \le Cn^{3\delta/4}\Delta t^{3/2}
\end{cases}.
\end{equation}
Now, we compare the quantiles $\gamma_i({t_1})$ and $\wt\gamma_i({t_1})$ through the following equation:
\begin{equation}\label{eq_quantile}
  \int_{\gamma_{i}({t_1})}^{\gamma_{L}({t_1})} \rho_{t_1}(\xi)\rd \xi= \frac{L-i}{n}=\int_{\wt\gamma_{i}({t_1})}^{\wt\gamma_{L}({t_1})} \wt\rho_{t_1}(\xi)\rd \xi,\quad i \in \qq{-L, L}.  
\end{equation}
With \eqref{density_diff}, we can write
\begin{align*}
\int_{\wt\gamma_{i}({t_1})}^{\wt\gamma_{L}({t_1})} \wt\rho_{t_1}(\xi)\rd \xi&=\int_{\wt\gamma_{i}({t_1})}^{\wt\gamma_{L}({t_1})} \rho_{t_1}(\xi)\rd \xi+ \OO\left( \Delta t |\wt\gamma_{L}({t_1})-\wt\gamma_{i}({t_1})| \right)\\
&=\int_{\wt\gamma_{i}({t_1})}^{\gamma_{L}({t_1})} \rho_{t_1}(\xi)\rd \xi+ \OO\left(n^{3\delta/4}\Delta t^{5/2}+  n^{\delta/4}\Delta t^{1/2}  \left|\wt\gamma_{L}({t_1})- \gamma_{L}({t_1})\right| \right)\\
&=\int_{\wt \gamma_{i}({t_1})}^{\gamma_{L}({t_1})} \rho_{t_1}(\xi)\rd \xi+ \OO( n^{3\delta/4}\Delta t^{5/2}),
\end{align*}
where in the second step we used \eqref{density_t} and the estimates $|\xi-t_1/\fr| \lesssim n^{3\delta/4}\Delta t^{3/2}$, $|\wt\gamma_{L}(t_1)-\wt \gamma_i(t_1)|\lesssim n^{3\delta/4}\Delta t^{3/2}$ by \eqref{posteri-quant3.0}, and in the third step we used \eqref{eq:bdd_stab}. 
Plugging it into \eqref{eq_quantile} then gives
$$\int_{\wt \gamma_{i}({t_1}) \wedge \gamma_{i}({t_1})}^{\wt \gamma_{i}({t_1})\vee \gamma_{i}({t_1})}\rho_{t_1}(\xi)\rd \xi\lesssim n^{3\delta/4}\Delta t^{5/2}.$$
Combining this equation with \eqref{density_t}, we get that
$\Delta t^{1/2} |\wt\gamma_{i}({t_1})- \gamma_{i}({t_1})| \lesssim n^{3\delta/4}\Delta t^{5/2}$,
which concludes \eqref{eq:bdd_stab2}.

\bibliography{bibliography.bib}

\begin{thebibliography}{AJvM18b}

\bibitem[ACVM11]{adler2011pearcey}
Mark Adler, Mattia Cafasso, and Pierre Van~Moerbeke.
\newblock From the {P}earcey to the {A}iry process.
\newblock {\em Electron. J. Probab.}, 16:1048--1064, 2011.

\bibitem[ADPZ20]{DMCS}
Kari Astala, Erik Duse, István Prause, and Xiong Zhong.
\newblock Dimer models and conformal structures.
\newblock Preprint arXiv:2004.02599, 2020.

\bibitem[AFvM10]{AFVM2010}
Mark Adler, Patrik~L. Ferrari, and Pierre van Moerbeke.
\newblock {Airy processes with wanderers and new universality classes}.
\newblock {\em Ann. Probab.}, 38(2):714 -- 769, 2010.

\bibitem[AG22]{ERT}
Amol Aggarwal and Vadim Gorin.
\newblock Gaussian unitary ensemble in random lozenge tilings.
\newblock {\em Probab. Theory Related Fields}, 184(3-4):1139--1166, 2022.

\bibitem[Agg19]{ULS}
Amol Aggarwal.
\newblock Universality for lozenge tilings local statistics.
\newblock Preprint arXiv:1907.09991, 2019.

\bibitem[AH21]{aggarwal2021edge}
Amol Aggarwal and Jiaoyang Huang.
\newblock Edge statistics for lozenge tilings of polygons, {II}: {A}iry line
  ensemble.
\newblock Preprint arXiv:2108.12874, 2021.

\bibitem[AIM]{AIM}
{AimPL: Kardar-Parisi-Zhang equation and universality class}.
\newblock available at \url{http://aimpl.org/kpzuniversality}.

\bibitem[AJv14]{AJM_tac}
Mark Adler, Kurt Johansson, and Pierre {van Moerbeke}.
\newblock Double {A}ztec diamonds and the tacnode process.
\newblock {\em Adv. Math.}, 252:518--571, 2014.

\bibitem[AJvM18a]{THCAF}
Mark Adler, Kurt Johansson, and Pierre van Moerbeke.
\newblock Lozenge tilings of hexagons with cuts and asymptotic fluctuations: a
  new universality class.
\newblock {\em Math. Phys. Anal. Geom.}, 21(1):Paper No. 9, 53, 2018.

\bibitem[AJvM18b]{TNSDP}
Mark Adler, Kurt Johansson, and Pierre van Moerbeke.
\newblock Tilings of non-convex polygons, skew-{Y}oung tableaux and
  determinantal processes.
\newblock {\em Comm. Math. Phys.}, 364(1):287--342, 2018.

\bibitem[AL22]{AL2022}
Arka Adhikari and Benjamin Landon.
\newblock Local law and rigidity for unitary {B}rownian motion.
\newblock Preprint arXiv:2202.06714, 2022.

\bibitem[AOVM10]{adler2010universality}
Mark Adler, Nicolas Orantin, and Pierre Van~Moerbeke.
\newblock Universality for the {P}earcey process.
\newblock {\em Phys. D}, 239(12):924--941, 2010.

\bibitem[AvM07]{AM_CPAM}
Mark Adler and Pierre van Moerbeke.
\newblock {PDE}s for the {G}aussian ensemble with external source and the
  {P}earcey distribution.
\newblock {\em Comm. Pure Appl. Math.}, 60(9):1261--1292, 2007.

\bibitem[AvM23]{AvM_tac}
Mark Adler and Pierre van Moerbeke.
\newblock Double interlacing in random tiling models.
\newblock {\em J. Math. Phys.}, 64(3):033509, 2023.

\bibitem[BCJ22]{BCJ_AAP}
Vincent Beffara, Sunil Chhita, and Kurt Johansson.
\newblock {Local geometry of the rough-smooth interface in the two-periodic
  Aztec diamond}.
\newblock {\em Ann. App. Probab.}, 32(2):974 -- 1017, 2022.

\bibitem[BD11]{borodin2011limits}
Alexei Borodin and Maurice Duits.
\newblock Limits of determinantal processes near a tacnode.
\newblock {\em Ann. Inst. Henri Poincar\'e Probab. Stat.}, 47(1):243--258,
  2011.

\bibitem[BD19]{BergDuits_2019}
Tomas Berggren and Maurice Duits.
\newblock Correlation functions for determinantal processes defined by infinite
  block {T}oeplitz minors.
\newblock {\em Adv. Math.}, 356:106766, 2019.

\bibitem[BF15]{borodin2015random}
Alexei Borodin and Patrik~L. Ferrari.
\newblock Random tilings and {M}arkov chains for interlacing particles.
\newblock Preprint arXiv:1506.03910, 2015.

\bibitem[BG12]{borodin2016lectures}
Alexei Borodin and Vadim Gorin.
\newblock Lectures on integrable probability.
\newblock Preprint arXiv:1212.3351, 2012.

\bibitem[BGR10]{borodin2010q}
Alexei Borodin, Vadim Gorin, and Eric~M. Rains.
\newblock $q$-distributions on boxed plane partitions.
\newblock {\em Sel. Math.}, 16(4):731--789, 2010.

\bibitem[BH98a]{brezin1998level}
Edouard Br{\'e}zin and Shinobu Hikami.
\newblock Level spacing of random matrices in an external source.
\newblock {\em Phys. Rev. E}, 58:7176--7185, Dec 1998.

\bibitem[BH98b]{brezin1998universal}
Edouard Br{\'e}zin and Shinobu Hikami.
\newblock Universal singularity at the closure of a gap in a random matrix
  theory.
\newblock {\em Phys. Rev. E}, 57(4):4140, 1998.

\bibitem[BK08]{borodin2008asymptotics}
Alexei Borodin and Jeffrey Kuan.
\newblock Asymptotics of {P}lancherel measures for the infinite-dimensional
  unitary group.
\newblock {\em Adv. Math.}, 219(3):894--931, 2008.

\bibitem[CDKL20]{CDKL_CMP}
Christophe Charlier, Maurice Duits, Arno~B.J. Kuijlaars, and Jonatan Lenells.
\newblock A periodic hexagon tiling model and non-{H}ermitian orthogonal
  polynomials.
\newblock {\em Comm. Math. Phys.}, 378(1):401--466, 2020.

\bibitem[CEKS19]{CEKS_Cusp}
Giorgio Cipolloni, L{\'a}szl{\'o} Erdős, Torben Kr{\"u}ger, and Dominik
  Schr{\"o}der.
\newblock {Cusp universality for random matrices, II: The real symmetric case}.
\newblock {\em Pure Appl. Anal.}, 1(4):615 -- 707, 2019.

\bibitem[CEP96]{LSRT}
Henry Cohn, Noam Elkies, and James Propp.
\newblock Local statistics for random domino tilings of the {A}ztec diamond.
\newblock {\em Duke Math. J.}, 85(1):117--166, 1996.

\bibitem[Cer06]{Cerf}
Rapha\"{e}l Cerf.
\newblock {\em The {W}ulff Crystal in {I}sing and Percolation Models}, volume
  1878 of {\em Lecture Notes in Mathematics}.
\newblock Springer Berlin, Heidelberg, 2006.

\bibitem[CH14]{PLE}
Ivan Corwin and Alan Hammond.
\newblock Brownian {G}ibbs property for {A}iry line ensembles.
\newblock {\em Invent. Math.}, 195(2):441--508, 2014.

\bibitem[CH16]{corwin2016kpz}
Ivan Corwin and Alan Hammond.
\newblock {KPZ} line ensemble.
\newblock {\em Probab. Theory Related Fields}, 166:67--185, 2016.

\bibitem[Cha20]{Char_period}
Christophe Charlier.
\newblock Doubly periodic lozenge tilings of a hexagon and matrix valued
  orthogonal polynomials.
\newblock Preprint arXiv:2001.11095, 2020.

\bibitem[CK01]{ECM}
Rapha\"{e}l Cerf and Richard Kenyon.
\newblock The low-temperature expansion of the {W}ulff crystal in the 3{D}
  {I}sing model.
\newblock {\em Comm. Math. Phys.}, 222(1):147--179, 2001.

\bibitem[CKP01]{VPT}
Henry Cohn, Richard Kenyon, and James Propp.
\newblock A variational principle for domino tilings.
\newblock {\em J. Amer. Math. Soc.}, 14(2):297--346, 2001.

\bibitem[CLP98]{TSP}
Henry Cohn, Michael Larsen, and James Propp.
\newblock The shape of a typical boxed plane partition.
\newblock {\em New York J. Math.}, 4:137--165, 1998.

\bibitem[CP16]{Capitaine2016}
Mireille Capitaine and Sandrine P{\'e}ch{\'e}.
\newblock Fluctuations at the edges of the spectrum of the full rank deformed
  {GUE}.
\newblock {\em Probab. Theory Related Fields}, 165(1):117--161, Jun 2016.

\bibitem[DJM16]{TCP}
Erik Duse, Kurt Johansson, and Anthony Metcalfe.
\newblock The cusp-{A}iry process.
\newblock {\em Electron. J. Probab.}, 21:Paper No. 57, 50, 2016.

\bibitem[DK20]{duits2020two}
Maurice Duits and Arno~B.J. Kuijlaars.
\newblock The two-periodic {A}ztec diamond and matrix valued orthogonal
  polynomials.
\newblock {\em J. Eur. Math. Soc.}, 23(4):1075--1131, 2020.

\bibitem[DKS92]{DKS_Book}
Roland Dobrushin, Roman Koteck{\'y}, and Senya Shlosman.
\newblock {\em Wulff Construction: A Global Shape from Local Interaction},
  volume 104 of {\em Translations of Mathematical Monographs}.
\newblock American Mathematical Society, 1992.

\bibitem[DM18]{UEFDIPS}
Erik Duse and Anthony Metcalfe.
\newblock Universal edge fluctuations of discrete interlaced particle systems.
\newblock {\em Ann. Math. Blaise Pascal}, 25(1):75--197, 2018.

\bibitem[DM21]{dimitrov2021characterization}
Evgeni Dimitrov and Konstantin Matetski.
\newblock Characterization of {Brownian Gibbsian} line ensembles.
\newblock {\em Ann. Probab.}, 49(5):2477--2529, 2021.

\bibitem[DNV19]{DNV}
Duncan Dauvergne, Mihai Nica, and B{\'a}lint Vir{\'a}g.
\newblock Uniform convergence to the {A}iry line ensemble.
\newblock Preprint arXiv:1907.10160, 2019.

\bibitem[DSS10]{MCFARS}
Daniela De~Silva and Ovidiu Savin.
\newblock Minimizers of convex functionals arising in random surfaces.
\newblock {\em Duke Math. J.}, 151(3):487--532, 2010.

\bibitem[EKS20]{EKS_Cusp}
L{\'a}szl{\'o} Erdős, Torben Kr{\"u}ger, and Dominik Schr{\"o}der.
\newblock Cusp universality for random matrices {I}: Local law and the complex
  {H}ermitian case.
\newblock {\em Comm. Math. Phys.}, 378(2):1203--1278, 2020.

\bibitem[Gor17]{gorin2017bulk}
Vadim Gorin.
\newblock Bulk universality for random lozenge tilings near straight boundaries
  and for tensor products.
\newblock {\em Comm. Math. Phys.}, 354(1):317--344, 2017.

\bibitem[Gor21]{RT}
Vadim Gorin.
\newblock {\em Lectures on random lozenge tilings}, volume 193 of {\em
  Cambridge Stud. Adv. Math.}
\newblock Cambridge Univ. Press, 2021.

\bibitem[GP19]{gorin2019universality}
Vadim Gorin and Leonid Petrov.
\newblock Universality of local statistics for noncolliding random walks.
\newblock {\em Ann. Probab.}, 47(5):2686--2753, 2019.

\bibitem[HHN16]{HHN_Pearcey}
Walid Hachem, Adrien Hardy, and Jamal Najim.
\newblock {Large complex correlated Wishart matrices: the Pearcey kernel and
  expansion at the hard edge}.
\newblock {\em Electron. J. Probab.}, 21(none):1 -- 36, 2016.

\bibitem[Hua21]{huang2021edge}
Jiaoyang Huang.
\newblock Edge statistics for lozenge tilings of polygons, {I}: Concentration
  of height function on strip domains.
\newblock Preprint arXiv:2108.12872, 2021.

\bibitem[Joh18]{EFLS}
Kurt Johansson.
\newblock Edge fluctuations of limit shapes.
\newblock In {\em Current Developments in Mathematics 2016}, pages 47--110.
  Int. Press, Somerville, MA, 2018.

\bibitem[JPS98]{RTT}
William Jockusch, James Propp, and Peter Shor.
\newblock Random domino tilings and the arctic circle theorem.
\newblock Preprint arXiv:9801068, 1998.

\bibitem[Ken08]{Honeycomb}
Richard Kenyon.
\newblock Height fluctuations in the honeycomb dimer model.
\newblock {\em Comm. Math. Phys.}, 281(3):675--709, 2008.

\bibitem[KO07]{LSCE}
Richard Kenyon and Andrei Okounkov.
\newblock Limit shapes and the complex {B}urgers equation.
\newblock {\em Acta Math.}, 199(2):263--302, 2007.

\bibitem[KOR02]{KCR}
Wolfgang K{\"o}nig, Neil O'Connell, and S{\'e}bastien Roch.
\newblock Non-colliding random walks, tandem queues, and discrete orthogonal
  polynomial ensembles.
\newblock {\em Electron. J. Probab.}, 7(none):1 -- 24, 2002.

\bibitem[KOS06]{KOS}
Richard Kenyon, Andrei Okounkov, and Scott Sheffield.
\newblock Dimers and amoebae.
\newblock {\em Ann. of Math. (2)}, 163:1019--1056, 2006.

\bibitem[Las19]{LLTS}
Benoit Laslier.
\newblock Local limits of lozenge tilings are stable under bounded boundary
  height perturbations.
\newblock {\em Probab. Theory Related Fields}, 173(3-4):1243--1264, 2019.

\bibitem[Mkr21]{Mkrtchyan2021}
Sevak Mkrtchyan.
\newblock {\em Turning Point Processes in Plane Partitions with Periodic
  Weights of Arbitrary Period}, pages 497--527.
\newblock Springer International Publishing, Cham, 2021.

\bibitem[Oko02]{okounkov2002symmetric}
Andrei Okounkov.
\newblock Symmetric functions and random partitions.
\newblock In {\em Symmetric functions 2001: surveys of developments and
  perspectives}, pages 223--252. Springer, 2002.

\bibitem[OR07]{RSPPP}
Andrei Okounkov and Nicolai Reshetikhin.
\newblock Random skew plane partitions and the {P}earcey process.
\newblock {\em Comm. Math. Phys.}, 269(3):571--609, 2007.

\bibitem[Pet14]{petrov2014asymptotics}
Leonid Petrov.
\newblock Asymptotics of random lozenge tilings via {Gelfand--Tsetlin} schemes.
\newblock {\em Probab. Theory Relat. Fields}, 160(3-4):429--487, 2014.

\bibitem[She05]{AST_Shef}
Scott Sheffield.
\newblock Random surfaces.
\newblock {\em Ast\'erisque}, (304), 2005.

\bibitem[TW06]{Tracy2006}
Craig~A. Tracy and Harold Widom.
\newblock The {P}earcey process.
\newblock {\em Comm. Math. Phys.}, 263(2):381--400, Apr 2006.

\bibitem[Wu21]{wu2021bessel}
Xuan Wu.
\newblock The {B}essel line ensemble.
\newblock Preprint arXiv:2109.09070, 2021.

\end{thebibliography}
\bibliographystyle{alpha}

\end{document}